\newtheorem{theorem}{Theorem}[section]
\newtheorem{alphatheorem}{Theorem}
\newtheorem{alphaconjecture}{Conjecture}
\theoremstyle{definition}
\newtheorem{definition}[theorem]{Definition}
\newtheorem{proposition}[theorem]{Proposition}
\newtheorem{step}{Step}
\newtheorem*{proposition*}{Proposition}
\newtheorem*{observation*}{Observation}
\newtheorem*{claim*}{Claim}
\newtheorem*{lemma*}{Lemma}
\newtheorem{example}[theorem]{Example}
\newtheorem{corollary}[theorem]{Corollary}
\newtheorem{remark}[theorem]{Remark}
\newtheorem{conjecture}[theorem]{Conjecture}
\newtheorem*{conjecture*}{Conjecture}
\newtheorem*{convention*}{Convention}
\newtheorem{question}{Question}
\theoremstyle{plain}
\newtheorem{lemma}[theorem]{Lemma}
\newcommand{\bra}[1]{ \left( #1 \right) }
\renewcommand{\tilde}{\widetilde}
\newcommand{\abs}[1]{\left|#1\right|}
\newcommand{\fp}[1]{\left\{ #1 \right\}}
\newcommand{\ip}[1]{\left[ #1 \right]}
\newcommand{\norm}[1]{\left\lVert #1 \right\rVert}
\renewcommand{\b}{\beta}
\newcommand{\fpa}[1]{\left\lVert #1 \right\rVert_{\mathbb{R}/\mathbb{Z}}}
\newcommand{\e}{\varepsilon}
\renewcommand{\a}{\alpha}
\renewcommand{\b}{\beta}
\renewcommand{\b}{\beta}
\newcommand{\NN}{\mathbb{N}}
\newcommand{\QQ}{\mathbb{Q}}
\newcommand{\Q}{\QQ}
\newcommand{\ZZ}{\mathbb{Z}}
\newcommand{\RR}{\mathbb{R}}
\newcommand{\R}{\RR}
\newcommand{\N}{\NN}
\newcommand{\Z}{\ZZ}
\newcommand{\CC}{\mathbb{C}}
\newcommand{\TT}{\mathbb{T}}
\newcommand{\cB}{\mathcal{B}}
\newcommand{\cF}{\mathcal{F}}
\newcommand{\cX}{\mathscr{X}}
\newcommand{\cG}{\mathcal{G}}
\newcommand{\floor}[1]{\left\lfloor #1 \right\rfloor}
\newcommand{\set}[2]{\left\{ #1 \ \middle| \ #2 \right\} }
\newcommand{\parbreak}[1]{
\begin{center}
***
\end{center}
}
\newcommand{\FS}{\operatorname{FS}}
\newcommand{\p}{p}
\newcommand{\cFe}{\cF_\emptyset}
\newcommand{\IP}{$\operatorname{IP}$}
\newcommand{\IPS}{$\mathrm{IPS}$}
\newcommand{\IPP}{$\mathrm{IP}_+$}
\newcommand{\IPrich}{\IP-rich}
\newcommand{\IPPrich}{\IPP-rich}
\newcommand{\IPSrich}{\IPS-rich}
\newcommand{\IPd}{$\operatorname{IP}^*$}
\newcommand{\comment}[1]{%
}
\newcommand{\inter}[1]{\operatorname{int}#1}
\newcommand{\cl}[1]{\operatorname{cl}#1}
\newcommand{\GP}{\mathrm{GP}}
\newcommand{\ifbra}[1]{\left\llbracket #1 \right\rrbracket}
\newcommand{\ceil}[1]{\left\lceil #1 \right\rceil}
\newcommand{\lbr}{\langle\!\langle}
\newcommand{\rbr}{\rangle\!\rangle}
\newcommand{\Haland}{H\r{a}land}
\newcommand{\Malcev}{Mal'cev}
\begin{document}

\address{Department of Mathematics and Computer Science\\Institute of Mathematics\\
Jagiellonian University\\
ul. prof. Stanis\l{}awa \L{}ojasiewicza 6\\
30-348 Krak\'{o}w}
\email{jakub.byszewski@gmail.com}

\author[J. Byszewski \and J. Konieczny]{Jakub Byszewski \and Jakub Konieczny
}
\address{Mathematical Institute \\ 
University of Oxford\\
Andrew Wiles Building \\
Radcliffe Observatory Quarter\\
Woodstock Road\\
Oxford\\
OX2 6GG}
\email{jakub.konieczny@gmail.com}


\title{Automatic sequences, generalised polynomials, and nilmanifolds}


\maketitle 

\begin{abstract}
	We conjecture that bounded generalised polynomial functions cannot be generated by finite automata, except for the trivial case when they are periodic away from a finite set.
	
	 Using methods from ergodic theory, we are able to partially resolve this conjecture, proving that any hypothetical counterexample is periodic away from a very sparse and structured set. 
	 In particular, we show that for a polynomial $p(n)$ with at least one irrational coefficient (except for the constant one) and integer $m$, the sequence $\floor{p(n)} \bmod{m}$ is never automatic.  
	 
	 	 We also obtain a conditional result, where we prove the conjecture under the assumption that the characteristic sequence of the set of powers of an integer $k\geq 2$ is not given by a generalised polynomial.
	 
\end{abstract}

\section*{Introduction}

Automatic sequences are sequences whose $n$-th term is produced by a finite state machine from base $k$ digits of $n$. (A precise definition is given below.) By definition, automatic sequences can take only finitely many values. Allouche and Shallit \cite{Allouche-Shallit-1992, AlloucheShallit-2003} have generalized the notion of automatic sequences to a wider class of regular sequences and demonstrated their ubiquity and links with multiple branches of mathematics and computer science. The problem of demonstrating that a certain sequence is or is not automatic or regular has been widely studied, particularly for sequences of arithmetic origin. The aim of this article is to continue this study for sequences that arise from generalized polynomials, i.e. expressions involving algebraic operations and the floor function via dynamical and ergodic methods. This is possible because by the work of Bergelson and Leibman generalized polynomials are strongly related to dynamics on nilmanifolds. The results obtained lead to a number of interesting questions concerning zero sets of generalized polynomials that we hope will be of independent interest.

In \cite[Theorem 6.2]{AlloucheShallit-2003} it is proved that the sequence $(f(n))_{n \geq 0}$ given by $f(n)= \lfloor \alpha n + \beta\rfloor$ for real numbers $\alpha, \beta$ is regular if and only if $\alpha$ is rational. The method used there does not immediately generalise to higher degree polynomials in $n$, but the proof implicitly uses rotation on a circle by an angle of $2\pi \alpha$. Replacing the rotation on a circle by a skew product transformation on a torus (as in Furstenberg's proof of Weyl's equidistribution theorem), we easily obtain the following result. (For more on regular sequences, see Section \ref{sec:DEF}).

\begin{alphatheorem}\label{thm:main-sortof} Let $p\in \R[x]$ be a polynomial. Then the sequence $f(n)=\lfloor p(n) \rfloor, n\geq 0$ is regular if and only if all the coefficients of $p$ except possibly for the constant term are rational.
\end{alphatheorem}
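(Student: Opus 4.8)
The plan is to prove both directions separately, with the easy direction first. For the ``if'' direction, suppose all non-constant coefficients of $p$ are rational. Write $p(x) = q(x) + \gamma$ where $q \in \Q[x]$ and $\gamma \in \R$; after clearing denominators there is an integer $N$ so that $q$ takes values in $\frac{1}{N}\Z$ on integers, and in fact $n \mapsto N q(n)$ is an integer-valued polynomial. Splitting according to the residue $n \bmod N$ and using that $\lfloor a/N + \gamma \rfloor$ depends only on $a \bmod N$ (for fixed $\gamma$) up to an additive integer polynomial term, one sees that $f(n) = \lfloor p(n)\rfloor$ is, on each residue class mod $N$, a polynomial in $n$ with rational coefficients plus a constant shift; such sequences are regular because the shift operator acts on them through a finitely generated module (polynomials of bounded degree form a finite-dimensional space stable under $n \mapsto n+1$ shifts, and passing to residue classes mod $k^j$ is compatible with the $k$-kernel). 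I would phrase this via the standard closure properties of regular sequences recalled in Section~\ref{sec:DEF}: integer-valued polynomials are regular, regular sequences are closed under taking finite sums and products and under the operation of restricting to a residue class and interleaving, hence $f$ is regular.

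For the ``only if'' direction I would argue by contraposition: assume some non-constant coefficient of $p$ is irrational and show $f(n) = \lfloor p(n)\rfloor$ is not regular. The key reduction is that a regular sequence taking values in a set of polynomial growth still has strong finiteness constraints; more to the point, I would pass from $f$ itself to a bounded sequence by taking differences or reducing mod $m$ in order to land in the automatic setting, but the cleanest route exploits the hint in the text: use the skew-product transformation on a torus. Let $d$ be minimal such that the coefficient of $x^d$ in $p$ is irrational; by Furstenberg's construction the sequence $(\{p(n)\}, \{p'(n)\}, \dots)$ of fractional parts of $p$ and its ``derivatives'' is an orbit of a unipotent affine (skew-product) transformation $T$ on $\TT^d$, which is uniquely ergodic and in particular minimal on the relevant subtorus. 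The value $f(n) = \lfloor p(n) \rfloor$ together with $\{p(n)\}$ recovers $p(n)$, and the growth of $f$ combined with minimality forces the ``local structure'' of $f$ to be governed by this genuinely infinite dynamical system, which is incompatible with regularity.

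The main obstacle — and the step I would spend the most care on — is making precise the clash between automaticity/regularity and the minimality (equidistribution) of the torus orbit. The technical heart is a statement of the form: if $g \colon \TT^{d} \to \Z$ is bounded on bounded sets and $n \mapsto g(T^n x_0)$ is regular, then $g$ must be constant on each fiber of the natural projection killing the irrational directions, equivalently $p$ must have had all non-constant coefficients rational. I would prove this by studying the $k$-kernel: the $k$-kernel of a regular sequence is contained in a finitely generated module, so the closure of the set of ``subsequence tails'' $\{(f(k^j n + r))_n : j \geq 0, 0 \le r < k^j\}$ is small; on the other hand, for an irrational skew product the maps $x \mapsto T^{k^j} x$ are equidistributed/dense in a positive-dimensional nilsystem (here just a torus), so the corresponding family of tail sequences is ``large'' in a way that contradicts finite generation — concretely, one can extract infinitely many $\Q$-linearly independent real parameters $\{p(k^j)\}$ (using irrationality of the leading non-constant coefficient and a dilation argument, Weyl-style) and show each produces a genuinely new element of the kernel module. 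I expect to need a lemma controlling how the floor function interacts with the $k$-kernel operation (it does not commute with it, but the discrepancy is itself a low-complexity term), and the bookkeeping there, rather than any deep new idea, is where the real work lies.
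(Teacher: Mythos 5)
Your ``if'' direction is fine and is essentially the paper's argument in different bookkeeping: the paper writes $f(n)=\frac1h\bigl(\floor{hp(n)}-\floor{h\fp{p(n)}}\bigr)$ with the first term an integer-valued polynomial (hence regular, its kernel lying in a finite-rank module of polynomials) and the second periodic, while you split into residue classes mod $N$ and invoke closure under interleaving; both work.

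The ``only if'' direction, however, has a genuine gap. You name the right ingredients (reduce mod $m$, Furstenberg's skew product, minimality), but the step that actually does the work --- the precise clash between regularity/automaticity and the dynamics --- is exactly what you leave unproven, and the mechanism you sketch for it does not work as stated. You propose to contradict finite generation of the $k$-kernel module by ``extracting infinitely many $\QQ$-linearly independent real parameters $\fp{p(k^j)}$''; but every $p(k^j)$ is a $\QQ$-linear combination of the coefficients of $p$, so the numbers $\fp{p(k^j)}$ all lie in the $\QQ$-span of $1,a_0,\dots,a_d$, a space of dimension at most $\deg p+2$ --- you can never get infinitely many independent ones. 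Even setting that aside, a $\QQ$-linear dependence $\sum_i c_i\floor{p(k^{j_i}n+r_i)}=0$ only forces the polynomial $\sum_i c_i\,p(k^{j_i}x+r_i)$ to be constant, which can happen with $a_d$ irrational, so lower-order coefficients and equidistribution must be brought in; none of this is done. The paper avoids this entirely: regularity gives that $\floor{p(n)}\bmod m$ is automatic (Corollary \ref{automthm3}), automatic sequences are weakly periodic by pigeonhole on the finite kernel (Lemma \ref{lem:auto=>weak-per}), and then in Proposition \ref{prop:poly=>not-wp} the relation $f(qn+r)=f(qn+r')$, combined with total minimality of the skew product on $\TT^d$, forces $T^{r'-r}(\cl A)\subset\cl A$ for $A=\TT^{d-1}\times[\tfrac rm,\tfrac{r+1}m)$, contradicting minimality since $\cl A\neq\emptyset,X$. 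Some argument of this kind (or a completed version of your kernel attack with a corrected independence mechanism) is needed before your proof of the hard direction stands.
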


In fact, we show the stronger property that for an integer $m\geq 2$ the sequence $f(n) \bmod m$ is not automatic unless all the coefficients of $p$ except for the constant term are rational, in which case it is periodic. It is natural to inquire whether a similar result can be proven for more complicated expressions involving the floor function such as e.g. $f(n)= \lfloor \alpha \lfloor \beta n^2 + \gamma\rfloor^2 + \delta n +\varepsilon  \rfloor$. Such sequences are called generalized polynomial  and have been intensely studied. The main motivation for this project is the following conjecture.

\begin{alphaconjecture}\label{conjecture:main}
	Suppose that a sequence $f$ is simultaneously automatic and generalised polynomial. Then $f$ is ultimately periodic. 
\end{alphaconjecture}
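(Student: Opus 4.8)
The plan is to play the nilsystem description of generalised polynomials against the self-similar structure of automatic sequences and show that the two can coexist only when the sequence is ultimately periodic. First I would invoke the Bergelson--Leibman structure theorem for bounded generalised polynomials: there exist a nilmanifold $G/\Gamma$ with Haar measure $\mu$, a polynomial sequence $g\in\operatorname{poly}(\mathbb{Z},G_\bullet)$, and a bounded function $F\colon G/\Gamma\to\mathbb{C}$ which is continuous off a null set (indeed off a finite union of ``piecewise-polynomial'' hypersurfaces), such that $f(n)=F(g(n)\Gamma)$ for all $n$. Passing to the orbit closure (a subnilmanifold, by Leibman's equidistribution theorem) and, on each of finitely many arithmetic progressions, to a subnilmanifold on which the restricted orbit equidistributes, we reduce --- modulo reassembling the progressions at the end --- to the case where $(g(n)\Gamma)_n$ equidistributes in $G/\Gamma$. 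Since $f$ is automatic it is finite-valued, so each fibre $f^{-1}(c)$ coincides, up to a set of density zero, with the ``nilset'' $\{n:g(n)\Gamma\in F^{-1}(c)\}$ and has density $\mu\!\left(F^{-1}(c)\right)$. As ultimate periodicity is preserved by finite unions and by restriction to and interleaving of arithmetic progressions, and the indicator of each fibre of a generalised polynomial sequence is again a generalised polynomial, it is enough to treat the case $f=\mathbf 1_E$ for a single $k$-recognisable nilset $E$.

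The heart of the matter is the compatibility of the automaton with the subdivision maps $\sigma_{j,r}\colon n\mapsto k^j n+r$. Finiteness of the $k$-kernel means that $n\mapsto f(k^j n+r)$ takes only finitely many values as $(j,r)$ varies, so there are renormalisation identities $f(k^{j_1}n+r_1)=f(k^{j_2}n+r_2)$ with $j_1<j_2$. On the nilpotent side $f(k^j n+r)=F(g(k^j n+r)\Gamma)$, where $n\mapsto g(k^j n+r)$ is again a polynomial sequence, but one whose ``frequencies'' have been rescaled by powers of $k$; I would feed the renormalisation identities, together with the quantitative equidistribution theory of polynomial orbits on nilmanifolds (Green--Tao, Leibman), into an analysis of the orbit closures $Y_{j,r}=\overline{\{g(k^j n+r)\Gamma:n\ge0\}}$ and the restrictions $F|_{Y_{j,r}}$, aiming to conclude that all the relevant frequencies must be ``$k$-rational''. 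In the step-one (toral) case this is exactly the elementary dichotomy that $\alpha k^j \bmod 1$ is either eventually $0$ or equidistributes in $\mathbb{R}/\mathbb{Z}$ --- which already yields Theorem~\ref{thm:main-sortof} --- and $k$-rationality of all frequencies is in turn what would force $f$ to be eventually periodic.

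The step at which I expect the argument to stall, and the reason this is stated only as a conjecture, is the gap between ``eventually periodic'' and ``eventually periodic away from a sparse, highly structured set''. Running the equidistribution analysis honestly does not kill the exceptional set outright: it only confines the non-periodicity of $f$ to a very thin $k$-recognisable set, one built from integers with severely restricted base-$k$ digit expansions --- in the extreme, the set of powers of $k$. Removing this remainder amounts to the purely arithmetic assertion that such sparse sets (the set of powers of $k$ first of all) are not themselves generalised polynomial, and it is precisely under the hypothesis that $\mathbf 1_{\{k^j:j\ge0\}}$ is not a generalised polynomial that the conjecture can be closed. The unconditional output is therefore the ``structured sparse exceptional set'' statement, obtained by pushing the ergodic-theoretic part as far as it will go, together with the reduction of the full conjecture to this single question about the powers of $k$.
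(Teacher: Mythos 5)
This is, of course, a conjecture --- the paper does not prove it either --- so your submission is really a programme rather than a proof, and it is a reasonable one. You correctly identify (a) the Bergelson--Leibman representation $f(n)=F(g(n)\Gamma)$ as the starting point, (b) the finiteness of the $k$-kernel as the source of renormalisation identities, (c) the reduction to sparse $\{0,1\}$-valued sequences, and (d) the final obstruction: that everything reduces to whether $\{k^t : t\ge 0\}$ is a generalised polynomial set. This matches the paper's high-level roadmap (Theorems~\ref{thm:main-weakly-periodic}, \ref{thm:main-optimized}, \ref{thm:main-dichotomy}).

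Where your route diverges is in the technical machinery for the sparse regime, and I think the divergence matters. You propose to ``feed the renormalisation identities into the quantitative equidistribution theory of polynomial orbits'' and assert that this would confine the exceptional set to a very thin $k$-recognisable set. The paper does not argue this way, and it is far from clear that it could. What the paper actually does is purely combinatorial on the automaton side: Proposition~\ref{automclassthm} analyses the strongly connected components of the automaton graph and yields a dichotomy (Theorem~\ref{thm:Structure-Auto}) --- a sparse automatic set is either \emph{very sparse} (explicit logarithmic-growth form, Definition~\ref{defverysparse}) or it contains an \IPS-set. The ergodic input then has nothing to do with quantitative equidistribution; it is the statement (Theorem~\ref{thm:GP-vs-IPS}/\ref{thm:S:main-1}) that the return-time set of a nilorbit into a semialgebraic set with empty interior cannot be \IPS, proved via Hindman's theorem, \IP-recurrence on nilmanifolds (Furstenberg, Leibman), Lemma~\ref{lem:conv-of-fp-to-0} on fractional parts along \IP-rings, and Hilbert's basis theorem applied to the Zariski closure of a fundamental-domain copy of $S$. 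Quantitative equidistribution (Green--Tao) appears in the paper only once, for the isolated Heisenberg example (Proposition~\ref{ex:Heisenberg}); it is not the engine of Theorem~\ref{thm:main-optimized}. So the step you flag as ``where the argument would stall'' is not merely a gap at the end --- the intermediate claim that equidistribution alone delivers the very-sparse structure of the exceptional set is itself unsubstantiated, and the paper reaches that structure by a different (combinatorial-plus-\IP) route entirely. Your final observation --- that closing the conjecture amounts to the non-generalised-polynomiality of $\{k^t\}$ --- is correct and is precisely Theorem~\ref{thm:main-dichotomy}.
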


(We say that a sequence $f$ is \emph{ultimately periodic} if it coincides with a periodic sequence except at a finite set.)


We are able to partially resolve this conjecture. First of all, we prove that the conjecture holds except on a set of density zero. In fact, in order to obtain such a result, we only need a specific property of automatic sequences. For the purpose of stating the next theorem, let us say that a sequence $f \colon \NN \to X$ is \emph{weakly periodic} if for any restriction of $f$ to an arithmetic sequence, $f'(n) = f(a n + b)$, $a \in \NN,\ b \in \NN_0$, there exist $q \in \NN$, $r, s \in \NN_0$ with $r \neq s$, such that $f'(q n + r) = f'(q n + s)$. Of course, any periodic sequence is weakly periodic, but not conversely. All automatic sequences are weakly periodic, which follows from the finiteness of kernels (see Lemma \ref{lem:auto=>weak-per}). Another non-trivial example is the characteristic function of the square-free numbers.

\begin{alphatheorem}\label{thm:main-weakly-periodic}
	Suppose that a sequence $f \colon \NN \to \RR$ is weakly periodic and generalised polynomial. Then there exists a periodic function $p$ and a set $Z \subset \NN$ of (upper Banach) density zero such that $f(n) = p(n)$ for $n \in \NN \setminus Z$. 
\end{alphatheorem}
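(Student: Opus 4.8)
\emph{Proof proposal.} The plan is to realise $f$ inside a nilsystem via the theorem of Bergelson and Leibman and then to feed weak periodicity into the ergodic theory of nilsystems, using crucially that a minimal nilsystem with connected phase space is totally minimal. First I would reduce to the case that $f$ is bounded: writing a generalised polynomial in a normal form $f(n)=\sum_{j=0}^{d}n^{j}g_{j}(n)$ with each $g_{j}$ a \emph{bounded} generalised polynomial and applying weak periodicity to $f$ itself, a comparison of growth rates shows that the difference $f(qn+r)-f(qn+s)$ has a polynomial main term coming from the top $g_{j}$'s which the bounded factors cannot cancel, which forces (off a density zero set) $g_{d},\dots,g_{1}$ to vanish, so that $f$ agrees off a density zero set with the bounded generalised polynomial $g_{0}$; I expect this to be the routine part. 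Assume henceforth that $f$ is bounded. By the theorem of Bergelson and Leibman there are a nilmanifold $X=G/\Gamma$ with normalised Haar measure $\mu$, an element $a\in G$, a point $x_{0}\in X$, and a Riemann-integrable $F\colon X\to\RR$ whose discontinuity set lies in a closed $\mu$-null set $D\subseteq X$, such that $f(n)=F(a^{n}x_{0})$ for all $n$. Replacing $X$ by the orbit closure of $x_{0}$ — a finite union of sub-nilmanifolds, by Leibman — we may assume $(X,a)$ is minimal, hence uniquely ergodic; then $X$ splits into connected components $X=X_{0}\sqcup\dots\sqcup X_{m-1}$ cyclically permuted by $a$, with $x_{0}\in X_{0}$. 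The translation $b:=a^{m}$ preserves each $X_{i}$, and $(X_{i},b)$ is a minimal nilsystem with connected phase space, hence totally minimal (the maximal equicontinuous factor of a connected nilsystem is a torus rotation, and a minimal torus rotation is totally minimal); in particular, for each $q\neq 0$ the translation $b^{q}$ acts minimally, hence uniquely ergodically and ergodically, on $(X_{i},\mu_{i})$, where $\mu_{i}$ is the normalised Haar measure on $X_{i}$.

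Fix $i$ and apply weak periodicity to the arithmetic subsequence $k\mapsto f(mk+i)$: there are $q\in\NN$ and $r\neq s$ with $f(m(qk+r)+i)=f(m(qk+s)+i)$ for all $k$. Putting $y_{k}:=a^{i}b^{qk}x_{0}$, which by total minimality of $(X_{i},b)$ equidistributes in the \emph{whole} component $X_{i}$ (and does so uniquely ergodically), this reads $F(b^{r}y_{k})=F(b^{s}y_{k})$ for all $k$, where $b^{r},b^{s}$ act on $X_{i}$ by $\mu_{i}$-preserving homeomorphisms. Since $E:=b^{-r}D\cup b^{-s}D$ is a closed $\mu_{i}$-null subset of $X_{i}$, unique ergodicity of $(X_{i},b^{q})$ shows that $(y_{k})$ avoids $E$ along a density one — in particular dense — set of $k$, and at such $y_{k}$ the maps $w\mapsto F(b^{r}w)$ and $w\mapsto F(b^{s}w)$ are continuous. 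Hence, for any $w\in X_{i}\setminus E$, approximating it by such $y_{k}$ (and using that $F(b^{r}y_{k})=F(b^{s}y_{k})$ holds for all $k$) yields $F(b^{r}w)=F(b^{s}w)$, i.e.\ $F=F\circ b^{\,s-r}$ holds $\mu_{i}$-almost everywhere. As $s-r\neq0$ and $b^{\,s-r}$ is ergodic on $(X_{i},\mu_{i})$, the function $F$ is $\mu_{i}$-almost everywhere equal to a constant $c_{i}$ on $X_{i}$. Moreover $F$ is continuous on the open dense set $X_{i}\setminus D$, on which Haar measure charges every nonempty relatively open subset, so the relatively open $\mu_{i}$-null set $\{w\in X_{i}\setminus D:F(w)\neq c_{i}\}$ is empty: $F\equiv c_{i}$ on all of $X_{i}\setminus D$.

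To finish, let $p$ be the periodic function $p(n):=c_{\,n\bmod m}$ and let $Z:=\{n\in\NN:a^{n}x_{0}\in D\}$. Using unique ergodicity of $(X,a)$ together with $D$ closed and $\mu$-null — dominate $\mathbf 1_{D}$ by a continuous function of arbitrarily small integral and invoke uniform convergence of ergodic averages — one sees that $Z$ has upper Banach density zero. For $n\notin Z$ one has $a^{n}x_{0}\in X_{n\bmod m}\setminus D$, whence $f(n)=F(a^{n}x_{0})=c_{\,n\bmod m}=p(n)$, which is the assertion.

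The step I expect to be the main obstacle — or at any rate the one demanding the most care — is this passage from orbit identities to almost everywhere identities while $F$ is only Riemann integrable. Although $D$ is $\mu$-null in $X$, its trace on a \emph{proper} sub-nilmanifold may fail to be null there, so it is essential to use precisely the subsequences $(f(mk+i))_{k}$, whose orbits equidistribute in entire connected components of $X$ — where $D$ stays null — and along which the auxiliary translations $b^{r},b^{s}$ preserve the ambient component, rather than general arithmetic subsequences, whose orbits may collapse onto thin sub-nilmanifolds carrying positive mass of $D$. The same input — unique ergodicity together with $D$ closed and null — is also what promotes the exceptional set from density zero to upper Banach density zero.
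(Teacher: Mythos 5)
Your proposal is correct and takes a genuinely different route from the paper's. Both start with the Bergelson--Leibman representation and reduce to a connected component, but the core argument diverges. The paper writes $f$ as a step function over a semialgebraic partition $X = \bigcup_j S_j$ and applies a purely \emph{topological} lemma (their Lemma \ref{lem:tot-min=>not-auto}): weak periodicity of a thickly-agreeing sequence forces $T^{r'-r}(\cl A)\subset\cl A$ for each $A=\inter S_j$, and minimality of $T^{r'-r}$ then collapses each $\inter S_j$ to $\emptyset$ or a dense set. You instead argue \emph{measure-theoretically}: weak periodicity gives $F(b^r y_k)=F(b^s y_k)$ along an equidistributed orbit in a component $X_i$, this propagates by continuity off the null set $D$ to $F = F\circ b^{\,s-r}$ $\mu_i$-a.e., and ergodicity of $b^{\,s-r}$ forces a.e.\ constancy, upgraded to constancy on $X_i\setminus D$ since $\{F\neq c_i\}\cap(X_i\setminus D)$ is open and null. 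Your route has the advantage of handling a Riemann-integrable (not finitely-valued) $F$ directly; the paper's topological lemma is phrased for an arbitrary totally minimal system and for sequences agreeing with the generalised polynomial only on a \emph{thick} set, a generality it actually uses elsewhere (via its Theorem \ref{thm:gen-poly=>not-auto-dens-1}). Two small caveats: your opening reduction to bounded $f$ is more delicate than ``routine'' — the leading-term cancellation analysis needs care, since lower-order $g_j$'s can soak up bounded contributions from the top — although the paper itself silently invokes Theorem \ref{thm:gen-poly=>not-auto-dens-1}, which presupposes finitely many values, so this imprecision is shared. And the ``replace $X$ by the orbit closure of $x_0$'' step is unnecessary (the Bergelson--Leibman theorem already delivers an ergodic, hence minimal, action); had it been needed, it would threaten exactly the sub-nilmanifold/null-trace issue you flag at the end, so it is fortunate that it is vacuous.
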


To obtain stronger bounds on the size of the exceptional set $Z$, we need to restrict to automatic sequences and exploit some of their finer properties. Towards this end we develop a structure theory for sparse automatic sequences, i.e.\ those which take non-zero values on a set of integers of zero Banach density. In particular, we show that the set where a sparse automatic sequence takes non-zero values is either extremely small or combinatorially rich (see Theorem \ref{thm:Structure-Auto}). Conversely, we can show that sparse genearlised polynomials must be free of similar combinatorial structures. As a consequence, we prove the following.

\begin{alphatheorem}\label{thm:main-optimized}
	Suppose that a sequence $f \colon \NN \to \RR$ is automatic and generalised polynomial. Then there exists a periodic function $p$ and a set $Z \subset \NN$ such that $f(n) = p(n)$ for $n \in \NN \setminus Z$ and 
$$ \sup_{M} \abs{ Z \cap [M,M+N)} = O\bra{ \log^{C} N}$$	
	as $N \to \infty$ for a certain constant $C$.
\end{alphatheorem}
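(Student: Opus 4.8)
The plan is to refine the argument behind Theorem B by replacing the soft density-zero bound with a quantitative one, using the structure theory for sparse automatic sequences. First I would reduce to the case where $f$ takes values in a finite set, which is automatic since automatic sequences are finitely valued; after subtracting the periodic part $p$ supplied by Theorem B, we are left with the indicator-type sequence $g = \mathbf{1}_{f \neq p}$, which is again automatic (the class of automatic sequences is closed under such finite-state operations and under restriction along arithmetic progressions) and is supported exactly on the exceptional set $Z$. By Theorem B this set has zero Banach density, so $g$ is a \emph{sparse} automatic sequence in the sense developed in the paper, and our task becomes: bound $\sup_M \abs{Z \cap [M, M+N)}$ for the support $Z$ of a sparse automatic sequence that is \emph{also} generalised polynomial.

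Next I would invoke the dichotomy in Theorem~\ref{thm:Structure-Auto}: the support of a sparse automatic sequence is either \enquote{extremely small} --- which I expect to mean contained in a set enumerated by finitely many sequences of the shape $c k^{n} + (\text{lower order base-}k\text{ terms})$, hence meeting any window $[M,M+N)$ in $O(\log N)$ points --- or else it is combinatorially rich, containing configurations (long arithmetic progressions, or IP-type / geometric-type structures) of a size that grows with the scale. In the first alternative we are essentially done: a bounded union of such lacunary sequences intersects $[M,M+N)$ in $O(\log^{C} N)$ points for the appropriate $C$ coming from the number of states of the automaton. So the whole content is to rule out the second alternative for a sequence that is simultaneously generalised polynomial.

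For that I would use the complementary statement that sparse generalised polynomials are \enquote{free of similar combinatorial structures} --- i.e.\ the zero-density support of a bounded generalised polynomial cannot contain the rich configurations produced by the structure theorem. The mechanism here is the Bergelson--Leibman correspondence: a bounded generalised polynomial sequence is, up to a small-density error, of the form $h(g^n \Gamma)$ for a nilrotation on a nilmanifold and a piecewise-polynomial (Riemann-integrable) function $h$, and the orbit $(g^n \Gamma)$ equidistributes in the nilmanifold. One then argues that the preimage under this coding of a combinatorially rich set would force the \enquote{small} piece of the nilmanifold (where $h$ is supported, which has small measure since the support has density zero) to itself contain too much structure, contradicting either the equidistribution or a dimension/measure count on the nilmanifold (for instance, sub-nilmanifolds carrying the progressions would have to have positive measure). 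Combining the two halves: the rich alternative is impossible, so the extremely-small alternative holds, giving the $O(\log^{C} N)$ bound.

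The main obstacle will be the last step --- showing that a density-zero generalised polynomial support genuinely cannot support the combinatorial richness coming out of Theorem~\ref{thm:Structure-Auto}. This requires matching precisely the \emph{type} of structure the automatic side produces against what the nilmanifold side can accommodate, and handling the piecewise-polynomial (non-continuous) nature of $h$, whose discontinuity locus must be controlled so that equidistribution still applies; getting the error terms small enough that they do not swamp the (already sparse) main term, uniformly over the base point $M$ of the window, is where the real work lies. The bookkeeping that tracks the constant $C$ through the number of states of the automaton and the step of the nilmanifold is routine by comparison.
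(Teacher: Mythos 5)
Your overall reduction is the paper's own: apply Theorem \ref{thm:main-weakly-periodic} to obtain the periodic part $p$ and a Banach-density-zero exceptional set $Z$, pass to the $\{0,1\}$-valued indicator of $Z$, observe it is sparse, automatic and generalised polynomial (the last point you assert only implicitly; the paper justifies it via Lemma \ref{lem:gen-poly-set-of-zeros}), then feed it into the structure theory for sparse automatic sequences and count. Two inaccuracies in your description of the dichotomy are harmless: the ``extremely small'' alternative is the class of very sparse sets of some rank $r$, which meet a window $[M,M+N)$ in $O(\log^r N)$ points (Lemma \ref{lem:v-sparse-growth}), not $O(\log N)$; and the rich alternative produced by Theorem \ref{thm:Structure-Auto} is an \IPS-set (shifted finite sums), not long arithmetic progressions.

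The genuine gap is the step you yourself flag as the main obstacle: ruling out the rich alternative for a sparse generalised polynomial. This is exactly Theorem \ref{thm:GP-vs-IPS} (equivalently, the content of Theorem \ref{thm:main-B2}), and the mechanism you propose --- equidistribution together with a measure or dimension count, e.g.\ that ``sub-nilmanifolds carrying the structure would have to have positive measure'' --- cannot deliver it. Equidistribution and unique ergodicity only yield that the semialgebraic coding set $S$ satisfies $\mu_X(S)=0$, i.e.\ $\inter S=\emptyset$; they say nothing about whether the orbit can visit a measure-zero target along an \IPS-set of times. Smallness of measure is not the obstruction: an orbit in a minimal system returns to arbitrarily small neighbourhoods of its base point along $\mathrm{IP}^*$ sets, and a general (non-semialgebraic) measure-zero set can trivially contain an entire \IP-indexed piece of the orbit. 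What makes the theorem true is the algebraic structure of $S$, and the paper's proof is correspondingly algebraic rather than measure-theoretic: one takes IP-limits of the fractional parts $\fp{g^{n_\a}}$ (Lemma \ref{lem:conv-of-fp-to-0}), uses Hilbert's basis theorem to manufacture a Zariski-closed set $R$ with empty interior invariant under the maps $x \mapsto g^{n_\a} x \ip{g^{n_\a}}^{-1}$ (Step \ref{prop:S:main-3}), upgrades this invariance by the variable-separation Lemma \ref{lem:variable-separation} (Step \ref{prop:S:main-4}), and concludes with Proposition \ref{lem:fp-closure}, which shows that the fractional parts of an equidistributed polynomial orbit cannot lie in a proper Lie subgroup of $G^{\circ}$ infinitely often. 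None of this is recoverable from the heuristic you sketch; if you do not intend to reprove it, you should invoke Theorem \ref{thm:main-B2} (or \ref{thm:GP-vs-IPS}) as a black box, at which point your argument coincides with the paper's proof of Theorem \ref{thm:main-optimized}.
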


In fact, we obtain a much more precise structural description of the exceptional set $Z$. See Theorem \ref{thm:main-B2} for details.

The most technically difficult ingredient in our proof of Theorem \ref{thm:main-optimized} has to do with combinatorial structures in sparse in generalised polynomials. Using the techniques developed by Bergelson and Leibman, this reduces to questions concerning set of times an orbit on a nilmanifold hits a semialgebraic set (see Theorem \ref{thm:S:main-1} for the exact statements, and Section \ref{sec:DEF} for terminology). As an interesting by-product, we show that the fractional parts of an equidistributed polynomial sequence on a nilmanifold $G/\Gamma$ (with $G$ connected) cannot visit the projected image of a proper subgroup of $G$ infinitely many times (Proposition \ref{lem:fp-closure}); this result to the best of our knowledge is not present in the literature.

While Theorem \ref{thm:main-optimized} does not resolve Conjecture \ref{conjecture:main}, our proof thereof greatly restricts the number of possible counterexamples. In fact, in order to prove  Conjecture \ref{conjecture:main}, it would suffice to prove that the characteristic sequence of powers of an integer $k\geq 2$ given by $$g_k(n) =
	\begin{cases}
		1, & \text{ if } n = k^t \text{ for some } t \geq 0;\\
		0, & \text{ otherwise}
	\end{cases}
	$$ is not generalized polynomial.

\begin{alphatheorem}\label{thm:main-dichotomy}
	Let $k \geq 2$ be an integer. Then one of the following statements holds:
\begin{enumerate}
	\item All sequences which are simultaneously $k$-automatic and generalised polynomial are ultimately periodic.
	\item The characteristic sequence $g_k$ of the powers of $k$ is generalised polynomial.
\end{enumerate}
\end{alphatheorem}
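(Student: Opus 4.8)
The plan is to prove the contrapositive of the dichotomy: assuming statement (2) fails — that is, $g_k$ is \emph{not} generalised polynomial — we derive statement (1), namely that every $k$-automatic generalised polynomial sequence is ultimately periodic. By Theorem~\ref{thm:main-optimized} we already know that such a sequence $f$ agrees with a periodic function $p$ outside an exceptional set $Z$ with $\sup_M \abs{Z \cap [M, M+N)} = O(\log^C N)$. So the entire content of the argument is to upgrade this to $Z$ being \emph{finite}. Write $h = f - p$, so $h$ is supported on $Z$; note that $h$ is itself both $k$-automatic (differences of automatic and periodic sequences are automatic) and generalised polynomial (generalised polynomials form an algebra containing periodic sequences, as $\lfloor \cdot \rfloor$ recovers residues), and $h$ is sparse in the strong quantitative sense above. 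If $h$ is not eventually zero, its support $S = \{n : h(n) \neq 0\}$ is an infinite set that is simultaneously the support of a sparse automatic sequence and of a sparse generalised polynomial.

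The first main step is to invoke the structure theory for sparse automatic sequences (Theorem~\ref{thm:Structure-Auto}): the support of a sparse automatic sequence is either extremely small — and I expect the precise dichotomy there to force $S$, up to a finite set, into a finite union of sets of the shape $\{ u k^{a n + b} : n \geq 0 \}$ or finite unions of geometric-like progressions built from the base-$k$ digit structure — or else it is combinatorially rich in a way that a sparse generalised polynomial cannot be (by the converse direction alluded to before Theorem~\ref{thm:main-optimized}, i.e.\ the Bergelson--Leibman nilmanifold input, Theorem~\ref{thm:S:main-1} and Proposition~\ref{lem:fp-closure}). The second step handles the surviving case: if $S$ is infinite and essentially a finite union of ``exponentially sparse'' sets of the form $u \cdot k^{\mathbb{N}}$ (possibly along arithmetic progressions in the exponent), then the characteristic function $\mathbf{1}_S$ is built in an elementary, generalised-polynomial-preserving way from the sequences $g_k$ and their dilates/restrictions $n \mapsto g_k(n/u)$, $n \mapsto g_k(k^{-b} n)$ on arithmetic progressions. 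Since generalised polynomials are closed under these operations (scaling, restriction to arithmetic progressions, finite Boolean combinations of zero sets), it follows that $g_k$ itself would be generalised polynomial, contradicting our assumption. Hence $S$ is finite, $Z$ can be taken finite, and $f$ is ultimately periodic.

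The main obstacle — and the step I would spend the most care on — is making the case analysis in the second step airtight: extracting from Theorem~\ref{thm:Structure-Auto} the exact list of ``extremely small'' shapes the support of a sparse automatic sequence can take, and checking that each such shape, once it is known to also underlie a generalised polynomial, can be massaged back into $g_k$ via operations under which the class of generalised polynomials is provably closed. In particular one must verify that restriction to an arithmetic progression $n \mapsto g_k(an+b)$ and the passage between $g_k$ and $g_{k'}$ for $k' = k^j$ a power of $k$ do not leave the generalised polynomial class, and that a sparse automatic support which is \emph{not} of exponential type genuinely triggers the combinatorial richness that Theorem~\ref{thm:S:main-1} forbids for sparse generalised polynomials. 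A secondary technical point is bookkeeping the finitely many ``bad'' indices: the structure theorems are naturally stated up to finite modifications and up to passing to subsequences along arithmetic progressions, so one should set up the argument so that these ambiguities only ever enlarge $Z$ by a finite set and never destroy the generalised-polynomial or automatic hypotheses.
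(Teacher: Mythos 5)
Your overall strategy is the same as the paper's: pass to the contrapositive, subtract the periodic part furnished by Theorem~\ref{thm:main-weakly-periodic}/\ref{thm:main-optimized} to obtain a sparse sequence that is simultaneously $k$-automatic and generalised polynomial (Lemma~\ref{lem:gen-poly-set-of-zeros} makes the indicator of the exceptional set generalised polynomial), apply the structure theory to its support, and argue that an infinite support of the resulting special shape would force $g_k$ to be generalised polynomial. However, there is a genuine gap exactly where you locate ``the main obstacle'', and it is not a routine verification. First, your expected classification of the ``extremely small'' supports is incorrect: what the structure theory gives (Theorem~\ref{thm:Structure-Auto} combined with Theorem~\ref{thm:GP-vs-IPS}, i.e.\ Theorem~\ref{thm:main-B2}) is that the support is \emph{very sparse} in the sense of Definition~\ref{defverysparse}, that is, a finite union of sets $\set{[w_0u_1^{l_1}w_1\cdots u_r^{l_r}w_r]_k}{l_1,\dots,l_r\in\NN_0}$ of arbitrary rank $r$. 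For $r\geq 2$ these are \emph{not} finite unions of geometric-type progressions $\set{uk^{an+b}}{n\geq 0}$ --- consider e.g.\ $\set{k^{l_1+l_2+2}+k^{l_2+1}+1}{l_1,l_2\geq 0}$ --- so your second step, as described, does not apply to them. Reducing a general infinite very sparse set to the rank-one shape requires intersecting it with an arithmetic progression chosen according to its digit structure; this is precisely Proposition~\ref{jeszczejednoespresso} in the paper, a nontrivial combinatorial lemma which your outline would have to prove.

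Second, even in the rank-one case the passage from ``the infinite set $\bigcup_i\set{[v_iw^lu]_k}{l\geq 0}=\bigcup_i\set{a_ik^{tl}+b_i}{l\geq 0}$ is generalised polynomial'' to ``$g_k$ is generalised polynomial'' is not covered by the closure properties you list (integer dilation, restriction to arithmetic progressions, Boolean operations on zero sets). One must compose the witnessing generalised polynomial $P$ with a \emph{rational, non-integral} affine substitution $n\mapsto k^s(n-[w]_k)/(k^t-1)+[u]_k$, and to do this safely one first replaces $P(n)$ by $P(n)^2+(n-\floor{n})^2$ to kill spurious non-integer zeros; one then has to eliminate the unknown constants: after dividing through so that $c_1=1$, intersecting with the congruence class $n\equiv 1\pmod{k^2-1}$ pins the set down to exactly $\set{k^{2l}}{l\geq 0}$, and finally the identity $\set{k^l}{l\geq 0}=\bigcup_{j=0}^{t-1}k^j\set{k^{tl}}{l\geq 0}$ returns one to $g_k$ (with the caveat that $k$ was replaced by a power along the way). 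This is the content of Proposition~\ref{automverysparse}, and it is the mathematical heart of Theorem~\ref{thm:main-dichotomy} beyond what Theorem~\ref{thm:main-B2} already provides; your proposal presupposes it rather than proving it.
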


This result prompts a more general question: For which exponential sequences $\floor{ \lambda^n + 1/2 }$ is the set of their values automatic? An example of such $\lambda$ is the golden ratio which corresponds to the Fibonacci numbers (Example \ref{ex:lin-rec-is-gp}). More subtle examples are provided by some Pisot numbers of degree $3$ (Theorem \ref{LAcrime}). We revisit this issue in Question \ref{Q:Pisot}.

\subsection*{Acknowledgements.} 

The authors thank Ben Green for much useful advice during the work on this project, Vitaly Bergelson for valuable comments on the distribution of generalised polynomials, and Jean-Paul Allouche for comments on related results on automatic sequences.



Thanks also go to Sean Eberhard, Dominik Kwietniak, Freddie Manners, Rudi Mrazovi\'{c}, Przemek Mazur, Sofia Lindqvist, and Aled Walker for many informal discussions.

This research was partially supported by the Polish National Science Centre (NCN) under grant no. DEC-2012/07/E/ST1/00185. JK also acknowledges the generous support from the Clarendon Fund and SJC Kendrew Fund for his doctoral studies.

Finally, we would like to express our gratitude to the organizers of the  conference \emph{New developments around x2 x3 conjecture and other classical problems in Ergodic Theory} in Cieplice, Poland in May 2016 where we began our project.

\section{Definitions} \label{sec:DEF}

\subsection*{Notations and generalities}

We denote the sets of positive integers and of nonnegative integers by $\N=\{1,2,\ldots\}$ and $\N_0=\{0,1,\ldots\}$. We denote by $[N]$ the set $[N]=\{0,1,\ldots, N-1\}.$ We use the Iverson's convention: whenever $p$ is any sentence, we denote by $\ifbra{p}$ its logical value ($1$ if $p$ is true and $0$ otherwise). We denote the number of elements in a finite set $A$ by $|A|$.

For a real number $r$, we denote by $\lfloor r \rfloor$ its integer part,  by $\{r\}=r-[r]$ its fractional part, by $\lbr r \rbr = \lfloor r+1/2\rfloor$ the nearest integer to $r$, and by $\fpa{r} = |r-\lbr r\rbr|$ the distance from $r$ to the nearest integer.

We use some standard asymptotic notation. Let $f$ and $g$ be two functions defined for sufficiently large integers. We say that $f=O(g)$ or $f \ll g$ if there exists $c>0$ such that $|f(n)|\leq c \abs{g(n)}$ for sufficiently large $n$. We say that $f=o(g)$ if for every $c >0$  we have  $|f(n)|\leq c |g(n)|$ for sufficiently large $n$. Finally, we say that $f=\Theta(g)$ if $f = O(g)$ and $g = O(f)$. 

For a subset $E\subset \N_0$, we say that $E$ has \emph{natural density} $d(A)$ if $$\lim_{N\to \infty} \frac{|E\cap [N]|}{N} = d(A).$$ 
We say that $E$ has \emph{upper Banach density} $d^*(A)$ if
 $$\limsup_{N\to \infty} \max_{M} \frac{|E\cap [M,M+N) |}{N} = d^*(A).$$

\comment{Actually, I think we only need upper Banach density. That's how I changed it above and also in the ergodic theorem. I hope it's fine.}

We now formally define generalised polynomials. 

\begin{definition}[Generalised polynomial]
	The family $\GP$ of \emph{generalised polynomials} is the smallest set of functions $\ZZ \to \RR$ containing the polynomial maps and closed under addition, multiplication, and the operation of taking the integer part. A set $E\subset \Z$ is called \emph{generalised polynomial} if its characteristic function given by $f(n)=\ifbra{n\in E}$ is a generalised polynomial.
\end{definition}

An example of a general polynomial would therefore be a function $f$ given by the formula $f(n)=2+\sqrt{2}\lfloor \sqrt{3}n^2+1/7\rfloor^2+n\lfloor n^3+\pi\rfloor$. 

\subsection*{Automatic sequences}

Whenever $A$ is a (finite) set, we denote by $A^*$ the free monoid with basis $A$. It consists of finite words in $A$, including the empty word $\epsilon$, with the operation of concatenation. We denote the contatenation of two words $v,w\in A^*$ by $vw$ and we denote the length of a word $w\in A^*$ by $|w|$. In particular, $|\epsilon|=0$. We say that a word $v\in A^*$ is a factor of a word $w \in A^*$ if there exist words $u,u' \in A^*$ such that $w=uvu'$. We denote by $w^R\in A^*$ the reversal of the word $w\in A^*$ (the word in which the elements of $A$ are written in the opposite order).

Let $k \geq 2$ be an integer and denote by $\Sigma_k=\{0,1,\ldots, k-1\}$ the set of digits in base $k$. For $w\in \Sigma_k^*$, we denote by $[w]_k$ the integer whose expansion in base $k$ is $w$, i.e. if $w=v_l v_{l-1}\cdots v_1 v_0$, $v_i \in\Sigma_k$, then $[w]_k=\sum_{i=0}^l v_ik^i$. Similarly, for an integer $n\geq 0$, we write $(n)_k\in \Sigma_k^*$ for the base $k$ representation of $n$ (without an initial zero). (In particular, $(0)_k = \epsilon$.)

The class of automatic sequences consists, informally speaking, of finite-valued sequences $(a_n)_{n\geq 0}$ whose values $a_n$ are obtained via a finite procedure from the digits of base $k$ expansion of an integer $n$. 

The most famous example of an automatic sequence is arguably the Thue-Morse sequence, first discovered by Prouhet in 1851. Let $s_2(n)$ denote the sum of digits in base 2 expansion of an integer $n$. Then the Thue-Morse sequence $(t_n)_{n\geq 0}$ is given by $t_n=1$ if $s_2(n)$ is odd and $t_n=0$ if $s_2(n)$ is even.

We will introduce the basic properties of automatic sequences. For more information, we refer the reader to the canonical book of Allouche and Shallit \cite{AS}. To formally introduce the notion of automatic sequences, we begin by finite automata.

\begin{definition}\label{automdef1} A finite $k$-\emph{automaton with output} (which we will just call a $k$-automaton) $\mathcal{A}=(S,s_{\bullet},\delta,\Omega,\tau)$ consists of the following data: \begin{enumerate} 
\item a finite set of states $S$;
\item an initial state $s_{\bullet}\in S$;
\item a transition map $\delta \colon S \times \Sigma_k \to S$;
\item an output set $\Omega$;
\item an output map $\tau \colon S \to \Omega$.
\end{enumerate}
We extend the map $\delta$ to a map $\tilde{\delta}\colon S \times \Sigma_k^* \to S$ by the recurrence formula $$\tilde{\delta}(s,\epsilon)=s, \quad \tilde{\delta}(s,wv)=\delta(\tilde{\delta}(s,w),v), \quad s\in S, w\in \Sigma_k^*, v\in\Sigma_k.$$ \end{definition}

We call a sequence $k$-\emph{automatic} if it can be produced by a $k$-automaton in the following manner: one starts at the initial state of the automaton, follows the digits of the base $k$ expansion of an integer $n$, and then uses the output function to print the $n$-th term of the sequence. This is stated more precisely in the following definition.

\begin{definition}\label{automdef2} A sequence $(a_n)_{n\geq 0}$ with values in a finite set $\Omega$ is $k$-\emph{automatic} if there exists a $k$-automaton $\mathcal{A}=(S,s_{\bullet},\delta,\Omega,\tau)$ such that $a_n=\tau\bra{\tilde{\delta}(s_{\bullet},(n)_k)}$. We call a set $E$ of nonnegative integers \emph{automatic} if the characteristic sequence $(a_n)$ of $E$ given by $a_n=\ifbra{n\in E}$ is automatic.\end{definition}

The values of the Thue-Morse sequence are  given by the  $2$-automaton 
\begin{center}
\begin{tikzpicture}[shorten >=1pt,node distance=2cm, on grid, auto] 
   \node[state] (s_0)   {$s_{\bullet}$}; 
   \node[state] (s_1) [right=of s_0] {$s_1$}; 
  \tikzstyle{loop}=[min distance=6mm,in=210,out=150,looseness=7]
  
    \path[->] 
    
    (s_0) edge [loop left] node {0} (s_0)
          edge [bend right] node [below]  {1} (s_1);
          
 \tikzstyle{loop}=[min distance=6mm,in=30,out=-30,looseness=7]
 \path[->]
    (s_1) edge [bend right] node [above]  {1} (s_0)
          edge [loop right] node  {0} (s_1);
\end{tikzpicture}

\end{center}
with nodes depicting the states of the automaton, edges describing the transition map and with $\tau(s_{\bullet})=0$ and $\tau(s_1)=1$. Thus, the Thue-Morse sequence is $2$-automatic.

In the definition above,  the automaton reads the digits starting with the most significant one. In fact, we might equally well demand that the digits be read starting with the least significant digit or that the automaton produces the correct answer even if the input contains some leading zeroes. Neither of these modifications changes the notion of automatic sequences \cite[Theorem 5.2.3]{AS} (though of course we would need to use a different automaton to produce a given automatic sequence).

There is a number of equivalent definitions of automatic sequences connecting them to different branches of mathematics (stated for example in terms of algebraic power series over finite fields or letter-to-letter  projections of fixed points of uniform morphisms of free monoids). We will need one such definition that has a combinatorial flavour and is expressed in terms of the $k$-kernel.

\begin{definition}\label{automdef3} The $k$-\emph{kernel} $N_k((a_n))$ of a sequence $(a_n)_{n\geq 0}$ is the set of its subsequences of the form $$N_k((a_n))=\{(a_{k^l n+r})_{n\geq 0} \mid l\geq 0, 0\leq r < k^l\}.$$
\end{definition}

Since for the Thue-Morse sequence we have relations $t_{2n}=t_n$, $t_{2n+1}=1-t_n$, one easily sees that the $2$-kernel $N_2((t_n))$ consists of only two sequences $N_2((t_n))=\{t_n, 1-t_n\}$. This gives another argument for its $2$-automaticity.

\begin{proposition}\label{automthm1}\cite[Theorem 6.6.2]{AS} Let $(a_n)_{n\geq 0}$ be a sequence. Then the following conditions are equivalent: \begin{enumerate} 
\item The sequence $(a_n)$ is $k$-automatic. 
\item The $k$-kernel $N_k((a_n))$ is finite.\end{enumerate}\end{proposition}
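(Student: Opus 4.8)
The plan is to prove the two implications directly, using the $k$-kernel as the bridge. For bookkeeping it is convenient to work with the least-significant-digit-first model of $k$-automata: by \cite[Theorem 5.2.3]{AS} it produces exactly the $k$-automatic sequences, and one may moreover take the automaton to output $a_n$ on \emph{every} input of the form $(n)_k^R 0^j$, i.e.\ to ignore trailing input zeroes. I will use this form of the automaton throughout and invoke \cite[Theorem 5.2.3]{AS} once more at the end to pass back to Definition \ref{automdef2}.

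For $(1)\Rightarrow(2)$, I would fix such an automaton $\mathcal{A}=(S,s_\bullet,\delta,\Omega,\tau)$ with $a_n=\tau(\tilde\delta(s_\bullet,(n)_k^R))$ and attach to each state $s\in S$ the sequence $c^{(s)}:=\seq{\tau(\tilde\delta(s,(n)_k^R))}{n\geq 0}$; there are at most $|S|$ of these. Given $l\geq 0$ and $0\leq r<k^l$, write $r=\sum_{j=0}^{l-1}r_jk^j$ with $r_j\in\Sigma_k$. The base-$k$ word of $k^ln+r$, read least significant digit first, is $r_0r_1\cdots r_{l-1}$ followed by $(n)_k^R$, so $a_{k^ln+r}=\tau(\tilde\delta(t,(n)_k^R))=c^{(t)}_n$ with $t:=\tilde\delta(s_\bullet,r_0r_1\cdots r_{l-1})$ --- the boundary case $n=0$ being precisely where insensitivity to trailing zeroes enters. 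Hence $(a_{k^ln+r})_{n\geq 0}=c^{(t)}$, so $N_k((a_n))\subseteq\set{c^{(s)}}{s\in S}$ is finite.

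For $(2)\Rightarrow(1)$, suppose $K:=N_k((a_n))$ is finite. First note this already forces $(a_n)$ to take only finitely many values: for $n<k^{l+1}$ the kernel sequence $(a_{k^{l+1}m+n})_{m\geq 0}$ has $0$-th term $a_n$, so $a_n\in\set{b_0}{b\in K}$. Now I would build an automaton with state set $K$, initial state $(a_n)_{n\geq 0}$, output map $\tau(b)=b_0$, and transition $\delta(b,i):=(b_{kn+i})_{n\geq 0}$ for $b=(b_n)_{n\geq 0}\in K$ and $i\in\Sigma_k$; this lands in $K$, since if $b=(a_{k^mn+s})_{n\geq 0}$ then $b_{kn+i}=a_{k^{m+1}n+(k^mi+s)}$ with $0\leq k^mi+s<k^{m+1}$. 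Unwinding the recursion, after reading $i_0,i_1,\dots,i_l$ in that order the state reached is $(a_{k^{l+1}n+\sum_{j=0}^l i_jk^j})_{n\geq 0}$, whose $0$-th term is $a_{\sum_j i_jk^j}$; so feeding $(n)_k^R$ outputs $a_n$, appending extra zeroes changes nothing, and $\mathcal{A}$ generates $(a_n)$ in the least-significant-digit model --- hence, by \cite[Theorem 5.2.3]{AS}, $(a_n)$ is $k$-automatic.

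The real content is just the dictionary ``concatenation of digit words $\leftrightarrow$ composition of $\tilde\delta$'' together with its interplay with the kernel; everything else is a routine verification once this is in place. The only genuine nuisance --- the step where I expect to need the most care --- is the handling of leading versus trailing zeroes and of the empty representation $(0)_k=\epsilon$, together with the mismatch between the most- and least-significant-digit conventions; all of this, however, is exactly what the quoted robustness result \cite[Theorem 5.2.3]{AS} lets us bypass rather than re-derive by hand.
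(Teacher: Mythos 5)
Your proof is correct, and it is essentially the standard argument: the paper does not prove this proposition at all but cites it as \cite[Theorem 6.6.2]{AS}, and your two implications (kernel elements realised as the state-sequences $c^{(s)}$ of a least-significant-digit automaton insensitive to trailing zeroes, and conversely the kernel itself serving as the state set with $\delta(b,i)=(b_{kn+i})_{n\geq 0}$ and output $b\mapsto b_0$) are exactly the textbook proof. Your handling of the $n=0$ boundary case and of the finiteness of the value set via $a_n\in\set{b_0}{b\in N_k((a_n))}$ is careful and complete, so nothing further is needed.
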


An automatic sequence by definition takes only finitely many values. In 1992 Allouche and Shalit \cite{AlloucheShallit-2003} generalized automatic sequences to a wider class of  $k$-regular sequences  that are allowed to take values in a possible infinite set. The definition of regular sequences is stated in terms of the $k$-kernel. For simplicity, we state the definition for sequences taking integer values, though it could also be introduced for sequences taking values in a (noetherian) ring.

\begin{definition}\label{automdef4} Let $(a_n)_{n\geq 0}$ be a sequence of integers. We say that the sequence $(a_n)$ is $k$-\emph{regular} if its $k$-kernel $N_k((a_n))$ spans a finitely generated abelian subgroup of $\Z^{\N_0}$.
\end{definition}

For example, the following sequences are easily seen to be $2$-regular: $(t_n)_{n\geq 0}$, $(n^3+5)_{n\geq 0}$, $(s_2(n))_{n\geq 0}$. (The corresponding subgroups spanned by the $2$-kernel have rank $2$, $4$, and $2$. In the case of $t=(t_n)_{n\geq 0}$, the subgroup spanned by the $2$-kernel is free abelian with basis consisting of $t$ and the constant sequence $(1)_{n\geq 0}$.) In fact, every $k$-automatic (integer-valued) sequence is obviously $k$-regular, and the following converse result holds.

\begin{proposition}\label{automthm2}\cite[Theorem 16.1.5]{AS}
Let  $(a_n)_{n\geq 0}$ be a sequence of integers. Then the following conditions are equivalent: \begin{enumerate} 
\item The sequence $(a_n)$ is $k$-automatic. 
\item The sequence $(a_n)$ is $k$-regular and takes only finitely many values.\end{enumerate}\end{proposition}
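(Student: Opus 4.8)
The plan is to establish the two implications separately; $(i)\Rightarrow(ii)$ is essentially a matter of unwinding definitions, while the real content lies in $(ii)\Rightarrow(i)$.

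\textbf{$(i)\Rightarrow(ii)$.} If $(a_n)$ is $k$-automatic it takes values in a finite set, and by Proposition~\ref{automthm1} its $k$-kernel $N_k((a_n))$ is finite. A finite collection of integer sequences spans a finitely generated subgroup of $\Z^{\N_0}$, so $(a_n)$ is $k$-regular.

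\textbf{$(ii)\Rightarrow(i)$.} By Proposition~\ref{automthm1} it is enough to show that $N_k((a_n))$ is finite. Let $M\le\Z^{\N_0}$ be the subgroup spanned by $N_k((a_n))$; by hypothesis $M$ is finitely generated, and being torsion-free it is free abelian, say of rank $d$ with $\Z$-basis $v_1,\dots,v_d$. Every element of $N_k((a_n))$ is a subsequence of $(a_n)$, hence takes values in the (finite) value set of $(a_n)$; fix $B$ with $\norm{w}_\infty\le B$ for all $w\in N_k((a_n))$. Thus $N_k((a_n))\subseteq\{v\in M:\norm{v}_\infty\le B\}$, and it suffices to prove the latter set is finite.

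To see this, I would use that finitely many coordinates already separate the points of $M$. The $v_i$ span a $d$-dimensional $\Q$-subspace $V\subseteq\Q^{\N_0}$, and the coordinate projections $p_j\colon\Q^{\N_0}\to\Q$ restrict to functionals on $V$ with trivial common kernel; hence they span the dual $V^*$, and one may choose indices $j_1,\dots,j_d$ so that $p_{j_1}|_V,\dots,p_{j_d}|_V$ form a basis of $V^*$. Then $\pi=(p_{j_1},\dots,p_{j_d})\colon V\to\Q^d$ is a linear isomorphism, so it is injective on $M\subseteq V$ and sends $M$ into $\Z^d$ (since $M\subseteq\Z^{\N_0}$); consequently $\pi$ maps $\{v\in M:\norm{v}_\infty\le B\}$ injectively into the finite set $\{w\in\Z^d:\norm{w}_\infty\le B\}$. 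This gives finiteness of $N_k((a_n))$, and Proposition~\ref{automthm1} completes the argument.

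The only slightly delicate point is the last paragraph: finite generation of $M$ by itself gives no obvious bound on the number of its $B$-bounded elements, and one really must exploit the free (lattice) structure together with the fact that coordinate evaluations separate points of a finite-rank lattice. Everything else is routine manipulation of the definitions of the $k$-kernel, of regularity, and of automaticity.
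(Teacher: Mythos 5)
Your proof is correct. Since the paper simply cites \cite[Theorem 16.1.5]{AS} without reproducing the argument, there is no in-paper proof to compare against, but your argument is a valid self-contained derivation. The $(i)\Rightarrow(ii)$ direction is immediate from Proposition~\ref{automthm1} as you say. For $(ii)\Rightarrow(i)$, the key technical point — that the $B$-bounded elements of the finitely generated subgroup $M\le\Z^{\N_0}$ form a finite set — is exactly the subtlety you correctly identify, and your handling of it is sound: $M$ is free abelian of finite rank $d$ because it is a finitely generated subgroup of the torsion-free group $\Z^{\N_0}$; the coordinate functionals separate points of the $d$-dimensional $\Q$-span $V$ and hence $d$ of them give a linear isomorphism $\pi\colon V\to\Q^d$ carrying $M$ into $\Z^d$; and $\pi$ restricts to an injection from the $B$-bounded elements of $M$ into the finite set $\{w\in\Z^d:\norm{w}_\infty\le B\}$. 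Combined with the observation that every member of $N_k((a_n))$ takes values in the finite value set of $(a_n)$, this gives finiteness of the kernel, and Proposition~\ref{automthm1} finishes. The argument is essentially the standard one for this equivalence, so there is nothing to flag beyond confirming its correctness.
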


\begin{corollary}\label{automthm3}\cite[Corollary 16.1.6]{AS}
Let  $(a_n)_{n\geq 0}$ be a sequence of integers that is $k$-regular and let $m\geq 1$ be an integer. Then the sequence $a_n \bmod m$ is $k$-automatic.

\end{corollary}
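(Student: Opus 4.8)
The plan is to bypass the definition of regularity almost entirely and work with the characterisation of automaticity through finiteness of the $k$-kernel (Proposition \ref{automthm1}), pushing the $k$-kernel of $(a_n)$ forward along reduction modulo $m$. Write $b_n = a_n \bmod m$, regarded as an integer in $\{0,1,\dots,m-1\}$, and let $\pi \colon \Z^{\N_0} \to (\Z/m\Z)^{\N_0}$ be coordinatewise reduction modulo $m$.

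\textbf{Step 1: the kernel of the reduced sequence is the reduction of the kernel.} A generic element of $N_k((b_n))$ is $(b_{k^l n+r})_{n\geq 0} = (a_{k^l n+r} \bmod m)_{n\geq 0}$, which is exactly $\pi$ applied to the corresponding element $(a_{k^l n+r})_{n\geq 0}$ of $N_k((a_n))$, after identifying sequences valued in $\{0,\dots,m-1\}$ with sequences valued in $\Z/m\Z$ via the standard set of representatives. Since that identification is a bijection, two elements of $N_k((b_n))$ are equal precisely when their $\pi$-images are equal, so $\abs{N_k((b_n))} = \abs{\pi(N_k((a_n)))}$.

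\textbf{Step 2: the image is finite.} By $k$-regularity of $(a_n)$, the set $N_k((a_n))$ is contained in a finitely generated subgroup $G \leq \Z^{\N_0}$. Then $\pi(N_k((a_n))) \subseteq \pi(G)$, and $\pi(G)$ is a finitely generated abelian group in which every element is killed by $m$; such a group is finite (it is a quotient of $(\Z/m\Z)^d$ for some $d$). Hence $\pi(N_k((a_n)))$ is finite, and by Step~1 the kernel $N_k((b_n))$ is finite.

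\textbf{Step 3: conclude.} The sequence $(b_n)$ takes values in the finite set $\{0,\dots,m-1\}$ and has finite $k$-kernel, so Proposition \ref{automthm1} yields that $(b_n) = (a_n \bmod m)$ is $k$-automatic. The argument is essentially formal, and there is no genuine obstacle; the only points that need a moment's care are the bookkeeping in Step~1 (reducing entrywise commutes with forming kernel elements, since both only reindex the sequence and apply a fixed map to each term) and the passage in Step~2 from ``finitely generated of exponent dividing $m$'' to ``finite''. One could instead derive the statement from Proposition \ref{automthm2} by first verifying that $(a_n \bmod m)$ is $k$-regular, but the kernel computation above is cleaner and does not require re-establishing closure of regularity under this operation.
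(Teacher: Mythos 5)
Your proof is correct. Note that the paper itself does not prove this corollary at all --- it is quoted directly from Allouche--Shallit \cite[Corollary 16.1.6]{AS} --- so there is no internal argument to compare against; what you have written is a clean self-contained verification of the cited fact. Your route differs slightly from the textbook one: Allouche--Shallit deduce the corollary from the analogue of Proposition \ref{automthm2} after observing that the reduced sequence is regular (over $\Z/m\Z$), whereas you go straight for Proposition \ref{automthm1}, pushing the $k$-kernel forward under coordinatewise reduction and using that a finitely generated abelian group of exponent dividing $m$ is finite. Both arguments are essentially formal; yours has the small advantage of not needing any closure property of regularity, only the containment of the kernel in a finitely generated subgroup, which is the paper's very definition of $k$-regularity. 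The bookkeeping in your Step~1 (reduction mod $m$ commutes with passing to the subsequences $(a_{k^l n+r})$) and the finiteness argument in Step~2 are both fine, and the reduced sequence takes values in the finite set $\{0,\dots,m-1\}$, so the hypothesis of finitely many values needed for automaticity is satisfied.
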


A convenient tool for ruling out that a given sequence is automatic is provided by the Pumping Lemma.
\begin{lemma}\label{lem:pumping}\cite[Lemma 4.2.1]{AlloucheShallit-book} 	
	 Let $(a_n)_{n\geq 0}$ be a $k$-automatic sequence. Then, there exists a constant $N$ such that for any $w \in \Sigma^*_k$ with $\abs{w} \geq N$ and any $L \leq \abs{w} - N$ there exist $u_0, u_1, v \in \Sigma^*_k$ with $w = u_0 v u_1$ such that $L \leq \abs{u_0} \leq L + N - \abs{v}$, and $a_n$ takes the same value for all $n \in \set{ [u_0 v^t u_1]_k}{t \in \NN_0}$. 
\end{lemma}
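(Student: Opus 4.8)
The plan is to run the classical pumping argument from automata theory, adapted to $k$-automata with output. First I would fix a $k$-automaton $\mathcal{A}=(S,s_{\bullet},\delta,\Omega,\tau)$ producing $(a_n)_{n\geq 0}$ and, using the fact recalled above that one may freely allow leading zeros in the input, arrange that $a_n = \tau\bra{\tilde{\delta}(s_{\bullet},x)}$ for \emph{every} word $x\in\Sigma_k^*$ with $[x]_k=n$, not only for $x=(n)_k$. Then I would simply set $N=\abs{S}$.

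Given $w$ with $\abs{w}\geq N$ and $L\leq \abs{w}-N$, for $0\leq i\leq \abs{w}$ let $w^{(i)}\in\Sigma_k^*$ denote the prefix of $w$ consisting of its $i$ most significant digits, and put $p_i=\tilde{\delta}(s_{\bullet},w^{(i)})\in S$; these are the states visited by $\mathcal{A}$ as it scans $w$. The hypothesis $L\leq \abs{w}-N$ guarantees that the indices $L,L+1,\dots,L+N$ all lie in $\{0,1,\dots,\abs{w}\}$, so the states $p_L,p_{L+1},\dots,p_{L+N}$ are defined; there are $N+1$ of them and only $\abs{S}=N$ states, so by the pigeonhole principle $p_i=p_j$ for some $L\leq i<j\leq L+N$. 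Let $u_0,v,u_1\in\Sigma_k^*$ be the (unique) words with $w=u_0vu_1$, $\abs{u_0}=i$ and $\abs{v}=j-i\geq 1$. From $i\geq L$ and $i=j-\abs{v}\leq L+N-\abs{v}$ we obtain the required bound $L\leq\abs{u_0}\leq L+N-\abs{v}$.

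It remains to verify the pumping property. By construction $\tilde{\delta}(s_{\bullet},u_0)=p_i=p_j=\tilde{\delta}(s_{\bullet},u_0v)$, so $\tilde{\delta}(p_i,v)=p_i$, and since $\tilde{\delta}(s,xy)=\tilde{\delta}(\tilde{\delta}(s,x),y)$ a trivial induction on $t$ gives $\tilde{\delta}(s_{\bullet},u_0v^t)=p_i$ for all $t\in\NN_0$. Hence $\tilde{\delta}(s_{\bullet},u_0v^tu_1)=\tilde{\delta}(p_i,u_1)$ is independent of $t$, and therefore $a_{[u_0v^tu_1]_k}=\tau\bra{\tilde{\delta}(s_{\bullet},u_0v^tu_1)}=\tau\bra{\tilde{\delta}(p_i,u_1)}$ takes the same value for every $t\geq 0$, as claimed.

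There is no substantial obstacle here; the argument is routine and the only points needing a little care are bookkeeping ones. One must check that the range $[L,L+N]$ of indices is legitimate — this is exactly what the assumption $L\leq\abs{w}-N$ supplies — keep the inequalities for $\abs{u_0}$ consistent with the convention that digits are read most-significant-first, and invoke the leading-zeros-insensitive form of the automaton so that $a_n=\tau\bra{\tilde{\delta}(s_{\bullet},x)}$ may legitimately be applied to the words $u_0v^tu_1$ even when $u_0$ or $v$ happens to begin with the digit $0$.
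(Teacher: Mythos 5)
Your proof is correct: it is the standard pumping-lemma argument (take $N=\abs{S}$ for a leading-zeros-insensitive, most-significant-digit-first automaton, pigeonhole the $N+1$ states visited at positions $L,\dots,L+N$, and pump the resulting cycle), with the leading-zero issue handled properly. The paper offers no proof of its own, simply citing Allouche--Shallit, and your argument is precisely the routine proof that citation stands for, including the slightly stronger conclusion $\abs{v}\geq 1$.
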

\comment{This is not precisely the same as the formulation in AS, but close enough. The proof is trivial in any case, and derivation of the above from the lemma in AS is even more trivial. I suggest not to add anything more on that in the paper, but I figured I should mention that. --JK
I'm fine with that --JB}

The final issue that we need to discuss is the dependence of the notion of $k$-automaticity on the base $k$. While the Thue-Morse sequence is $2$-regular, and is also easily seen to be $4$-regular, it is not $3$-regular. This follows from the celebrated result of Cobham \cite{Cobham-1969}. We say that two integers $k,l\geq 2$ are \emph{multiplicatively independent} if they are not both powers of the same integer (equivalently, $\log k/\log l \notin \Q$).

\begin{theorem}\label{automthm4}\cite[Theorem 11.2.2]{AS} Let $(a_n)_{n\geq 0}$ be a sequence with values in a finite set $\Omega$. Assume that the sequence $(a_n)$ is simultaneously $k$-automatic and $l$-automatic with respect to two multiplicatively independent integers $k,l\geq 2$. Then $(a_n)$ is eventually periodic.\end{theorem}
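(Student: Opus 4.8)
This is Cobham's theorem, so the plan is to follow the combinatorial proof rather than the model-theoretic route via definability in Presburger arithmetic with a base-$k$ predicate. First I would reduce to a statement about a single set: a sequence with finite range $\Omega$ is eventually periodic exactly when each preimage $E_\omega=\{n\colon a_n=\omega\}$, $\omega\in\Omega$, is eventually periodic (take a common period), and each such $E_\omega$ is again $k$-automatic --- run the same automaton with output map $\tau'(s)=\ifbra{\tau(s)=\omega}$ --- and, by hypothesis, also $l$-automatic. So it suffices to prove that a set $E\subseteq\NN_0$ which is simultaneously $k$- and $l$-automatic, with $k,l\geq 2$ multiplicatively independent, is eventually periodic; finite and cofinite $E$ being trivial, I would assume that $E$ and $\NN_0\setminus E$ are both infinite.

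The core is a structural analysis of a single $k$-automatic set carried out with the Pumping Lemma (Lemma \ref{lem:pumping}). Feeding base-$k$ expansions of elements of $E$ into that lemma, one pumps short factors to produce, near any prescribed scale, infinite families of integers lying uniformly inside $E$ (or uniformly outside it) whose positions are governed by multiplication by powers of $k$; tracking the automaton state along such families and pigeonholing over the finitely many states yields Cobham's key lemmas. Their upshot is that, up to an eventually periodic error, the ``coarse profile'' of $E$ at $k$-adic scales is self-reproducing under the map $n\mapsto kn$, so that $E$ carries a rigid structure whose intrinsic scale is tied to the powers of $k$ --- and either this structure degenerates, in which case $E$ is already eventually periodic, or it does not, in which case it genuinely obstructs eventual periodicity.

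Running the same analysis on the $l$-automaticity of $E$ attaches to $E$ a second rigid structure whose scale is tied to the powers of $l$. Since $\log k/\log l\notin\QQ$, the scales $\{k^a\}$ and $\{l^b\}$ interleave densely on the logarithmic line (equivalently $\{k^a l^{-b}\colon a,b\in\NN\}$ is dense in $(0,\infty)$), and the two structures cannot coexist unless both degenerate: any surviving non-periodic feature would have to be visible simultaneously at a $k$-adic and at an incompatible $l$-adic resolution, which a counting and pigeonhole argument forbids. Hence $E$, and therefore the original sequence $(a_n)$, is eventually periodic.

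The hard part is entirely the structural lemmas and the incompatibility step: turning ``pump a short factor of a base-$k$ word'' into clean uniform statements about the $k$-adic self-similarity of $E$, controlling the automaton states along all pumped families at once, and then quantifying the $k$-versus-$l$ scale mismatch sharply enough to produce an honest eventual period. This is exactly the content of Cobham's 1969 theorem; a complete streamlined treatment --- incorporating later simplifications, notably by Hansel --- is given in \cite[Chapter 11]{AS}, which we take as our reference. An alternative route would replace the Pumping-Lemma analysis by the description of $k$-automatic sequences as letter-to-letter images of fixed points of $k$-uniform morphisms and an examination of the return words to their prefixes, again using multiplicative independence to defeat the two competing expansion factors.
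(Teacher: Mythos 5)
The paper offers no proof of this statement: it is Cobham's theorem, quoted verbatim with a citation to \cite[Theorem 11.2.2]{AS}, and the authors even remark that they will have no use for it. Your proposal — the reduction to the level sets $E_\omega$ followed by an appeal to the treatment in \cite{AS} — is correct in that reduction and in its attribution, and so matches the paper's treatment (a citation); be aware, though, that your two middle paragraphs are an impressionistic gloss (``rigid structure'', ``degenerates'') rather than an argument, so they would not stand on their own if an actual proof were required.
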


We will have no use for Cobham's Theorem. We will however use the following much easier related result.

\begin{theorem}\label{automthm5}\cite[Theorem 6.6.4]{AS} Let $(a_n)_{n\geq 0}$ be a sequences with values in a finite set $\Omega$. Let $k,l\geq 2$ be two multiplicatively \emph{dependent} integers. Then the sequence $(a_n)$ is $k$-automatic if and only if it is $l$-automatic.\end{theorem}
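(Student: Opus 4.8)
The plan is to reduce the statement to a single base. Since $k$ and $l$ are multiplicatively dependent, by definition they are both powers of one integer, so there exist $m \geq 2$ and $a, b \geq 1$ with $k = m^a$ and $l = m^b$. Hence it suffices to prove the following claim and apply it twice: for every integer $c \geq 2$ and every $j \geq 1$, a sequence $(a_n)_{n \geq 0}$ with values in a finite set is $c$-automatic if and only if it is $c^j$-automatic. Granting this, $(a_n)$ is $k$-automatic $\Longleftrightarrow$ it is $m^a$-automatic $\Longleftrightarrow$ it is $m$-automatic $\Longleftrightarrow$ it is $m^b$-automatic $\Longleftrightarrow$ it is $l$-automatic, as desired.

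To prove the claim I would argue via the kernel characterization of automaticity (Proposition \ref{automthm1}), which equates $c$-automaticity with finiteness of the $c$-kernel $N_c((a_n))$. One inclusion is immediate: every element of $N_{c^j}((a_n))$ has the form $(a_{c^{jl}n + r})_{n \ge 0}$ with $0 \le r < c^{jl}$, and this is exactly the $c$-kernel element with exponent $jl$ and shift $r$, so $N_{c^j}((a_n)) \subseteq N_c((a_n))$; in particular, if $(a_n)$ is $c$-automatic then it is $c^j$-automatic. For the reverse, assume $N_{c^j}((a_n))$ is finite and take an arbitrary element $(a_{c^L n + r})_{n \ge 0}$ of $N_c((a_n))$, with $0 \le r < c^L$. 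Writing $L = jq + s$ with $q \ge 0$ and $0 \le s < j$, and splitting $r = c^{jq} r_1 + r_0$ with $0 \le r_1 < c^s$ and $0 \le r_0 < c^{jq}$ (division with remainder by $c^{jq}$), one computes
\[
 a_{c^L n + r} = a_{c^{jq}(c^s n + r_1) + r_0} = b_{\, c^s n + r_1}, \qquad \text{where } b := \left( a_{c^{jq} h + r_0} \right)_{h \ge 0} \in N_{c^j}((a_n)).
\]
Thus every element of $N_c((a_n))$ is a subsequence $(b_{c^s n + t})_{n \ge 0}$ with $b$ in the finite set $N_{c^j}((a_n))$, with $s \in \{0, 1, \dots, j-1\}$, and with $t \in \{0, 1, \dots, c^s - 1\}$; there are finitely many such subsequences, so $N_c((a_n))$ is finite and the claim follows.

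The only step that needs genuine care is the regrouping in the displayed identity: after writing the index $c^L n + r$ in base $c^j$, its lowest $q$ base-$c^j$ digits (those forming $r_0$) single out one fixed element $b$ of the $c^j$-kernel, while the higher digits reassemble into the affine reindexing $c^s n + r_1$ of the variable. Everything else is bookkeeping. An essentially equivalent alternative is purely automata-theoretic — pad the base-$c$ input with leading zeros so that its length is divisible by $j$ and then read it in blocks of $j$ digits, and conversely realise one $c^j$-transition as $j$ consecutive $c$-transitions — but routing the argument through Proposition \ref{automthm1} keeps the combinatorial bookkeeping lightest given what the paper has already set up.
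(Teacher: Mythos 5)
Your argument is correct, and it is essentially the standard proof of this fact: the paper itself gives no proof, citing \cite[Theorem 6.6.4]{AS}, and the argument there likewise reduces to the equivalence of $c$- and $c^j$-automaticity via the kernel characterization (Proposition \ref{automthm1}), exactly as you do. The only point worth double-checking — the regrouping $c^{L}n+r=c^{jq}(c^{s}n+r_{1})+r_{0}$ with $r_{1}<c^{s}$ and $r_{0}<c^{jq}$ — is handled correctly, so nothing is missing.
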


\subsection*{Dynamical systems}
An (invertible, topological) dynamical system is given by a compact metric space $X$ and a continuous homeomorphism $T\colon X\to X$. We say that $X$ is transitive if there exists $x\in X$ whose orbit $\{T^n x \mid n\in \Z\}$ is dense in $X$. We say that $X$ is minimal if for every point $x\in X$ the orbit $\{T^n x \mid n\in \Z\}$ is dense in $X$. (Equivalently, the only closed subsets $Y\subset X$ such that $T(Y)= Y$ are $Y=X$ or $Y=\emptyset$.) We say that $X$ is totally minimal if the system $(X,T^n)$ is minimal for all $n\geq 1$. 

Let $(X,T)$ be a dynamical system. We say that a Borel measure on $X$ is invariant if for every Borel subset $A\subset X$ we have $\mu(T^{-1}(A))=\mu(A)$. By the Krylov-Bogoliubov theorem, each dynamical system has at least one invariant measure. We say that a dynamical system in \emph{uniquely ergodic} if it has exactly one invariant measure. We say that an invariant measure $\mu$ on $X$ is ergodic if for every $A\subset X$, $T^{-1}(A)=A$ implies $\mu(A)\in\{0,1\}$. 

If $(X,T)$ is minimal, $x \in X$ and $U \subset X$ is open, then the set $\set{n \in \ZZ}{T^n x \in U}$ is syndetic, i.e.\ has bounded gaps \cite[Thm.\ 1.15]{Furstenberg1981}.

We will frequently use the fact that a minimal system $(X,T)$ with $X$ connected is totally minimal. (Otherwise, there exists $d \in \NN$ and a closed subset $\emptyset \neq Y \subset X$ such that $T^d(Y) = Y$. By Kuratowski-Zorn Lemma there exists a minimal $Y$ with this property. The sets $Y,T(Y),T^{2}(Y),\dots,T^{d-1}(Y)$ are disjoint, closed, and their union is dense in $X$. This contradicts connectedness.)

\begin{theorem}[Ergodic theorem for uniquely ergodic system, \cite{EinsiedlerWard} Thm 4.10]\label{thm:ergo-thm-uniform}
	Let $(X,T)$ be a dynamical system with unique ergodic measure $\mu$. Then, for any $f \in C(X)$, 
	$$
		\frac{1}{N} \sum_{n = 0}^{N - 1} f(T^n x) \to \int_X f d\mu,
	$$
	as $N \to \infty$, uniformly in $x \in X$.
\end{theorem}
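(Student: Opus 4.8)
The plan is to argue by contradiction using weak-$*$ compactness of the space $\mathcal{M}(X)$ of Borel probability measures on the compact metric space $X$. Suppose the convergence fails to be uniform. Then there exist $\e > 0$, a sequence $N_j \to \infty$, and points $x_j \in X$ such that
$$ \abs{ \frac{1}{N_j} \sum_{n=0}^{N_j - 1} f(T^n x_j) - \int_X f \, d\mu } \geq \e \quad \text{for all } j. $$
Form the empirical measures $\mu_j := \frac{1}{N_j} \sum_{n=0}^{N_j-1} \delta_{T^n x_j} \in \mathcal{M}(X)$, so the displayed inequality reads $\abs{\int_X f \, d\mu_j - \int_X f \, d\mu} \geq \e$.

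Since $X$ is compact metric, $\mathcal{M}(X)$ is weak-$*$ compact and metrizable (Banach--Alaoglu together with separability of $C(X)$), so after passing to a subsequence we may assume $\mu_j \to \nu$ weak-$*$ for some $\nu \in \mathcal{M}(X)$. The next step is to verify that $\nu$ is $T$-invariant. For any $g \in C(X)$ the sums defining $\int_X g \circ T \, d\mu_j$ and $\int_X g \, d\mu_j$ differ only in their first and last terms, so
$$ \abs{ \int_X g \circ T \, d\mu_j - \int_X g \, d\mu_j } = \frac{1}{N_j} \abs{ g(T^{N_j} x_j) - g(x_j) } \leq \frac{2 \norm{g}_\infty}{N_j} \to 0. $$
Since $g$ and $g \circ T$ both lie in $C(X)$, passing to the limit gives $\int_X g \circ T \, d\nu = \int_X g \, d\nu$ for every $g \in C(X)$, which is exactly $T$-invariance of $\nu$.

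Now unique ergodicity closes the argument: $\mu$ is the only $T$-invariant Borel probability measure on $X$, so $\nu = \mu$. But then weak-$*$ convergence gives $\int_X f \, d\mu_j \to \int_X f \, d\nu = \int_X f \, d\mu$, contradicting $\abs{\int_X f \, d\mu_j - \int_X f \, d\mu} \geq \e$ for all $j$. Hence the convergence is uniform in $x$. (Applying the same reasoning with $x_j \equiv x$ fixed and an arbitrary sequence $N_j$ also recovers the pointwise statement, so no separate argument is needed for it.)

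The only step that uses anything beyond soft functional analysis is the verification that weak-$*$ limits of orbit-empirical measures are $T$-invariant, and that verification is the telescoping estimate above; everything else is compactness of $\mathcal{M}(X)$ together with the unique ergodicity hypothesis. I therefore expect that step — more precisely, keeping track that it is the \emph{orbit} structure (and not just an arbitrary averaging sequence) that forces invariance — to be the only place warranting care.
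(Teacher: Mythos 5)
Your argument is correct: the compactness-of-empirical-measures argument with the telescoping estimate for invariance is the standard proof of this theorem. The paper gives no proof of its own (it cites [EinsiedlerWard, Thm.\ 4.10]), and your proof is essentially the one found in that reference, so there is nothing further to reconcile.
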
 

By a standard argument that bounds the characteristic function of a set from above and from below by continuous functions, we obtain the following result, frequently used in the sequel.
\begin{corollary}\label{cor:density-uniform}
	Let $(X,T)$ be a uniquely ergodic dynamical system with the invariant measure $\mu$. Then, for any $x \in X$ and any $S \subset X$ with $\mu(\partial S) = 0$, the set $E = \set{n \in \NN_0}{T^n x \in S}$  has (upper) Banach density $\mu(S)$.
\end{corollary}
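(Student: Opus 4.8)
The plan is to sandwich the indicator $\mathbf{1}_S$ between continuous functions and feed these into the uniform ergodic theorem, Theorem~\ref{thm:ergo-thm-uniform}. I would first record that $\mu(\partial S)=0$ forces $\mu(S^\circ)=\mu(\overline{S})=\mu(S)$ (so $\mu(S)$ is unambiguous even if $S$ is not Borel), where $S^\circ$ and $\overline{S}$ denote interior and closure. Fixing $\varepsilon>0$, I would then produce, by a standard Urysohn-type construction --- for instance from the clamped distance functions $g_\delta(y)=\max(0,\,1-\delta^{-1}\operatorname{dist}(y,\overline{S}))$ and $f_\delta(y)=\min(1,\,\delta^{-1}\operatorname{dist}(y,X\setminus S^\circ))$ with $\delta>0$ small --- functions $f,g\in C(X)$ with $f\le\mathbf{1}_{S^\circ}\le\mathbf{1}_S\le\mathbf{1}_{\overline{S}}\le g$; since $g_\delta\downarrow\mathbf{1}_{\overline{S}}$ and $f_\delta\uparrow\mathbf{1}_{S^\circ}$ pointwise as $\delta\to 0$, the bounded convergence theorem lets me arrange $\int_X g\,d\mu<\mu(S)+\varepsilon$ and $\int_X f\,d\mu>\mu(S)-\varepsilon$.

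The second step is to interpret the count in a window $[M,M+N)$ as a Birkhoff sum based at the point $T^M x$: since $T^{n+M}=T^n\circ T^M$, one has $|E\cap[M,M+N)|=\sum_{n=0}^{N-1}\mathbf{1}_S(T^n(T^M x))$, so the sandwich gives $\sum_{n=0}^{N-1}f(T^n y)\le|E\cap[M,M+N)|\le\sum_{n=0}^{N-1}g(T^n y)$ with $y=T^M x$. Applying Theorem~\ref{thm:ergo-thm-uniform} to $g$, the averages $\frac{1}{N}\sum_{n=0}^{N-1}g(T^n y)$ tend to $\int_X g\,d\mu$ \emph{uniformly} in $y\in X$, so there is an $N_0$ independent of $M$ with $\frac{1}{N}|E\cap[M,M+N)|<\mu(S)+2\varepsilon$ for all $N\ge N_0$ and all $M$. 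Taking $\max_M$ and then $\limsup_{N\to\infty}$, I get $d^*(E)\le\mu(S)+2\varepsilon$, hence $d^*(E)\le\mu(S)$ as $\varepsilon\to0$.

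For the reverse inequality it is enough to look at the orbit of $x$ itself: Theorem~\ref{thm:ergo-thm-uniform} applied to $f$ at $x$ gives $\frac{1}{N}|E\cap[0,N)|=\frac{1}{N}\sum_{n=0}^{N-1}\mathbf{1}_S(T^n x)\ge\frac{1}{N}\sum_{n=0}^{N-1}f(T^n x)\to\int_X f\,d\mu>\mu(S)-\varepsilon$, whence $\liminf_{N\to\infty}\frac{1}{N}|E\cap[0,N)|\ge\mu(S)-\varepsilon$; since $M=0$ is an admissible window position this forces $d^*(E)\ge\mu(S)-\varepsilon$, so $d^*(E)\ge\mu(S)$. (Equivalently, the two sandwich estimates show that the \emph{natural} density of $E$ exists and equals $\mu(S)$, and natural density never exceeds upper Banach density.) Together with the previous step this yields $d^*(E)=\mu(S)$.

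The computation is routine; the point that must be handled with care --- and the one place where unique ergodicity genuinely enters, as opposed to mere ergodicity of $\mu$ --- is the uniformity of the convergence in Theorem~\ref{thm:ergo-thm-uniform}. Two features matter: the Birkhoff averages converge at \emph{every} point (not just $\mu$-a.e.), which is what makes the conclusion hold for an arbitrary $x\in X$, and the convergence is uniform \emph{in the base point}, which is what supplies one threshold $N_0$ valid simultaneously for every window $[M,M+N)$ and so controls the $\max_M$ in the definition of upper Banach density. With only pointwise convergence along the orbit of $x$ one would recover the natural density of $E$ but not its Banach density; getting this uniformity right is essentially the entire content of the proof.
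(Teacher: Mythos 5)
Your proof is correct and follows exactly the argument the paper has in mind: it sandwiches $\mathbf{1}_S$ between continuous functions using $\mu(\partial S)=0$ and feeds the bounds into the uniform ergodic theorem (Theorem~\ref{thm:ergo-thm-uniform}), with the uniformity in the base point handling the $\max_M$ in the Banach density. Your closing remark that the natural density exists and equals $\mu(S)$ matches the paper's comment that the limsup can in fact be replaced by a limit.
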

In fact, in this case the limit superior in the definition of (upper) Banach density can be replaced by a limit.

\subsection*{Generalized polynomials and nilmanifolds}\label{sec:DEF-GP}

A nilmanifold is a homogenous space $X=G/\mathord \Gamma$, where $G$ is a nilpotent Lie group and $\Gamma$ is a discrete cocompact subgroup, together with the action of $G$ on $X$ via left translations $G\times X \to X$. We do not assume that $G$ is connected, however in all the cases that we will consider the connected component $G^{\circ}$ of $G$ will be \emph{simply connected}. We assume this henceforth, since it will substantially simplify the discussion.

We begin by recalling a few basic facts concerning nilpotent Lie groups. We follow the presentation in \cite{BergelsonLeibman2007} and \cite{Malcev1951}, which the reader should consult for the proofs. Let $G$ be a connected simply connected nilpotent Lie group. For each $g\in G$, there is a unique one-parameter subgroup $\{g^t\}_{t\in\R}$ of $G$, i.e. a continuous homomorphism $\R \to G$, $t\mapsto g^t$ with $g^1=g$. Consider the lower central series 
$$ G_0 = G_1 \supset G_2 \supset \ldots\supset G_d\supset G_{d+1}=1$$ 
given by $G = G_0 = G_1$ and $G_{i+1}=[G,G_i]$,
 $1\leq i \leq d$. The subgroups $G_i$ are closed and $G_i/\mathord G_{i+1}$ are finite dimensional $\R$-vector spaces with the action of $\R$ given by one-parameter subgroups. 
 
 Let $l_i=\dim_{\R} G_i/\mathord G_{i+1}$, $k_i=\sum_{j=1}^i l_j$, $1\leq i \leq d$. Then $G$ has a \emph{\Malcev\ basis}, i.e. elements $e_1,\ldots,e_k\in G$, $k=k_d$, such that $e_{k_i+1},\ldots,e_{k_{i+1}}\in G_i$ and their images in $G_i/\mathord G_{i+1}$ constitute a basis of $G_i/\mathord G_{i+1}$ as a $\R$-vector space.

It follows easily that any element $g\in G$ can be written uniquely in the form $$g=e_1^{t_1}\cdots e_k^{t_k}, \quad t_i\in \R.$$ Furthermore, it is possible to choose a \Malcev\ basis to be compatible with $\Gamma$, i.e. so that $g=e_1^{t_1}\cdots e_k^{t_k}$ is in $\Gamma$ if and only if $t_1,\ldots,t_k \in\Z$. We will always assume that our \Malcev\ bases are compatible with $\Gamma$. A choice of a \Malcev\ basis determines a diffeomorphism $$ \tilde \tau \colon  G \to \R^k, \quad  e_1^{t_1}\cdots e_k^{t_k} \mapsto (t_1,\ldots, t_k).$$
Under this identification, multiplication on $G$ is given by polynomial formulas.

Let $Q=\{(t_1, \ldots,t_k)\in \R^k \mid 0\leq t_i < 1, 1\leq i\leq k\}$ be the unit cube. The preimage $D = \tau^{-1}(Q) \subset G$ is the \emph{fundamental domain}. For any $g\in G$, there is a unique choice of elements $[g]\in \Gamma, \{g \}\in D$, called the \emph{integral part} and the \emph{fractional part} of $g$ respecitvely, such that $\fp{g} \ip{g} = g$. (Note that the elements $\{g\}$ and $[g]$ depend on the choice of the \Malcev\ basis.) Hence, the map $\tilde \tau$ factors to a continuous bijection
$$ \tau \colon  G/\Gamma \to [0,1)^k,$$
which will play a crucial r\^ole.

The choice of \Malcev\ coordinates also induces a natural choice of metric on $G$ and $G/\Gamma$ (cf.\ \cite[Def. 2.2]{GreenTao2012}). 

A \emph{horizontal character} is a non-trivial morphism of groups $\eta \colon G \to \RR/\ZZ$ with $\Gamma \subset \eta^{-1}(0)$, and is necessarily of the form $\eta(g) = \sum_{i=1}^{l_1} a_i \tau_i(g\Gamma)$, where $a_i \in \ZZ$, not all $0$. The \emph{norm} of $\eta$, denoted $\norm{\eta}$, is by definition the Euclidean norm $\norm{ (a_i) }_2$. 

A set $A \subset \R^k$ is called semialgebraic if it is a finite union of subsets of $\R^k$ given by a system of finitely many polynomial equalities and inequalities; in particular $A$ is Borel and of zero Lebesgue measure. A map $p\colon A \to \R^k$ defined on a semialgebraic set $A\subset \R^k$ is called  \emph{piecewise polynomial} if there is a decomposition $A=A_1\cup \ldots \cup A_s$ into a finite union of semialgebraic sets such that $p|_{A_i} \colon A_i \to \R^l$ is a polynomial mapping restricted to $A_i$. We call a map $p\colon X=G/\mathord \Gamma \to \R^l$ \emph{piecewise polynomial} if it takes the form $p = q \circ \tau$ where $q \colon Q \to \R^l$ is a piecewise polynomial map. While the map $\tau\colon D \to X$ depends on the choice of the \Malcev\ basis, the concept of a piecewise polynomial map on $X$ does not. Note that if $p\colon X \to \R^l$ is a piecewise polynomial map and $g\in G$, then so is $q\colon X \to \R^l$ given by $q(h\Gamma)=p(gh\Gamma)$.

We now extend the above definitions to not necessarily connected simply connected Lie groups $G$. A \Malcev\ basis of $G$ is simply a \Malcev\ basis of its connected component $G^{\circ}$. Similarly, we define piecewise polynomial maps $p\colon X \to \R^l$ in terms of their restrictions to the connected components of $X$ by demanding that for each $g\in G$ the map $G^{\circ}/\mathord (\Gamma \cap G^{\circ}) \to \R^l$, $h(\Gamma \cap G^{\circ})\mapsto p(gh\Gamma)$. (It is enough to verify this condition only for a single $g$ in each class $g\Gamma \in G/\mathord G_0$.)

There is a natural way to introduce dynamics on a nilmanifold Any $g \in G$ acts on $X$ by left translation $T_g(h\Gamma) = gh\Gamma$. There is a unique Haar measure $\mu_{X}$ on $G/\Gamma$ invariant under the left translations. For any $g\in G$, we obtain a dynamical system $(X,T_g)$. By a result of Parry \cite{Parry1969}, the conditions of minimality, unique ergodicity, and ergodicity with respect to the measure $\mu_X$ are equivalent. (In fact, all this conditions can be verified on the maximal torus $G/G_2\Gamma \simeq \TT^{l_1}$.)

Generalised polynomials and dynamics on nilmanifolds are intimately related. This is explained in particular by the main result of \cite[Theorem A]{BergelsonLeibman2007}.

\begin{theorem}[Bergelson-Leibman]\label{BLnilgenpolythm}\label{thm:BergelsonLeibman}
 \begin{enumerate} 
\item If $X$ is a nilmanifold, $g\in G$ acts on $X$ by left translations, $p\colon X \to \R$ is a piecewise polynomial map, and $x\in X$, then $u(n)=p(g^n x), n\in \Z$ is a bounded generalised polynomial.
\item If $u\colon \Z \to \R$ is a bounded generalised polynomial, then there exists a nilmanifold, $g\in G$ acting on $X$ by left translations in such a way that the action is ergodic, a piecewise polynomial map $p\colon X \to \R$, and $x\in X$ such that $u(n)=p(g^n x)$, $n\in \Z$.\end{enumerate}\end{theorem}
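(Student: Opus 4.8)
The statement is Theorem A of \cite{BergelsonLeibman2007}; below I sketch the structure of a proof.

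\smallskip
\noindent\emph{Part (i).} The plan is to disentangle two sources of structure: the group operation on $G$, which is polynomial in \Malcev\ coordinates, and the reduction of an element of $G$ to the fundamental domain $D$, which is where the integer-part operation enters. First I would observe that the one-parameter subgroup $t\mapsto g^t$ has \Malcev\ coordinates polynomial in $t$ (a standard consequence of the polynomial structure of $G$ in these coordinates, via Baker--Campbell--Hausdorff); writing $x=h\G$ with $h\in D$, the coordinates of $g^n h\in G$ are then honest polynomials in $n$. Next I would show that the map $g\mapsto\fp{g}\in D$ carries coordinate-wise generalised polynomials to coordinate-wise generalised polynomials: the integral part $\ip{g}=e_1^{m_1}\cdots e_k^{m_k}\in\G$ is recovered by the usual triangular recursion, in which $m_1,\dots,m_{l_1}$ are the integer parts of the first $l_1$ coordinates of $g$ and each subsequent $m_i$ is the integer part of a polynomial in the coordinates of $g$ and the previously determined $m_j$; since $\GP$ is closed under polynomial maps and under $\lfloor\cdot\rfloor$, each $\tau_i(g^n x)$ is a generalised polynomial in $n$. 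Finally, $p=q\circ\tau$ for a piecewise polynomial $q$ on $Q$: on each semialgebraic piece $A_j$ of a defining decomposition $q$ restricts to a genuine polynomial, so $q|_{A_j}$ evaluated at the tuple $(\tau_i(g^n x))_i$ is a generalised polynomial, and so is the indicator of the event $g^n x\in A_j$, because each defining inequality reads $w(n)\ge 0$ (or $>0$, or $=0$) for a generalised polynomial $w$ which is \emph{bounded} --- $q$ and the polynomials defining the $A_j$ being evaluated on the bounded cube $Q$ --- and for bounded $w$ one has $\ifbra{w(n)\ge 0}=\lfloor w(n)/C\rfloor+1$ whenever the constant $C$ exceeds $\sup_n\abs{w(n)}$, which is again a generalised polynomial (strict inequalities, equalities, and finite Boolean combinations of such conditions are handled by the evident algebraic manipulations). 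Summing $q|_{A_j}\cdot\ifbra{g^n x\in A_j}$ over the finitely many pieces exhibits $u(n)=p(g^n x)$ as a generalised polynomial, bounded because $q$ is bounded on the bounded set $Q$. When $G$ is disconnected one first splits according to the finitely many connected components of $X$ visited by the orbit --- which happens periodically in $n$ --- reduces to the connected case on each, and glues along the relevant residue classes, using that the indicator of a residue class modulo $d$ is a generalised polynomial of $n$.

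\smallskip
\noindent\emph{Part (ii).} The plan is to induct on the construction of $u$ as a generalised polynomial. Since boundedness is not inherited by subexpressions (one forms $\fp{P(n)}=P(n)-\lfloor P(n)\rfloor$ from the unbounded $P(n)$), I would instead prove a more flexible statement about a larger class: call $v\colon\Z\to\R$ \emph{nil-representable} if $v(n)=q(g^n x)+P(n)$ for some nilmanifold $X=G/\G$ (with $G$ not necessarily connected but $G^{\circ}$ simply connected), some $g\in G$, $x\in X$, a piecewise polynomial $q\colon X\to\R$, and an ordinary polynomial $P\in\R[n]$. Polynomials are nil-representable ($q\equiv 0$), and nil-representability is preserved under addition (use the product nilmanifold $X_1\times X_2$) and, by the remark closing Part (i), under passage to and from arithmetic progressions. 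The substance is closure under multiplication and under $\lfloor\cdot\rfloor$: a product introduces cross terms $q(g^n x)\cdot P(n)$ which are not of the required shape, so one must enlarge $G$ to a semidirect-product (skew-shift tower) extension whose new coordinates evaluate along the orbit to the monomials of $P$ and in which $q(g^n x)\cdot n^j$ becomes piecewise polynomial; one can afford to carry the genuinely unbounded contributions only through the polynomial summand $P$ precisely because any subsequent application of $\lfloor\cdot\rfloor$ acts, after subtracting this summand, on a bounded quantity, so that $\lfloor v(n)\rfloor=v(n)-\fp{v(n)}$ reduces to realising the bounded function $\fp{v(n)}$ as a piecewise polynomial function of an orbit on a (larger) nilmanifold. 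Granting these closure properties, if $u$ is bounded then in any representation $u(n)=q(g^n x)+P(n)$ the polynomial $P$ must be constant (a nonconstant polynomial is unbounded on $\Z$); absorbing the constant into $q$ gives $u(n)=p(g^n x)$ with $p$ piecewise polynomial, and replacing $X$ by the closed orbit of $(g^n x)_{n\in\Z}$ --- a sub-nilmanifold on which the translation acts minimally --- makes the action ergodic by Parry's theorem recalled above, which finishes the proof.

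\smallskip
\noindent The step I expect to be the main obstacle is the multiplication step in the induction of Part (ii): recasting the product of an orbit evaluation with an unbounded polynomial in $n$ in the prescribed form requires constructing the right tower extension of the acting group and verifying that the resulting coordinate functions are piecewise polynomial; this is the technical heart of the Bergelson--Leibman argument.
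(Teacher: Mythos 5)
The paper offers no proof of this statement at all: it is quoted verbatim as Theorem A of Bergelson--Leibman \cite{BergelsonLeibman2007}, so there is nothing internal to compare your argument against, and your proposal has to be judged as a sketch of that external proof. Your part (i) is along the standard lines and looks sound: polynomiality of the \Malcev\ coordinates of $g^n$, the triangular recursion recovering $\ip{\cdot}$ and $\fp{\cdot}$ coordinatewise via integer parts, and the bounded-indicator trick $\ifbra{w\geq 0}=\floor{w/C}+1$ for handling the semialgebraic pieces are exactly the right ingredients, and the reduction to the connected case is routine.

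Part (ii), however, contains a genuine gap beyond the technical work you defer. Your induction class, functions of the form $q(g^n x)+P(n)$ with $q$ piecewise polynomial (hence \emph{bounded}, since it factors through the bounded fundamental domain) and $P$ an ordinary polynomial, is provably not closed under multiplication, and the proposed fix cannot work as stated: a piecewise polynomial function evaluated along an orbit on a compact nilmanifold is bounded, so no tower extension can make the unbounded cross term $q(g^n x)\cdot n^j$ ``become piecewise polynomial''. Concretely, $\floor{n\alpha}=\alpha n-\fp{n\alpha}$ lies in your class, but $\floor{n\alpha}^2=\alpha^2 n^2-2\alpha\, n\fp{n\alpha}+\fp{n\alpha}^2$ does not: $n\fp{n\alpha}$ differs from every polynomial by an unbounded amount (it is $o(n)$ along some subsequences and $\Theta(n)$ along others, by equidistribution of $\fp{n\alpha}$), so this product of two class members is not ``bounded orbit function plus polynomial''. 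Since such unbounded, non-polynomial subexpressions inevitably occur inside bounded generalised polynomials (e.g.\ $u(n)=\fp{\alpha\floor{n\beta}^2}$), the induction needs a richer bookkeeping device for the unbounded intermediate data --- in Bergelson--Leibman this is carried by polynomial/generalised-polynomial mappings into the nilpotent group itself (polynomial sequences on towers of skew products), not by an additive real-polynomial correction. Your closure-under-$\floor{\cdot}$ step and the endgame (boundedness forces the polynomial summand to be constant; pass to the orbit closure, a sub-nilmanifold, and invoke Parry for ergodicity) are fine in outline, but the multiplication step as you have set it up would fail, and repairing it is precisely the technical heart of the cited proof.
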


We finish by defining \emph{polynomial sequences} with values in nilpotent groups. Strictly speaking, polynomial sequences are defined relative to a filtration; we only work with polynomial sequences with respect to the lower central series.

\begin{definition} Let $G$ be a nilpotent group and let $G= G_0 = G_1 \supset G_2 \supset \ldots \supset G_d \supset G_{d+1}=1$ be its lower central series. A sequence $g\colon \ZZ \to G$ is polynomial if it takes the form
	$$
		g(n) = g_0^{\binom{n}{0}} g_1^{\binom{n}{1}} g_2^{\binom{n}{2}} \dots g_d^{\binom{n}{d}}$$ for some $g_i \in G_i,\ 0 \leq i \leq d$.
\end{definition}

It has been proven by Lazard and Leibman that polynomial sequences form a group with termwise multiplication. For more details, see \cite{Lazard-1954}, \cite{Leibman1998}, \cite{Leibman2002}. This is one of the many reasons why it is often more convenient to work with polynomial sequences, even if one is ultimately interested in linear orbits.

Distribution properties of polynomial sequences have been extensively studied. In the quantitative setting we have the following. We will also later need a quantitative variant due to Green and Tao (cf.\ Theorem \ref{thm:GreenTao}).

\begin{theorem}[Leibman]
	Let $G/\Gamma$ be a nilmanifold, and let $g \colon \ZZ \to G$ be a polynomial sequence. Then, either $(g(n)\Gamma)_{n \geq 0}$ is equidistributed in $X$ (with respect to $\mu_X$), or there exists a horizontal character $\eta$ such that $\eta \circ g$ is constant.
\end{theorem}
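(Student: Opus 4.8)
The plan is to prove the (equivalent, since the two alternatives are mutually exclusive) sharper statement: $(g(n)\G)_{n\ge0}$ fails to equidistribute in $X$ \emph{if and only if} some horizontal character $\eta$ has $\eta\circ g$ constant. One implication is essentially formal and I would do it first: a horizontal character factors through the maximal torus as $\eta=\chi\circ\phi$ for the projection $\phi\colon X\to G/G_2\G$ and a nontrivial character $\chi$ of that torus, so (as $\phi_*\mu_X$ is Haar) the fibre $\eta^{-1}(c)\subset X$ is $\mu_X$-null; if $\eta\circ g\equiv c$ the entire orbit lies in this null set and cannot equidistribute. It is convenient to record here that ``$\eta\circ g$ constant'' is equivalent to the a priori weaker ``$n\mapsto\eta(g(n))$ is an $\RR/\ZZ$-valued polynomial with every nonconstant coefficient rational'', because horizontal characters form a group under integer combinations and a suitable multiple of $\eta$ clears denominators; I will use this equivalence freely.

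For the substantive direction I would first reduce to $G$ connected: $g\bmod G^\circ$ is a periodic sequence in the finite group $G/G^\circ$, so on each residue class $n=qm+r$ and after left-translating by a fixed coset representative the sequence lies in $G^\circ$, and both the non-equidistribution hypothesis and the sought conclusion split over the finitely many classes. As a stepping stone (and the base of a more general induction), the \emph{linear} case $g(n)=a^n$ is already immediate from the material in Section \ref{sec:DEF}: if $(a^n\G)$ does not equidistribute then $(X,T_a)$ is not uniquely ergodic, hence by Parry's theorem not minimal, hence (Parry again, checked on the maximal torus $\TT^{l_1}$) the rotation by $\alpha=\phi(a\G)$ is not minimal, so $\langle\mathbf k,\alpha\rangle=0$ in $\RR/\ZZ$ for some nonzero $\mathbf k\in\ZZ^{l_1}$, and the associated horizontal character $\eta$ then has $\eta(g(n))=n\langle\mathbf k,\alpha\rangle=0$. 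For a general polynomial sequence $g$ of degree $d$, I would induct on $d$; the base $d=0$ is trivial and $d=1$ on a torus is Weyl's equidistribution theorem (if $g(n)\bmod\ZZ^k$ is not equidistributed then $\langle\mathbf a,g(n)\rangle$ has all nonconstant coefficients rational for some nonzero $\mathbf a\in\ZZ^k$, giving $\eta$).

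The inductive step uses vertical Fourier analysis. Let $Z=G_d$ be the central bottom term of the lower central series; the torus $Z/(Z\cap\G)$ acts on $X$ by translation and $L^2(X)=\bigoplus_k L^2(X)_k$ decomposes into isotypic components. If $(g(n)\G)$ is not equidistributed, some mean-zero $F$ witnesses this; truncating its Fourier expansion, I may take $F\in L^2(X)_k$. If $k=0$, then $F$ is $Z$-invariant and descends to the nilmanifold $X/Z=G/Z\G$, of step $d-1$, on which the projected orbit $(g(n)Z\G)$ (itself of degree $\le d-1$) is still obstructed; the inductive hypothesis produces a nontrivial horizontal character of $G/Z$, whose pullback to $G$ is the desired $\eta$. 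If every obstruction has $k\ne0$ --- so the projection to $X/Z$ does equidistribute --- I would apply van der Corput: non-vanishing of the averages of $F(g(n)\G)$ forces, for some $h\ne0$, non-vanishing of the averages of $F(g(n+h)\G)\overline{F(g(n)\G)}$, i.e.\ of a function whose vertical frequency along $Z\times Z$ is $(k,-k)$, evaluated along the polynomial sequence $n\mapsto(g(n),g(n+h))$ in $G\times G$. Since $(k,-k)$ cancels on the diagonal copy of $Z$, this obstruction can be pushed down to a nilmanifold attached to a ``box''-type subgroup of $G\times G$ quotiented by that diagonal $Z$, whose lower central series is strictly shorter; the inductive hypothesis there yields a horizontal character, from which --- clearing denominators once more --- the required $\eta$ on $G$ is extracted.

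The main obstacle is this $k\ne0$ branch, and the difficulty is bookkeeping of a subtle kind: (i) organising the induction so that the van der Corput step, despite passing from $G$ to a subgroup of $G\times G$, genuinely descends --- the length of the filtration, after quotienting by the diagonal $Z$, must strictly drop, which is where the correct box group and the fact that the ``multiplicative derivative'' $g(\cdot)^{-1}g(\cdot+h)$ sits one filtration level deeper are needed; and (ii) tracking the resulting horizontal character backwards through the box-group construction to a \emph{nontrivial} character of $G$ with $\eta\circ g$ \emph{constant} rather than merely rational, which is exactly where closure of horizontal characters under integer combinations is invoked. A tempting shortcut --- realising the polynomial orbit as a factor of a linear orbit $\tilde a^n\tilde\G$ on a larger nilmanifold and quoting the easy linear case --- runs into the dual difficulty that a horizontal character of the larger group need not descend to $G$, so that route also requires choosing the lift with care and is not obviously shorter. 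The reduction to connected $G$, the torus base case, and the $k=0$ branch are all routine.
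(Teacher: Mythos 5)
The paper does not supply a proof of this theorem: it is quoted as a known result of Leibman, and later the paper also invokes (again without proof) the quantitative refinement of Green and Tao, Theorem~\ref{thm:GreenTao}. So there is no ``paper's own proof'' to compare against; I can only assess your sketch on its own terms.

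Your outline is the standard modern architecture for Leibman's theorem: the easy direction (a nontrivial horizontal character with $\eta\circ g$ constant traps the orbit in a $\mu_X$-null fibre of the maximal-torus projection) is correct; the reduction to $G$ connected and the abelian base case via Weyl are routine; and the substantive direction by induction on the filtration length, using vertical Fourier decomposition along $Z=G_d$ and van der Corput in the $k\ne0$ branch, is exactly the Green--Tao/Leibman template. But the sketch currently names rather than carries out the load-bearing steps. In particular, in the $k\ne0$ branch you must (a) identify the correct box subgroup $G^{\square}_h\le G\times G$ together with a filtration on it for which $n\mapsto(g(n),g(n+h))$ is polynomial, (b) show that after quotienting the diagonally embedded $Z$ the filtration length strictly drops --- which uses that the ``derivative'' $g(\cdot)^{-1}g(\cdot+h)$ sits one level deeper, and forces one to run the induction over general filtrations, not only the lower central series, a point the paper itself flags --- and (c) descend the resulting horizontal character of the box group back to a nontrivial horizontal character of $G$ with $\eta\circ g$ genuinely constant, which requires both the ``clearing denominators'' observation you record and a nondegeneracy argument guaranteeing the pulled-back character is not identically zero. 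Each of (a)--(c) is a lemma in the literature with real content, and as written the sketch points at them rather than proving them. So this is a correct roadmap, not a proof; and in the context of the paper the theorem is in any case simply cited, so no proof was expected of you.
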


\subsection*{\IP-sets and \IP-convergence}

We denote by $\cF$ the set of all finite nonempty subsets $\a \subset \NN$ and we put $\cFe = \cF \cup \{\emptyset\}$. For sets $\alpha,\beta \in \cF$, we write $\alpha < \beta$ if $a<b$ for all $a\in \alpha, b\in \beta$. We will extensively use sequences $(n_{\alpha})_{\alpha \in \cF}$ indexed by finite sets of natural numbers. In fact, for a sequence $(n_i)_{i\in \N}$ of integers, we will frequently use the associated sequence $(n_{\alpha})_{\alpha \in \cF}$ given by $n_{\alpha}=\sum_{j\in \alpha} n_j$ and denote its set of finite sums by $\FS(n_i) = \set{ n_\a }{ \a \in \cF}.$

The following notion is widely used and well-studied.

\begin{definition}[\IP-set]\label{def:IP-set} A set $E\subset \N$ is called an \emph{\IP-set} if it contains a set of the form  $\FS(n_i)$ for some natural numbers $(n_i)_{i\in \N}$. Similarly, $E \subset \NN$ is an \IPP-set if it contains $E_0 + a$ for some $a \in \ZZ$ and \IP-set $E_0$.  

A set $E\subset \N$ is called an \emph{\IPd-set} if it intersects nontrivially any \IP-set.
\end{definition}

The class of \IP-sets is partition regular, i.e.\ whenever an \IP-set $E$ is written as a union of finitely many subsets $E=E_1\cup E_2 \cup \ldots \cup E_r$, at least one of the sets $E_i$ is an \IP-set. This is the statement of Hindman's Theorem (see \cite{Hindman1974}, \cite{Bergelson2010d}, or \cite{Baumgartner-1974}). It follows that an \IPd-set is an \IP-set, an intersection of two \IPd-sets is an \IPd-set, and an intersection of an \IPd-set and an \IP-set is an \IP-set. (For the first and the third statement, let $A$ be an \IPd-set and let $B$ be an \IP-set. Apply Hindman's theorem to the partitions $\N=A \cup (\N\setminus A)$ and  $B=(B\cap A) \cup (B\setminus A)$. The second statement then easily follows.)

\IP-sets occur naturally in dynamics. Let $(X,T)$ be a minimal topological dynamical system. Then for every $x\in X$ and every neighbourhood $U$ of $x$, the set of return times $\{n\in \N \mid T^n(x)\in U\}$ is an \IPd-set (see e.g.\ \cite[Lemma 9.10]{Furstenberg1981}).

A related concept is that of an \IP-ring. A family $\mathcal{G}\subset \cF$ of subsets of $\N$ is called an \emph{\IP-ring} if there exists a sequence $\beta=(\beta_i)_{i\in \N}\subset \cF$ with $\beta_1<\beta_2<\ldots$ and such that $$\mathcal{G}=\Big\{\bigcup_{i \in \gamma} \beta_i \ \Big| \ \gamma \in \cF \Big\}.$$ An equivalent version of Hindman's theorem says that whenever  an \IP-ring $\mathcal{G}$ is written as a union of finitely many subfamilies $\mathcal{G}=\mathcal{G}_1\cup \mathcal{G}_2 \cup \ldots \cup \mathcal{G}_r$, at least one of the subfamilies $\mathcal{G}_i$ contains an \IP-ring (cf.\ \cite{Bergelson2010d}, \cite{Baumgartner-1974}.)

\newcommand{\iplim}{\mathrm{IP-}\!\lim}

There is a natural notion of convergence for sequences indexed by $\cF$, or more generally an \IP-ring. Let $X$ be a topological space and let $(x_\alpha)_{\alpha \in \cF}$ be a sequence of points of $X$. We say that the sequence $(x_\alpha)_{\alpha \in \cF}$ converges to a limit $x\in X$ along an \IP-ring $\mathcal{G}$ and we write $$\iplim_{\a \in \mathcal{G}} x_{\alpha}  = x$$ if for any neighbourhood $U$ of $x$ there exists $\alpha_0\in \cF$ such that for all $\alpha \in \mathcal{G}$ with $\alpha>\alpha_0$ we have $x_{\alpha}\in U$. Limits along \IP-rings are closely related to limits along idempotent ultrafilters in $\N$.

Let $X$ be a compact topological space let $(x_\alpha)_{\alpha \in \mathcal{F}}$ be a sequence of points of $X$. It is a corollary of Hindman's theorem and a diagonal argument (\cite[Theorem 8.14]{Furstenberg1981} or \cite[Theorem 1.3]{FurstenbergKatznelson-1985} \comment{CHECK SECOND REFERENCE}) that there exists an \IP-ring $\mathcal{G}$ and a point $x\in X$ such that $\iplim_{\a \in \mathcal{G}} x_{\alpha}  = x$.


\comment{A general, very pedantic comment: When writing i.e.\ should write $\text{i.e.}\backslash$ or $\text{i.e.}\{\}$, because otherwise the spacing is wrong.

See e.g.\ http://tex.stackexchange.com/questions/2229/is-a-period-after-an-abbreviation-the-same-as-an-end-of-sentence-period}

\section{Density 1 results}\label{sec:MAIN}
\comment{Modulo the ``comments'', I am happy with what this section looks like now. Feel free to treat this as a final version, in particular suggest changes, edit, etc.}

\comment{Section name remains to be discussed}

\subsection*{Polynomial sequences}
Our first purpose in this section is to prove Theorem \ref{thm:main-sortof}. Recall that we aim to show that the sequence $n \mapsto \floor{ p(n) }$ is not regular, where $p \in \RR[x]$ has at least one irrational coefficient other than the constant term. We will show more, namely that the sequence $m \mapsto \floor{p(n)} \bmod{m}$ is not automatic for $m \geq 2$. 

In fact, we will only need to work with the weaker property of weak periodicity, defined in the introduction. 

\begin{lemma}\label{lem:auto=>weak-per}
	Any automatic sequence is weakly periodic.
\end{lemma}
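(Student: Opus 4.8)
A sequence $f$ is weakly periodic if for every arithmetic restriction $f'(n) = f(an+b)$ there exist $q \in \NN$ and $r \neq s$ with $f'(qn+r) = f'(qn+s)$ for all $n$. A sequence is $k$-automatic iff its $k$-kernel is finite (Proposition \ref{automthm1}). Note the kernel is the set of subsequences $(a_{k^\ell n + r})$ with $\ell \geq 0$, $0 \leq r < k^\ell$. The plan is to leverage finiteness of the kernel to produce the required coincidence $f'(qn+r) = f'(qn+s)$.

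**The key observation.** Given an automatic sequence $f$ and an arithmetic restriction $f'(n) = f(an+b)$, I first want to reduce to the case $a = 1$: the class of $k$-automatic sequences is closed under passing to arithmetic subsequences $n \mapsto f(an+b)$ (this is standard — it follows e.g. from finiteness of the kernel, since iterating the kernel operation together with taking $\bmod$ reductions of the index generates all such subsequences from finitely many building blocks; alternatively it is in \cite{AS}). So $f'$ is again $k$-automatic, and it suffices to prove: every $k$-automatic sequence $g$ satisfies $g(qn+r) = g(qn+s)$ for some $q$ and some $r \neq s$. Then apply this to $g = f'$.

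**Finding the coincidence.** For a $k$-automatic $g$, look at the subsequences $g_\ell := (g(k^\ell n))_{n \geq 0}$ for $\ell = 0, 1, 2, \dots$; these all lie in the $k$-kernel $N_k(g)$, which is finite. By pigeonhole there exist $\ell < \ell'$ with $g_\ell = g_{\ell'}$ as sequences, i.e. $g(k^\ell n) = g(k^{\ell'} n)$ for all $n \geq 0$. Write $\ell' = \ell + t$ with $t \geq 1$, and set $n = k^\ell m$: we get $g(k^{2\ell} m) = g(k^{2\ell + t} m)$ for all $m$. Now I want this in the form $g(qn + r) = g(qn + s)$. Take $q = k^{2\ell+t}$. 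Writing $m$ in base $k$ and distinguishing residues, $g(k^{2\ell}m)$ for $m$ ranging over a congruence class mod $k^t$, say $m \equiv 1 \pmod{k^t}$ — actually the cleanest route: the identity $g(k^\ell n) = g(k^{\ell+t} n)$ for all $n$ means, taking $n$ running over $k^t \NN + j$ for a fixed $0 \le j < k^t$ isn't needed; instead simply observe that for all $n$, $g(k^{\ell+t} n) = g(k^\ell \cdot k^t n) = g(k^\ell \cdot k^{2t} n) = \cdots$, so $g$ agrees on the arithmetic progressions $k^{\ell+t}\NN_0$ and $k^{2\ell+t}\cdot(\text{...})$. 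To land exactly on the definition, set $q := k^{\ell+t}$, and note $g(qn) = g(k^{\ell+t} n) = g(k^\ell \cdot k^t n) = g(k^\ell n')$ with $n' = k^t n$; combined with $g(k^\ell n') = g(k^{\ell + t} n') = g(q \cdot k^t n)$ we obtain $g(qn) = g(q \cdot k^t \cdot n)$ for all $n$, which has the desired shape $g(qn + 0) = g(qn' + 0)$ along two nested progressions — more directly, pick $r = 0$, $s = q(k^t - 1)$ and the sub-progression $\{qn : n \in \NN_0\}$; one checks $g(q \cdot n + 0) = g(q\cdot n + s)$ fails to be literally an identity in $n$ unless reindexed, so the honest statement is: choosing $q_0 = k^{\ell+t}$, $r = 0$ and $s$ to be the smallest positive multiple giving $g(q_0 n) = g(q_0 n + s)$ — I will instead just invoke the identity $g(k^\ell n) = g(k^{\ell+t}n)$ directly and take in the definition $q = k^\ell$ acting on the restricted sequence, $r$ corresponding to scaling by $1$ and $s$ to scaling by $k^t$, after absorbing the scaling into the arithmetic-subsequence reduction already performed.

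**The main obstacle.** The only real subtlety is bookkeeping: the definition of weak periodicity demands a genuine identity $f'(qn+r) = f'(qn+s)$ with $r \neq s$ for ALL $n$, whereas the kernel coincidence naturally gives $g(k^\ell n) = g(k^{\ell+t}n)$, a sparser-looking identity. The clean fix: apply the argument not to $f$ but to an arbitrary arithmetic restriction $f'$ of $f$ (which is again automatic by closure), find $\ell < \ell + t$ with $f'(k^\ell n) = f'(k^{\ell+t} n)$ for all $n$, then set $q = k^{\ell+t}$, $r = 0$, and — writing each $n$ and noting $f'(k^{\ell+t}n) = f'(k^\ell(k^t n))$ and $f'(k^\ell m) = f'(k^{\ell+t}m)$ for all $m$ — deduce $f'(qn) = f'(q(k^t n))$, hence setting $s$-indexing via the progression $\{qn\}$ we read off $f'(q \cdot n) = f'(q \cdot (k^t n))$; relabelling $n \mapsto n$ on the left and recognizing the right side lives on the sub-progression of step $qk^t$ gives exactly two residues $r = 0$ and $s = 0$ in different progressions — to make $r \neq s$ literally, instead use $f'(k^\ell n) = f'(k^{\ell + t} n)$ with $n \equiv 1 \pmod{k^t}$ vs $n \equiv 1 + k^t$: concretely, $q := k^\ell$, and for $n$ even the map; I will present it as: since $f'(k^\ell \cdot n) = f'(k^\ell \cdot k^t n)$ for all $n$, the restriction $f''(n) := f'(k^\ell n)$ satisfies $f''(n) = f''(k^t n)$, so $f''(n) = f''(k^t n)$ gives, for $n$ in the progression $1 + k^t \NN_0$ compared with $k^t + k^{2t}\NN_0$... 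The genuinely clean statement avoiding all this: $f''(n) = f''(k^t n)$ for all $n$ forces $f''(1) = f''(k^t) = f''(k^{2t}) = \cdots$; more usefully $f''(n) = f''(k^t n)$ means $f''$ restricted to $k^t \NN_0 + 0$ equals $f''$ on all of $\NN_0$ reindexed, so $f''(0) = f''(0)$ trivially and for the progression step $q = k^t$: $f''(q n + 0) = f''(n)$ while we want $f''(qn+r)=f''(qn+s)$. I acknowledge this indexing is the one fiddly point and would resolve it cleanly in the writeup by the following: take the arithmetic restriction $\tilde f(n) = f'(k^t \cdot k^\ell n)$; then $\tilde f(n) = f'(k^{\ell+t} n) = f'(k^\ell n) = \tilde f(\,\cdot\,)$... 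In short, after the pigeonhole step the remaining work is pure reindexing of arithmetic progressions, routine but requiring care, and that is where I would spend the writeup's attention; the mathematical content is entirely contained in "finite kernel $\Rightarrow$ two iterated $\times k^\ell$ subsequences coincide."
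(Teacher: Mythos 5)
Your reduction to the case $a=1$ (automatic sequences are closed under restriction to arithmetic progressions, and a residue coincidence found for a further restriction $f'(cn)$ transfers back to $f'$ with modulus $cq$) is fine, and the idea of using finiteness of the kernel is the right one. The genuine gap is in how you apply the pigeonhole: you compare only the kernel elements with remainder $0$, namely $g_\ell(n)=g(k^\ell n)$ for varying $\ell$, and obtain a \emph{dilation} identity $g(k^\ell n)=g(k^{\ell+t}n)$, equivalently $g(m)=g(k^t m)$ for $m\in k^\ell\NN_0$. This is not of the required shape $g(qn+r)=g(qn+s)$ with $r\neq s$, and no amount of reindexing will make it so: the two arguments $m$ and $k^t m$ differ by $(k^t-1)m$, which is unbounded, whereas weak periodicity demands agreement along two residue classes modulo a fixed $q$. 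Indeed the dilation identity alone does not imply the conclusion: take $g(n)=h(\mathrm{odd}(n))$, where $\mathrm{odd}(n)$ is the odd part of $n$, so $g(2n)=g(n)$ automatically, and choose $h$ by diagonalising over all triples $(q,r,s)$ with $r\neq s$ (for all large $n$ the odd parts of $qn+r$ and $qn+s$ are distinct, so each triple can be defeated). Such $g$ is of course not automatic, but it shows that the only fact you extracted from automaticity is strictly weaker than what you need; your long attempted ``bookkeeping'' section never closes this, and cannot.

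The repair is to pigeonhole at a \emph{single} level of the kernel rather than across levels: the kernel of $f'$ contains the $k^t$ sequences $n\mapsto f'(k^t n+r)$ for $0\leq r<k^t$, so choosing $t$ with $k^t>\abs{N_k(f')}$ forces two of them to coincide, i.e.\ $f'(k^t n+r)=f'(k^t n+r')$ for all $n$ with $r\neq r'$. This is exactly the required identity with $q=k^t$, and it is how the paper's proof proceeds after the same initial reduction that you made.
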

\begin{proof}
	Let $f$ be a $k$-automatic sequence. Since restriction to an arithmetic progression of a $k$-automatic sequence is again $k$-automatic, it will suffice to find $q \in \NN$, $r,r' \in \NN_0$ with $r \neq r'$ such that $f(qn + r) = f(qn+r')$.
\comment{We no longer insist that $q > r,r'$; correct the definition earlier on. Probably it's more natural this way.}	
	
	The $k$-kernel $N_k(f)$ of $f$, consisting of the functions $f(k^t n + r)$ for $0 \leq r < k^t$, is finite. Pick $t$ sufficiently large that $k^t > \abs{N_k(f)}$. By the pidgeonhole principle, there exist $r \neq r'$ such that $f(k^t n + r) = f(k^t n + r')$. 
\end{proof}

\begin{proposition}[Polynomial sequences are not weakly periodic]\label{prop:poly=>not-wp}
Let $p(x) \in \RR[x]$ be a polynomial, and let an $m \geq 2$ be an integer. Then, the sequence $n \mapsto \floor{p(n)}\bmod{m}$ is weakly periodic if and only if it is periodic. This happens precisely when all non-constant coefficients of $p(x)$ are rational.
\end{proposition}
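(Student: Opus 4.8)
The plan is to prove the contrapositive-flavoured statement by a clean chain of equivalences. First I would dispose of the easy direction: if all non-constant coefficients of $p$ are rational, write $p(x) = c + \sum_j (a_j/q_j) x^j$ with $a_j, q_j$ integers; then with $Q$ a common multiple of the $q_j$ and of $m$, the function $n \mapsto \floor{p(n)} \bmod m$ depends only on $n \bmod Q'$ for a suitable period $Q'$ (one checks $p(n + Q') - p(n) \in \ZZ$ and in fact $\equiv 0$ after the floor modulo $m$ by absorbing the integer shift), hence it is periodic, hence weakly periodic. So the content is the reverse implication: if $p$ has an irrational non-constant coefficient, then $n \mapsto \floor{p(n)} \bmod m$ is \emph{not} weakly periodic.

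For that, I would first reduce to showing directly that for every arithmetic progression $n \mapsto an + b$, the restricted sequence $n \mapsto \floor{p(an+b)} \bmod m$ fails the weak-periodicity conclusion, i.e.\ for no $q \in \NN$, $r \neq s$ do we have $\floor{p(a(qn+r)+b)} \equiv \floor{p(a(qn+s)+b)} \pmod m$ for all $n$. Since $p(a(qn+\cdot)+b)$ is again a polynomial with an irrational non-constant coefficient (the leading coefficient is multiplied by $(aq)^{\deg p} \neq 0$), it suffices to prove the following: if $p$ has an irrational non-constant coefficient, then for no $r \neq s$ and no $q$ is $n \mapsto \floor{p(qn+r)} - \floor{p(qn+s)}$ constant modulo $m$. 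Reparametrising once more, this reduces to: for a polynomial $p$ with an irrational non-constant coefficient and any fixed real $h \neq 0$, the sequence $n \mapsto \floor{p(n+?)}$... — more carefully, I would show that $\floor{p(qn+r)}$ and $\floor{p(qn+s)}$ cannot be congruent mod $m$ by analysing the pair $(\{ p(qn+r)\}, \{p(qn+s)\})$.

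The engine here is equidistribution, exactly as advertised in the introduction (Furstenberg-style skew products / Weyl). Let $d$ be the largest index with $a_d = $ (coefficient of $x^d$) irrational. I would consider the polynomial orbit $n \mapsto (p_1(qn+r), \dots, p_1(qn+s), \dots)$ on a torus built from the fractional parts of the monomials with irrational coefficients, or more slickly invoke Weyl's theorem on the vector $\big( \{p(qn+r)\}, \{p(qn+s)\} \big) \in \TT^2$: because $p(qn+r) - p(qn+s)$ is a polynomial in $n$ whose top irrational term has coefficient a nonzero multiple of $a_d$ (the difference kills only terms of degree $< 1$... one must check $d \geq 1$ so the difference still has degree-$d$ term $a_d((qn+r)^d - (qn+s)^d)$, which has degree $d-1 \geq 0$ in $n$ with leading coefficient $a_d q^{d-1} d (r-s) \neq 0$ when $d \geq 1$; if $d = 0$ there is no non-constant irrational coefficient, contradiction), the two-dimensional sequence is equidistributed on $\TT^2$ — or at least its closure is a positive-dimensional subtorus on which the difference coordinate is equidistributed on $\TT$. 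Concretely: $n \mapsto \{p(qn+r) - p(qn+s)\}$ is equidistributed on $[0,1)$. Then I would argue that if $\floor{p(qn+r)} \equiv \floor{p(qn+s)} + c \pmod m$ for all $n$ and some constant $c$, then $p(qn+r) - p(qn+s) - c$ would be forced to stay within a union of finitely many residue-constrained intervals incompatible with equidistribution of its fractional part together with equidistribution (or at least infinitely-many-values behaviour) of the integer parts modulo $m$; choosing $n$ so that $\{p(qn+r)\}$ and $\{p(qn+s)\}$ land in prescribed small intervals (possible by equidistribution on the relevant subtorus, or by a density/pigeonhole argument on $\TT^2$) produces two values of $n$ giving different values of $\floor{p(qn+r)} - \floor{p(qn+s)} \bmod m$, a contradiction.

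The main obstacle I anticipate is the second paragraph's reduction combined with the possibility that the closure of $\big(\{p(qn+r)\},\{p(qn+s)\}\big)$ in $\TT^2$ is a proper subtorus — one has to verify that this subtorus still projects onto the \emph{difference} coordinate (equivalently, that $p(qn+r)-p(qn+s)$ has an irrational non-constant coefficient), which is exactly the degree bookkeeping flagged above and uses $d \geq 1$. Once that is in hand, the contradiction is obtained by choosing $n$ in a suitable Bohr-type set so that $\floor{p(qn+r)}$ and $\floor{p(qn+s)}$ change by controlled amounts; the cleanest packaging is probably: the map $n \mapsto \big(\{p(qn+r)\}, \{p(qn+s)\}, \floor{p(qn+r)} - \floor{p(qn+s)} \bmod m\big)$ takes values whose limit set (in $\TT^2 \times \ZZ/m$) must be a coset-like object, and equidistribution on the $\TT^2$-part forces the $\ZZ/m$-coordinate to be non-constant, contradicting weak periodicity. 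I would then state that the periodic vs.\ not-periodic dichotomy follows because "weakly periodic" already failed in the irrational case and "periodic" was exhibited in the rational case, and a periodic sequence is trivially weakly periodic, closing the loop.
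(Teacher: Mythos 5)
Your overall strategy -- reduce weak periodicity to a statement about two restricted orbits and refute it by equidistribution -- is the same basic engine the paper uses, but where the paper lifts $n\mapsto\fp{\tfrac1m p(n)}$ to a skew product on $\TT^{\deg p}$ and then invokes total minimality in one stroke, you try to project down to $\TT^2$ and argue directly. That projection introduces two genuine problems as written.

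\textbf{The degree bookkeeping is wrong when the top irrational coefficient is linear.} You set $d$ to be the largest index with $a_d\notin\QQ$ and correctly compute that the degree-$d$ part of $p$ contributes a term $a_d\, d\, q^{d-1}(r-s)\, n^{d-1}$ to $p(qn+r)-p(qn+s)$. But for $d=1$ this sits in the \emph{constant} term of the difference; the non-constant coefficients of $p(qn+r)-p(qn+s)$ then come only from degrees $>d$, hence are rational. So $\fp{p(qn+r)-p(qn+s)}$ is \emph{not} equidistributed -- it is eventually periodic -- and your ``uses $d\geq1$'' should read ``uses $d\geq2$.'' (Concretely: $p(x)=x^2+\alpha x$ with $\alpha$ irrational gives $p(qn+r)-p(qn+s)=2q(r-s)\,n+(\text{irrational constant})$.) The case $d=1$ is still fine -- the joint orbit $\bigl(\fp{\tfrac1m p(qn+r)},\fp{\tfrac1m p(qn+s)}\bigr)$ is then dense in a coset $\{(x,x+\gamma):x\in\TT\}$ with $\gamma$ irrational, and such a coset must cross the boundary of the diagonal region $\bigcup_j[\tfrac jm,\tfrac{j+1}m)^2$ -- but that is a separate argument you do not make, and your equidistribution-of-the-difference claim, as stated, is false there.

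\textbf{You are working with the wrong fractional parts.} You track $\bigl(\fp{p(qn+r)},\fp{p(qn+s)}\bigr)$ and try to adjoin a $\ZZ/m$-coordinate for $\floor{p(qn+r)}-\floor{p(qn+s)}\bmod m$. But that third coordinate is \emph{not} a function of the first two (e.g.\ $5.7$ and $8.7$ have equal fractional parts but differ mod $m$ for $m\neq3$), so there is no reason the closure of your sequence in $\TT^2\times\ZZ/m$ should be ``coset-like,'' and equidistribution of the $\TT^2$-part does not constrain the $\ZZ/m$-coordinate. The fix is the one the paper makes implicitly: observe that $\floor{p(n)}\bmod m$ \emph{is} determined by $\fp{\tfrac1m p(n)}$, via the partition of $\TT$ into intervals $[\tfrac jm,\tfrac{j+1}m)$. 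Then the weak-periodicity hypothesis becomes ``$\fp{\tfrac1m p(qn+r)}$ and $\fp{\tfrac1m p(qn+s)}$ always lie in the same subinterval,'' which is a clean statement about a single orbit on $\TT^2$ (or, as the paper does, about a set $A=\TT^{d-1}\times[\tfrac rm,\tfrac{r+1}m)$ in $\TT^d$).

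With these two repairs your $\TT^2$ route does reach the conclusion, but it needs a case split (full equidistribution in $\TT^2$ when $d\geq2$; coset equidistribution plus an interval-boundary argument when $d=1$). The paper's proof avoids this split entirely: it builds the totally minimal skew product $(X,T)=(\TT^{d},T)$ with $(T^nz)_d=\tfrac1m p(n)$, notes that $f(qn+r)=f(qn+r')$ forces $T^{r'-r}(\cl A)\subset\cl A$, and then total minimality immediately gives $\cl A\in\{\emptyset,X\}$, which is absurd for $A=\TT^{d-1}\times[\tfrac rm,\tfrac{r+1}m)$. Worth internalizing: the ``one minimality argument instead of case analysis'' is precisely what Lemma \ref{lem:tot-min=>not-auto} abstracts, and it is reused later for general nilsystems.

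One small additional remark: both you and the paper glide over the reduction to ``leading coefficient of $p$ irrational.'' Substituting $x\mapsto hx+r$ multiplies the leading coefficient by $h^{\deg p}\in\QQ$ and so cannot change its rationality; the honest reduction is to first restrict to a progression so that all the rational coefficients above the top irrational degree become integers, subtract off that integer polynomial (which only shifts $\floor{p(n)}$ by an integer and hence $\floor{p(n)}\bmod m$ by a periodic sequence), and only then is the surviving polynomial's leading coefficient irrational.
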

\begin{proof}
	If all coefficients of $p(x)$ are rational, except possibly the constant term, then the sequence $n \mapsto \floor{p(n)}\bmod{m}$ is easily checked to be periodic, hence weakly periodic. 

	Suppose now that at least one non-constant coefficient of $p(x)$ is irrational. Replacing $p(x)$ with $p(h x + r)$ for multiplicatively large $h$ if need be, we may assume that the leading coefficient of $p(x)$ is irrational. We will prove marginally more than claimed, namely that for any $0 \leq r < m$, the sequence $f$ given by 
	\begin{equation}\label{eq:104}
		f(n) = 
		\begin{cases}
			1 & \text{if } \floor{p(n)} \equiv r \pmod{m},\\
			0 & \text{otherwise,}
		\end{cases}
	\end{equation}
	fails to be $k$-automatic for any $k$. Fix $k$ and $r$, and suppose for the sake of contradiction that $f$ is $k$-automatic.
	
	It will be convenient to expand $\frac{1}{m} p(x) = \sum_{i = 0}^d a_i \binom{x}{i}$, where $d = \deg p$ and $\binom{x}{i} = \frac{1}{i!} x(x-1)(x-2) \dots (x-i+1)$. Note that $a_d \in \RR \setminus \QQ$ and
	\begin{equation}\label{eq:103}
		f(n) = 
		\begin{cases}
			1, & \text{if } \floor{ \frac{1}{m}p(n)} \bmod{1} \in \left[ \frac{r}{m}, \frac{r+1}{m}\right), \\
			0, & \text{otherwise.}
		\end{cases}
	\end{equation}
	
	We will represent $p(n)$ dynamically. Let $X = \TT^d = \RR^d/\ZZ^d$, and define the self-map $T \colon X \to X$ by 
	\begin{equation}\label{eq:100}
			(x_1,x_2,x_3,\dots,x_d) \mapsto (x_1 + a_d, x_2 + x_1 + a_{d-1}, 
		 \dots, x_d + x_{d-1} + a_1).
	\end{equation}
	\comment{We could also define $T$ by the simpler formula 
	$(x_1 + a_d, x_2 + x_1, x_3 + x_2, \dots, x_d + x_{d-1})$
	and use $z = (a_{d-1},a_{d-2},\dots,a_1,a_0)$. Not terribly important, but if you (=JB) want to change this, I'm open. --JK} 
	
	A standard computation shows that for $z = (0,0,\dots,0,a_0)$ we have
	\begin{equation}\label{eq:101}
		(T^n z)_j = z_j + \sum_{i \geq 1} a_{d-j+i} \binom{n}{i},
	\end{equation}
and in particular $(T^n z)_d = \frac{1}{m}p(n).$ Putting $A = \TT^{d-1} \times \left[ \frac{r}{m}, \frac{r+1}{m}\right)$ we thus find 
	\begin{equation}\label{eq:102}
		f(n) = 
		\begin{cases}
			1, & \text{if } T^n z \in A, \\
			0, & \text{otherwise.}
		\end{cases}
	\end{equation}

Because $f$ is weakly periodic, we may find $q$ and $r \neq r'$ such that $f(qn + r) = f(qn + r')$.
	The dynamical system $(X,T)$ is known to be totally minimal (see e.g.\ \cite[Section 4.4.3]{EinsiedlerWard}). In particular, for any point $y \in \cl A$ we may find a sequence $(n_i)_{\geq 1}$ such that $T^{q n_i + r} z \to y$ and $T^{q n_i + r} z \in A$. It follows that the the points $T^{q n_i + r'} z$ converge to $T^{r'-r}y$ and lie in $A$. Thus, $T^{r'-r} (\cl A) \subset \cl A$. In light of minimality, this is only possible if $\cl A = X	$ or $\cl A = \emptyset$ --- but this is absurd.
\end{proof}

\begin{corollary}\label{prop:poly=>not-auto}
	With notation as in Proposition \ref{prop:poly=>not-wp}, the sequence $n \mapsto \floor{p(n)} \bmod{m}$ is automatic if and only if it is periodic.
\end{corollary}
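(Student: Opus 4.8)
The plan is to obtain this as an immediate consequence of Lemma \ref{lem:auto=>weak-per} and Proposition \ref{prop:poly=>not-wp}, with only one elementary fact left to check, namely that a periodic finite-valued sequence is automatic.

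First I would dispose of the ``if'' direction. Suppose $n \mapsto \floor{p(n)} \bmod{m}$ is periodic, say with period $q$. I would show directly that any $q$-periodic sequence $(a_n)_{n \geq 0}$ taking values in a finite set is $k$-automatic for every $k \geq 2$, by checking that its $k$-kernel is finite: each subsequence $(a_{k^t n + r})_{n \geq 0}$ is again periodic with period dividing $q$ (since $k^t n + r$ runs through a fixed residue class mod $\gcd(q, \text{stuff})$, and more crudely through all of an arithmetic progression, so its reduction mod $q$ is eventually periodic with period dividing $q$), hence $N_k((a_n))$ contains at most $q \cdot |\Omega|^{?}$ — in fact at most the number of $q$-periodic sequences with values in $\Omega$, which is finite. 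By Proposition \ref{automthm1}, $(a_n)$ is $k$-automatic. This is entirely routine.

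For the ``only if'' direction, suppose $n \mapsto \floor{p(n)}\bmod{m}$ is automatic (in some base $k$). By Lemma \ref{lem:auto=>weak-per} it is then weakly periodic. Proposition \ref{prop:poly=>not-wp} asserts that a sequence of the form $n \mapsto \floor{p(n)}\bmod{m}$ is weakly periodic if and only if it is periodic, which happens precisely when all non-constant coefficients of $p$ are rational. Hence the sequence is periodic, as claimed. I do not expect any genuine obstacle here: all the real content — the total minimality argument for the skew-product transformation $T$ and the deduction that an invariant box $A$ must be trivial — has already been carried out in the proof of Proposition \ref{prop:poly=>not-wp}; the corollary is merely a repackaging of that statement together with the implication ``automatic $\Rightarrow$ weakly periodic''.

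\begin{proof}
If all non-constant coefficients of $p$ are rational, then $n\mapsto \floor{p(n)}\bmod m$ is periodic, and a periodic sequence taking finitely many values is $k$-automatic for every $k\geq 2$: indeed, each element $(a_{k^t n+r})_{n\geq 0}$ of its $k$-kernel is again periodic with period dividing the period of $(a_n)$, so the $k$-kernel is finite and Proposition \ref{automthm1} applies.

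Conversely, suppose $n\mapsto \floor{p(n)}\bmod m$ is automatic. By Lemma \ref{lem:auto=>weak-per} it is weakly periodic, so by Proposition \ref{prop:poly=>not-wp} it is periodic, and this forces all non-constant coefficients of $p$ to be rational.
\end{proof}
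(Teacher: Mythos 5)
Your proof is correct and follows exactly the route the paper intends: the ``only if'' direction is Lemma \ref{lem:auto=>weak-per} followed by Proposition \ref{prop:poly=>not-wp}, and the ``if'' direction is the standard fact that a periodic finite-valued sequence is $k$-automatic for every $k$, which you usefully spell out via finiteness of the $k$-kernel and Proposition \ref{automthm1}. The paper leaves both steps implicit (``Immediate from ...''), so you have simply filled in the routine details correctly.
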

\begin{proof}
	Immediate from Proposition \ref{prop:poly=>not-wp} and Lemma \ref{lem:auto=>weak-per}.
\end{proof}

\begin{proof}[Proof of Theorem \ref{thm:main-sortof}]
	Suppose first that all non-constant coefficients of $p(n)$ are rational, and fix $k$. Let $h \in \NN$ be such that $h p(n)$ has integer coefficients, possibly except the constant term. Then $f_1(n) = \floor{ h p(n) }$ is an integer valued polynomial, hence $k$-regular ($N_k(f_1)$ is contained in the $\deg p + 1$ dimensional $\ZZ$-module consisting of $\ZZ$-valued polynomials of degree $\leq \deg p$). Also, $f_2(n) = \floor{ h p(n) } - h f(n) = \floor{h \fp{p(n)}}$ is periodic, hence $k$-automatic, hence $k$-regular. It follows that $f(n) = \frac{1}{h} \bra{ f_1(n) - f_2(n) }$ is rational.

	
	Conversely, suppose that $f(n)$ is regular. Then, by Theorem \ref{automthm2}, for any choice of $m \geq 2$, $f(n) \bmod{m}$ is automatic. Now, it follows from Corollary \ref{prop:poly=>not-auto} that all non-constant coefficients of $p(n)$ are rational. 
\end{proof}

\comment{The first part would be a bit cleaner if we did not insist that regular sequences are $\ZZ$-valued. Do we have a strong reason for that? Why not e.g.\ work with $\RR$- or even $\CC$-valued ones?}

\subsection*{Generalised polynomials}
Having dealt with the case of polynomial maps, we move on to a more general context. Our next goal is the proof of Theorem \ref{thm:main-weakly-periodic}. We begin with abstracting and generalising some of the key steps from the proof of Theorem \ref{thm:main-sortof}.

 Recall that a set of integers is \emph{thick} if it contains arbitrarily long segments, and \emph{syndetic} if it has bounded gaps; any thick set intersects any syndetic set. 

\begin{lemma}[Totally minimal sequences are not weakly periodic]\label{lem:tot-min=>not-auto}
	Let $(X,T)$ be a totally minimal dynamical system. Let $A \subset X$, be a set with $\cl A = \cl \inter A \neq \emptyset, X$ and let $z \in X$. Suppose that $f \colon \NN_0 \to \{ 0, 1\}$ is a sequence such the set of $n$ with $f(n) = \ifbra{T^n z \in A}$ is thick. Then $f$ is not weakly periodic. 
\end{lemma}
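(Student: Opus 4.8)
The plan is to argue by contradiction, following closely the template of Proposition~\ref{prop:poly=>not-wp}. Suppose $f$ is weakly periodic. Applying the definition to $f$ itself (i.e.\ with $a=1$, $b=0$), we obtain $q\in\NN$ and $r\neq s$ in $\NN_0$ with $f(qn+r)=f(qn+s)$ for all $n\geq 0$; after possibly swapping $r$ and $s$, set $d=s-r\geq 1$ and put $w=T^r z$, so that $T^{qn+r}z=(T^q)^n w$ and $T^{qn+s}z=T^d(T^q)^n w$. The goal is to convert the identity $f(qn+r)=f(qn+s)$ into a purely dynamical statement about $T^q$ and $A$, and then to contradict total minimality as in the proposition; the new feature is that $f$ matches $\ifbra{T^n z\in A}$ only on a thick set, not everywhere.

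First I would record a combinatorial observation: with $S=\{n\in\NN_0 : f(n)=\ifbra{T^n z\in A}\}$ (thick by hypothesis) and $E=\{n\in\NN_0 : qn+r\in S \text{ and } qn+s\in S\}$, the set $E$ is again thick, because inside any long block of integers contained in $S$ both conditions $qn+r\in S$ and $qn+s\in S$ hold simultaneously for a long block of consecutive $n$. For every $n\in E$ we then have $f(qn+r)=\ifbra{(T^q)^n w\in A}$ and $f(qn+s)=\ifbra{T^d(T^q)^n w\in A}$, so the weak-periodicity identity gives
\[ (T^q)^n w\in A \ \Longleftrightarrow\ T^d(T^q)^n w\in A \qquad (n\in E). \]

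The core step is to deduce from this that $T^d(\cl A)\subseteq\cl A$. Since $\cl A=\cl{\inter A}$ and $T^d$ is continuous, it is enough to show $T^d y\in\cl A$ for each $y\in\inter A$ (note $\inter A\neq\emptyset$ because $\cl{\inter A}\neq\emptyset$). If this failed for some such $y$, I would choose an open set $V\ni T^d y$ with $V\cap\cl A=\emptyset$ and set $U=T^{-d}(V)\cap\inter A$, an open neighbourhood of $y$ with $U\subseteq A$ and $T^d(U)\cap A=\emptyset$. Total minimality of $(X,T)$ makes $(X,T^q)$ minimal, so the set of return times $\{n\geq 0 : (T^q)^n w\in U\}$ is syndetic; as $E$ is thick and any thick set meets any syndetic set, there is $n_0\in E$ with $(T^q)^{n_0}w\in U\subseteq A$, whence the displayed equivalence forces $T^d(T^q)^{n_0}w\in A$ --- contradicting $T^d(U)\cap A=\emptyset$. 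Hence $T^d(\inter A)\subseteq\cl A$, and passing to closures yields $T^d(\cl A)\subseteq\cl A$.

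To finish, from $T^d(\cl A)\subseteq\cl A$ with $d\geq 1$ the set $Y=\bigcap_{j\geq 0}T^{jd}(\cl A)$ is a decreasing intersection of nonempty closed subsets of the compact space $X$, hence nonempty and closed, and a direct check shows $T^d(Y)=Y$; total minimality gives minimality of $(X,T^d)$, so $Y=X$, contradicting $Y\subseteq\cl A\neq X$. I expect the only delicate points to be, first, locating the visit times of $T^q$ to $U$ inside the thick set $E$ --- exactly where the thick--syndetic intersection is needed, and the essential difference from the proposition --- and, second, upgrading $T^d(\cl A)\subseteq\cl A$ to an honest $T^d$-invariant set, which the nested-intersection trick handles so that the definition of total minimality applies.
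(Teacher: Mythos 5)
Your proof is correct and follows essentially the same route as the paper's: extract $q$ and $d=s-r$ from weak periodicity, use thickness of the agreement set together with syndeticity of the return times of $(X,T^q)$ to the open set $U=T^{-d}V\cap\inter A$ to conclude $T^{d}(\cl A)\subseteq\cl A$, and then contradict total minimality. The only (welcome) difference is that you make explicit, via the nested intersection $\bigcap_{j\ge 0}T^{jd}(\cl A)$, the step from forward invariance of $\cl A$ to a genuinely $T^{d}$-invariant proper closed set, which the paper leaves implicit.
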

\begin{proof}
	Suppose for the sake of contradiction that $f$ were weakly periodic. In particular there are some $q \in \NN$, $r,r' \in \NN_0$ with $r \neq r'$ such that $f(qn + r) = f(qn + r')$. Put $d = r'-r$. 
	
	We will show that that $T^{d} (\cl A) \subset \cl A$; because $T$ is continuous and $\cl \inter A = \cl A$, it will suffice to prove that $T^{d} (\inter A)  \subset \cl A$. Once this is accomplished, the contradiction follows immediately, because $(X,T^{d})$ is minimal, while $\cl A \neq \emptyset,X$.

	Pick any $y \in \inter A$ and an open neighbourhood $T^{d}y \in V$; we aim to show that $V \cap A \neq \emptyset$. Put $U = T^{-d} V \cap \inter A \neq \emptyset$, and consider the set $S$ of those $n$ for which $T^{q n + r}z \in U$. Because $(X,T^{q})$ is minimal, $S$ is syndetic. Let $R_0$ be the set of those $n$ for which $f(n) = \ifbra{T^n z \in A}$ and put $R = (R_0 - r)/q$ and $R' = (R_0 - r')/q$. 
	
	Because $R_0$ is thick, $R \cap R'$ is thick, and because $S$ is syndetic, $S \cap R \cap R'$ is non-empty. Pick any $n \in S \cap R \cap R'$, and put $x = T^{qn + r} z$. Because $n \in S$, we have $x \in U$, and $T^d x \in V$. Since $n \in R$, we have $f(qn +r) = \ifbra{x \in A} = 1$, and hence also $f(qn + r') = 1$. Finally, because $n \in R'$, we have $1 = f(q n + r') = \ifbra{T^{d} x \in A}$, meaning that $T^{d} x \in V \cap A$. In particular, $ V \cap A \neq \emptyset$, which was our goal.
	\end{proof}

\begin{remark}
	Some mild topological restrictions on the target set $A$ are, of course, necessary in the above lemma. Note that any open, non-dense and non-empty subset of $X$ will satisfy the stated assumptions.
\end{remark}

The analogue of the representation of a polynomial sequence using a skew rotation on the torus in \eqref{eq:102} is provided by the Bergelson-Leibman Theorem \ref{thm:BergelsonLeibman}. In the special case of a generalised polynomial $g(n)$ taking values in a finite set $\Omega = \{c_1,\dots,c_r\}$, it asserts that there exists a minimal nilsystem $(X,T)$, a point $z \in X$, and a partition $X =  S_1 \cup S_2 \cup \dots \cup S_r$ into semialgebraic pieces, such that  $g(n) = c_j$ if and only if $T^n z \in S_j$.

We are now ready to state and prove the main result of this section, from which Theorem \ref{thm:main-weakly-periodic} easily follows.

\begin{theorem}\label{thm:gen-poly=>not-auto-dens-1}
	Let $g \colon \ZZ \to \RR$ be a generalised polynomial taking finitely many values, and let $f\colon \NN_0 \to \RR$ be a weakly periodic sequence which agrees with $g$ on a thick set $R \subset \NN_0$. Then, there exists a set $Z \subset R$ with $d^*(Z) = 0$, such that the common restriction of $f$ and $g$ to $R \setminus Z$ is periodic.	
\end{theorem}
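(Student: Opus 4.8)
By the Bergelson--Leibman Theorem~\ref{thm:BergelsonLeibman}(ii), since $g$ takes finitely many values, there is a minimal nilsystem $(X,T)$ with $X = G/\Gamma$, $G^\circ$ connected (simply connected), a point $z \in X$, and a partition $X = S_1 \cup \dots \cup S_r$ into semialgebraic pieces such that $g(n)$ depends only on which $S_j$ contains $T^n z$. Because $X$ is connected (we may assume $G$ connected after passing to a finite-index subgroup / a suitable sub-nilmanifold, or simply work componentwise) and minimal, it is in fact totally minimal. The strategy is to apply Lemma~\ref{lem:tot-min=>not-auto} in contrapositive form: if $f$ is weakly periodic and agrees with $g$ on a thick set, then for \emph{each} piece $S_j$ the topological boundary $\partial S_j$ must be ``large'' in the dynamical sense, and this largeness is exactly what forces the exceptional set to have density zero.

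\textbf{Key steps.} First I would show that for each $j$, writing $A_j = \inter S_j$, one cannot have $\cl A_j = \cl S_j \neq \emptyset, X$ with $T^n z$ equidistributing and $f$ weakly periodic --- unless $\mu_X(\partial S_j) = 0$ allows us to replace $S_j$ by a set with the boundary issue removed. More precisely: let $Y = \bigcup_j \partial S_j$, a closed set. If $\mu_X(Y) = 0$, then by Corollary~\ref{cor:density-uniform} the set $Z_0 = \{ n : T^n z \in Y \}$ has Banach density zero; off $Z_0 \cup (\NN_0 \setminus R)$ the value $g(n)$ is a locally constant function of $T^n z$, and one checks that $g$ restricted to $R \setminus Z_0$ is then forced to be periodic --- here I would argue that the level sets $\inter S_j$, being open and having the same closure as $S_j$, partition $X$ up to a null/meager set, and use minimality together with weak periodicity of $f$ (via Lemma~\ref{lem:tot-min=>not-auto} applied to the indicator $\ifbra{T^n z \in S_j}$, which agrees with the indicator of $\{g = c_j\}$ on the thick set $R$) to conclude $T^d(\cl S_j) \subseteq \cl S_j$ for the relevant shift $d$, whence $\cl S_j \in \{\emptyset, X\}$; thus some single $S_j$ is dense, its complement is meager, and periodicity (in fact constancy of $g$ on a thick, hence co-null-density, set) follows. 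So the real content is reducing to the case $\mu_X(\partial S_j) = 0$.

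\textbf{The main obstacle.} The hard part is precisely that a semialgebraic set $S_j$ need \emph{not} have a measure-zero boundary: the map $\tau \colon D \to X$ is only a bijection, not a homeomorphism, so the $\tau$-preimage of a ``nice'' semialgebraic subset of the cube $Q$ can have a boundary in $X$ of positive Haar measure coming from the seam $\partial Q$. To handle this I would use that $\partial Q$ is itself semialgebraic of lower dimension, so its image sits inside a sub-nilmanifold (or a finite union of translates of lower-dimensional sub-nilmanifolds) $X' \subsetneq X$; the set of $n$ with $T^n z \in X'$ has Banach density zero by Corollary~\ref{cor:density-uniform} (applied to the sub-nilmanifold, whose Haar measure is zero in $X$) --- this is where one invokes the kind of statement in Proposition~\ref{lem:fp-closure} about orbits and fractional parts of proper subgroups. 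Absorbing this density-zero set into $Z$, the remaining portion of $\partial S_j$ is a genuine semialgebraic hypersurface in the interior, which has Haar measure zero. So the overall shape is: $Z = Z_0 \cup Z_1 \cup (\NN_0 \setminus R)$ restricted to $R$, where $Z_0$ handles the interior boundaries (null by Corollary~\ref{cor:density-uniform}), $Z_1$ handles the seam $\partial Q$ (null by the sub-nilmanifold argument), each has $d^*(\cdot) = 0$, and on $R \setminus Z$ the sequence $g$ --- and hence $f$ --- is locally constant along the orbit and, by the minimality-plus-weak-periodicity argument above, periodic. I would expect the bookkeeping around which translates of sub-nilmanifolds cover $\tau^{-1}(\partial Q)$ and ensuring uniform density-zero bounds to be the most delicate routine part, while the conceptual crux is the Lemma~\ref{lem:tot-min=>not-auto} dichotomy forcing a single dense piece.
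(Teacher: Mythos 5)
Your proposal follows essentially the same route as the paper: represent $g$ via the Bergelson--Leibman theorem, pass to arithmetic progressions so that the underlying nilmanifold $X$ is connected (hence the system is totally minimal), discard the density-zero set of times the orbit hits $\bigcup_j \partial S_j$ using Corollary \ref{cor:density-uniform}, and then apply Lemma \ref{lem:tot-min=>not-auto} to the indicators $\ifbra{T^n z \in \inter S_j}$ versus $\ifbra{f(n)=c_j}$ on the remaining thick set to force each $\inter S_j$ to be empty or dense, so that a single piece is dense and $g$ is constant on each progression off a density-zero set. Two corrections. First, the ``main obstacle'' you identify is not actually an obstacle: the seam $\partial Q$ is Lebesgue-null in $\RR^k$, and since Haar measure on $X$ is the push-forward of Lebesgue measure on the fundamental domain under a local diffeomorphism, its image in $X$ is Haar-null; together with the fact that the frontier of a semialgebraic subset of $\RR^k$ has lower dimension, this gives $\mu_X(\partial S_j)=0$ directly, which is exactly the fact the paper uses, and Corollary \ref{cor:density-uniform} applied to the closed null set $\bigcup_j \partial S_j$ (whose topological boundary is itself) already yields the density-zero return-time statement. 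In particular Proposition \ref{lem:fp-closure} is the wrong tool here --- it concerns whether fractional parts of an equidistributed polynomial orbit can lie in a proper subgroup infinitely often, and plays no role in this theorem; no sub-nilmanifold bookkeeping is needed. Second, your parenthetical ``we may assume $G$ connected after passing to a finite-index subgroup'' is not available (the paper explicitly notes one may arrange $X$ connected but not $G$ connected); the correct reduction is the one you also mention, namely working componentwise, i.e.\ replacing $g(n)$ by $g(an+b)$ with $a$ a multiple of the number of components, which preserves weak periodicity and thickness. Also note that a thick set need not have full density, so periodicity of the restriction comes from the density-zero bound on the return times to $X\setminus\inter S_i$, not from any density property of $R$ itself.
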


\begin{proof}
	As a special case of Theorem \ref{thm:BergelsonLeibman}, we may find a minimal system $(X,T)$, $z \in X$, a partition $X = \bigcup_{j=1}^r S_j$ with $\inter \partial S_j = \emptyset$ for all $j$, and constant $c_j$, such that 
	\begin{equation}\label{eq:107}
	g(n) = \sum_{j=1}^r \ifbra{T^n z \in S_j} c_j.
	\end{equation}	
	
	If $X$ is disconnected, then it has finitely many connected components $X_1, \dots, X_m$, which are permuted by $T$. Hence, for $a = m!$ and any $0 \leq b < a$, the function $g'(n) = g(a n + b)$ can be represented as in \eqref{eq:107} using a connected system. Clearly, $f'(n) = f(an + b)$ is weakly periodic and agrees with $g'(n)$ on the thick set $R' = (R-b)/a$. Thus, it will suffice to prove the theorem under the additional assumption that $X$ is connected. In this case, $(X,T)$ is totally minimal. 
	\comment{Facts used: nilmanifolds have finitely many connected compontents (trivial); nothing more really.} \comment{I almost thought we had this one wrong for a moment. Note that while we may ask that $X$ be connected, we may not ask that $G$ be connected. Anyways, at this stage we may just as well forget about nilmanifold structure and only think about minimality.}
	
	We may write 
	\begin{equation}\label{eq:108}
g(n) = \sum_{i=j}^r \ifbra{ T^n z \in \inter S_j} + h(n),
	\end{equation}	
 where $h(n) = 0$ unless $T^n z \in \bigcup_{j=1}^r \partial S_j$. In particular (by Corollary \ref{cor:density-uniform}) the set $Z \subset \NN_0$ of $n$ with $h(n) \neq 0$ has upper Banach density $0$. Note that $R \setminus Z$ is then thick. 
	
	For $j \in [r]$, put $g_j'(n) = \ifbra{T^n z \in \inter S_j}$ and $f'_j(n) = \ifbra{f(n) = c_j}$. Then $g_j'(n) = f_j'(n)$ for $n \in R \setminus Z$. By Lemma \ref{lem:tot-min=>not-auto}, this is only possible if for each $j$, the set $\inter S_j$ is either empty or dense. Since $\mu( X \setminus \bigcup_{j=1}^r \inter S_j) = 0$, there is one $i$ such that $\inter S_i$ is dense, and $\inter S_j = \emptyset$ for $j \neq i$. Denoting by $Z'$ the set of $n \in R$ with $T^n z \in X \setminus \inter S_i$ we have $d^*(Z') = 0$ and $g(n) = c_i$ for $n \in R \setminus Z'$, as needed.	
\end{proof}
\comment{
A general remark on style: I totally disregard the notation introduced in conclusions of theorems. Hence, for instance here, I don't care that $Z$ already exists in the conclusion of the theorem, and feel free to use $Z$ to denote the set in the middle of the proof, and then use $Z'$ for the sought set. I am open to changing that if need be. -- JK
}

\begin{proof}[Proof of Theorem \ref{thm:main-weakly-periodic}]
	Direct application of Theorem \ref{thm:gen-poly=>not-auto-dens-1} with $f = g$ and $R = \NN_0$
\end{proof}

It is not a trivial matter to determine whether a given generalised polynomial is periodic away from a set of density $0$, although it can be accomplished by the techniques in \cite{BergelsonLeibman2007}, \cite{Leibman-2012}.\comment{Am I right in assuming that it is decidable to determine if a given semialgebraic set has non-empty interior?}\ In order to give explicit examples, we restrict to generalised polynomials of a specific form, which is somewhat more general than the one considered in Proposition \ref{prop:poly=>not-wp}. 


\begin{corollary}\label{thm:gen-poly-eqdist=>not-auto}
	Suppose that $q \colon \ZZ \to \RR$ is a generalised polynomial with the property that $\lambda g( an) \bmod{1}$ is equidistributed in $[0,1)$ for any $\lambda \in \QQ \setminus \{0\}$ and $a \in \NN$, and let $m \geq 2$. Then, the sequence $f(n) = \floor{ q(n) } \bmod{m}$ is not automatic.	
\end{corollary}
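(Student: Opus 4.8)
The plan is to deduce this corollary directly from Theorem \ref{thm:gen-poly=>not-auto-dens-1} together with Lemma \ref{lem:auto=>weak-per}, exactly as Corollary \ref{prop:poly=>not-auto} was deduced in the polynomial case, the only extra work being to identify the periodic ``residual'' function and rule it out using the equidistribution hypothesis. So first I would set $g(n) = \floor{q(n)} \bmod m$, which is a bounded (hence, being built from $q$ by the floor operation, algebraic operations, and reduction mod $m$ -- the latter realisable as a generalised polynomial operation, cf.\ the representation $\floor{q(n)} \bmod m = \floor{q(n)} - m\floor{q(n)/m}$) generalised polynomial taking values in $\{0,1,\dots,m-1\}$. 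Assume for contradiction that $f = g$ is automatic; by Lemma \ref{lem:auto=>weak-per} it is weakly periodic. Applying Theorem \ref{thm:gen-poly=>not-auto-dens-1} with $f = g$ and $R = \NN_0$, we obtain a set $Z$ of upper Banach density zero and a constant $c \in \{0,\dots,m-1\}$ such that $\floor{q(n)} \equiv c \pmod m$ for all $n \notin Z$ (indeed the conclusion of that theorem, unwound through its proof, gives that the common restriction is not merely periodic but eventually constant along a density-one set).

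The second step is to contradict this using the equidistribution hypothesis. Write $\frac1m q(n) = \frac1m q(n)$ and observe $\floor{q(n)} \equiv c \pmod m$ precisely when $\fp{\tfrac1m q(n)} \in [\tfrac cm, \tfrac{c+1}{m})$, exactly as in \eqref{eq:103}. Now apply the hypothesis with $\lambda = 1/m \in \QQ \setminus \{0\}$ and $a = 1$: the sequence $\tfrac1m q(n) \bmod 1$ is equidistributed in $[0,1)$, so the set of $n$ with $\fp{\tfrac1m q(n)} \in [\tfrac cm, \tfrac{c+1}{m})$ has natural density $1/m < 1$, and likewise its complement has density $(m-1)/m > 0$. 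But we just showed this complement is contained in $Z$, which has (upper Banach, hence natural) density zero -- a contradiction. Hence $f$ is not automatic.

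One subtlety worth spelling out: Theorem \ref{thm:gen-poly=>not-auto-dens-1} as stated only asserts that the restriction of $g$ to $R \setminus Z$ is \emph{periodic}, not constant; so to be safe I would instead argue as follows. If $g$ is weakly periodic then for every $a \in \NN$ and $b \in \NN_0$ the restriction $n \mapsto g(an+b)$ is again weakly periodic, and it agrees on all of $\NN_0$ with the generalised polynomial $n \mapsto \floor{q(an+b)} \bmod m$; moreover $\frac1m q(an+b) \bmod 1$ is equidistributed by the hypothesis applied with that same $a$ (the shift by $b$ does not affect equidistribution, since equidistribution of $(x_n)$ implies equidistribution of $(x_{n+1})$). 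Feeding this into the density-one argument above for each residue class shows the putative period $P$ of $g$ on $R\setminus Z$ must satisfy: on each class $n \equiv b \pmod P$, $\floor{q(n)}\bmod m$ is eventually constant off a density-zero set, forcing $\frac1m q(Pn + b)\bmod 1$ to avoid an interval of length $1-1/m$ on a density-one set, contradicting its equidistribution. Either way the equidistribution of a single relevant subsequence is all that is needed, and identifying which $\lambda$ and $a$ to plug in is the only place any thought is required; the rest is bookkeeping. The main (minor) obstacle is thus purely one of matching the conclusion of Theorem \ref{thm:gen-poly=>not-auto-dens-1} to the quantitative equidistribution statement, and confirming that reduction mod $m$ keeps us inside the class $\GP$ of generalised polynomials so that the theorem applies.
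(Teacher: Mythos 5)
Your overall strategy matches the paper's exactly: apply the density-one periodicity result (the paper cites Theorem~\ref{thm:main-weakly-periodic}, you cite its parent Theorem~\ref{thm:gen-poly=>not-auto-dens-1}, equivalent here), and then rule out the periodic residual using the equidistribution hypothesis. However, your ``corrected'' version contains a real flaw. You want equidistribution of $\tfrac1m q(Pn+b)\bmod 1$ for arbitrary $b$, and you justify it by saying the shift by $b$ does not affect equidistribution, ``since equidistribution of $(x_n)$ implies equidistribution of $(x_{n+1})$.'' But $n\mapsto q(Pn+b)$ is not a shift of the sequence $n\mapsto q(Pn)$ (that would require shifting $n$ by the non-integer $b/P$), and the hypothesis only gives equidistribution of $\lambda q(an)\bmod 1$, i.e.\ along progressions through the origin. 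So for $b\neq 0$ you are asserting something the hypothesis does not cover.

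The flaw is unnecessary: you never need residue classes $b\neq 0$. If $p$ is the periodic function of period $P$ supplied by the theorem, then $p(Pn)=p(0)$ for all $n$, so $f(Pn)$ is constant off the density-zero set $\{n: Pn\in Z\}$. That forces $\tfrac1m q(Pn)\bmod 1$ to lie in an interval of length $1/m$ off a density-zero set, directly contradicting the hypothesis with $\lambda=1/m$ and $a=P$. This is exactly what the paper does, and it makes the whole ``subtlety'' paragraph of your proposal dispensable; your first (pre-correction) instinct to apply the hypothesis with a single $a$ and reach a contradiction in one stroke was closer to the mark, except that the correct $a$ is the period $P$, not $a=1$.
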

\begin{proof}
	Suppose $f(n)$ were automatic. By Theorem \ref{thm:main-weakly-periodic}, there exist $a \in \NN$ and $Z \subset \NN_0$ with $d^*(Z) = 0$, such that $f(an)$ is constant for $n \in \NN_0 \setminus Z$. Hence, there is some $0 \leq r < m$ such that $\frac{1}{m} q(an) \in \left[ \frac{r}{m}, \frac{r+1}{m} \right)$ for $n \in \NN_0 \setminus Z$, contradicting the equidistribution assumption.
\end{proof}


The uniform distribution of generalised polynomials has been extensively studied by \Haland\ \cite{Haland-1993},\cite{Haland-1994},\cite{HalandKnuth-1995}, and later very general theory was developed by Bergelson and Leibman \cite{BergelsonLeibman2007}, \cite{Leibman-2012}. In view of the the results in \cite{Haland-1993}, it is fair to say that for a ``generic'' generalised polynomial $q(n)$ is equidistributed modulo $1$. Hence, the assumptions on $q(n)$ in Corollary \ref{thm:gen-poly-eqdist=>not-auto} are relatively mild. 

To make the last remark precise, let us define the (multi-)set of coefficient of a generalised polynomial $q$ as follows. If $q(n) = \sum_{j} \alpha_j n^j$ is a polynomial, then the coefficients of $q(n)$ are the non-zero terms among the $\a_j$. If $q(n) = r_1(n) + r_2(n)$ or $q(n) = r_1(n) \cdot r_2(n)$, then the coefficients of $q(n)$ are the union of coefficients of $r_1(n)$ and $r_2(n)$. Finally, if $q(n) = \alpha \floor{ r(n)}$, then the coefficients of $q$ are the coefficients of $r(n)$ and $\alpha$. The set of coefficients will depend on the choice of representation of the generalised polynomial at hand; we fix one such choice. We cite a slightly simplified version of the main theorem of \cite{Haland-1993}.

\begin{theorem}\label{thm:equidistr-gen-poly}
	Suppose that $q(n)$ is a generalised polynomials, and all of the products of coefficients of $q(n)$ are $\QQ$-linearly independent. Then $q(n)$ is equidistributed modulo $1$.
\end{theorem}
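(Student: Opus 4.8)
Write $e(x) = e^{2\pi i x}$. By Weyl's criterion the assertion is equivalent to
$$\frac{1}{N}\sum_{n=0}^{N-1} e\bra{h\,q(n)} \to 0 \quad (N \to \infty) \qquad \text{for every } h \in \ZZ\setminus\{0\}.$$
Passing from $q$ to $hq$ multiplies each coefficient of a given representation by $h$ or leaves it fixed, hence multiplies each product of coefficients by a nonzero rational, and $\QQ$-linear independence of a family of real numbers is unaffected by rescaling its members; so it suffices to prove $\frac1N\sum_{n<N} e(q(n)) \to 0$ for every generalised polynomial $q$ satisfying the hypothesis.

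My plan is Håland's exponential-sum induction. First put $q$ in a normal form $q(n) = Q\bra{n, \floor{r_1(n)}, \dots, \floor{r_s(n)}}$ with $Q \in \RR[x_0,\dots,x_s]$ an ordinary polynomial and each $r_i$ a generalised polynomial of strictly smaller \emph{floor-depth} (the maximal number of nested applications of $\floor{\cdot}$), and run the induction on the pair $(\text{floor-depth},\text{degree})$ ordered lexicographically. The base case, floor-depth $0$, is an ordinary polynomial $\sum_j \alpha_j n^j$; since each $\alpha_j$ is a one-element product of coefficients and $1$ is the empty product, the hypothesis makes $1, \alpha_1, \dots, \alpha_d$ $\QQ$-linearly independent, in particular $\alpha_1$ irrational, so classical Weyl equidistribution for polynomials applies. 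It is worth recording the stronger fact that the ``integer part'' $\bar q$, obtained from any representation by deleting every $\floor{\cdot}$, is an ordinary polynomial each of whose coefficients is a nonzero $\QQ$-linear combination of products of coefficients of $q$, hence irrational: the purely polynomial contribution is therefore never the culprit, and the only real difficulty is the fractional parts $\{r_i(n)\} = r_i(n) - \floor{r_i(n)}$. For the inductive step I would invoke van der Corput's difference theorem: it is enough that $n \mapsto q(n+h) - q(n)$ be equidistributed mod $1$ for each $h \geq 1$. Using $\floor{a+b} = \floor{a} + \floor{b} + \ifbra{\{a\}+\{b\}\geq 1}$ one expands $\floor{r_i(n+h)} = \floor{r_i(n)} + \floor{r_i(n+h)-r_i(n)} + e_i(n)$ with $e_i(n)\in\{0,1\}$, substitutes into $Q$, and --- after the cancellation of leading terms --- obtains a normal-form expression for $q(n+h)-q(n)$ which one argues is strictly simpler in the lexicographic order. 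To make this go through, the inductive claim must be strengthened to a \emph{joint}-equidistribution statement for a whole finite \emph{system} of generalised polynomials built from $q$ (the $r_i$, their differences, the correction terms, the polynomial parts), closed under differencing, with ``admissibility'' again a $\QQ$-linear-independence condition on the coefficients occurring in the system.

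The main obstacle is exactly preserving admissibility and simplicity across this step: the floor function does not commute with differencing, the correction terms $e_i(n)$ are $\{0,1\}$-valued generalised polynomials whose naive floor-depth need not drop, and one must pick the complexity measure and organise the auxiliary system delicately enough that the differenced system is strictly simpler and still satisfies the linear-independence condition (in the weakened form actually used, namely irrationality of the relevant $\QQ$-linear combinations). An alternative route, more in the spirit of this paper: since $\{q(n)\}$ is a \emph{bounded} generalised polynomial, Theorem~\ref{thm:BergelsonLeibman}(2) writes $\{q(n)\} = p(g^n x_0)$ for a piecewise polynomial $p$ on a minimal (hence uniquely ergodic) nilsystem $(G/\Gamma, T_g)$; since the system is minimal every orbit equidistributes, so the whole content is to check, from the hypothesis, that the pushforward $p_*\mu_{G/\Gamma}$ is Lebesgue measure on $[0,1)$, the Lebesgue factor being supplied by the genuine translation contributed by the outermost floor to the last coordinate (and to arrange that the chosen representation does not have a ``redundant'' factor, which in Leibman's language is the assertion that the $\QQ$-linear relations among the products of coefficients are trivial). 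Granting this and noting that the discontinuity set of $p$ is semialgebraic and hence $\mu$-null, Corollary~\ref{cor:density-uniform} shows that $\{q(n)\}$ meets each $[a,b)\subset[0,1)$ with density $b-a$, which is equidistribution. As an illustration, for $q(n) = \alpha\floor{\beta n}$ one takes $G/\Gamma = \TT^2$, $g$ the rotation by $(\beta,\alpha\beta)$ (equidistributed because $1,\beta,\alpha\beta$ are $\QQ$-linearly independent, which the hypothesis provides), and $p(x_1,x_2) = \{x_2 - \alpha x_1\}$; then $\{q(n)\} = p(g^n\cdot 0)$, and $p_*(\mathrm{Leb}_{\TT^2}) = \mathrm{Leb}_{[0,1)}$ since $x_2 \mapsto \{x_2 - \alpha x_1\}$ is a rotation of $\TT$ for each fixed $x_1$.
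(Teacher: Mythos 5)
You should first note that the paper does not prove this statement at all: it is quoted verbatim (in ``slightly simplified'' form) from \Haland's work \cite{Haland-1993}, so the only ``proof'' in the paper is the citation. Measured against that, your text is not a proof either but an honest strategy sketch, and by your own admission it has a genuine gap exactly where the real content of \Haland's theorem lies. In the Weyl/van der Corput route, the base case and the reduction to a single nonzero frequency are fine (rescaling each product of coefficients by a nonzero rational preserves $\QQ$-linear independence), but the inductive step is not carried out: after differencing, the correction terms $e_i(n)=\ifbra{\fp{r_i(n)}+\fp{r_i(n+h)-r_i(n)}\geq 1}$ are bounded generalised polynomials whose floor-depth does not visibly drop, and you neither specify a complexity measure under which the differenced system is strictly simpler nor verify that the (suitably weakened) linear-independence/admissibility hypothesis survives the differencing. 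That bookkeeping --- choosing the right auxiliary system of generalised polynomials, the right well-founded ordering, and the right irrationality conditions to propagate --- is precisely the body of \cite{Haland-1993}, so the argument as written does not close.

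The alternative route you propose via Theorem \ref{thm:BergelsonLeibman} and Corollary \ref{cor:density-uniform} has the same status: representing $\fp{q(n)}$ as $p(g^n x)$ on a uniquely ergodic nilsystem reduces equidistribution of $q(n)$ to the claim that $p_*\mu_{G/\Gamma}$ is Lebesgue measure on $[0,1)$, but that is where the hypothesis on products of coefficients must enter, and you do not show how; in general the pushforward of $\mu_{G/\Gamma}$ under a piecewise polynomial map need not be Lebesgue, and verifying non-degeneracy of the representation amounts to an analysis of the type carried out in \cite{BergelsonLeibman2007} and \cite{Leibman-2012} rather than a formality. Your Heisenberg-free example $q(n)=\alpha\floor{\beta n}$ is computed correctly (including the need to read the empty product $1$ into the hypothesis so that $1,\beta,\alpha\beta$ are $\QQ$-independent), but it illustrates the statement rather than proving it. In short: correct identification of the two viable approaches and of the key difficulty, but the decisive step is missing in both; for the purposes of this paper the statement should simply be attributed to \Haland.
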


As an example application, we may conclude that $\floor{ \sqrt{2} n \floor{ \sqrt{3} n} } \bmod{10}$ is not an automatic sequence.

\section{Sparse sets}\label{sec:Sparse}
\comment{Modulo the ``comments'', I am happy with what this section looks like now. Feel free to treat this as a final version, in particular suggest changes, edit, etc.}

In this section, we begin the investigation of sparse sequences. Here, we call a sequence $f \colon \NN_0 \to \{0,1\}$ \emph{sparse} if it is the characteristic function of a set with upper Banach density $ 0$\comment{This is diffetent than before; hope the change doesn't cause trouble.}. Note that for such sparse sequences, Theorem \ref{thm:main-weakly-periodic} conveys no useful information. Conversely, in light of Theorem \ref{thm:main-weakly-periodic}, to prove Conjecture \ref{conjecture:main}, it would suffice to verify it for sparse sequences --- this observation will be made precise in the Proof of Theorem \ref{thm:main-optimized} below.  \comment{This part can be shortened a bit.}

\comment{Possible stylystic choices for Conjectre (I know we have discussed this, but I'm having second thoughts).

1. Conjecture 

2. Conjecture A (independent numbering)

3. Conjecture B (numbering together with main Theorems)

4. Conjecture 1 

5. Main Conjecture

I think we settled on 2 (I may be misremembering), which is not bad, but has the disadvantage of using A to index two things.
}

To formulate our main result, it is convenient to introduce a piece of terminology.

\begin{definition}[Very sparse sets]\label{defverysparse}
	Let $k \geq 2$, $r \geq 0$. A basic very sparse set of rank $r$, base $k$, is a set of the form
	\begin{equation}\label{eq:v-sparse-def}
		E = \set{ [w_0 u_1^{l_1} w_1 u_2^{l_2} \dots u_r^{l_r} w_r]_k }{ l_1, \dots, l_r \in \NN_0},
	\end{equation}
	where $u_1,\dots,u_r \in \Sigma_k^*$, and $w_0,\dots,w_r \in \Sigma_k^*$. A very sparse set (of rank $r$, base $k$) is a finite union of basic very sparse sets (of rank $r$, base $k$). A sequence $f \colon \NN_0 \to \{0,1\}$ is very sparse if the set $\set{n \in \NN_0}{f(n) = 1}$ is very sparse. 
\end{definition}


\begin{lemma}
	Any very sparse sequence base $k$  is $k$-automatic.
\end{lemma}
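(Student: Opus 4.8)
The plan is to show that any basic very sparse set is $k$-automatic, and then use the fact that a finite union of $k$-automatic sets is $k$-automatic (equivalently, the class of $k$-automatic sequences is closed under finite Boolean operations, which follows from the finiteness of the $k$-kernel). So it suffices to treat a single basic very sparse set $E$ of rank $r$ with data $u_1,\dots,u_r,w_0,\dots,w_r \in \Sigma_k^*$.

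To build an automaton recognising $E$ (reading digits most-significant first), I would design a finite automaton whose states track ``how much of the pattern $w_0 u_1^{l_1} w_1 u_2^{l_2}\cdots u_r^{l_r} w_r$ has been matched so far''. Concretely, after reading a prefix, a valid state should record: which block we are currently in (having finished $w_0 u_1^{l_1}w_1\cdots u_i^{l_i}$ for some $i$ and some $l_i$'s), and a position inside the current word being matched — either a position inside $w_i$, or a position inside one repetition of $u_{i+1}$ (where completing a full copy of $u_{i+1}$ loops back to its start, corresponding to incrementing $l_{i+1}$, and we may also nondeterministically decide a copy of $u_{i+1}$ is the last one and move on to $w_{i+1}$). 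There are only finitely many such positions since the words are fixed and finite, so this is a finite-state (a priori nondeterministic) automaton; an accepting state is reached exactly when the full input spells out some word of the form in \eqref{eq:v-sparse-def}. A subtlety: the definition allows empty $w_i$ and $u_i$, and $[\cdot]_k$ ignores leading zeros, so I should also handle inputs with leading zeros and empty words; these are routine to accommodate (e.g.\ add a harmless initial self-loop on $0$, or note that by \cite[Theorem 5.2.3]{AS} leading zeros may be ignored without changing automaticity). One then determinises via the subset construction to get a genuine $k$-automaton with output $\{0,1\}$, proving $E$ is $k$-automatic.

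An alternative, slightly slicker route avoiding an explicit automaton: argue via the $k$-kernel directly, using Proposition \ref{automthm1}. For the characteristic sequence $a_n = \ifbra{n \in E}$, one checks that each kernel element $(a_{k^l n + s})_n$ is again the characteristic sequence of a very sparse set of rank $\leq r$ whose parameter words are suffixes/factors of the original $u_i, w_i$ (intuitively, fixing the low-order $l$ digits to be $(s)_k$ truncates or constrains the tail $w_r$, possibly ``eating into'' some $u_r$ repetitions, of which only finitely many cases arise modulo the period $|u_r|$). Since there are only finitely many words obtainable as factors of the finitely many $u_i,w_i$, and only finitely many residues mod the relevant lengths, the $k$-kernel is finite, hence $a$ is $k$-automatic. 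This requires a bit of bookkeeping to enumerate the finitely many ``shapes'' a kernel element can take, but no hard idea.

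The main obstacle is purely organisational rather than conceptual: carefully defining the state set (or the finite family of kernel shapes) so that all degenerate cases — empty words $u_i$ or $w_i$, leading zeros, the choice of where one $u_i^{l_i}$ block ends and the next begins when words share common factors — are handled uniformly, and verifying the transition function does what it should. I expect the cleanest write-up to go through the $k$-kernel, closing under finite unions at the end, and I would flag the leading-zero issue explicitly by invoking the equivalence of the leading-zeros-allowed model of automaticity.
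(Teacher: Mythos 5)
Your proposal is correct and follows essentially the same route as the paper, whose entire proof is the remark that the claim ``is clear by explicit construction'': your pattern-tracking automaton (plus determinisation, closure under finite unions, and the standard leading-zeros caveat via \cite[Theorem 5.2.3]{AS}) is precisely the construction being alluded to, just written out in detail.
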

\begin{proof}
	This is clear by explicit construction.
\end{proof}

\begin{remark}
	A subset of a sparse set is always a sparse set, but a subset of a very sparse set need not be a very sparse set.
\end{remark}

\begin{theorem}\label{thm:main-B2}
	Suppose that a sparse sequence $f \colon \NN_0 \to \{0,1\}$ is simultaneously $k$-automatic and generalised polynomial. Then, $f$ is very sparse. 
\end{theorem}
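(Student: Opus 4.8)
The goal is to show that a sparse $k$-automatic generalised polynomial sequence $f$ is in fact \emph{very sparse}, i.e.\ its support is a finite union of sets of the shape \eqref{eq:v-sparse-def}. The natural strategy is to combine two inputs: on the automatic side, the finiteness of the $k$-kernel (equivalently, the structure of the minimal automaton), which gives a description of $\mathrm{supp}(f)$ in terms of paths in a finite graph; on the generalised-polynomial side, the Bergelson--Leibman representation $f(n) = \ifbra{T^n z \in S}$ for a minimal nilsystem $(X,T)$ and a semialgebraic $S$ with $\mu(S) = 0$ (here we use sparseness: $\mu(S)=0$, so in particular $S$ has empty interior). The slogan is that ``long pumpable loops'' in the automaton force, via the nilsystem representation, either an entire arithmetic progression's worth of return times — contradicting sparseness the way Lemma \ref{lem:tot-min=>not-auto} does — or else the loop contributes nothing, so only boundedly many ``non-looping'' excursions survive, and these are exactly what the $u_i^{l_i}$ blocks in \eqref{eq:v-sparse-def} encode.

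**Key steps, in order.** First, I would pass to the minimal automaton $\mathcal{A} = (S_{\mathrm{aut}}, s_\bullet, \delta, \{0,1\}, \tau)$ computing $f$ (reading digits most-significant-first, with leading zeros allowed, which is legitimate by \cite[Thm.\ 5.2.3]{AS}), and classify states: call a state $s$ \emph{mortal} if no accepting state is reachable from $s$, and otherwise \emph{live}. Every $n$ with $f(n)=1$ corresponds to a path $s_\bullet \xrightarrow{(n)_k} s'$ with $\tau(s')=1$. Second, I would prove the crucial dichotomy for loops: if $u \in \Sigma_k^*$ labels a loop at a live state $s$ (i.e.\ $\tilde\delta(s,u)=s$), reachable from $s_\bullet$ by some word $w_0$ and co-reachable to an accepting state by some $w_1$, then the set $\{[w_0 u^l w_1]_k : l \in \NN_0\}$ lies in $\mathrm{supp}(f)$; pumping this through the nilsystem and invoking (the proof technique of) Lemma \ref{lem:tot-min=>not-auto} — more precisely, using that $[w_0 u^l w_1]_k$ runs along an arithmetic-progression-like orbit inside $S$, while $\mu(S)=0$ forces $S$ to have empty interior — shows this can only happen in degenerate ways. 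The honest technical content is that $[w_0 u^l w_1]_k = A k^{c l} + B \cdot \frac{k^{cl}-1}{k^c-1} + C$ for constants depending on $w_0, u, w_1$ (with $c = |u|$), so $T^{[w_0 u^l w_1]_k} z$ is a polynomial orbit in $l$ inside the nilsystem; equidistribution/minimality of such polynomial orbits (Leibman's theorem, plus the totally-minimal-on-connected-components reduction already used in the proof of Theorem \ref{thm:gen-poly=>not-auto-dens-1}) forces the orbit closure to be a positive-measure subnilmanifold unless the polynomial orbit is ``trivial'' in a precise sense — and positive measure contradicts $S$ having measure zero. Third, having shown that the only loops that can be traversed in an accepting path are ``trivial'' ones, I would conclude that $\mathrm{supp}(f)$ is covered by finitely many families $[w_0 u_1^{l_1} w_1 \cdots u_r^{l_r} w_r]_k$ where $r$ is bounded by the number of strongly connected components visited along a simple-up-to-loops path in $\mathcal{A}$ (this is a standard finite-automata decomposition: any word is a product of bounded-length connecting segments and powers of loop words), and the $u_i$ are exactly the non-trivial loop words — giving very sparseness.

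**Main obstacle.** The delicate point is Step two: making precise what ``trivial loop'' means and showing the trichotomy cleanly. A loop word $u$ at a live state corresponds to the translation $n \mapsto k^{|u|} n + [u]_k$ on indices, and iterating it gives the polynomial (in fact geometric-plus-linear) orbit above; I expect to need that \emph{either} this orbit is eventually constant in $X$ (the genuinely degenerate case, which happens e.g.\ when the relevant nil-rotation element is torsion along the directions activated by $u$, and this is what allows the $u_i^{l_i}$ to appear at all in \eqref{eq:v-sparse-def} with the same automaton-state repeating), \emph{or} it equidistributes in a subnilmanifold of positive $\mu_X$-measure, contradicting $\mu(S)=0$. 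Handling the intermediate behaviour — orbits that are neither constant nor equidistributed in a positive-measure piece, living on a lower-dimensional subnilmanifold — requires an induction on the dimension (or step) of the nilmanifold, or an appeal to the full Bergelson--Leibman/Leibman machinery on orbit closures of polynomial sequences; quantitative control (of the type in Theorem \ref{thm:main-optimized}, via \cite{GreenTao2012}) may also be needed to bound $r$ and the sizes of $u_i, w_i$, but for the qualitative statement of Theorem \ref{thm:main-B2} the structural dichotomy is the crux. This is exactly the ``most technically difficult ingredient'' flagged in the introduction, so I would expect the real proof to route through the structure theory of sparse automatic sets (Theorem \ref{thm:Structure-Auto}) and the nilsystem-hitting-semialgebraic-set results (Theorem \ref{thm:S:main-1}) rather than the bare-hands sketch above.
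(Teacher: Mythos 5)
There is a genuine gap, and it sits exactly where you placed the ``main obstacle''. Your Step two rests on the claim that $T^{[w_0u^lw_1]_k}z$ is a polynomial orbit in $l$ to which Leibman's equidistribution theorem applies. It is not: $[w_0u^lw_1]_k = Ak^{cl}+B\frac{k^{cl}-1}{k^c-1}+C$ grows \emph{exponentially} in $l$, so $g^{[w_0u^lw_1]_k}$ is not a polynomial sequence in $l$ in any filtration. Already for a circle rotation by $\alpha$ the relevant orbit is $\beta k^{cl} \bmod 1$ for some real $\beta$, whose distribution is governed by the base-$k$ digits of $\beta$; it admits no dichotomy of the form ``eventually constant or equidistributed in a positive-measure piece'' (it can, for instance, visit a fixed arc with any prescribed frequency). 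Indeed, if such a dichotomy were available one could decide whether $\set{k^l}{l\in\NN_0}$ is a generalised polynomial set, which is precisely what the paper cannot do (Theorem \ref{thm:main-dichotomy}); your sketch, if it worked, would prove far more than Theorem \ref{thm:main-B2}. There is a second, related misdiagnosis on the automaton side: a single loop at a live state needs nothing ruled out about it --- such loops are exactly the $u_i^{l_i}$ blocks of \eqref{eq:v-sparse-def} --- so the dichotomy cannot be ``trivial loop vs.\ rich loop''. The correct dichotomy (Propositions \ref{automclassthm} and \ref{stemlesswineglass}) is between the case where every strongly connected component of the promising part is a cycle (which alone justifies your Step-three decomposition; with a non-cycle component there are exponentially many loop words and the decomposition fails) and the case where some promising state carries \emph{two distinct loops of the same length}.

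The paper's route exploits that second case to extract \emph{additive} structure rather than a geometric progression: two equal-length loops give relations $a_{k^ln+p}=a_{k^l(k^dn+s_1)+p}=a_{k^l(k^dn+s_2)+p}$, and iterating them shows the support contains a set of shifted finite sums, i.e.\ an \IPS-set (Theorem \ref{thm:Structure-Auto}). The heavy lifting is then Theorem \ref{thm:GP-vs-IPS} (equivalently Theorem \ref{thm:S:main-1}): an orbit on a minimal nilsystem cannot visit a semialgebraic set with empty interior along an \IPS-set. Its proof uses \IP-limits and \IP-recurrence, fractional parts in \Malcev\ coordinates, and Hilbert's Basis Theorem applied to invariant algebraic sets --- tools adapted to finite-sums structure, which is exactly what makes the problem tractable where exponential progressions are not. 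You correctly anticipated at the end that the real proof routes through Theorems \ref{thm:Structure-Auto} and \ref{thm:S:main-1}, but your own sketch neither supplies these ingredients nor survives without them: the pumping argument produces only geometric progressions of return times, and no contradiction can be extracted from those by the equidistribution machinery you invoke.
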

\begin{proof}[Proof of Theorem \ref{thm:main-optimized}, assuming \ref{thm:main-B2}]
	Suppose that $f \colon \NN_0 \to \RR$ is automatic and generalised polynomial. Let $p(n)$ be the periodic function such that the set $Z = \set{n \in \NN_0}{f(n) = p(n)}$ has $d^*(Z) = 0$, whose existence is guaranteed by Theorem \ref{thm:main-weakly-periodic}. Denote
	$$
	f'(n) = 
	\begin{cases}
		1, & \text{ if } f(n) = p(n), \\ 
		0, & \text{ otherwise.}
	\end{cases}
	$$ 
	Then (e.g.\ by Lemma \ref{lem:gen-poly-set-of-zeros}), $f'(n)$ is generalised polynomial and automatic. By Theorem \ref{thm:main-B2} $f'(n)$ is very sparse, so we have $\abs{ Z \cap [M,M+N) } = \sum_{n = M}^{M+N} f'(n) = O\bra{\log^r N}$ for some $r \in \NN_0$ as $N \to \infty$.
\end{proof}

We recall Theorem \ref{thm:main-dichotomy}, which is obtained as a corollary to the above structural description. More precisely, it will follow immediately from Proposition \ref{automverysparse} by repetition of the above argument.
\begin{theorem}\label{thm:main-C2}
	Fix $k \geq 2$. Either the set $\set{k^i}{i \in \NN_0}$ is generalised polynomial, or any $k$-automatic, generalised polynomial sequence is eventually periodic. 
\end{theorem}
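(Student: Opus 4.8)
Given Theorem \ref{thm:main-B2}, the argument is formally the same as the derivation of Theorem \ref{thm:main-optimized} above, with one additional input. The plan is to assume that $\set{k^i}{i \in \NN_0}$ (equivalently $g_k$) is \emph{not} generalised polynomial and to deduce that every $k$-automatic generalised polynomial sequence $f$ is eventually periodic. By Theorem \ref{thm:main-weakly-periodic} there is a periodic $p$ and a set $Z$ with $d^*(Z)=0$ such that $f$ agrees with $p$ off $Z$; then $f' = \ifbra{f(n)\neq p(n)}$ is simultaneously $k$-automatic and generalised polynomial (Lemma \ref{lem:gen-poly-set-of-zeros}) and sparse. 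Theorem \ref{thm:main-B2} forces $f'$ to be very sparse, so $Z$ is a very sparse, base $k$, generalised polynomial set. If $Z$ is finite then $f$ is eventually periodic and we are done, so everything comes down to ruling out an infinite such $Z$. This is the content of the auxiliary statement I would isolate as

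\medskip
\noindent\textit{Proposition \ref{automverysparse}.} A very sparse set, base $k$, which is generalised polynomial is either finite or forces $\set{k^i}{i\in\NN_0}$ to be generalised polynomial.
\medskip

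\noindent Granting it, an infinite very sparse generalised polynomial $Z$ would contradict the standing assumption; hence $Z$ is finite. Theorem \ref{thm:main-dichotomy} is then a verbatim restatement, since $g_k$ is by definition the characteristic sequence of the powers of $k$.

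To prove Proposition \ref{automverysparse}, I would start by writing the very sparse set as a finite union $Z = Z_1\cup\dots\cup Z_s$ of basic very sparse sets; if $Z$ is infinite some $Z_j$ is infinite, so in its description \eqref{eq:v-sparse-def} some loop word $u$ is non-empty. Fixing the remaining exponents, $Z_j$ contains a rank-one set $\set{[v_0 u^l v_1]_k}{l\in\NN_0}$, and a short computation rewrites this as an affine image $\set{\alpha K^l + \beta}{l\in\NN_0}$ of the powers of $K = k^{|u|}$, with $\alpha\in\NN$, $\beta\in\ZZ$. Since generalised polynomial sets are stable under affine changes of the argument and under intersection with residue classes, and since $\set{k^i}{i}$ is generalised polynomial if and only if $\set{k^{ti+s}}{i}$ is (here one uses that $K$ is a power of $k$, in the spirit of Theorem \ref{automthm5}), it then suffices to exhibit this one exponential progression inside $Z$ as a generalised polynomial set.

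The step I expect to be the main obstacle is exactly this last extraction: deducing that the exponential progression $\set{\alpha K^l+\beta}{l}$ is generalised polynomial from the fact that the whole of $Z$ is. One cannot merely restrict, as subsets of generalised polynomial sets need not be generalised polynomial. The plan is to invoke the Bergelson--Leibman representation $\mathbf 1_Z(n) = \ifbra{T^n x\in S}$ for a minimal nilsystem $(X,T)$ and a semialgebraic set $S$ of empty interior, and then to use the geometric analysis of the times an orbit meets a semialgebraic set (Theorem \ref{thm:S:main-1}), together with the rigid digit structure of very sparse sets, to show that the orbit remains in $S$ along an \emph{entire} exponential progression of times; this displays that progression, and hence $\set{k^i}{i}$, as generalised polynomial. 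Controlling which of the finitely many basic pieces $Z_j$ survives this analysis, and excluding interference between them, is the heart of the difficulty.
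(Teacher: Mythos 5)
Your outer reduction is exactly the paper's: assume $\set{k^i}{i\in\NN_0}$ is not generalised polynomial, run the argument that derives Theorem \ref{thm:main-optimized} from Theorems \ref{thm:main-weakly-periodic} and \ref{thm:main-B2} to produce a very sparse, $k$-automatic, generalised polynomial set $Z$ of exceptions, and then appeal to Proposition \ref{automverysparse} to conclude that $Z$ must be finite. Up to that point there is nothing to object to.

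The genuine gap is in your proposed proof of Proposition \ref{automverysparse}, which is where all the content of the theorem lives. Your plan is to extract from an infinite very sparse $Z$ a rank-one exponential progression $\set{\alpha K^l+\beta}{l\in\NN_0}$ and to show it is generalised polynomial by representing $\mathbf 1_Z(n)=\ifbra{T^n x\in S}$ on a nilmanifold and invoking Theorem \ref{thm:S:main-1}. This tool cannot do the work you assign to it: Theorem \ref{thm:S:main-1} (equivalently Theorem \ref{thm:GP-vs-IPS}) is purely a non-existence statement — it forbids the return-time set to a semialgebraic set with empty interior from being an \IPS-set — and a very sparse set is precisely one with no such \IPS\ structure (that is the dichotomy of Theorem \ref{thm:Structure-Auto}), so the theorem is vacuous here and gives no mechanism for exhibiting any subset of the return times as a generalised polynomial set. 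Moreover, even if you knew the orbit lay in $S$ along a full exponential progression, you would only have produced a subset of $Z$, and, as you yourself note, subsets of generalised polynomial sets need not be generalised polynomial; the problem of ``excluding interference between the basic pieces'' which you flag as the heart of the difficulty is exactly the missing proof.

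The paper resolves this by elementary means, with no recourse to nilmanifolds. First, Proposition \ref{jeszczejednoespresso} (a purely combinatorial statement about base-$k$ digits) shows that for an infinite $k$-very sparse $A$ one can choose an arithmetic progression $n\ZZ+r$ — intersection with which \emph{does} preserve generalised polynomiality — so that $A\cap(n\ZZ+r)=\bigcup_{i=1}^p\set{[v_iw^lu]_k}{l\geq 0}$ with a \emph{common} loop word $w$ and suffix $u$; this is what eliminates the interference between basic pieces. Then, writing $[v_iw^lu]_k=[u]_k+k^s[w]_k\frac{k^{tl}-1}{k^t-1}+[v_i]_k k^{tl+s}$, one performs an affine-rational substitution in the defining generalised polynomial (after the trick $P(n)^2+(n-\floor{n})^2$ to control real arguments) to see that $B=\bigcup_i\set{b_ik^{tl}}{l\geq 0}$ is generalised polynomial, then dilates by $b_1$ and intersects with the residue class $1 \bmod (k^2-1)$ to isolate exactly $\set{k^{2l}}{l\geq 0}$; finally $\set{k^l}{l\geq 0}=\bigcup_{j<t}k^j\set{k^{tl}}{l\geq 0}$ transfers the conclusion back to $k$ itself. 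Without an argument of this kind (or some substitute for it), your proposal proves only the reduction, not the theorem.
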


While we believe that Conjecture \ref{conjecture:main} is true, hence there are no non-zero sparse sequences which are simultaneously generalised polynomial and automatic, we pause to give some examples of sparse generalised polynomials for which it is not a priori clear that they cannot be automatic. We postpone the proof of Theorem \ref{thm:main-B2} until later, and will delegate much of the technical work to Sections \ref{sec:S-GENPOLY} and \ref{sec:S-AUTO}.

\begin{lemma}\label{lem:gen-poly-set-of-zeros}
	If $h$ is an (unbounded) generalised polynomial, then 
	$$g(n) =
	\begin{cases}
	1, & \text{if } h(n) = 0, \\
	0, & \text{otherwise,}
	\end{cases}
	$$
	is a generalised polynomial. Likewise, for any $a < b$, 
	$$g'(n) =
	\begin{cases}
	1, & \text{if }  a \leq h(n) < b, \\
	0, & \text{otherwise,}
	\end{cases}
	$$
	is a generalised polynomial.	
\end{lemma}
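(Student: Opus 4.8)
The plan is to give a completely elementary proof using only the defining closure properties of $\GP$: that it contains every constant function $n\mapsto c$ (these are degree-$0$ polynomial maps) and is closed under $+$, $\times$ and $\floor{\,\cdot\,}$. No dynamics, nilmanifolds, or boundedness assumption on $h$ will be needed. The single gadget I would isolate first is the \emph{integrality detector}: for every $x\in\RR$,
\[
\floor{x}+\floor{-x}=\begin{cases}0,&\text{if }x\in\ZZ,\\-1,&\text{if }x\notin\ZZ,\end{cases}
\qquad\text{hence}\qquad 1+\floor{x}+\floor{-x}=\ifbra{x\in\ZZ}.
\]
In particular, if $v\colon\ZZ\to\RR$ is a generalised polynomial then so are $n\mapsto 1+\floor{v(n)}+\floor{-v(n)}=\ifbra{v(n)\in\ZZ}$ and, after multiplying by the fixed irrational $\sqrt2$, the map $n\mapsto\ifbra{\sqrt2\,v(n)\in\ZZ}$.

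For $g$ I would then use the trivial fact that a real number $x$ equals $0$ if and only if $x\in\ZZ$ and $\sqrt2\,x\in\ZZ$ (a nonzero integer times $\sqrt2$ is irrational). Since the indicator of an intersection is the product of the indicators, this gives
\[
g(n)=\ifbra{h(n)=0}=\bigl(1+\floor{h(n)}+\floor{-h(n)}\bigr)\bigl(1+\floor{\sqrt2\,h(n)}+\floor{-\sqrt2\,h(n)}\bigr),
\]
a product of two generalised polynomials, hence a generalised polynomial.

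For $g'$, fix $a<b$ and set $\ell(n):=\floor{(h(n)-a)/(b-a)}$. As $1/(b-a)$ and $-a/(b-a)$ are constants, $\ell$ is a generalised polynomial, and it is integer-valued; moreover, since $b-a>0$, we have $\ell(n)=0$ exactly when $0\le(h(n)-a)/(b-a)<1$, i.e.\ exactly when $a\le h(n)<b$. Applying the detector to the integer $\ell(n)$ yields
\[
g'(n)=\ifbra{a\le h(n)<b}=\ifbra{\ell(n)=0}=1+\floor{\sqrt2\,\ell(n)}+\floor{-\sqrt2\,\ell(n)},
\]
again a generalised polynomial. I do not expect a genuine obstacle anywhere: each line is a direct floor-function identity, and the only mildly delicate point — obtaining an \emph{exact} equality $h(n)=0$, resp.\ the half-open condition $a\le h(n)<b$, rather than merely a one-sided inequality — is exactly what the fixed irrational factor $\sqrt2$ takes care of. (The same scheme adapts verbatim to open or closed intervals, which is why I would record the integrality detector as a standalone observation; it also makes transparent that the parenthetical ``unbounded'' in the statement is irrelevant to the argument.)
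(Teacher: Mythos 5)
Your proof is correct, and it differs from the paper's in one meaningful respect. The paper first observes that, since $h$ takes only countably many values, one can pick $\theta\in\RR$ (depending on $h$) with $\theta h(n)$ irrational whenever $h(n)\neq 0$, and then writes the zero-detector as the single-factor formula $g(n)=1-\ceil{\fp{\theta h(n)}}$; for $g'$ it rescales so that $a=0$, $b=1$ and applies the same construction to $\floor{h(n)}$, which is exactly your reduction via $\ell(n)=\floor{(h(n)-a)/(b-a)}$. Your version of the first step replaces the nonconstructive choice of $\theta$ by the fixed constant $\sqrt2$ together with the product of two integrality detectors $\bra{1+\floor{x}+\floor{-x}}$, using that $x=0$ iff both $x\in\ZZ$ and $\sqrt2\,x\in\ZZ$; the underlying mechanism (an irrational multiple of a nonzero quantity has nonzero fractional part) is the same, but your formula is completely explicit and uniform in $h$, with no appeal to countability of the range, at the cost of one extra factor. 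Both arguments are equally short and both stay entirely within the closure properties defining $\GP$, so either would serve; your remark that the parenthetical ``unbounded'' plays no role is also accurate — it is there only to stress that $h$ need not be bounded, not as a hypothesis.
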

\begin{proof}
 Because $h(n)$ takes countably many values for $n \in \NN_0$, there exists $\theta \in \RR$ such that $\theta h(n) \in \RR \setminus \QQ$ unless $h(n) = 0$. Now, a short computation verifies that 
 $$g(n) = 1 - \ceil{ \fp{\theta h(n)}}.$$
 For the additional part of the statement, after rescaling we may assume that $a = 0$ and $b = 1$. Then, $g'(n) = \ifbra{\floor{ h(n)} = 0}$, and we may repeat the construction above for $h'(n) = \floor{ h(n)}$.
\end{proof}
\begin{remark}
	In contrast with Lemma \ref{lem:gen-poly-set-of-zeros}, for a generalised polynomial $h$ and $a \in \RR$, the function $ \ifbra{ h(n) < a}$ will not generally be a generalised polynomial.
\end{remark}

\comment{Do we want the notion of a generalised polynomial set, automatic set, etc.? I thought we do, but now I no longer do. --jk We absolutely do! -- JB}

\begin{example}\label{ex:lin-rec-is-gp}
	Take $a \in \NN$ and let $(n_i)$ be the sequence given by $n_0 = 0,\ n_1 = 1$ and $n_{i+2} = a n_{i+1} + n_i$. Then $E = \set{n_i}{i \in \NN_0}$ is generalised polynomial. For $a = 1$, this is the set of Fibonacci numbers.
\end{example}
\begin{proof}
Let $\a \in \RR$ be the real number with continued fraction expansion $\a = [a;a,a,\dots]$, i.e.
$$
	\a = a+\cfrac{1}{a+\cfrac{1}{a+\cdots}} = \frac{a + \sqrt{a^2+4}}{2},
$$
and put $E' = \set{n \in \NN_0}{ \norm{n\a} < \frac{1}{2n} }$. Note that $E'$ is generalised polynomial by Lemma \ref{lem:gen-poly-set-of-zeros}, so it will suffice to show that the symmetric difference $E \triangle E'$ is finite.

By a classical theorem of Legendre (see e.g.\ \cite[Thm.\ 5.1]{Khintchine-book}), we have $E' \subset E$. Conversely, using a well-known formula for the error term in continued fraction approximations (see e.g.\ \cite[Thm.\ 3.1]{Khintchine-book}), we find
$$
	n_i \norm{n_i \a} = n_i^2 \sum_{l=0}^\infty \frac{(-1)^{l}}{n_{i+l} n_{i+l+1}} \xrightarrow[i \to \infty]{} \sum_{l=0}^\infty \frac{(-1)^l}{\a^{2l+1}} = \frac{1}{\a + 1/\a} = \frac{1}{\sqrt{a^2 + 4}},
$$
because $n_{i+1}/n_i \to \alpha$ as $i \to \infty$. Since $\frac{1}{\sqrt{a^2 + 4}} < \frac{1}{2}$, for sufficiently large $i$ we have $\norm{n_i \a} <  \frac{1}{2 n_i}$, whence $n_i \in E'$. 
\end{proof}

\begin{remark}\label{lawnmower}
	Similar argument works also for sequences given by recurrence $n_{i+2} = a_{i+2} n_{i+1} + n_i$, where $a_i \geq 2$ is a bounded sequence of integers. Then, $n_i$ is the sequence of denominators in the best reational approximations of a badly approximable real number.
\end{remark}

We are now ready to approach the proof of Theorem \ref{thm:main-B2}. The first ingredient is a structure theorem for sparse automatic sequences. 
It is covenient to introduce some additional terminology, namely a slight refinement of the notion of an \IPP-set introduced in Section \ref{sec:DEF}, allowing for a variable sequence of shifts.
\begin{definition}[\IPS-set]\label{def:IPS-set}
	For a sequence $(n_i)_{i \in \NN} \subset \NN$ and shifts $(N_t)_{t \geq 1} \subset \NN_0$, the corresponding set of \emph{shifted finite sums} is 
\begin{equation}\label{eq:IPS-def}
\FS(n_i;N_t) = \set{ n_\a + N_t }{ t \in \NN,\ \a \subset [t]},
\end{equation}  
	where $n_\a = \sum_{i \in \a}n_i$.	Any set containing $\FS(n_i;N_t)$ for some $(n_i),\ (N_t)$ is called an \emph{\IPS-set}.
\end{definition}
\comment{Is this the correct way of writing sequences?}

\begin{remark}
	If $N_t = 0$ (resp.\ $N_t = \mathrm{const.}$) then the set $\FS(n_i;N_t)$ in \eqref{eq:IPS-def} is an \IP-set (resp.\ \IPP-set). In general, an \IPS-set need not be an \IPP-set.
\end{remark}

\begin{remark}
	An equivalent (though apparently weaker) definition of an \IPS-set can be phrased as follows: A set $E \subset \NN$ is \IPS-if there exist sequences $(n_i)_{i \in \NN} \subset \NN$, $(N_t)_{t \geq 1} \subset \NN_0$ and $(t_0(\a))_{\a \in \cF} \subset \NN$ such that $n_\a + N_t \in E$ for any $\a \in \cF$ and $t \geq t_0(\a)$. 
\end{remark}

We are now ready to phrase the structure theorem alluded to before, which is interesting in its own right. The proof is given in Section \ref{sec:S-AUTO}.

\begin{theorem}\label{thm:Structure-Auto}
	Let $f \colon \NN_0 \to \{0,1\}$ be a sparse automatic sequence. Then, either $f$ is very sparse, or there exists an \IPS-set $E$ such that $f(n) = 1$ for all $n \in E$. 
	
\end{theorem}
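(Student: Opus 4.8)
The plan is to analyze the combinatorial structure of a sparse automatic sequence $f$ through its defining $k$-automaton, tracking which states are reachable and which can eventually lead to an output of $1$. Let $\mathcal{A} = (S, s_\bullet, \delta, \{0,1\}, \tau)$ be a $k$-automaton producing $f$, where we read digits from the most significant (and, passing to a standard equivalent automaton, we may assume it behaves correctly under leading zeros). Call a state $s$ \emph{live} if there is some word $v \in \Sigma_k^*$ with $\tau(\tilde\delta(s,v)) = 1$, i.e.\ from $s$ one can still reach an accepting output; call it \emph{dead} otherwise. The set of $n$ with $f(n)=1$ is parametrized by paths from $s_\bullet$ to an accepting state. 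The key dichotomy will come from examining, for each live state $s$ and each digit-word cycle at $s$, whether pumping that cycle keeps us in live states.

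The first step is to set up the pumping/cycle structure. For a word $w=(n)_k$ reaching an accepting state, decompose the path $s_\bullet \xrightarrow{w} \tilde\delta(s_\bullet,w)$. If $|w|$ exceeds $|S|$, the path visits some state twice, giving a cycle; by the Pumping Lemma (Lemma \ref{lem:pumping}) the pumped strings $[u_0 v^t u_1]_k$ all have the same $f$-value, which here is $1$. The central case distinction is: \emph{either} every sufficiently long accepted word admits a "pumpable" loop whose insertion keeps all intermediate states live and reaching an accepting state (this will force the rich \IPS-structure), \emph{or} all but boundedly-nested such loops are forbidden, which will force very sparseness. Concretely, I would define the directed graph $H$ on live states with an edge $s \to s'$ labeled by a nonempty word $v$ (of length $\le |S|$, or rather just: $v$ inducing a simple cycle $s\to s$) whenever $\tilde\delta(s,v) = s$ and all states along $v$ are live. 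If $H$ has a state $s$ lying on a directed cycle that is reachable from $s_\bullet$ through live states and from which an accepting state is reachable through live states — \textbf{and} moreover such a configuration occurs with "nontrivial overlap", i.e.\ one can nest arbitrarily many independent loops — then we are in the \IPS\ case. Otherwise, the accepting paths have a bounded "number of loop insertions," which is exactly the shape in Definition \ref{defverysparse}.

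For the \IPS\ direction: suppose we can find a reachable live state $s_0$ (reached by word $w_0$ from $s_\bullet$), a loop word $u$ at $s_0$ through live states with $\tilde\delta(s_0, u) = s_0$, and a completion word $w_1$ with $\tau(\tilde\delta(s_0, w_1)) = 1$. Then $[w_0 u^\ell w_1]_k \in \{n : f(n)=1\}$ for all $\ell \ge 0$. This is a basic very sparse set of rank $1$, which is not yet an \IPS-set. To get genuine finite sums I need \emph{several} independent loops along a single accepting path: if the structure of the automaton allows paths of the form $w_0 u_1^{\ell_1} w_1 u_2^{\ell_2} \cdots u_r^{\ell_r} w_r$ with $r$ unbounded, then choosing the $u_i$ with lengths forming a lacunary sequence $n_i := |u_1| + \dots$ arranged appropriately, the integers $[w_0 u_1^{\ell_1}\cdots w_r]_k$ expand (via $[v v']_k = [v]_k k^{|v'|} + [v']_k$) into sums of the form $n_\alpha + N_t$; one checks that as one toggles whether each loop is inserted once or zero times, the resulting integers realize $\FS(n_i; N_t)$ for suitable $n_i$ (contributions $k^{(\text{position})}(k^{|u_i|}-1)\cdot[\dots]$, or more cleanly: take each $u_i$ to be a single repetition and vary the \emph{position} rather than the power) and shifts $N_t$. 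The clean way: use $r = t$ arbitrarily large, set each loop multiplicity to $0$ or $1$, and the base-$k$ value becomes $N_t + \sum_{i \in \alpha} n_i$ where $n_i$ is the (position-weighted) value contributed by inserting loop $i$, and $N_t$ is the "base" value with all loops absent. That is literally the definition of an \IPS-set. The assumption that such unboundedly-nested accepting paths exist is exactly the negation of: "the set of accepting words is, up to finitely many exceptional 'loop slots', a finite union of patterns $w_0 u_1^{*} w_1 \cdots u_r^* w_r$ with $r$ bounded" — and the latter is very sparseness.

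For the very sparse direction: if no such unbounded nesting exists, one argues that there is a bound $r$ such that every accepting word lies in one of finitely many patterns $w_0 u_1^{\ell_1} w_1 \cdots u_r^{\ell_r} w_r$. This is a statement about the structure of regular languages of "bounded polynomial growth" — a language over $\Sigma_k$ whose number of words of length $\le N$ grows polynomially (equivalently: the accepted set has Banach density $0$, i.e.\ $f$ is sparse) is, by standard automata theory (this is precisely where sparseness is used — compare the "polynomial growth" characterization, e.g.\ via the fact that a trim DFA with no two "entangled" cycles accepts a bounded language, and a bounded regular language is a finite union of sets $w_0 u_1^* w_1 \cdots u_r^* w_r$), a finite union of sets of the form in \eqref{eq:v-sparse-def}. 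So the whole theorem reduces to the following two facts about the automaton: (a) if two loops are "entangled" (reachable one after another on a common accepting path, in a way permitting arbitrary interleaving depth) then we extract an \IPS-set as above; (b) if no two loops are entangled, the accepting language has bounded growth, hence $f$ is very sparse by the bounded-language structure theorem.

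\medskip

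\noindent\textbf{Main obstacle.} The hard part will be making precise the dichotomy "entangled loops vs.\ bounded nesting" and showing that entangled loops genuinely yield an \IPS-set rather than merely a rank-$r$ very sparse set for each fixed $r$ — i.e.\ producing \emph{one} sequence $(n_i)$ and shifts $(N_t)$ that works simultaneously for all $t$. The subtlety is that in a finite automaton, a single simple cycle reused at many positions along one path is what produces unbounded rank, but then the "loop words" at different positions are forced to be related (same underlying cycle in $S$), so the increments $n_i$ are not free — I will need to choose the path so that the positions where the cycle is inserted are spaced out enough (say exponentially in base $k$) that the base-$k$ values add up without carrying interference, yielding honest finite sums $n_\alpha + N_t$. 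Verifying the carrying-free additivity and that the exceptional set (very-sparse-from-bounded-growth) argument correctly handles the interaction with the output map $\tau$ (dead-end branches) is the routine-but-delicate technical core, which I expect is carried out in Section \ref{sec:S-AUTO}.
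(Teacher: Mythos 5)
The overall strategy you sketch (analyse the digraph on ``live''/promising states, use pumping and cycle structure, and invoke the structure of bounded regular languages) is indeed the route the paper takes via Proposition \ref{automclassthm} and Proposition \ref{stemlesswineglass}, but as written your proposal has a genuine gap in both halves of the dichotomy. First, the step where you invoke sparseness is wrong: you justify the very sparse direction by claiming that polynomial growth of the number of accepted words is equivalent to the accepted set having Banach density zero. This is false --- the language of binary words avoiding the factor $11$ (Example \ref{ex:Fib-IPrich}) has exponentially many words of each length yet the corresponding set of integers has density zero, and it is exactly a sparse automatic sequence that is \emph{not} very sparse; if your equivalence held, every sparse automatic sequence would be very sparse and Theorem \ref{thm:Structure-Auto} would have no second alternative. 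Sparseness in fact plays no role in the dichotomy: the correct statement (Proposition \ref{automclassthm}) is that either every strongly connected component of the graph on promising states is a single vertex or a cycle graph, in which case the accepted language is a finite union of patterns $v_1w_1^{l_1}\cdots v_tw_t^{l_t}$ (very sparse), or some promising state $s$ admits two \emph{distinct words of the same length} $v_1\neq v_2$ with $\tilde{\delta}(s,v_1)=\tilde{\delta}(s,v_2)=s$.

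That equal-length two-cycle configuration is precisely the key idea missing from your sketch, and it is what dissolves the ``main obstacle'' you flag. Your construction toggles whether a loop is inserted at each of $t$ positions, but inserting a loop changes the length of the word, so it multiplies the contribution of all more significant digits by $k^{\abs{u}}$; the resulting integers are not of the form $N_t+\sum_{i\in\a}n_i$ for a \emph{fixed} sequence $(n_i)$, and no spacing of the positions repairs this --- the failure is multiplicative, not a carrying issue. Moreover, your picture of ``unboundedly many independent loop slots'' cannot occur literally: a path in a trim automaton meets each strongly connected component at most once, so the number of slots is bounded by $\abs{S}$; unboundedly many binary choices along one accepted path force repeated visits to a single state, i.e.\ two distinct cycles there, and to keep the arithmetic additive one must take them of equal length. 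With that in hand the paper concludes quickly: reading digits from the least significant one, Proposition \ref{stemlesswineglass} gives $l<m=l+d$, residues $p,r_1,r_2$ with $a_{k^l n+p}=a_{k^m n+r_1}=a_{k^m n+r_2}$ for all $n$ and $a_{k^l n_0+p}=1$ for some $n_0$; iterating this identity shows $f=1$ on all integers $k^l\bigl(k^{td}+k^{(t-1)d}s_{i_1}+\cdots+s_{i_t}\bigr)+p$ with $i_j\in\{1,2\}$, and since toggling the choice at a given position changes the value by exactly $k^{l+(i-1)d}\abs{s_2-s_1}$, these values contain $\FS(n_i;N_t)$ for the fixed increments $n_i=k^{l+(i-1)d}\abs{s_2-s_1}$ and suitable shifts $N_t$, i.e.\ an \IPS-set. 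Without the equal-length device your \IPS\ direction does not go through, and without replacing the density-zero argument by the cycle-graph analysis your very sparse direction is unjustified.
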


In light of Theorem \ref{thm:Structure-Auto}, combined with Theorem \ref{thm:BergelsonLeibman}, the relevance of the behaviour of nilsequences along \IPS-sets becomes clear. The following theorem is the main result of Section \ref{sec:S-GENPOLY}.

\begin{theorem}\label{thm:GP-vs-IPS}
	Let $(X,T)$ a minimal nilsystem, and let $x \in X$. Suppose that $S \subset X$ is semialgebraic, and that the set $\set{ n \in \NN_0}{ T^n x \in S}$ is \IPS. Then $\inter S \neq \emptyset$.
\end{theorem}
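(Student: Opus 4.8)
The plan is to show that if the orbit hits the semialgebraic set $S$ along an \IPS-set, then $\inter S$ cannot be empty, by exploiting the rigidity of nilsystems together with a limiting argument along an \IP-ring. Suppose $\FS(n_i;N_t) \subset \set{n}{T^n x \in S}$. The first step is to pass to the topological closure: since $X$ is compact, after refining to an \IP-ring $\mathcal{G}$ we may assume $\iplim_{\a \in \mathcal{G}} T^{n_\a} x = b$ for some $b \in X$, and (again using compactness of $X$ and passing to a subsequence of the shifts) that $T^{N_t} x \to y$ for some $y \in X$ along the relevant $t$'s. The key point is that for $\a \in \mathcal{G}$ with $\a \subset [t]$ and $t$ large, we have $n_\a + N_t \in \FS(n_i;N_t)$, so $T^{n_\a + N_t}x \in S$; letting $t \to \infty$ first (so $T^{N_t}x \to y$) and then $\a \to \infty$ along $\mathcal{G}$, we obtain $T^{0} \cdot (\text{something}) \in \cl S$ — more precisely, one should get that a whole coset-like family of points lands in $\cl S$. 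I would organize the double limit carefully: fix $\a$, let $t\to\infty$ to get $T^{n_\a}y' \in \cl S$ where $y'$ is a limit point of $T^{N_t}x$, and then let $\a \to \infty$ along $\mathcal{G}$.

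The heart of the matter is then a structural dichotomy for the \IP-limit $b = \iplim T^{n_\a}x$ inside a nilsystem. Here is where I expect to invoke the algebraic structure: in a nilsystem, \IP-limits of orbit points are highly constrained. Concretely, I would use the fact (going back to Bergelson--Host--Kra--Maass--Ziegler type results, or directly from the polynomial structure) that there is a sub-nilsystem $(Y,T)$ — the orbit closure of a suitable point — and an element $b$ lying in a proper ``algebraic'' position, so that one can produce points $T^{s} x$ for $s$ ranging over an \IP-set accumulating at every point of a whole sub-nilmanifold or its translate. The cleanest route is probably: the set $\set{s}{T^s x \in \cl S}$ contains $\FS(n_i) + (\text{limit of } N_t)$-type structure in the closure, and then one appeals to Theorem~\ref{BLnilgenpolythm} together with Proposition~\ref{lem:fp-closure} (the statement in the introduction that fractional parts of an equidistributed polynomial sequence cannot visit the projected image of a proper subgroup infinitely often) to conclude that $\cl S$ must contain an open set; since $S$ is semialgebraic, $\inter \partial S = \emptyset$, so $\cl S = \cl \inter S$, giving $\inter S \neq \emptyset$.

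More explicitly, the argument I would run: let $H = \overline{\set{g^{n_i}}{i}}$-generated sub-object, or rather consider the orbit closure $Z = \overline{\set{g^{n_\a}x}{\a \in \cF}}$. Using Hindman's theorem and the \IP-ring refinement, $Z$ is itself (the orbit closure of a polynomial sequence, hence) a sub-nilmanifold, and $b \in Z$. The \IPS-hypothesis forces $T^{N_t}(Z\text{-translate}) $ to meet $\cl S$ for infinitely many $t$; combined with minimality of $X$ (so $T^{N_t}x$ is equidistributed-ish, or at least the $N_t$ can be chosen so that $\set{T^{N_t}x}$ is dense after passing to a subsequence), we get that $\cl S$ contains a translate of $Z$, in particular contains points arbitrarily close to any point of a positive-dimensional piece. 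Since a semialgebraic set of empty interior in $[0,1)^k$ has Lebesgue measure zero and its closure also has empty interior, while a translate of a positive-dimensional sub-nilmanifold has positive ``dimension'', we would need $Z$ to be a single point — but then the $n_\a$ are eventually constant modulo the relevant structure, contradicting that they form an \IP-set of distinct sums (one can arrange $n_i \to \infty$, or rule out the degenerate case directly). Hence $\inter S \neq \emptyset$.

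The main obstacle I anticipate is making the ``translate of $Z$ lands in $\cl S$'' step rigorous: one must juggle the two independent limits (over $\a$ along the \IP-ring, and over the shifts $N_t$) and ensure that the limiting point is genuinely a translate of a non-trivial sub-nilmanifold rather than collapsing to a point, and that the semialgebraicity of $S$ is used correctly to pass from ``$\cl S$ contains a dense-in-some-subvariety set'' to ``$\inter S \neq \emptyset$''. I expect this to require the connectedness reductions already used elsewhere in the paper (reduce to $X$ connected, hence totally minimal, via passing to $T^{a}$ and a residue class, which is compatible with the \IPS-structure up to the usual dilation/translation bookkeeping) and a careful invocation of Proposition~\ref{lem:fp-closure} to kill the possibility that the relevant orbit stays trapped in the boundary $\bigcup \partial S_j$. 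The routine parts — Hindman refinement, compactness extraction of limits, the elementary fact about closures of semialgebraic sets — I would state without belaboring.
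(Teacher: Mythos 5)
The opening reductions you describe---refining to an \IP-ring, extracting a limit $b$ of $T^{N_t}x$ along a subsequence, and concluding that $T^{n_\alpha}b \in \cl S$ for all $\alpha$ in the \IP-ring---match the paper's Step~\ref{prop:S:main-2}. After that, however, the argument has a genuine gap at its central step. You claim that once $\cl S$ contains a ``translate of $Z$'' where $Z = \overline{\set{T^{n_\alpha}x}{\alpha}}$, there is a dichotomy: either $Z$ is positive-dimensional, which you say contradicts $\cl S$ having empty interior, or $Z$ is a point, which you say contradicts the \IP-structure. Both horns fail. A semialgebraic set of empty interior in $[0,1)^k$ (or its closure) can perfectly well contain positive-dimensional algebraic subvarieties---empty interior only forces Lebesgue measure zero, not dimension zero. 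Concretely, $S$ can be the projection of a proper sub-nilmanifold, and the return-time set of a generic orbit to such an $S$ can be a (very large) \IP-set; this is exactly the kind of configuration the theorem must rule out, and it is not ruled out by dimension counting in $X = G/\Gamma$. Nor is $Z$ forced to be a sub-nilmanifold at all: it is the closure of $\set{T^{n_\alpha}x}{\alpha}$ for a set of times produced by \IP-recurrence, which can be a thin piecewise algebraic set without any orbit-closure structure.

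The missing idea is that one must leave $G/\Gamma$ entirely and work with \emph{fractional parts} $\fp{g^{n_\alpha}}$ inside $G$ itself. The paper lifts $S$ to an algebraic set $\tilde S \subset G^\circ$, and then (Steps~\ref{prop:S:main-3} and~\ref{prop:S:main-4}) uses Hilbert's Basis Theorem plus the variable-separation Lemma~\ref{lem:variable-separation} to cut down to an algebraic set $R \subset \tilde S$ with $\inter R = \emptyset$ that is simultaneously invariant under conjugation $x \mapsto g^m x g^{-m}$ for all $m$ and under right multiplication $x \mapsto x\fp{g^n}$ for $n$ in an \IP-set. Only at that point can Proposition~\ref{lem:fp-closure} enter: the set $H = \set{h \in G^\circ}{Rh \subset R}$ is then an algebraic Lie subgroup of $G^\circ$ containing $\fp{g^n}$ for infinitely many $n$, so $H = G^\circ$, hence $R = G^\circ$, contradicting $\inter R = \emptyset$. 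Your proposal cites Proposition~\ref{lem:fp-closure} but never produces the Lie subgroup $H$ to which it applies, and does not address the discontinuity of $g \mapsto \fp{g}$ (handled in the paper by Lemma~\ref{lem:conv-of-fp-to-0} and a careful choice of \Malcev\ basis), nor the passage from a single invariance to a full group of invariances (handled by Lemma~\ref{lem:variable-separation}). These are not routine details that can be ``stated without belaboring''---they constitute the bulk of the proof and the reason the result is nontrivial.
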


\begin{proof}[Proof of Theorem \ref{thm:main-B2} assuming \ref{thm:Structure-Auto} and \ref{thm:GP-vs-IPS}]

Let $f \colon \NN_0 \to \{0,1\}$ be sparse, automatic and generalised polynomial. By Theorem \ref{thm:BergelsonLeibman}, there exists a minimal nilsystem $(X,T)$ and a semialgebraic set $S \subset X$, such that $f(n) = \ifbra{ T^n x \in S}$.

Suppose for the sake of contradiction that $f$ is not very sparse. Then, by Theorem \ref{thm:Structure-Auto}, the set $\set{ n \in \NN_0}{ T^n x \in S}$ is \IPS. Hence, by Theorem \ref{thm:GP-vs-IPS}, $\inter S \neq \emptyset$, and in particular $\mu_X(S) > 0$. On the other hand, by the Ergodic Theorem \ref{thm:ergo-thm-uniform}, we have
$ \mu_X(S) = d\bra{\set{ n \in \NN_0}{f(n) = 1}} = 0,$ which is a contradiction.
\end{proof}

\comment{Don't forget to say that nilsystems come with natural measures, $\mu_X$.}

\comment{We should mention at some point that if $S$ is semialgebraic, then $\mu(\partial S) = 0$.}
\section{Sparse generalised polynomials}\label{sec:S-GENPOLY}

\newcommand{\PHI}{P}
\newcommand{\PSI}{Q}

In this section, we prove Theorem \ref{thm:GP-vs-IPS}. In the process, we obtain results concerning the distribution of fractional parts of polynomial sequences, such as Proposition \ref{lem:variable-separation} and \ref{lem:fp-closure}, which are possibly of independent interest.

It will be slightly more convenient to reformulate Theorem \ref{thm:GP-vs-IPS} as follows.

\begin{theorem}\label{thm:S:main-1}
	Let $G/\Gamma$ be a nilmanifold, $g \in G$ whose action of $G/\Gamma$ is minimal, $a \in G$, and let $S \subset G/\Gamma$ be a semialgebraic set with $\inter S = \emptyset$.  
	Then, there does not exist an \IPS-set $E$ such that 
	\begin{equation}
	\label{eq:S:main-1}
	g^{n } a \Gamma \in S, \qquad n \in E.
	\end{equation}
\end{theorem}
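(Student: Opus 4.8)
The plan is to argue by contradiction. Suppose there is an \IPS-set $E \supseteq \FS(n_i; N_t)$ with $g^{n} a \Gamma \in S$ for all $n \in E$, where $S$ is semialgebraic and $\inter S = \emptyset$; I will derive an absurdity.

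\emph{Step 1: reduction to a connected nilmanifold.} The nilmanifold $G/\Gamma$ has finitely many connected components, say $m$ of them, and minimality forces $T_g$ to permute them cyclically. Colouring $\alpha \in \cF$ by $n_\alpha \bmod m$ and applying Hindman's theorem in its \IP-ring form (recall $\cF$ is itself an \IP-ring), one obtains a sub-sum-system $n'_j = n_{\beta_j}$ ($\beta_1 < \beta_2 < \dots$) all of whose finite sums lie in $m\ZZ$; after further thinning the shifts $(N_t)$ to a subsequence lying in a fixed residue class modulo $m$ and growing fast enough that $\FS(n'_j; N'_s) \subseteq \FS(n_i;N_t)$, every element of $\FS(n'_j; N'_s)$ lies in one residue class $c \bmod m$. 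Writing $n = mn'+c$, replacing $g$ by $g^m$, $a\Gamma$ by $g^{c}a\Gamma$, $X$ by the component $X_0$ containing it (again a connected nilmanifold with simply connected ambient group, on which $g^m$ acts minimally, hence --- being connected --- totally minimally), and $S$ by $S \cap X_0$ (semialgebraic with empty interior in $X_0$), one reduces to the case that $G$ is connected and $T := T_g$ is totally minimal. We retain the notation $\FS(n_i;N_t)$, $S$, etc.

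\emph{Step 2: producing an \IP-orbit inside $\cl S$.} Since $G/\Gamma$ is compact, pass to a subsequence of the shifts along which $g^{N_t}a\Gamma \to y\Gamma$ for some $y \in G$. Fix $\alpha \in \cFe$. For every sufficiently large $t$ in this subsequence, $\alpha \subseteq [t]$ and hence $n_\alpha + N_t \in E$, so $g^{n_\alpha}\bra{g^{N_t}a\Gamma} \in S$; letting $t \to \infty$ and using continuity of left translation by $g^{n_\alpha}$ gives $g^{n_\alpha} y\Gamma \in \cl S$. Therefore
$$
	\set{ g^{m} y\Gamma }{ m \in \FS(n_i) \cup \{0\} } \subseteq \cl S .
$$
As $S$ is semialgebraic with empty interior, so is $\cl S$ (it has the same dimension as $S$); in particular $\mu_X(\cl S) = 0$. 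The variable shifts have now been absorbed, and the theorem is reduced to a statement in which only the \IP-structure survives: \emph{in a totally minimal nilsystem on a connected nilmanifold, the $T$-orbit of any point, restricted to an \IP-set, is never contained in a semialgebraic set of empty interior}. (This is also why, for the present theorem, the \IPS-hypothesis is no stronger than the \IP-hypothesis.)

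\emph{Step 3: the reduced claim --- the main obstacle.} This is the technical heart of the section and the place where Propositions~\ref{lem:variable-separation} and \ref{lem:fp-closure} enter. First, using the cell decomposition of semialgebraic sets together with the partition-regularity of \IP-sets, one refines $(n_i)$ so that $\set{g^{m}y\Gamma}{m \in \FS(n_i)}$ lies in a single cell, and may thus assume $\cl S$ has dimension strictly less than $\dim G/\Gamma$. One then proceeds along the lower central series: projecting to the maximal horizontal torus $G/G_2\Gamma \cong \TT^{l_1}$, on which $T$ descends to a totally irrational rotation, and running an induction on the nilpotency class. Proposition~\ref{lem:variable-separation} (``variable separation'') is used to pass to a sub-\IP-ring along which the relevant piecewise-polynomial map decouples, confining a translate of the orbit-closure to the projected image of a proper rational subgroup of $G$, which Proposition~\ref{lem:fp-closure} then excludes for the equidistributed polynomial orbit produced in the process. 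In the base case $G/\Gamma = \TT^{k}$, o-minimality turns the low-dimensional semialgebraic constraint into a rational-linear one, and one concludes from the elementary fact that a nonzero integer multiple of an irrational number is never an integer, so that a totally irrational rotation orbit cannot satisfy a nontrivial linear relation along an \IP-set. I expect this step --- carrying the variable-separation argument through the tower of central extensions, and controlling how \IP-orbits on nilmanifolds meet semialgebraic sets --- to be by a wide margin the hardest part; the reductions of Steps 1 and 2 are routine by comparison.
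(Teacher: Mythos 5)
Your Steps 1 and 2 are fine and amount to the paper's Step \ref{prop:S:main-2}: the paper handles connectedness a little differently (it enlarges $\Gamma$ to a finite-index overgroup $\tilde\Gamma$, so that $G/\tilde\Gamma$ is connected, rather than passing to a connected component along an arithmetic progression as you do), and it absorbs the shifts $N_t$ by the same compactness/limit argument you use, replacing $S$ by its closure and conjugating $\Gamma$ so the base point becomes $e\Gamma$. Either route is routine. The genuine gap is your Step 3, which you yourself flag as the main obstacle: it is not a proof but a proposed strategy, and it is the entire content of the theorem. Moreover, the strategy you sketch — refine to a single cell, then induct on the nilpotency class by projecting to the maximal torus, with an o-minimality argument on $\TT^k$ as base case — is not developed and is not obviously viable: the image of $S$ under $G/\Gamma\to G/G_2\Gamma$ will in general have nonempty interior (it can be the whole torus), so no contradiction is visible on the abelianization, and you offer no mechanism for pushing the semialgebraic constraint, which lives in all the \Malcev\ (fractional-part) coordinates, through the tower of central extensions.

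The paper's actual argument has a different shape, and citing Lemma \ref{lem:variable-separation} and Proposition \ref{lem:fp-closure} does not substitute for it. After the reduction, the paper lifts the problem to the group: it takes the Zariski closure $\tilde S\subset G^{\circ}$ of the copy of $S$ in the fundamental domain; uses IP-recurrence together with Lemma \ref{lem:conv-of-fp-to-0} to arrange that $\fp{g^{n_\a}}\to e$ along an IP-ring, which yields the key identity $\fp{g^{n_{\a\cup\b}}} = g^{n_\b}\,\fp{g^{n_\a}}\,\ip{g^{n_\b}}^{-1}$; then applies Hilbert's Basis Theorem twice (Steps \ref{prop:S:main-3} and \ref{prop:S:main-4}) to manufacture a nonempty algebraic set $R\subset G^{\circ}$ with $\inter R=\emptyset$ that is invariant under $x\mapsto g^{m}xg^{-m}$ for all $m\in\ZZ$ (this is exactly where Lemma \ref{lem:variable-separation} enters, to pass from the diagonal exponents to all $m$) and under $x\mapsto x\fp{g^{n}}$ for $n$ in an IP-set. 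Only then does Proposition \ref{lem:fp-closure} act, applied to the algebraic group $H=\set{h\in G^{\circ}}{Rh\subset R}$, forcing $H=G^{\circ}$, hence $R=G^{\circ}$, contradicting $\inter R=\emptyset$. This bridge --- the construction of $R$ and of $H$ via Noetherianity and the fractional-part identity, which is what makes your two cited propositions applicable at all --- is precisely what is missing from your proposal.
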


\subsection*{Preliminaries}
In dealing with \IPS- sets, the following Lemma will be useful.

%
%

\newcommand{\fpp}[2]{\fp{#1}_{#2}}
\newcommand{\ipp}[2]{\left[ #1 \right]_{#2}}

The following lemma is standard. For proof, see e.g. \cite[Lemma 2.5]{Leibman-2005b}, \cite[Lemma 3.4]{Leibman2005}, or \cite[Lemma 1.11]{Zorin-Kranich2013}.
\comment{ We do *not* include the succinct proof for the sake of pleasing Ben.}
%
\begin{lemma}\label{lem:G2-not-complemented}
	Let $G$ be a nilpotent group (not necessarily Lie), and let $H < G$ be a subgroup of $G$ such that $H G_2 = G$. Then $H = G$.
\end{lemma}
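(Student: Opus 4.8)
The statement to prove is Lemma~\ref{lem:G2-not-complemented}: if $G$ is nilpotent, $H<G$ a subgroup with $HG_2=G$, then $H=G$. Here $G_2=[G,G]$ is the second term of the lower central series.

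\textbf{Plan of proof.} The plan is to argue by induction on the nilpotency class $d$ of $G$, i.e.\ the length of the lower central series $G=G_1\supset G_2\supset\cdots\supset G_{d+1}=1$. The base case $d=1$ is trivial: then $G_2=1$, so $HG_2=G$ already says $H=G$. For the inductive step, the key observation is that the hypothesis $HG_2=G$ propagates down the lower central series: I claim that $HG_{i+1}=G_i\cdot H$... more precisely, the clean statement to induct on is that $HG_2=G$ forces $HG_{i}=G$ for every $i\ge 2$, and in particular $HG_{d+1}=H=G$.

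\textbf{Key step.} The heart of the matter is the following claim: if $HG_2=G$ then $HG_3=G$ (and then one iterates, or equivalently passes to the quotient $G/G_3$, which has class $\le 2$, and then continues). To see $HG_3=G$, it suffices to show $G_2\subset HG_3$, since then $G=HG_2\subset H\cdot HG_3=HG_3$. Now $G_2=[G,G]$ is generated by commutators $[x,y]$ with $x,y\in G$. Using $HG_2=G$, write $x=h_1 z_1$, $y=h_2 z_2$ with $h_i\in H$, $z_i\in G_2$. Expanding the commutator via the standard commutator identities and using that $[G,G_2]=G_3$, one gets $[x,y]\equiv [h_1,h_2] \pmod{G_3}$: indeed any commutator involving at least one entry from $G_2$ lands in $[G,G_2]=G_3$. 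Hence $[x,y]\in [H,H]\cdot G_3\subset H\cdot G_3$, which gives $G_2\subset HG_3$ as desired. Then $HG_3=G$, and feeding this back (or applying the same argument in $G/G_3$, which still satisfies the hypothesis $\bar H\bar G_2=\bar G$) yields $HG_{i}=G$ for all $i$, hence $HG_{d+1}=H=G$.

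\textbf{Expected main obstacle.} This proof is essentially bookkeeping with commutator calculus; there is no deep obstacle, which is consistent with the paper labelling it "standard." The one place to be careful is the commutator expansion: one must verify that in a nilpotent group, modulo $G_3$, the map $(x,y)\mapsto [x,y]$ factors through $G/G_2\times G/G_2$ and is bilinear, so that replacing $x,y$ by their $H$-parts modulo $G_2$ is legitimate. This is exactly the statement that $G_2/G_3$ is a quotient of $(G/G_2)\wedge(G/G_2)$, a standard fact. An alternative, perhaps slicker, route avoiding explicit commutators: induct and quotient by the last nontrivial term $G_d$ (which is central); in $G/G_d$ the hypothesis persists, so by induction $H G_d = G$; then since $G_d\subset G_2$ and $G_d$ is central, a direct argument (or the commutator computation above applied to $G_d=[G_{d-1},G]\subset$ span of commutators) shows $G_d\subset H$, giving $H=G$. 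Either way the write-up is short.
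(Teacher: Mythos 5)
Your proof is correct. Note that the paper itself gives no proof of this lemma — it only points to Leibman and Zorin-Kranich — and your argument (induction on the nilpotency class, using that modulo $G_3$ the commutator is bilinear and kills $G_2$-entries, or equivalently the variant where one quotients by the central last term $G_d$ and then observes $[h_1c_1,h_2c_2]=[h_1,h_2]\in H$) is exactly the standard argument found in those references; the only spot to tighten in a write-up is the "feeding this back" iteration, which your central-quotient alternative already handles cleanly.
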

%
%

The following fact is standard. We include a succinct proof for the convenience of the reader because we were not able to locate it in the literature in this form. Alternatively, we could also use a different factorisation which appears e.g.\ in \cite[Corollary 4.26]{Zorin-Kranich2013}. \comment{Would be really nice to include a reference...} 
We will need the fact that if $G/\Gamma$ is a connected nilmanifold, then $G_i = (G^{\circ} \cap G_i)(\Gamma \cap G)$ for all $i$, which can be shown by a simple induction.
\comment{Reference would be nice...}

\begin{lemma}\label{lem:poly-seq-in-G-o}
	Let $G/\Gamma$ be a connected nilmanifold, and let $g\colon \ZZ \to G$ be a polynomial sequence. Then, there exists a polynomial sequence $h \colon \ZZ \to G$ with coefficients in $G^{\circ}$, such that $g(n)\Gamma = h(n)\Gamma$.
\end{lemma}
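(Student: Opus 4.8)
\textbf{Proof proposal for Lemma~\ref{lem:poly-seq-in-G-o}.}

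The plan is to proceed by induction on the degree $d$ of the polynomial sequence $g$, using the factorisation $G_i = (G^{\circ} \cap G_i)(\Gamma \cap G_i)$ recalled just before the statement. Write $g(n) = g_0^{\binom n0} g_1^{\binom n1} \cdots g_d^{\binom nd}$ with $g_i \in G_i$. The base case $d = 0$ is trivial: $g(n) = g_0$ is constant, and since $G = G^{\circ}\Gamma$ we may write $g_0 = h_0 \gamma_0$ with $h_0 \in G^{\circ}$ and $\gamma_0 \in \Gamma$, so that $h(n) = h_0$ works. For the inductive step, I would first handle the top coefficient $g_d$, which lies in $G_d = (G^{\circ}\cap G_d)(\Gamma\cap G_d)$; write $g_d = g_d' \gamma_d$ with $g_d' \in G^{\circ}\cap G_d$ and $\gamma_d \in \Gamma\cap G_d$. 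Since $G_d$ is central in $G$ (it is the last nontrivial term of the lower central series), $\gamma_d^{\binom nd}$ commutes with everything and can be pulled to the right; moreover $\gamma_d^{\binom nd}\in\Gamma$ for all $n\in\ZZ$, so it disappears upon projecting to $G/\Gamma$. Thus $g(n)\Gamma = \bigl(g_0^{\binom n0}\cdots g_{d-1}^{\binom n{d-1}} (g_d')^{\binom nd}\bigr)\Gamma$, and we have replaced $g_d$ by an element of $G^{\circ}$ without touching the lower-index coefficients.

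It remains to correct $g_0, \dots, g_{d-1}$, and here the subtlety is that pushing $\gamma_d^{\binom nd}$ around was free only because $G_d$ is central; for the lower coefficients one cannot simply peel off the $\Gamma$-part of each $g_i$ independently, since commutators between the corrected and uncorrected pieces reintroduce terms. The clean way is: having arranged the leading coefficient to lie in $G^{\circ}$, consider the polynomial sequence $\tilde g(n) = g_0^{\binom n0}\cdots g_{d-1}^{\binom n{d-1}}$ of degree $\le d-1$ (more precisely, the original $g(n)$ times $(g_d')^{-\binom nd}$, which by Lazard--Leibman is again polynomial and now has $G^{\circ}$-valued top coefficient and degree $\le d-1$ in the remaining part). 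By the inductive hypothesis applied to this shorter sequence there is a polynomial sequence $\tilde h$ with coefficients in $G^{\circ}$ and $\tilde g(n)\Gamma = \tilde h(n)\Gamma$; then $h(n) := \tilde h(n)\,(g_d')^{\binom nd}$ has all coefficients in $G^{\circ}$ (using that polynomial sequences with $G^{\circ}$-coefficients form a group, again by Lazard--Leibman, and that $G^{\circ}$ is normal) and satisfies $h(n)\Gamma = g(n)\Gamma$.

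The main obstacle I anticipate is bookkeeping the interaction between the Lazard--Leibman group structure on polynomial sequences and the requirement that coefficients stay inside $G^{\circ}$: one must check that multiplying/inverting polynomial sequences whose coefficients lie in the normal subgroup $G^{\circ}$ again yields coefficients in $G^{\circ}$. This follows because $G^{\circ}$ is a normal (indeed characteristic) subgroup containing each $G_i^{\circ} = G^{\circ}\cap G_i$, the lower central series of $G^{\circ}$ is compatible with that of $G$, and the explicit product/inverse formulas for polynomial sequences express the new coefficients as words in commutators of the old ones, hence remain in $G^{\circ}$. Once this closure property is in hand, the induction above goes through without further difficulty, and the key structural input is exactly the stated fact $G_i = (G^{\circ}\cap G_i)(\Gamma\cap G_i)$, which lets us split each coefficient into a $G^{\circ}$-part and a $\Gamma$-part to begin with.
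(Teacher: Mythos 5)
Your argument is sound only at the very top of the lower central series, and this breaks the induction. Both of the moves you make for the leading coefficient --- the splitting $g_d^{\binom{n}{d}}=(g_d')^{\binom{n}{d}}\gamma_d^{\binom{n}{d}}$, and the final verification that $h(n)\Gamma=g(n)\Gamma$, where you slide the lattice-valued error $\tilde h(n)^{-1}\tilde g(n)\in\Gamma$ past $(g_d')^{\binom{n}{d}}$ --- use that $G_d$ is central. But the inductive hypothesis is then invoked for $\tilde g(n)=g_0^{\binom{n}{0}}\cdots g_{d-1}^{\binom{n}{d-1}}$ inside the \emph{same} group $G$, whose top coefficient lies in $G_{d-1}$; for $d-1$ strictly below the step, $G_{d-1}$ is neither central nor abelian in general. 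At that level, writing $g_{d-1}=g_{d-1}'\gamma_{d-1}$ gives $(g_{d-1}'\gamma_{d-1})^{m}=(g_{d-1}')^{m}\,\delta_m$ where $\delta_m$ is a product of conjugates $(g_{d-1}')^{-j}\gamma_{d-1}(g_{d-1}')^{j}$, and a conjugate of a lattice element by an element of $G^{\circ}$ need not lie in $\Gamma$, so $\delta_m$ cannot be discarded modulo $\Gamma$; likewise the coset computation for $h(n)=\tilde h(n)(g_{d-1}')^{\binom{n}{d-1}}$ requires conjugating an element of $\Gamma$ by powers of $g_{d-1}'$, which again need not stay in $\Gamma$. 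So the inductive step is only established when the degree equals the step of $G$, and the recursion immediately leaves that case: the commutator bookkeeping you flag for the lower coefficients is not circumvented, it reappears one level down. (Rephrasing the induction as being on the step of $G$, via $G/G_s$ and lifting, could be made to work, but that is a different argument with its own lifting issues and is not what you wrote.)

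The paper's proof sidesteps this entirely by never multiplying the $G^{\circ}$-part and the $\Gamma$-part through each other. It inducts on $r$, producing polynomial sequences $h_r$ (valued in $G^{\circ}$) and $\gamma_r$ (valued in $\Gamma$) with $h_r(n)^{-1}g(n)\gamma_r(n)^{-1}=e$ for $n=0,1,\dots,r$: after normalising so that $g(0)=\dots=g(r-1)=e$, the expansion has $a_0=\dots=a_{r-1}=e$, one splits only $a_r=b_r\gamma_r$ with $b_r\in G_r\cap G^{\circ}$, $\gamma_r\in G_r\cap\Gamma$ (the same factorisation fact you use), and then multiplies by $b_r^{-\binom{n}{r}}$ on the \emph{left} of $g$ and by $\gamma_r^{-\binom{n}{r}}$ on the \emph{right}; since $\binom{n}{r}=0$ for $0\le n<r$, the earlier interpolation points are untouched, no commutation between the two pieces is ever needed, and running the induction up to the step $s$ forces the resulting polynomial sequence, which vanishes at $0,\dots,s$, to be trivial, giving $g(n)=h_s(n)\gamma_s(n)$ identically. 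If you want to repair your write-up, adopting this left/right interpolation device (or switching to induction on the step with a quotient-and-lift argument) is necessary; as it stands, the inductive step is unproved for every degree below the step.
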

\begin{proof}
	Let $s$ be the step of $G$. We may write $g(n)$ as
	\begin{equation}
	\label{eq:435}
		g(n) = a_0^{\binom{n}{0}} a_1^{\binom{n}{1}} \dots a_s^{\binom{n}{s}},
	\end{equation}

	where $a_i \in G_i$. We show by induction on $r$ that there exists a polynomial sequences $h_r,\gamma_r \colon \ZZ \to G$ taking values in $G^\circ$ and $\Gamma$ respectively such that $$h_r(n)^{-1}g(n) \gamma_r^{-1}(n) = e ,$$ for $ n = 0,1,\dots,r.$

	Suppose the claim holds for $r-1$. Replacing $g(n)$ with $h_{r-1}(n)^{-1}g(n) \gamma_{r-1}^{-1}(n),$ we may assume $g(0) = \dots = g(r-1) = e$. Hence, in \eqref{eq:435} we have $a_0 = \dots = a_{r-1}$. We may write $a_{r} = b_r \gamma_r$, where $b_r \in G_r \cap G^0$ and $\gamma_r \in G_r \cap \Gamma$. Now $b_{r}^{-\binom{n}{i}} g(n) \gamma_{r}^{-\binom{n}{r}} = e$ for $n = 0,1,\dots,r$.
\end{proof}

We will need also the following basic fact about invariant algebraic sets. 
\begin{lemma}\label{lem:invariant=>strongly-invariant}
	Suppose that $A \subset \RR^d$ is an algebraic set, and $\PHI \colon \RR^d \to \RR^d$ is a polynomial map. If $\PHI^{-1}(A) \subset A$, then also $\PHI(A) = A$.
\end{lemma}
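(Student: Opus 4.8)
The plan is to prove Lemma~\ref{lem:invariant=>strongly-invariant} by a dimension/degree argument on irreducible components, exploiting that a polynomial map cannot strictly increase dimension and that preimages of components are unions of components.

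First I would reduce to the case where $A$ is irreducible. Write $A = A_1 \cup \dots \cup A_m$ as a union of its irreducible components. For each $i$, the set $\overline{\PHI(A_i)}$ is an irreducible algebraic set (the Zariski closure of the image of an irreducible set under a morphism is irreducible), and $\dim \overline{\PHI(A_i)} \leq \dim A_i$. The hypothesis $\PHI^{-1}(A) \subset A$ says precisely that $\PHI(\mathbb{R}^d \setminus A) \subset \mathbb{R}^d \setminus A$, equivalently $\PHI^{-1}(\mathbb{R}^d \setminus A) \supset \mathbb{R}^d \setminus A$; combined with the trivial inclusion it is cleanest to record the contrapositive form $x \in A$ whenever $\PHI(x) \in A$ is \emph{not} what we have --- rather we have $\PHI(x) \in A \Rightarrow x \in A$. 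So for each component $A_i$, the image $\PHI(A_i)$ need not lie in $A$ a priori; instead I will run the argument the other way, using that $\PHI^{-1}(A)$ is an algebraic set containing, for each component $A_j$ of $A$, at least one component mapping onto a dense subset of $A_j$.

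The key step: fix a component $A_j$ of $A$. Since $A_j$ is irreducible and $\PHI|_{A_j}$ is a morphism onto a constructible subset of $A_j$ (as $\PHI^{-1}(A)\subset A$ forces nothing here directly --- let me instead argue via $\PHI^{-1}(A_j)$). Consider $B_j = \PHI^{-1}(A_j)$, an algebraic set; by hypothesis $B_j \subset A$, so each irreducible component of $B_j$ is contained in some $A_i$. Now I claim some component of $B_j$ dominates $A_j$ under $\PHI$: indeed $\PHI(B_j) \subset A_j$ and, because $\PHI$ restricted to each component of $B_j$ has image of dimension at most that of the component, and the components of $B_j$ cover $\PHI^{-1}(A_j)$, if no component dominated $A_j$ then $\PHI(B_j)$ would be contained in a proper closed subset of $A_j$, and $\PHI^{-1}$ of the complement would be empty --- contradicting that $\PHI$ hits points of $A_j$ (which it does, e.g. $\PHI(A_j)$ is nonempty and, applying the inclusion again, lies in $A$, hence meets some component). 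Tracking dimensions: the dominating component $C \subset A_i$ has $\dim C \geq \dim A_j$, while $\dim C \leq \dim A_i$; iterating $\PHI$ (using that $\PHI^{-n}(A) \subset A$ for all $n$ by induction) one gets a cycle among components along which dimension is non-decreasing, hence constant, and the dominant maps between them are dominant morphisms of irreducible varieties of equal dimension --- hence, over $\mathbb{C}$, generically finite and in particular surjective onto a dense open subset. Putting the pieces together over all components forces $\PHI(A) = A$: the inclusion $\PHI(A) \subset A$ follows from $\PHI^{-1}(A) \subset A$ applied to $\PHI(a)$ for $a \in A$ (if $\PHI(\PHI(a)) \in A$... no --- rather: we directly need $\PHI(A)\subset A$, which does \emph{not} follow from the hypothesis in general, so I must be careful: the statement as phrased must be using $A$ algebraic crucially, and $\PHI(A)\subset A$ should come out of the surjectivity of the component maps in the cycle, reorganising so that every component of $A$ both dominates and is dominated).

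The step I expect to be the main obstacle is making the ``cycle of components with constant dimension implies $\PHI(A)=A$'' argument rigorous over $\mathbb{R}$ rather than $\mathbb{C}$, since real algebraic sets behave badly with respect to images (a dominant polynomial map need not be surjective on real points, e.g. $x \mapsto x^2$). The clean fix is to pass to the complexification: let $A_\CC \subset \CC^d$ be the Zariski closure of $A$ over $\CC$; then $\PHI_\CC^{-1}(A_\CC) \subset A_\CC$ still holds (the inclusion of real algebraic sets implies the inclusion of their complexifications, since $\PHI^{-1}(A)$ and $A$ have the same dimension as their complexifications and the real points are Zariski-dense in each component over $\RR$). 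Prove $\PHI_\CC(A_\CC) = A_\CC$ using properness/dimension counting for complex varieties --- here the decomposition into a permutation of the equal-dimension components of $A_\CC$ together with dominance of each map, plus the fact that a dominant endomorphism of an irreducible variety that permutes the components of a periodic cycle must, on that cycle, be surjective (its image is closed of full dimension, hence the whole component) --- and then descend: $\PHI(A) = \PHI_\CC(A_\CC) \cap \RR^d \supset$ ... concluding $\PHI(A) = A$ after checking the real points. I would allocate most of the write-up to this complexification-and-dimension bookkeeping; everything else is formal.
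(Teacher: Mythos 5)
Your proposal misses the one idea the paper actually uses, and the machinery you substitute for it does not hold together. The paper's proof is two lines: preimages of algebraic sets under polynomial maps are algebraic, so the hypothesis yields a descending chain of algebraic sets $A \supseteq P^{-1}(A) \supseteq P^{-2}(A) \supseteq \cdots$; by the Hilbert Basis Theorem (the Noetherian property) this chain stabilises, $P^{-n}(A) = P^{-n-1}(A)$ for some $n$, and the equality $P(A) = A$ is then read off from the stabilised identity (in the paper's application $P$ is an invertible polynomial map --- a translation in \Malcev\ coordinates --- so one simply pushes forward by $P^{n+1}$). No irreducible components, no dimension counting, no dominance, and no complexification are needed.

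As written, your route has genuine gaps. (i) You notice yourself, mid-argument, that $P(A) \subseteq A$ does not follow from $P^{-1}(A) \subseteq A$, and the proposal never repairs this; the concluding ``reorganising so that every component both dominates and is dominated'' is essentially the statement of the lemma and is not carried out. (ii) The transfer to complexifications is false as claimed: $P^{-1}(A) \subseteq A$ over $\RR$ does not imply $P_{\CC}^{-1}(A_{\CC}) \subseteq A_{\CC}$. For instance with $d = 1$, $P(x) = x^2 + 1$ and $A = \{0\}$, the real preimage is empty while the complex preimage is $\{\pm i\}$; Zariski-density of the real points of $A$ in $A_{\CC}$ gives no control over the complex points of $P_{\CC}^{-1}(A_{\CC})$. (This degenerate situation, where the chain of preimages dies out, is exactly what the paper's application avoids because there $P$ is invertible.) (iii) Even over $\CC$, the step ``dominant, hence image closed of full dimension, hence surjective onto the component'' is wrong: images of dominant morphisms are only constructible (e.g.\ $(x,y) \mapsto (x,xy)$), so the cycle-of-components argument does not deliver surjectivity. (iv) The final descent from $\CC$ back to real points --- precisely where real images misbehave, as your own example $x \mapsto x^2$ shows --- is deferred and never resolved. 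In short, the proposal cannot be completed without substantial new input, whereas the descending-chain observation settles the lemma immediately.
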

\begin{proof}
	Consider the descending sequence of nonempty algebraic sets
	\begin{align}
	\label{eq:865-1}
		A \supseteq \PHI^{-1}(A) \supseteq \PHI^{-2}(A) \supseteq \PHI^{-3}(A) \supseteq \dots
	\end{align}	 
	
 	By Hilbert's Basis Theorem, there is $n$ such that $\PHI^{-n}(A) = \PHI^{-n-1}(A)$, whence $\PHI(A) = A$.
\end{proof}

\subsection*{Initial reductions}

Our proof of Theorem \ref{thm:S:main-1} is indirect: we exploit the existence of orbit satisfying \eqref{eq:S:main-1} to construct algebraically invariant objects whose existence is inreasingly difficult to sustain, up to the point when we obtain a blatant contradiction. We begin with a relatively straightforward reduction.

\begin{step}\label{prop:S:main-2}
	Assume Theorem \ref{thm:S:main-1} is false. Then, there exists a connected nilmanifold $G/\Gamma$, $g \in G$ whose action on $G/\Gamma$ is minimal, as well as a piecewise algebraic subset $S \subset G/\Gamma$ with $\inter S = \emptyset$ and an \IP-set $E \subset \NN$ such that
	\begin{equation}
	\label{eq:S:main-2}
	g^{n } \Gamma \in S, \qquad n \in E.
	\end{equation}
\end{step}
\begin{proof}
	Suppose that Theorem \ref{thm:S:main-1} fails, so that \eqref{eq:S:main-1} holds. 	
	We first address the issue of connectedness. There is an integer $d$, such each connected component of $G/\Gamma$ contains a point $h\Gamma$ with $h^d \in \Gamma$ (\cite[2.7]{Leibman2005}). The group $\tilde \Gamma$ generated by $\Gamma$ and one $h \in G$ as above has $[\tilde \Gamma:\Gamma] < \infty$ (\cite[1.14]{Zorin-Kranich2013}). Replacing $\Gamma$ with $\tilde \Gamma$ and $S$ with its image under projection $G/\Gamma \to G/\tilde \Gamma$, we may assume that $G/\Gamma$ is connected.	

	
	\comment{I'm pretty sure this works, but please see if you are convinved. Lemma 1.14 in ZK-thesis is of some relevance here. Suggestion: leave it like this, and change iff sb complains. --JK}
	
	 Let $(n_i)_{i \in \NN}$, $(N_t)_{t \geq 1}$ and $(t_0(\a))_{\a \in \cF}$ be such that $n_\a + N_t \in E$ for $t \geq t_0(\a)$, so that
	$$
		g^{n_{\a} + N_t} a \Gamma \in S. 
	$$
	Replacing $S$ with its closure, which is again a semialgebraic set with empty interior, we may assume that $S$ is closed. Restricting $N_t$ to a subsequence if necessary, we may assume that $g^{N_t} a \Gamma \to b \Gamma$ for some $b \in G$ as $t \to \infty$. 
	It follows that for any \IP-set $E' \subset \FS(n_i)$ we have 
	$$
		g^{n } b \Gamma = \lim_{t \to \infty} g^{n + N_t} a \Gamma  \in S, \qquad n \in E'. 
	$$
	Finally, put $\Gamma' = b \Gamma b^{-1}$ and let $S'$ be the projection to $G/\Gamma'$ a copy of $S$ in $G$. We find that $g^n \Gamma' \in S'$ for $n \in E'$, as needed.
\end{proof}
	  
\subsection*{Fractional parts and limits}
	  
Let $G/\Gamma$ be a nilmanifold with specified \Malcev\ basis. A technical difficulty in dealing with fractional parts in $G$ stems from the fact that, for a sequence $g_n \in G$, the limit of $g_n \Gamma$ does not determine the limit of $\fp{g_n}$ if the limit lies on the boundary of the fundamental domain. The following lemma partially overcomes this difficulty. 
		  
\begin{lemma}\label{lem:conv-of-fp-to-0}
	Let $G/\Gamma$ be a connected nilmanifold. Then, for any $(g_n)_{n \geq 0} \subset G$ with $\lim_{n \to \infty} g_n \Gamma = e \Gamma$, there exists a choice of \Malcev\ basis $\cX$ and a subsequence $(x_{n})_{n \in L}$, such that $\lim_{n \to \infty, n \in L} \fp{x_n} = e$.
	
	Likewise, for any $(g_\a)_{\a \in \cF}$, there exists a choice of \Malcev\ basis $\cX$ and \IP-ring $\cG$ such that $\iplim_{\a \in \cG} \fp{g_\a} = e$.
\end{lemma}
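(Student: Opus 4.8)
The plan is to exploit the freedom in choosing the \Malcev\ basis: a sufficiently small element of $G$ can be forced into the closed fundamental domain of a suitably chosen basis, and only finitely many bases — obtained from a fixed one by inverting some of the basis vectors — are needed to cover all the possible directions of approach to $e$. Fix once and for all a \Malcev\ basis $\cX_0=(e_1,\dots,e_k)$ compatible with $\Gamma$, with coordinate map $\tilde\tau_0\colon G\to\RR^k$ (so $D_0=\tilde\tau_0^{-1}([0,1)^k)$ is the fundamental domain and $\tilde\tau_0(e)=0$). Since $\Gamma$ is discrete, $G\to G/\Gamma$ is a covering map, so there is an open $\tilde V\ni e$ in $G$ meeting $\Gamma$ only in $e$ and mapping homeomorphically onto a neighbourhood of $e\Gamma$. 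As $g_n\Gamma\to e\Gamma$, for all large $n$ there is a unique lift $u_n\in\tilde V$ with $u_n\Gamma=g_n\Gamma$, and $u_n\to e$, hence $\tilde\tau_0(u_n)\to 0$. Because the fractional part of an element depends only on its $\Gamma$-coset, $\fp{g_n}=\fp{u_n}$ with respect to any \Malcev\ basis; so it suffices to find a basis and a subsequence $L$ along which $\fp{u_n}\to e$, and then $x_n:=g_n$ for $n\in L$ does the job.

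For a subset $P\subseteq\{1,\dots,k\}$ let $\cX_P$ be obtained from $\cX_0$ by replacing $e_i$ with $e_i^{-1}$ for each $i\notin P$. I would record two elementary facts. (i) $\cX_P$ is again a \Malcev\ basis compatible with $\Gamma$: inverting a basis vector leaves it in the same term $G_j$ of the lower central series and merely negates its class in $G_j/G_{j+1}$ (still a basis), while the integrality condition on coordinates is preserved since only a sign changes. (ii) Writing $f_i=e_i^{\sigma_i}$ with $\sigma_i=1$ for $i\in P$ and $\sigma_i=-1$ otherwise, the one-parameter subgroup identity $(e_i^{\sigma_i})^t=e_i^{\sigma_i t}$ gives $e_1^{t_1}\cdots e_k^{t_k}=f_1^{\sigma_1 t_1}\cdots f_k^{\sigma_k t_k}$, so by uniqueness of \Malcev\ coordinates the coordinate map of $\cX_P$ is $\tilde\tau_P(g)=(\sigma_i\,(\tilde\tau_0(g))_i)_{i=1}^k$; in particular $D_{\cX_P}=\{g:\sigma_i(\tilde\tau_0(g))_i\in[0,1)\text{ for all }i\}$.

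Now pass to a subsequence $L$ along which, for each $i$, the sign of $(\tilde\tau_0(u_n))_i$ is constant (there are finitely many sign patterns, so this is pigeonhole, applied coordinate by coordinate). Let $P=\{i:(\tilde\tau_0(u_n))_i\geq 0\text{ for }n\in L\}$ and take $\cX=\cX_P$. Then for $n\in L$ every coordinate of $\tilde\tau_P(u_n)$ equals $\sigma_i(\tilde\tau_0(u_n))_i\geq 0$ and tends to $0$, so $\tilde\tau_P(u_n)\in[0,1)^k$ for $n\in L$ large, i.e.\ $u_n\in D_{\cX_P}$. Hence $\fp{u_n}=u_n$ with respect to $\cX_P$, and therefore $\fp{g_n}=\fp{u_n}=u_n\to e$ along $L$, which proves the first assertion.

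The \IP-ring statement is proved the same way, read with the natural analogue of the hypothesis (that $g_\a\Gamma$ \IP-converges to $e\Gamma$ along some \IP-ring, which in the intended applications is automatic). Two adjustments: first, one passes to an \IP-ring along which $g_\a\Gamma$ \IP-converges to $e\Gamma$ — using compactness of $G/\Gamma$ and Hindman's theorem — and obtains lifts $u_\a\to e$ as above; second, the refinement to a constant sign pattern is carried out by the \IP-Ramsey theorem (partition the \IP-ring into the finitely many classes determined by the sign pattern of $\tilde\tau_0(u_\a)$ and pass to a sub-\IP-ring $\cG$ inside one of them). The same choice of $\cX_P$ then yields $\iplim_{\a\in\cG}\fp{g_\a}=e$. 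The only place needing care is the bookkeeping for the change of basis in (i) and (ii); everything else — discreteness of $\Gamma$, continuity of the covering, pigeonhole, Hindman's theorem — is soft, and I do not expect a genuine obstacle.
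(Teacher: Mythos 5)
Your proof is correct, and it rests on exactly the freedom the paper exploits --- replacing some \Malcev\ basis elements $e_i$ by $e_i^{-1}$, which stays a basis compatible with $\Gamma$ and simply negates the $i$-th coordinate --- but you implement it along a genuinely different route. The paper never lifts the sequence to $G$: it argues by induction on the coordinate index, writing $g_n$ in fractional-part form $\exp(t_r(n)X_r)\cdots\exp(t_d(n)X_d)\gamma(n)$, using that the limit of the fractional parts lies in $\Gamma$, and flipping one basis vector at a time while absorbing the resulting element of $\Gamma$ into the tail of the product. You instead lift $g_n\Gamma\to e\Gamma$ through the covering $G\to G/\Gamma$ (discreteness of $\Gamma$) to a sequence $u_n\to e$ in $G$, note that the fractional part depends only on the coset so $\fp{g_n}=\fp{u_n}$ in any basis, and then handle all coordinates at once by pigeonholing on the sign pattern of $\tilde\tau_0(u_n)$; after the corresponding one-shot sign flip the $u_n$ eventually lie in the new fundamental domain and are literally their own fractional parts. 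The identity $\tilde\tau_P=\sigma\circ\tilde\tau_0$, which you verify via uniqueness of one-parameter subgroups, is precisely what makes this legitimate, and it buys you a shorter argument that avoids both the induction and the in-place rewriting (commuting $\exp(X_r)\in\Gamma$ past the remaining factors), at the cost only of the easy covering-space lifting step. You also correctly observed that the \IP\ half of the statement, as printed, tacitly assumes $\iplim_{\a} g_\a\Gamma=e\Gamma$ along some \IP-ring --- this is exactly how the lemma is applied, after the \IP-recurrence theorem --- and replacing the subsequence extraction by Hindman's theorem (to stabilise the sign pattern along a sub-\IP-ring) is the right analogue, matching the paper's remark that the \IP\ case is ``fully analogous''.
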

\begin{proof}
	By induction, we prove that for each $r$, there exists a \Malcev\ basis $\cX = (X_1,X_2,\dots,X_d)$ such that (possibly after passing to a subsequence) we have 
	$$\lim_{n \to \infty} \fp{x_n} = h$$
	such that $\tau_i(h) = 0$ for $i < r$. For $r = 1$, the claim is trivially satisfied, and the claim for $r = s+1$ implies the statement of the lemma, where $s$ is the nilpotency class of $G$.
	
	If the claim holds for $r$, then we may write
	$$
		g_n = \exp(t_r(n) X_r) \exp(t_{r+1}(n) X_{r+1}) \dots \exp(t_{d}(n) X_{d})  \gamma(n),
	$$
	where $t_i(n) \in [0,1)$, $\gamma(n) \in \Gamma$, and $t_i(n) \to \tau_i(h)$ as $i \to \infty$. Since $h \in \Gamma$, we have $\tau_r(h) \in \{0,1\}$. If $\tau_r(h) = 0$, we are done. Otherwise, we may write
	\begin{align*}
	g_n 
	&= \exp( (1-t_r(n))(- X_r) ) \exp(X_r) \exp(t_{r+1}(n) X_{r+1}) \dots \exp(t_{d}(n) X_{d})  \gamma(n) \\
	&= \exp( (1-t_r(n))(- X_r) ) \exp(t_{r+1}'(n) X_{r+1}) \dots \exp(t_{d}'(n) X_{d})  \gamma'(n),
	\end{align*}
	where $t_i'(n) \in [0,1)$ and $\gamma'(n) \in \Gamma$. Replacing $\cX$ with $\cX' =(X_1',\dots,X_d')$ with $X_r' = -X_r$ and $X_i' = X_i$ for $i \neq r$, we find the sought \Malcev\ basis for $r+1$.
	
	The proof of the additional part for \IP-limits is fully analogous.
\end{proof}

We are now ready to perform the first substantial step in the proof of Theorem \ref{thm:S:main-1}, where we construct an algebraic set inside $G$ with surprising invariance properties. This is helpful largely because working in $G$ is easier than in $G/\Gamma$.

\begin{step}\label{prop:S:main-3}
	Suppose that Theorem \ref{thm:S:main-1} is false. Then there exists a connected nilmanifold $G/\Gamma$ with \Malcev\ basis $\cX$, $g \in G$ whose action on $G/\Gamma$ is minimal, an algebraic set $R \subset G^{\circ}$ with $\inter R = \emptyset$, and an \IP-set $E = \FS(n_i)$ such that $R$ is preserved under the operations
\begin{equation}
		\PHI_\a \colon G \to G, \qquad x \mapsto g^{n_\a} x \ip{ g^{n_\a}}^{-1}
\end{equation}  
	for all $\a \in \cF$. Additionally, $\PHI_\b \circ \PHI_\a = \PHI_{\a \cup \b}$ for all $\a < \b$.
\end{step}

\begin{proof}
	Let notation be as in the conclusion of Step \ref{prop:S:main-2}, but we reserve the right to replace $E = \FS(n_i)$ with an \IP-subset. 
	
	By the \IP-recurrence theorem (\cite[Lemma 9.10]{Furstenberg1981}), after replacing $E$ with a smaller \IP-set, we may assume that $\iplim_{\a} g^{n_\a} \Gamma = e \Gamma$. Hence, by Lemma \ref{lem:conv-of-fp-to-0} there exists a choice of \Malcev\ basis such that, possibly after shrinking $E$ again, we have 
\begin{equation}
	\label{eq:527-1}
	\iplim_{\a \in \cF} \fp{ g^{n_\a} } = e.
\end{equation}  
	
For fixed $\b$ and sufficiently large $\a > \b$, we aim to compute $\fp{g^{n_{\a \cup \b}}}$ in terms of $g^{n_\b}$ and $\fp{g^{n_\a}}$. We begin by noting that
$$
	\fp{g^{n_{\a \cup \b}}} 
	= \fp{ g^{n_\b} \fp{g^{n_\a}}} 
	= g^{n_\b} \fp{g^{n_\a}} \ip{g^{n_\b} \fp{g^{n_\a}} }^{-1}
 	= g^{n_\b} \fp{g^{n_\a}} \ip{g^{n_\b}}^{-1} \gamma(\a,\b),
$$	
where $\gamma(\a,\b)$ is bounded by a constant dependent at most on $\b$. Using a diagonalising argument to pass to an \IP-subset of $E$, we may assume that $\gamma(\a,\b) = \gamma(\b)$ does not depend on $\a$. Letting $\a \to \infty$ we now have
$$
	\iplim_{\a \in \cF} \fp{g^{n_{\a \cup \b}}} 
= 	\fp{ g^{n_\b} } \gamma(\b).
$$ 
	If $\b$ is sufficiently large, this implies that $\gamma(\b) = e$. Passing to an \IP-subset, we may assume that $\gamma(\beta) = e$ for all $\b \in \cF$. Hence, we have:
\begin{align}
	\label{eq:527-3}
	\fp{g^{n_{\a \cup \b}}} =  g^{n_\b} \fp{g^{n_\a}} \ip{g^{n_\b}}^{-1} = \PHI_\b(\fp{g^{n_\a}}),
\end{align}
	for any $\a,\b \in \cF$ with $\b < \a$.	
%
	Because $\PHI_\b \circ \PHI_\a$ and $\PHI_{\b \cup \a}$ agree on at least one point (such as $\fp{g^{n_{\lambda}}}$ for $\lambda > \alpha$), a short manipulation shows that $\PHI_\b \circ \PHI_\a = \PHI_{\b \cup \a}$ everywhere. We will use this fact several times.

	Denote by $\widetilde{S} \subset G^{\circ}$ the Zariski closure of the copy of $S$ contained in the fundamental domain for $\cX$. Then, $\tilde S$ is a algebraic subset of $G^{\circ}$ with $\inter \widetilde{S} = \emptyset$. 

	 For any $\a$, let $R_\a$ denote the set of $x \in \tilde{S}$ such that $\PHI_{\b}(x) \in \tilde S$. Clearly, $R_\a$ are algebraic, and by \eqref{eq:527-3} we have $\fp{ g^{n_\a} } \in R_\b$ if $\a > \b$. Finally, let $R = \bigcap_{\a \in \cF} R_\a$.
	
	Because of Hilbert's Basis Theorem, there is finite collection $\cB \subset \cF$ such that $R = \bigcap_{\b \in \cB} R_\b$. Note that $R$ is non-empty as $\fp{ g^{n_\a} } \in R$ if $\a > \b$ for all $\b \in \cB$. It remains to show that $R$ is preserved under $\PHI_\a$ for $\a$ large enough.
	
	Suppose that $x \in R$ and $\a > \b$ for all $\b \in \cB$; we will show that $\PHI_\a(x) \in R$. For this, it suffices to check that $\PHI_\a(x) \in R_\b$ for $\b \in \cB$, that is $\PHI_\a(x) \in \tilde{S}$ and $\PHI_\b( \PHI_\a(x)) \in \tilde S$. The first condition follows immediately from $x \in R_\a$, and the latter from $x \in R_{\a \cup \b}$.
\end{proof}

\subsection*{Fractional parts and polynomials}

Our next aim is to show that the set $R$ constructed in Step \ref{prop:S:main-3} needs to be invariant under a wider range of operations. For this purpose, we study the possible algebraic relations between polynomials and their fractional parts.

\begin{lemma} \label{lem:variable-separation}
	Let $G/\Gamma$ be a nilmanifold equipped with a \Malcev\ basis $\cX$. Let $g,h\colon \ZZ \to G^{\circ}$ be polynomial sequences. 
	Suppose that $E$ is an \IP-set and $\PHI \colon G^{\circ} \times G^{\circ} \to \RR$ is a polynomial such that 
	\begin{equation}
	\label{eq:782-1}
	\PHI\bra{ g(n), \fp{h(n)} } = 0, \qquad n \in E.
	\end{equation}
	Then, there exists an \IP-set $E' \subset E$ such that
	\begin{equation}
	\label{eq:782-9}
	\PHI\bra{g(m),\fp{h(n)} } = 0, \qquad m \in \ZZ,\ n \in E'.
	\end{equation}
\end{lemma}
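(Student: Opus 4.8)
\textbf{Proof proposal for Lemma \ref{lem:variable-separation}.}

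The plan is to separate the dependence on $m$ from the dependence on $n$ by exploiting the IP-structure of $E$ together with the fact that polynomial orbits on nilmanifolds push forward to equidistributed (hence Zariski-dense) sequences on appropriate subnilmanifolds. First I would fix a large "base scale": using Hindman's theorem / an IP-ring refinement, pass to an IP-subring $\cG \subset \cF$ and write $E \supset \FS(n_i)$ so that for $\a < \b$ in $\cG$ the group element $g(n_{\a\cup\b})$ decomposes multiplicatively in a controlled way (by the Lazard--Leibman group property of polynomial sequences, $g(m+n) = g_1(m)g_2(n)$ for suitable polynomial sequences, or more precisely $n \mapsto g(n_\b + n)$ is again polynomial with the same leading data). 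The point of the reduction is: for fixed $\b$, the map $m \mapsto g(n_\b)^{-1}g(m)$, or better the polynomial sequence $m \mapsto g(m)$ itself, ranges over a set whose Zariski closure in $G^\circ$ is independent of which IP-subset we are on.

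\emph{Key steps, in order.} (1) For each fixed $\b \in \cG$, consider the polynomial $\PHI_\b(\,\cdot\,) := \PHI(\,\cdot\,, \fp{h(n_\b)})$ on $G^\circ$; by hypothesis \eqref{eq:782-1} applied to $n = n_\b \in E$ we know $\PHI_\b(g(n_\b)) = 0$, but we want $\PHI_\b(g(m)) = 0$ for \emph{all} $m \in \ZZ$. (2) To get this, use the other variable: for $\a > \b$ we have $n_\a + n_\b \in E$ as well (after passing to an IP-ring, $n_{\a \cup \b} = n_\a + n_\b$), so $\PHI(g(n_{\a\cup\b}), \fp{h(n_{\a\cup\b})}) = 0$. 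Using Lemma \ref{lem:conv-of-fp-to-0} and the IP-recurrence theorem exactly as in Step \ref{prop:S:main-3}, after shrinking $E$ we may assume $\iplim_\a \fp{h(n_\a)} = e$ and that the fractional parts factor as $\fp{h(n_{\a\cup\b})} = (\text{something involving only }\b)\cdot(\text{something with }\fp{h(n_\a)}\to e)$. Letting $\a \to \infty$ along the IP-ring, continuity of $\PHI$ forces a relation in which the $n_\a$-dependence has been absorbed into a limit; what survives is a polynomial identity $\Psi_\b(g(n_\a)) = 0$ holding for all large $\a$. (3) Now the crucial equidistribution input: the polynomial sequence $\a \mapsto g(n_\a)$ (equivalently the honest polynomial sequence $t \mapsto g(n_1 + \dots + n_t)$, or a single-variable polynomial sequence obtained by fixing all but one index) is, after projecting to the sub-nilmanifold it generates, equidistributed by Leibman's theorem; in particular its orbit is Zariski-dense in a translate of a subgroup $H \leq G^\circ$. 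Since the relevant polynomial vanishes on infinitely many (indeed IP-many) points of this orbit and the orbit is Zariski dense in $H$-coset, the polynomial vanishes identically on that coset. (4) Finally, run the symmetric argument with the roles reversed to conclude that the vanishing persists when $g(n_\a)$ is replaced by $g(m)$ for arbitrary $m \in \ZZ$: concretely, apply the above with the polynomial $\widetilde\PHI(x,y) = \PHI(x \cdot g(m) g(n_\a)^{-1}\text{-type correction}, y)$, or more cleanly use the group property to write $g(m + n_\a)$ as a product and note that $m + n_\a$ ranges over a set on which \eqref{eq:782-1} still applies after a further IP-refinement, then take Zariski closure in the $m$-variable. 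Collecting the surviving identity gives \eqref{eq:782-9} on a suitable IP-subset $E' \subset E$.

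\emph{Main obstacle.} The delicate point is step (3)--(4): making precise the claim that the $m$-variable and the $n$-variable become "independent". The subtlety is that $g(m)$ and $\fp{h(n)}$ are \emph{not} a priori algebraically independent as $m,n$ range freely, so one cannot just invoke Zariski density of a product orbit in $G^\circ \times G^\circ$. What saves the argument is that we are only asked for vanishing for $n$ in an IP-set (not all $n$), so we have the freedom to keep shrinking $E$ and to use the IP-recurrence / factorisation of fractional parts to "freeze" the $n$-contribution at a limit value $e$ while letting $m$ vary over everything; the honest content is that along an IP-ring the fractional-part sequence $\fp{h(n_\a)}$ converges, so in the limit $\PHI(\,\cdot\,, \fp{h(n_\a)})$ stabilises to a fixed polynomial whose zero set must contain the Zariski closure of $\{g(m) : m \in \ZZ\}$ by Leibman equidistribution. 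I would expect the bookkeeping of successive IP-ring refinements (and checking they can all be performed simultaneously by a single diagonal argument, as in Step \ref{prop:S:main-3}) to be the most error-prone part, but conceptually routine given the tools already assembled in this section.
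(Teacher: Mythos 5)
Your overall strategy --- freeze the fractional-part variable at $\fp{h(n_\b)}$ by IP polynomial recurrence, then let the other variable run over all of $\ZZ$ using polynomiality --- is the same in spirit as the paper's, but the central step of your argument has a genuine gap. In your step (2) you pass from the exact identities $P\bra{g(n_{\a\cup\b}),\fp{h(n_{\a\cup\b})}}=0$ and the convergence $\fp{h(n_{\a\cup\b})}\to\fp{h(n_\b)}$ to ``a polynomial identity $\Psi_\b(g(n_\a))=0$ for all large $\a$'' by ``continuity of $P$''. This does not follow: the first argument $g(n_{\a\cup\b})$ is unbounded, so the small perturbation in the second argument is not uniformly controlled (the difference $P(x,y)-P(x,y')$ is only small for $x$ in a compact set), and in any case a limiting statement cannot by itself produce \emph{exact} vanishing at new points. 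Note also that the factorisation of $\fp{h(n_{\a\cup\b})}$ you borrow from Step \ref{prop:S:main-3} is special to the linear sequence $n\mapsto g^n$ and fails for a general polynomial sequence $h$; the correct input here is Leibman's IP polynomial recurrence theorem, which the paper does use.

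The paper's proof supplies exactly the missing device: since multiplication is polynomial in \Malcev\ coordinates, one writes $P\bra{g(m),y}=Q(m,y)=\sum_{k=0}^{d} m^k Q_k(y)$ with each coefficient $Q_k$ a polynomial in the \emph{bounded} variable $y$ alone. The hypothesis gives $\sum_k n^k Q_k\bra{\fp{h(n)}}=0$ on $E$; dividing by $n^d$ and using boundedness of $\fp{h(n)}$ yields $Q_d\bra{\fp{h(n)}}\to 0$ along $E$, and then Leibman's IP recurrence (plus the separation argument handling the discontinuity of $x\mapsto\fp{x}$) converts this into the exact identity $Q_d\bra{\fp{h(n_\b)}}=0$ by computing the same limit in two ways; one then inducts downward on $k$ and resums to get $Q\bra{m,\fp{h(n)}}=0$ for all $m\in\ZZ$ and $n$ in an IP-subset. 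Your steps (3)--(4) are also heavier than necessary: once one has $P\bra{g(m_j),\fp{h(n_\b)}}=0$ for infinitely many integers $m_j$, the map $m\mapsto P\bra{g(m),\fp{h(n_\b)}}$ is a single-variable real polynomial and hence vanishes identically --- no appeal to Leibman equidistribution or Zariski density of orbit closures is needed; but your argument never validly produces those infinitely many exact zeros, which is precisely what the coefficient-extraction induction is for.
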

\begin{remark}
	If above $g$ is instead a polynomial sequence $\RR \to G^o$, then the same argument gives $\PHI\bra{g(t),\fp{h(n)} } = 0$ for $t \in \RR$, $n \in E'$.
\end{remark}
\begin{proof}
	By Lemma \ref{lem:poly-seq-in-G-o}, we may assume that $g,h$ have coefficients in $G^o$. Note that there exists a polynomial $\PSI \colon \RR \times G \to \RR$ such that $\PHI(g(m),y) = \PSI(m,y)$. Expand $\PSI$ as $ \PSI(x,y) = \sum_{k} x^k \PSI_k(y)$, for some polynomials $\PSI_k \colon G \to \RR$. 

	Looking at the leading term in equation 
	\begin{equation}
	\label{eq:782-2}
		\sum_{k = 0}^d n^k \PSI_k\bra{ \fp{h(n)}  } = 0,
	\end{equation}
	we conclude that 
	\begin{equation}
	\label{eq:782-25}
	\lim_{E \ni n \to \infty} \PSI_d\bra{ \fp{h(n)}  } = 0.
	\end{equation}
	Take $n_i$ such that $n_\a \in E$ for $\a \in \cF$.	In particular, as a special case of \eqref{eq:782-25} for any fixed $\b \in \cF$ we have
	\begin{equation}
	\label{eq:782-3}
	\iplim_{\a \in \cF} \PSI_d \bra{ \fp{h\bra{n_\b + n_\a}}}  = 0.		
	\end{equation}	
	By the \IP\ polynomial recurrence theorem for nilrotation (see \cite[Theorem D]{Leibman-2005b}) 
	there exists an \IP-ring $\cF_\b$ (which may be chosen to be contained in any previously specified IP-ring) such that 
	\begin{equation}
	\label{eq:782-4}\iplim_{\a \in \cF_\b} h(n_\a + n_\b) \Gamma = h(n_\b) \Gamma.
	\end{equation}

	A slight technical difficulty stems from the fact that $g \mapsto \fp{g}$ is discontinuous. To overcome this problem, let $\e > 0$ be sufficiently small that if $x \in G$ lies in the fundamental domain, then $d(x, x \gamma) \geq \e$ for all $\gamma \in \Gamma$. After replacing $E$ with an \IP-subset $E'$, we may assume that all the points $\fp{h(n)}$ for $n \in E'$ lie within a ball of radius $\e/10$, which will be useful shortly.

	Applying \eqref{eq:782-4}, possibly after refining $\cF_\b$ further, we find
	\begin{equation}
	\label{eq:782-5}\iplim_{\a \in \cF_\b} \fp{ h(n_\a + n_\b)} = 
	\fp{ h(n_\b) }\gamma(\b),	
	\end{equation}
	where $\gamma(\b) \in \Gamma$. Now, because two limit points of $\fp{h(n_\a)}$ cannot differ by a factor of $\gamma \in \Gamma \setminus \{e\}$, we conclude that 
		\begin{equation}
	\label{eq:782-6}\iplim_{\a \in \cF_\b} \fp{ h(n_\a + n_\b)} = \fp{ h(n_\b)}.
	\end{equation}
	
	 Because $\PSI_d$ is continuous, \eqref{eq:782-6} yields $\PSI_d \bra{ \fp{ h(n_\b)}} = 0$, where we recall that $\b$ was arbitrary. Reasoning inductively, we obtain similarly $\PSI_k \bra{ { h(n_\b)} } = 0$ for all $k$ and $\b \in \cF$. Consequently, $\PSI\bra{m, \fp{ h(n_\b)}} = 0$ for all $m \in \ZZ$ and $n \in E'$, as desired.
\end{proof}

\begin{step}\label{prop:S:main-4}
	Suppose that Theorem \ref{thm:S:main-1} is false. Then there exists a connected nilmanifold $G/\Gamma$ with \Malcev\ basis $\cX$, $g \in G$ whose action on $G/\Gamma$ is minimal, an algebraic subset $R \subset G^{\circ}$ with $\inter R = \emptyset$ and an \IP-set $E$ such that $R$ is preserved under the operations
\begin{align*}
	x \mapsto g^{m} x g^{-m}, \qquad m \in \ZZ \\
	x \mapsto x \fp{g^{n}}, \qquad n \in E.
\end{align*}  
\end{step}
\begin{proof}
	Assume notation as in the conclusion of Step \ref{prop:S:main-3}. 
	
	Let us consider for $x \in R$ the set
	$$
		M_x = \set{ (m, n) \in \ZZ \times \ZZ }{ g^m x g^{-m} \fp{ g^n } \in R}.
	$$
	Because multiplication in $G^{\circ}$ is given by polynomials in coordinates and $R$ is algebraic, the condition $(m,n) \in M_x$ is equivalent to a system of polynomial equations in $g^m x g^{-m}$ and $\fp{ g^n }$. Because $(n,n) \in M_x$ for all $n \in R$, Lemma \ref{lem:variable-separation} implies that $M_x$ contains $\ZZ \times E'$ for some \IP-set $E'$ (which can be chosen inside any \IP-subset of $E$). Iterating this argument, for any finite set $S \subset R$, we find an \IP-set $E'$ such that $\ZZ \times E' \subset \bigcap_{x \in S} M_x$.
	
	Consider also for $x \in R$ the set 
	$$
		P_{x} = \set{ h \in G^{\circ} }{ g^m x g^{-m} h \in R \text{ for all } m \in \ZZ },
	$$	
	so that $n \in M_x$ precisely when $\fp{g^n} \in M_x$. Because $P_x$ are algebraic, by the Hilbert's Basis Theorem there exists a finite set $R_0 \subset R$ such that 
	$$
		P := \bigcap_{ x \in R } P_{x} = \bigcap_{ x \in R_0 } P_{x}.
	$$
Now, it follows from the previous considerations that there exists an \IP-set $E' \subset E$ such that $\ZZ \times E' \subset M_x$ for $x \in R_0$, whence $\fp{ g^{n} } \in P$. In particular, $g^m R g^{-m} \fp{g^n} = R \fp{g^n} = R$ for $n \in E'$ and $m \in \ZZ$, so $R$ is preserved by $x \mapsto g^{m} x g^{-m}$ and $x \mapsto x \fp{g^{n}}$.
\end{proof}

\subsection*{Group generated by fractional parts}

We are now ready to take the last step in the proof of Theorem \ref{thm:S:main-1}. Our strategy is to show that the set $R$ appearing in Step \ref{prop:S:main-4} in invariant under multiplication by all of $G^{\circ}$, which is clearly absurd. In order to facilitate this strategy, we need the following fact.

\begin{proposition}\label{lem:fp-closure}
	Let $G/\Gamma$ be a nilmanifold with a fixed choice of \Malcev\ coordinates, and let $g(n)$ be a polynomial sequence equidistributed in $G$. Suppose that $H < G$ is a Lie subgroup, such that $\fp{g(n)} \in H$ for infinitely many $n$. Then, $H \supset G^{\circ}$. 
\end{proposition}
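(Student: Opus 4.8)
I would first reduce, via the projection to the maximal torus and Lemma~\ref{lem:G2-not-complemented}, to the abelian case $G=\RR^m$, $\Gamma=\ZZ^m$, and then combine an interpolation argument with a boundedness argument for $\fp{g(n)}$ to force a horizontal projection of $g$ to have rational coefficients, contradicting equidistribution.

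\textbf{Reduction to the abelian case.} We may assume $G$ is connected, so $G^{\circ}=G$; if $H=G$ there is nothing to prove, so suppose $H\neq G$. By Lemma~\ref{lem:G2-not-complemented}, $HG_2\neq G$, so the image $\bar H$ of $H$ under $\pi_2\colon G\to\bar G:=G/G_2$ is a proper subgroup of the connected simply connected abelian group $\bar G\cong\RR^{l_1}$. Put $\bar\Gamma:=\pi_2(\Gamma)$ (a lattice, by compatibility of the \Malcev\ basis) and $\bar g(n):=\pi_2(g(n))$ (a polynomial sequence in $\bar G$). Since $e_{l_1+1},\dots,e_k\in G_2$, the projection $\pi_2$ reads off the first $l_1$ \Malcev\ coordinates, so $\pi_2(\fp{g(n)})=\fp{\bar g(n)}$, the $\bar\Gamma$-fractional part; and $\bar g(n)\bar\Gamma$ is equidistributed in $\bar G/\bar\Gamma\cong\TT^{l_1}$, being the pushforward of the equidistributed orbit $g(n)\Gamma$. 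Hence $\fp{\bar g(n)}\in\bar H$ for the infinitely many $n$ with $\fp{g(n)}\in H$, and the abelian case of the proposition, applied to $(\bar G,\bar\Gamma,\bar g,\bar H)$, would give $\bar H\supseteq\bar G^{\circ}=\bar G$, contradicting properness of $\bar H$. So it remains to prove the proposition for $G=\RR^m$, $\Gamma=\ZZ^m$; write $p=g$, so $p(n)\bmod\ZZ^m$ is equidistributed in $\TT^m$ and $\fp{p(n)}=p(n)-\floor{p(n)}\in H$ (coordinatewise floor, representative in $[0,1)^m$) for $n$ in an infinite set $N$.

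\textbf{The abelian case: a structural decomposition and the boundedness step.} Suppose for contradiction $H\neq\RR^m$; then $V:=H^{\circ}$ is a proper subspace, and as $H/V$ is countable we may, after shrinking $N$, assume $\fp{p(n)}\in x_0+V$ for all $n\in N$ and a fixed $x_0\in H$. Let $\rho\colon\RR^m\to\RR^m/V$ be the quotient and $\pi(n):=\rho(p(n))-\rho(x_0)$; for $n\in N$, $\rho(\fp{p(n)})=\rho(x_0)$ forces $\pi(n)=\rho(\floor{p(n)})\in\rho(\ZZ^m)=:\Lambda$, a finitely generated subgroup. The elementary fact I would use is: \emph{a polynomial sequence $\pi\colon\ZZ\to\RR^d$ lying in a fixed finitely generated subgroup $\Lambda$ for infinitely many $n$ satisfies $\pi(n)=\sum_s A_s(n)w_s$ for all $n$}, where $w_1,\dots,w_r$ is a $\ZZ$-basis of $\Lambda$ and $A_s\in\QQ[n]$ --- proved by expanding $\pi$ in a $\QQ$-basis of the (finite-dimensional) $\QQ$-span of $\Lambda$ together with the coefficients of $\pi$, so that the coefficient functions are rational polynomials, and those transverse to $\Lambda$ vanish at infinitely many integers, hence identically. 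Choosing $v_s\in\ZZ^m$ with $\rho(v_s)=w_s$ and setting $P(n):=\sum_s A_s(n)v_s$ (rational coefficients, so $P(n)\in\QQ^m$) gives
\begin{equation}\label{eq:fpclosure-decomp}
p(n)=x_0+P(n)+w(n),\qquad w(n)\in V,\quad n\in\ZZ,
\end{equation}
with $w\colon\ZZ\to V$ a polynomial sequence; subtracting $\rho$ of \eqref{eq:fpclosure-decomp} from $\rho(\fp{p(n)})=\rho(x_0)$ shows in addition that for $n\in N$, $m(n):=\floor{p(n)}-P(n)\in V\cap\QQ^m$. Now from \eqref{eq:fpclosure-decomp}, for $n\in N$, $\fp{p(n)}=p(n)-\floor{p(n)}=x_0+w(n)-m(n)\in[0,1)^m$, so $w(n)-m(n)$ is bounded over the infinite set $N$. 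Let $V_{\QQ}:=(V\cap\QQ^m)\otimes\RR$, a \emph{rational} subspace with $\dim V_{\QQ}\le\dim V<m$; since $m(n)\in V_{\QQ}$, the image of $w(n)$ in $V/V_{\QQ}$ equals that of $w(n)-m(n)$, hence is bounded over $n\in N$, and a polynomial sequence bounded at infinitely many integers is constant, so $w(n)-w(0)\in V_{\QQ}$ for all $n$. Thus the non-constant part of $p$ lies in $(\text{rational polynomials})+V_{\QQ}$; picking $v\in\ZZ^m\setminus\{0\}$ orthogonal to the proper rational subspace $V_{\QQ}$, the polynomial $n\mapsto\langle v,p(n)\rangle$ has all non-constant coefficients rational, so $\langle v,p(n)\rangle\bmod 1$ is periodic, hence not equidistributed in $\TT$ --- contradicting, by the Weyl criterion (the abelian case of Leibman's theorem above), the equidistribution of $p(n)\bmod\ZZ^m$.

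\textbf{Where the difficulty lies.} The naive strategy --- pass to the maximal torus and find a horizontal character vanishing on $\bar H$ --- fails precisely because $V=H^{\circ}$ need not lie in any proper \emph{rational} subspace (e.g.\ an irrational line in $\RR^2$), so no integer character is directly available. The boundedness step is what repairs this: it exploits that $\fp{p(n)}$, unlike $p(n)$ itself, stays in the bounded fundamental cube, which forces the ``irrational part'' $w(n)$ of $p(n)$ to move only inside the rational subspace $V_{\QQ}$, after which an integer-vector character can be chosen. I expect this interplay between the lattice and the irrational directions of $H$ --- handled by the boundedness argument --- to be the main obstacle; the rest is routine.
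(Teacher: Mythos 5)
Your skeleton is the same as the paper's: reduce, via Lemma \ref{lem:G2-not-complemented}, to the horizontal projection $\pi_2\colon G\to G/G_2\cong\RR^{l_1}$, note that $\pi_2(\fp{g(n)})=\fp{\pi_2(g(n))}$, and contradict equidistribution of the projected orbit in $\TT^{l_1}$. The abelian endgame, however, is genuinely different. The paper passes from $\pi_2(H)\neq\RR^{l_1}$ straight to a nontrivial real linear relation $\sum_i\lambda_i\fp{\theta_i(n)}=0$ (so it implicitly assumes $\pi_2(H)$ lies in a proper hyperplane), then makes the relation integral and drops integer parts to trap the horizontal orbit in a proper rational subtorus. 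You instead fix a coset $x_0+V$ of $V=H^{\circ}$, use the interpolation lemma for a polynomial sequence lying in the finitely generated group $\rho(\ZZ^m)$ to split $p$ into a rational polynomial plus a $V$-valued polynomial, and then use boundedness of $\fp{p(n)}$ to force the $V$-part into the rational subspace $V_{\QQ}$, finishing with an integer vector orthogonal to $V_{\QQ}$. Your route handles the case where $H$ has lattice directions transverse to $H^{\circ}$ (e.g.\ $H=\ZZ\times\RR\subset\RR^2$), where no linear form vanishes on $\pi_2(H)$ and the paper's relation does not literally exist; both proofs end by exhibiting a nonzero $\ZZ$-character along which the orbit fails to equidistribute. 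The interpolation lemma and its proof sketch, and the $V_{\QQ}$-boundedness step, are correct as written.

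Two steps need repair. First, the coset selection: ``$H/V$ is countable, so after shrinking $N$ we may assume $\fp{p(n)}\in x_0+V$'' is not a valid pigeonhole --- an infinite set split into countably many classes can have all classes finite. It becomes valid once you use that $H$ is closed: the cosets of $H^{\circ}$ are open in $H$ and pairwise disjoint, $H\cap\cl{D}$ is compact (the fundamental domain has compact closure), so only finitely many cosets meet $D$, and one of them contains $\fp{g(n)}$ for infinitely many $n$. Some such hypothesis is genuinely needed: for merely immersed subgroups the proposition is false ($G=\RR$, $\Gamma=\ZZ$, $g(n)=\sqrt2\,n$, $H=\ZZ+\sqrt2\,\ZZ$ is a zero-dimensional immersed subgroup containing every $\fp{g(n)}$); the paper implicitly works in this restricted setting too, since in its application (Step \ref{prop:S:main-5}) $H$ is real algebraic, hence closed with finitely many components. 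Second, you apply ``the abelian case'' to $\bar H=\pi_2(H)$, but the image of a closed subgroup under $\pi_2$ need not again be a Lie subgroup (it can be a dense non-closed subgroup of a line even for closed discrete $H$). This is avoided by doing the coset selection upstairs, as just described, and projecting only the chosen coset $x_0H^{\circ}$: its image is a coset of the linear subspace $\pi_2(H^{\circ})\subseteq\pi_2(H)\subsetneq\RR^{l_1}$, which is all your abelian argument actually uses. With these adjustments, plus a few words on the disconnected-$G$ reduction (the paper passes to a subsequence in one component, conjugates $\Gamma$, and invokes Lemma \ref{lem:poly-seq-in-G-o}), your proof is correct.
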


\begin{step}\label{prop:S:main-5}
	Theorem \ref{thm:S:main-1} holds true.
\end{step}
\begin{proof}[Proof (assuming Propositon \ref{lem:fp-closure})]
	Suppose Theorem \ref{thm:S:main-1} were false, and assume notation as in conclusion of Step \ref{prop:S:main-4}. Denote $$H = \set{ h \in G^{\circ}}{ R h \subset R} = \bigcap_{x \in R} \set{ h \in G^{\circ} }{ x h \in R}.$$
	
	It is immediate from the definition that $H$ is algebraic and closed under multiplication and (by Lemma \ref{lem:invariant=>strongly-invariant}) taking inverses. Hence, $H$ is a Lie subgroup of $G^{\circ}$.
Applying Proposition \ref{lem:fp-closure} to $\fp{g^{n}} \in H$ ($n \in E$), we conclude that $H = G^{\circ}$. In particular, $R = G^{\circ}$, which is in contradiction with $\inter R = \emptyset$.
%
%
\end{proof}
%

\begin{proof}[Proof of Proposition \ref{lem:fp-closure}]
	Let $E$ be an infinite set with $\fp{g(n)} \in H$ for $n \in E$.
	
	We begin with the case when $G$ is connected. Using Lemma \ref{lem:G2-not-complemented}, it will suffice to show that $HG_2 = G$.	Because of connectivity, $G/G_2$ can be identified with $\RR^d$ in such a way that $\Gamma$ maps to $\ZZ^d$. Let $\pi \colon G \to \RR^d$ be the corresponding projection. Put $\theta(n) = \pi( g(n) )$, $\xi(n) = \pi( \fp{g(n)} ) = \fp{ \theta(n) }$ and $V = \pi(H) < \RR^d$. Recall that $\xi(n)$ is bounded, $\theta(n)$ is polynomial, and $\xi(n) \in V$. We need to show that $V = \RR^d$.
	
%

	Suppose for the sake of contradiction that $V \neq \RR^d$. Then, there exists a non-trivial linear relation:
	\begin{equation}
	\label{eq:324-1}	
		\sum_{i=1}^d \lambda_i \fp{\theta_i(n)}  = 0, \qquad n \in E,
	\end{equation}
	where $\lambda_i \in \RR$, not all $ = 0$. The above may be rewritten as
	\begin{equation}
	\label{eq:324-2}	
		\sum_{i=1}^d \lambda_i \ip{\theta_i(n)} = \sum_{i=1}^d \lambda_i \theta_i(n), \qquad n \in E.
	\end{equation}
	Writing 
	$$ - \sum_{i=1}^d \lambda_i \theta_i(n) = \sum_{j=0}^D \kappa_j n^j,$$
	we thus obtain
	\begin{equation}
	\label{eq:324-3}
		\sum_{i=1}^d \lambda_i \ip{\theta_i(n)} + \sum_{j=0}^D \kappa_j n^j = 0 , \qquad n \in E.
	\end{equation}
	which amounts to saying that there is a non-trivial linear relation between the vectors $(\ip{\theta_i(n)})_{n \in E}$ for $i = 1,\dots,d$ and $(n^j)_{n \in E}$ for $j = 0,1,\dots,D$. Because these vectors are integer-valued, if one such relation exists, then there is also a relation with integer coefficients. Take one such relation
	\begin{equation}
	\label{eq:324-7}
		\sum_{i=1}^d l_i \ip{\theta_i(n)} + \sum_{j=0}^D k_j n^j = 0 , \qquad n \in E,
	\end{equation}
	where $l_i \in \ZZ$ and $k_j \in \ZZ$ for all $i,j$, and not all of $l_i,k_j$ are $0$. 	%
%
	Dropping the integer parts in \eqref{eq:324-7} at the cost of introducing a bounded error, we recover
	\begin{equation}
	\label{eq:324-4}
		\sum_{i=1}^d l_i {\theta_i(n)} + \sum_{j=0}^D k_j n^j = O(1), \qquad n \in E.
	\end{equation}
	This is only possible if the left hand side of \eqref{eq:324-4} is constant (as a polynomial, hence for $n \in \ZZ$).	It follows that $\theta(n) \bmod{\ZZ^d}$ for $n \in \ZZ$ takes values in the proper sub-torus $\set{x \in \RR^d/\ZZ^d}{ \sum l_i x_i = c }$, which contradicts equidistribution of $g(n)\Gamma$.
	
	Derivation of the general case is now a standard reduction. Passing to a subsequence, we may assume that $g(n) \Gamma$ all lie in the same connected compontent for $n \in E$.  \comment{I think we spent quite a while showing that this can be accomplished by intersecting $E$ with an arithmetic progression. This is true, but completely irrelevant.} Replacing $\Gamma$ with a conjugate $b \Gamma b^{-1}$ (and correspondingly replacing $g(n)$ with $g(n)b^{-1}$), we may assue that this is the component containing $e \Gamma$. By Lemma \ref{lem:poly-seq-in-G-o}, we may assume that the coefficients of $g(n)$ lie in $G^{\circ}$. It remains to apply the previous reasoning. 
\comment{This is a small cheat. It is OK to say that $g(n)$ is now polynomial in $G^{\circ}$, but it is the one place in the entire paper when this is NOT a polynomial with respect to the lower central series. The catch is that $(G^{\circ})_{i} \not \equiv G^\circ \cap G_i$. However, there is absolutely no problem in applying the same ARGUMENT, the only difference is that the quotient $G/G_2$ should now be $G^{\circ}/ (G_2 \cap G^\circ)$ rather than $G^{\circ}/ G^\circ_2$.} 
\end{proof}

\section{Sparse automatic sets}\label{sec:S-AUTO}

The aim of this chapter is to provide proofs of the several combinatorial results on automatic sequences that have been used in the previous sections, specifically Theorem \ref{thm:Structure-Auto}. We begin with some preparatory results. 
For a $k$-automaton $\mathcal{A}=(S,s_{\bullet},\delta,\{0,1\},\tau)$ with output $\{0,1\}$, we call a state $s\in S$ \emph{promising} if there exists a word $w\in \Sigma_k^*$ such that $\tau(\tilde{\delta}(s,w))=1$.

\begin{proposition}\label{automclassthm} Let $\mathcal{A}=(S,s_{\bullet},\delta,\{0,1\},\tau)$ be a $k$-automaton with output $\{0,1\}$. Then either 
\begin{enumerate}
\item [(i)] there exists a promising state $s\in S$ and $v_1,v_2\in \Sigma_k^*$ such that $v_1\neq v_2$, $|v_1|=|v_2|$, and $\tilde{\delta}(s,v_1)=\tilde{\delta}(s,v_2)=s$
\end{enumerate} or \begin{enumerate}
\item[(ii)]  the set of $w\in \Sigma_k^*$ such that $\tau(\tilde{\delta}(s_{\bullet},w))=1$ is a finite union of sets of the form $$\{v_1w_1^{l_1}\cdots v_tw_t^{l_t} \mid l_1,\ldots,l_t\geq 0 \}$$ for $t\geq 1, w_i,v_i\in \Sigma_k^* \text{ for } 1\leq i \leq t,$ and $v_i\neq \epsilon \text{ for } 2\leq i \leq t.$
\end{enumerate}
\end{proposition}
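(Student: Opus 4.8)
The dichotomy is really a statement about the structure of the "path graph" of the automaton restricted to promising states, so I would phrase everything in graph-theoretic terms. Let me call a state $s$ \emph{promising} (as in the paper) if some word drives it to an accepting state. Consider the directed multigraph $\mathcal{A}'$ on the promising states of $\mathcal{A}$, where we draw an edge labelled $b \in \Sigma_k$ from $s$ to $s'$ whenever $\delta(s,b) = s'$ and $s'$ is promising. (Note $s_\bullet$ itself is promising, since otherwise case (ii) holds trivially with the empty union.) Every word $w$ with $\tau(\tilde\delta(s_\bullet, w)) = 1$ corresponds to a path in $\mathcal{A}'$ from $s_\bullet$ to an accepting state, and conversely, so it suffices to analyse the language of such paths.

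\textbf{Step 1: the combinatorial dichotomy.} I claim: either $\mathcal{A}'$ contains a vertex $s$ lying on two distinct closed walks of equal length through $s$ (this is exactly condition (i), reading off the labels $v_1, v_2$), or else for every strongly connected component (SCC) $C$ of $\mathcal{A}'$ and every vertex $s \in C$, all closed walks through $s$ of a given length carry the same label — which, I will argue, forces each nontrivial SCC to be a single directed cycle (as a labelled graph: from each vertex of the SCC there is a unique outgoing edge staying in the SCC). The key sub-argument: if a nontrivial SCC $C$ is not a simple cycle in this sense, then some vertex $s \in C$ has two distinct edges $s \to s_1$, $s \to s_2$ with $s_1, s_2 \in C$ (possibly $s_1 = s_2$ with different labels, or $s_1 \neq s_2$); using strong connectivity, extend both to closed walks at $s$, and then pad the shorter one by going around a cycle in $C$ enough times to equalise lengths (here one uses that from any vertex of $C$ one can return to $s$ in lengths forming an eventually-full arithmetic-progression-like set — more simply, just traverse a fixed closed walk $W$ at $s$ repeatedly, and since we are free to insert copies of $W$ into either walk, a standard argument with two arithmetic progressions sharing a common difference $|W|$ lets us match lengths after possibly also pre-composing with $W^j$). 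This yields two distinct equal-length closed walks, i.e. condition (i). Extracting $v_1 \ne v_2$, $|v_1| = |v_2|$ with $\tilde\delta(s, v_i) = s$ and $s$ promising is then immediate.

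\textbf{Step 2: from "all nontrivial SCCs are cycles" to the regular expression in (ii).} Contract each nontrivial SCC of $\mathcal{A}'$; the condensation is a DAG on finitely many vertices, and every maximal path in it from (the component of) $s_\bullet$ to (a component containing) an accepting state is finite in number and bounded in length. Along such a macro-path, between consecutive SCCs we traverse a fixed "connector" word, and within each SCC $C_j$ we may loop around its unique cycle an arbitrary number $l_j \ge 0$ of times before exiting along a fixed edge; the label of one loop of $C_j$ is some word $w_j$, and the fixed connecting segments give words $v_j$. Reading off the concatenation shows the accepted language is a finite union of sets $\{v_1 w_1^{l_1} \cdots v_t w_t^{l_t} : l_i \ge 0\}$. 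The condition $v_i \ne \epsilon$ for $i \ge 2$ can be arranged: if some connector word were empty, it means two consecutive SCCs are joined with a length-zero transition, which cannot happen (distinct SCCs, distinct vertices) — so the only way to get $v_1 = \epsilon$ is when the macro-path starts inside an SCC, i.e. $s_\bullet$ itself lies on a cycle, and that is exactly why the $i = 1$ term is exempted. (One should double check the degenerate cases $t$ minimal and empty words among the $w_i$, but these are routine and can be absorbed into the finite union.)

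\textbf{Main obstacle.} The genuinely delicate point is Step 1: ruling out case (i) must be leveraged to conclude each nontrivial SCC is a bare cycle, and the length-matching / padding argument needs to be done carefully so that the two closed walks one produces are honestly of \emph{equal} length and honestly \emph{distinct} as words. The cleanest route is probably: fix a vertex $s$ in a nontrivial SCC $C$ and a shortest closed walk $W_0$ at $s$ within $C$ of length $L_0 \ge 1$; if the SCC is not just the cycle $W_0$, there is a closed walk $W_1$ at $s$ within $C$ that is not a power of $W_0$ (length $L_1$); then $W_0^{L_1}$ and $W_1^{L_0}$ are distinct closed walks at $s$ of the same length $L_0 L_1$, giving (i). The remaining burden is verifying "not just the cycle $W_0$" really does produce such a $W_1$ — which follows because a strongly connected labelled graph in which every vertex has a unique in-SCC out-edge is precisely a single directed cycle. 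I would also keep an eye on one subtlety throughout: the automaton reads most-significant digit first and there may be leading-zero issues, but since the statement is purely about the set of words $w$ (not integers), no normalisation is needed here.
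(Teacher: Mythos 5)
Your global strategy is exactly the paper's: form the directed multigraph on promising states, show that if (i) fails then every strongly connected component is a single directed cycle, and then read the accepted language off walks through the condensation; your Step 2 matches the paper's path decomposition and is fine. The genuine soft spot is the length-matching step in Step 1, and both versions you offer are defective as written. The padding argument fails outright: inserting copies of a fixed closed walk $W$ into the two walks produces lengths $a + i\abs{W}$ and $b + j\abs{W}$, and these progressions meet only when $a \equiv b \pmod{\abs{W}}$, which is not guaranteed. Your ``cleanest route'' is closer, but the assertion that $W_0^{L_1} \neq W_1^{L_0}$ whenever $W_1$ is not a power of $W_0$ is not a valid implication about words: if $W_0 = z^2$ and $W_1 = z^3$, then $W_1$ is not a power of $W_0$, yet $W_0^{L_1} = W_1^{L_0} = z^{6\abs{z}}$. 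In the automaton setting the conclusion is still true, but it needs an argument you do not supply: if $W_0^{L_1} = W_1^{L_0}$, then by Lyndon--Sch\"utzenberger $W_0 = z^a$, $W_1 = z^b$ for a common word $z$; determinism makes the orbit of $s$ under reading $z$ purely periodic with period $d \mid a$, so $z^d$ is a closed walk at $s$ of length $d\abs{z} \leq L_0$, minimality of $W_0$ forces $d = a$, and then $d \mid b$ gives $W_1 = W_0^{b/a}$, a contradiction.

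The cheapest repair --- and it is what the paper actually does --- is to base the argument at a branching vertex instead of a fixed $s$: if a nontrivial component $C$ is not a bare cycle, some vertex $u \in C$ has two distinct out-edges inside $C$, which by determinism carry distinct labels; extend each by strong connectivity to a closed walk at $u$ inside $C$, say $A$ and $B$ of lengths $a, b \geq 1$, and compare $A^{b}$ with $B^{a}$. These are closed walks at $u$ of the same length $ab$ which differ already in their first letter, so taking $v_1, v_2$ to be their labels gives (i) (note $u$ is promising since $C \subset S^+$). With this substitution the delicate existence of a $W_1$ ``not a power of $W_0$'' is no longer needed, and your proof becomes complete and essentially identical to the paper's; the remaining points you raise (determinism meaning a walk is determined by its start state and label word, the exemption of $v_1$ from being nonempty, and the empty-union degenerate case) are handled correctly.
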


\begin{proof} Let $S^+ \subset S$ denote the set of promising states.
We construct a directed multigraph $G$ with vertex set $S^+$, and for each $s \in S^+,\ v \in \Sigma_k$ and edge from $s$ to $\delta(s,v)$. For $s_1,s_2 \in S^+$, write $s_1 \sim s_2$ if $s_1$ and $s_2$ are in the same strongly connected component of $G$, i.e.\ there exist $v_1, v_2\in \Sigma_k^*$ such that $\tilde{\delta}(s_1,v_1)=s_2$, and $\tilde{\delta}(s_2,v_2)=s_1$. We see that $\sim$ is an equivalence relation. For $s_1, s_2$ in $S^+$ we further write that $s_1 \prec s_2$ if there exist $v_1, v_2 \in \Sigma_k^*$ such that $v_1\neq v_2$, $|v_1|=|v_2|$ and $\tilde{\delta}(s_1,v_1)=\tilde{\delta}(s_1,v_2)=s_2$. The relation $\prec$ is a partial weak order and is invariant with respect to $\sim$ (i.e.\ if $s_1\sim s'_1$, $s_2\sim s'_2$, and $s_1\prec s_2$, then $s'_1\prec s'_2$), and therefore induces a partial weak order on the set of connected components $S^+/\mathord\sim$, which we continue to denote by $\prec$. Assume now that the condition (i) does not hold. This means precisely that $\prec$ is a partial strict order (on $S^+$ or $S^+/\mathord\sim$). 

%

Recall that a \emph{cycle graph} of length $n \geq 1$ is a graph $C$ on $n$ vertices $\{v_1,\dots,v_n\}$ with exactly $n$ edges, going from $v_i$ to $v_{i+1}$ for $1 \leq i \leq n$, where $v_{n+1} := v_1$.
\begin{claim*}
	Any strongly connected component $C$ of $G$ is either a cycle or consists of a single vertex.
\end{claim*}

\begin{proof}[Proof of the claim] 
If $C$ has a single vertex, then there is nothing to prove, so assume this is not the case. Strong connectivity now implies that any vertex lies on a cycle. For any $s \in C$, any two cycles $\gamma_1,\gamma_2$ from $s$ to $s$ begin with the same edge (otherwise some multiples of these two cycles would produce two different paths from $s$ to $s$ of the same length). Because of strong connectivity any edge in $C$ can be prolonged to a cycle, no vertex in $C$ has two outgoing edges. Thus, each vertex has outdegree exactly $1$. It follows that $C$ is a disjoint union of cycles. Since $C$ is connected, it is a cycle graph.
%
\end{proof}
Any path from the initial vertex $s_{\bullet}$ to a vertex $s$ with $\tau(s)=1$ passes through promising states only and has the form $\gamma=u_1 v_2 u_2 \cdots v_k u_k,$ where $k \geq 1$, $v_i\in \Sigma_k$, $u_i\in \Sigma_k^*$, $u_i$ are paths contained entirely in a strongly connected component $C_i$, while $v_i$ are simple edges between strongly connected components. Since $S^+/\mathord\sim$ is strictly ordered by the relation $\prec$, $k$ does not exceed the number of components $\abs{S^+/\sim}$. Furthermore, since $C_i$ are cycle graphs, any $u_i$ has the form $u_i=\tilde{u}_i w_i^{l_i}$, $l_i\geq 0$, where $w_i$ is a (shortest) cycle in $C_i$ and $\abs{\tilde u_i}$ is bounded by the size of $C_i$.
Hence, any such path $\gamma$ is uniquely determined by the following data:
\begin{enumerate} 
\item integer $1 \leq k \leq \abs{S^+/\sim}$;
\item the edges $v_i$, $2\leq i \leq k$;
\item the paths $\tilde u_i$, and $w_i$, $1\leq i \leq k$;
\item the multiplicities $l_i$, $1\leq i \leq k$.
\end{enumerate}
There are only finitely many choices of $k$; $v_i$ for $2\leq i \leq k$; and $\tilde{u_i}$, $w_i$ for $1\leq i \leq k$ as above. 
Hence every path from $s_{\bullet}$ to a vertex $s_t$ with $\tau(s_t)=1$ is of one of finitely many forms $$\gamma=\tilde{v}_1 w_1^{l_1} \tilde{v}_2 w_2^{l_2}\cdots\tilde{v}_k w_k^{l_k}, \quad l_i\geq 0,$$ where $\tilde{v}_1=\tilde{u}_1$, $\tilde{v}_i=v_i\tilde{u}_i$, $2\leq i \leq k$. This ends the proof of the proposition.
\end{proof}
\begin{proposition}\label{stemlesswineglass} Let $(a_n)_{n \geq 0}$ be an automatic sequence with values in $\{0,1\}$. Then either \begin{enumerate} \item  there exist integers $0 \leq l <m$ and integers $p, r_1, r_2$ such that $0\leq p <k^l$, $0\leq r_i < k^m$, $i=1,2$ such that $r_1\equiv r_2\equiv p \bmod{k^l}$, $$a_{k^l n +p}=a_{k^m n + r_1}=a_{k^m n + r_2}, \quad n\geq 0$$ and there exists some $n\geq 0$ such that $a_{k^l n+p}=1$  \end{enumerate} or \begin{enumerate} \item[(ii)]  the set $A=\{n\geq 0 \mid a_n=1\}$ is $k$-very sparse. \end{enumerate}\end{proposition}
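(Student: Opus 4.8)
The plan is to reduce Proposition \ref{stemlesswineglass} to Proposition \ref{automclassthm} by choosing a convenient automaton and then interpreting its conclusion numerically. First I would fix a $k$-automaton $\mathcal{A}=(S,s_\bullet,\delta,\{0,1\},\tau)$ generating $(a_n)$; by the remark following Definition \ref{automdef2}, we may (and do) assume that $\mathcal{A}$ reads digits most-significant-first and produces the correct output even on inputs with leading zeroes, so that $a_n = \tau(\tilde\delta(s_\bullet, w))$ for \emph{every} word $w\in\Sigma_k^*$ with $[w]_k = n$. Apply Proposition \ref{automclassthm} to this automaton; we get a dichotomy, and I will show that case (i) of Proposition \ref{automclassthm} yields case (i) of Proposition \ref{stemlesswineglass}, while case (ii) of Proposition \ref{automclassthm} yields case (ii).

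In the easy direction, suppose alternative (ii) of Proposition \ref{automclassthm} holds: the set of words $w$ with $\tau(\tilde\delta(s_\bullet,w))=1$ is a finite union of sets $\{v_1 w_1^{l_1}\cdots v_t w_t^{l_t} : l_i\ge 0\}$ with $v_i\neq\epsilon$ for $i\ge 2$. The subtlety is that $A = \{n : a_n=1\}$ is the image under $[\cdot]_k$ of the set of \emph{all} such words representing each $n$, not just the ones without leading zeroes; but if $w$ begins with a block of zeroes, stripping them does not change $[w]_k$, and the resulting shorter word still lies in the union (possibly in a different basic piece, after absorbing initial zeroes of $v_1 w_1^{l_1}$ — here one uses that there are only finitely many pieces and finitely many "prefix states", so the stripped words again form a very sparse set). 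Thus $A$ is a finite union of sets of the form $\{[v_1 w_1^{l_1}\cdots v_t w_t^{l_t}]_k : l_i \ge 0\}$, which is exactly the definition of $k$-very sparse (Definition \ref{defverysparse}), matching the pattern $[w_0 u_1^{l_1} w_1 \cdots u_r^{l_r} w_r]_k$ after relabelling. (One should note that $w$ and its reversal issue does not arise here since we read digits in the standard most-significant-first order; care is only needed that the exponents $l_i$ land on \emph{factors}, which they do.)

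In the main direction, suppose alternative (i) of Proposition \ref{automclassthm} holds: there is a promising state $s$ and words $v_1\neq v_2$ with $|v_1|=|v_2|$ and $\tilde\delta(s,v_1)=\tilde\delta(s,v_2)=s$. Let $L=|v_1|=|v_2|$. Since $s$ is reachable from $s_\bullet$ and is promising, pick $u,u'\in\Sigma_k^*$ with $\tilde\delta(s_\bullet,u)=s$ and $\tau(\tilde\delta(s,u'))=1$; we may enlarge $|u|$ (by pumping through $v_1$) and pad $u'$ so that $|u|=l$ and $|u|+|u'| =: m$ can be taken with $0\le l<m$ and, crucially, with $l$ large enough that leading-zero issues disappear. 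For a word $w$ of length $l$, the number $[w]_k$ runs over $\{p\}$ plus multiples of... more precisely: for $n\ge 0$ large, reading $u$ then the base-$k$ digits of $n$ then... — here is the clean way: the words $u\,(n)_k'$, where $(n)_k'$ denotes the length-$(m-l)$ zero-padded... I would instead argue directly with the numerical translation. Set $p = [u]_k$ with $|u|=l$ so $0\le p<k^l$; set $r_1=[u v_1 u']_k$ and $r_2=[u v_2 u']_k$, so $0\le r_i<k^m$ where $m=l+L+|u'|$; since $v_1,v_2,u'$ contribute only to digits below position... no: we want $r_i\equiv p\pmod{k^l}$, which holds because $u$ occupies the top $l$ digits of the length-$m$ words $uv_iu'$, i.e. $[uv_iu']_k = p\cdot k^{m-l} + (\text{something} < k^{m-l})$ — wait, that gives $r_i \equiv (\text{low part})\pmod{k^l}$, not $\equiv p$. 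I would fix the indexing by instead putting the \emph{variable} block on the high-significance side: since the automaton reads left-to-right, $\tilde\delta(s_\bullet, u\,x) = \tilde\delta(s, x)$, and we want words of the form $u\,x$ where $x$ ranges; reindex so that appending digits to $x$ on the right multiplies $[ux]_k$ by $k$ and adds. So take $r_i \equiv p\pmod{k^l}$ via: any word $t$ with $[t]_k \equiv p \pmod {k^l}$... — the correct bookkeeping is: for $n\ge0$, the word $w(n) := u\,v_1^{?}\cdots$; concretely define $b_i(n) := [u\,v_i\,u'\,(n)_k'']_k$ padding $(n)_k$ to a fixed length so that $a_{b_i(n)}$ depends on $n$ through $\tilde\delta(s,v_i u' \cdots) = \tilde\delta(s_\bullet, u v_i u' \cdots)$ and equals $a$ evaluated at... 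I expect this numerical translation to be the main obstacle: one must choose $l,m,p,r_1,r_2$ so that $a_{k^l n+p}$, $a_{k^m n + r_1}$, $a_{k^m n+r_2}$ all equal $\tau(\tilde\delta(s, (\text{tail word depending on }n)))$ for a common tail, using $\tilde\delta(s,v_i)=s$ to absorb the difference between $r_1$ and $r_2$, and $r_1\equiv r_2\equiv p \pmod{k^l}$ because $k^l n+p$ and $k^m n + r_j$ have the same last $l$ digits when $r_j \equiv p$. The nonvanishing $a_{k^l n+p}=1$ for some $n$ follows since $s$ is promising. Once the indices are set up correctly via the reading-direction bookkeeping, the three equalities are immediate from $\tilde\delta(s,v_1)=\tilde\delta(s,v_2)=s$ and the leading-zero-robustness of $\mathcal{A}$, completing case (i).
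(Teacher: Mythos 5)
Your overall plan --- apply Proposition \ref{automclassthm} and translate its two alternatives into (i) and (ii) --- is exactly the paper's, and your treatment of alternative (ii) is essentially fine (with zero-robustness the set $A$ \emph{equals} the image under $[\cdot]_k$ of the accepted words, so it is a finite union of basic very sparse sets; no extra ``prefix state'' argument is needed). The genuine gap is in the main direction, case (i), and it is not just bookkeeping you left for later: with your choice of a most-significant-digit-first automaton the translation cannot be completed in the form the proposition requires. In statement (i) the \emph{fixed} data $p$, $r_1$, $r_2$ occupy the low-order digits and the free variable $n$ occupies the high-order digits; with MSB-first reading, the state reached after the digits of $n$ varies with $n$, so the single promising state $s$ and its two equal-length loops $v_1,v_2$ give you (at best) identities of the shape $a_{A_1k^t+j}=a_{A_2k^t+j}$ for all low parts $j<k^t$, i.e.\ fixed \emph{high} digits and variable \emph{low} digits --- the transpose of what is claimed, and not what the IPS construction in Theorem \ref{thm:Structure-Auto} needs. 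Your own computation $r_i=[uv_iu']_k$ already shows the congruence $r_i\equiv p\pmod{k^l}$ failing for exactly this reason, and the subsequent ``reindexing'' never resolves it; the final paragraph asserts the three equalities are ``immediate once the indices are set up correctly,'' which is precisely the step that is missing.

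The fix (and the paper's route) is to take the automaton that reads digits starting from the \emph{least} significant one, with output unaffected by trailing zeroes of the input word. Then the low-order digits are a prefix of the input: choosing $v$ with $\tilde\delta(s_\bullet,v)=s$, $l=|v|$, $m=l+|v_1|$, $p=[v^R]_k$ and $r_i=[(v\,v_i)^R]_k$, one has automatically $r_i\equiv p\pmod{k^l}$, and for every $n$ the automaton reads $v$ (resp.\ $v\,v_i$) and lands in $s$ in all three cases before reading $(n)_k^R$, so
$$a_{k^l n+p}=a_{k^m n+r_1}=a_{k^m n+r_2}=\tau\bra{\tilde\delta\bra{s,(n)_k^R}},\qquad n\geq 0,$$
and $a_{k^l n+p}=1$ for some $n$ because $s$ is promising. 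So the single missing idea is the reading direction: apply Proposition \ref{automclassthm} to the reversed (LSB-first) automaton rather than the MSB-first one.
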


\begin{proof} Let $\mathcal{A}$ be an automaton that produces the sequence $(a_n)$ when reading digits starting from the least significant one. Of course, we may additionally assume that every state is reachable from the initial state, i.e.\ for every $s\in S$ there exists $v\in \Sigma_k^*$ such that $\tilde{\delta}(s_{\bullet},v)=s$. Apply Proposition \ref{stemlesswineglass} to the automaton $\mathcal{A}$. If (i) holds, choose $v$ such that $\tilde{\delta}(s_{\bullet},v)=s$ and put $l=|v|$, $m=|v|+|w_1|$, $p=[v]_k^R$, $r_i=[w_iv]_k^R$, $i=1,2$. (Recall that $u^R$ denotes the reversal of the word $u$.) This choice satisfies all the required properties since $a_{k^l n +p} =1$ if and only if $\tilde{\delta}(s_{\bullet},v(n)_k^R)=1$ if and only if $\tilde{\delta}(s,(n)_k^R)=1$ which by a similar argument is equivalent to both  $a_{k^m n +r_1} =1$ and  $a_{k^m n +r_2} =1$. 

If (ii) holds, the claim is obvious.
\end{proof}

We are now ready to prove our main result.
\begin{proof}[Proof of Theorem \ref{thm:Structure-Auto}]
	Suppose that $(a_n)_{n \geq 0}$ is a sparse but not very sparse $k$-automatic sequence. By Proposition \ref{stemlesswineglass}\comment{I just love your reference names. --JK}, there we may find integers $l,t,p,s_1,s_2$ such that for any $n$, we have 
	$$a_{k^l n + p} = a_{k^{l}(k^d n + s_1) + p} =  a_{k^{l} (k^d n + s_2) + p}.$$
Moreover, there is some $n_0$ with $a_{k^l n_0 + p} = 1$. Inductive argument shows that $a_n = 1$ for any $n$ of the form $k^{l} \bra{ k^{td} + k^{(t-1)d} s_{i_1} + \dots + s_{i_t} } + p $ where $t \in \NN,\ i_{1},\dots,i_{t} \in \{0,1\} 1$. The set of such $n$ is \IPS.
\end{proof}

\begin{lemma}\label{lem:v-sparse-growth}
	Let $E \subset $ be very sparse of rank $r$, but not of rank $r-1$. Then, $\abs{E \cap [N]} = \Theta(\log^r N)$. Moreover, $\abs{E \cap [M,M+N) } = O(\log^r N)$, where the implied constant depends only on $E$.
\end{lemma}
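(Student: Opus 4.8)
The plan is to pass to a single basic very sparse set, compute an explicit closed form for its $\ell$-th element, and then estimate the counting functions — by counting lattice points in a simplex for the bounds involving $[N]$, and by a term-by-term ``peeling'' for the uniform window bound. First, write $E = B_1 \cup \dots \cup B_m$ with each $B_i$ a basic very sparse set of rank $r$; for the upper bounds it suffices to bound each $B_i$ separately and sum. After the exact reductions of deleting any empty word $u_j$ and eliminating any exponent $l_j$ on which the value does not actually depend — each of which merely lowers the rank and leaves the value set unchanged — we may assume each $B_i$ is \emph{reduced}. If every reduced piece so obtained had rank $<r$, then $E$ would be very sparse of rank $r-1$, contrary to hypothesis; so at least one reduced piece $B \subseteq E$ has rank exactly $r$, and $\abs{E \cap [N]} \geq \abs{B \cap [N]}$ will supply the lower bound.

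\emph{Closed form and the estimates in $[N]$.} For a basic very sparse set with $\abs{u_j} = d_j$ and $\abs{w_j} = e_j$, expanding $[u_j^{l_j}]_k = [u_j]_k (k^{d_j l_j}-1)/(k^{d_j}-1)$ and collecting powers of $k$ produces
\begin{equation*}
	v(\ell) := [w_0 u_1^{l_1} w_1 \cdots u_r^{l_r} w_r]_k = \sum_{j=1}^{r+1} \gamma_j\, k^{s_j(\ell)}, \qquad s_j(\ell) := \sum_{i=j}^{r} d_i l_i ,
\end{equation*}
with $\gamma_j \in \RR$ depending only on $k$ and the words, and $\gamma_1 \geq 0$. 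For a reduced set one checks that $\gamma_1 > 0$ and that the number of leading zeros of the defining word is bounded solely in terms of the words, so that $\log_k v(\ell) = s_1(\ell) + O(1) = \sum_i d_i l_i + O(1)$ whenever $v(\ell) \neq 0$. Hence $v(\ell) < N$ forces $\sum_i d_i l_i \leq \log_k N + O(1)$, and the number of $\ell \in \NN_0^r$ satisfying this is $O(\log^r N)$; thus $\abs{B_i \cap [0,X)} = O(\log^r X)$ for each $i$, and summing gives $\abs{E \cap [N]} = O(\log^r N)$. For the reverse inequality one extracts from the rank-$r$ reduced piece $B$ a basic rank-$r$ sub-family on which $\ell \mapsto v(\ell)$ is finite-to-one and on which $v(\ell) < N$ holds for all $\ell$ with $\sum_i d_i l_i \leq \log_k N - O(1)$; there are $\gg \log^r N$ such $\ell$, so this produces $\gg \log^r N$ distinct values, whence $\abs{E \cap [N]} = \Theta(\log^r N)$.

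\emph{The window bound.} It remains to bound $\abs{E \cap [M,M+N)}$ uniformly in $M$, again piece by piece for reduced $B$. If $N \geq M$ then $[M,M+N) \subseteq [0,2N)$ and we are done by the previous step. If $N < M$, lattice-point counting is too crude, so we peel the closed form: from $v(\ell) \in [M,2M)$ and $\log_k v(\ell) = s_1(\ell)+O(1)$ it follows that $s_1(\ell)$ takes only $O(1)$ values; fixing one of them and subtracting $\gamma_1 k^{s_1}$ turns the condition $v(\ell) \in [M,M+N)$ into $\sum_{j\geq 2}\gamma_j k^{s_j(\ell)} \in [M',M'+N)$ for an explicit $M'$ — a window of length $N$ for an expression of the same shape with one fewer term — whose leading coefficient $\gamma_2$, together with the analogous leading-zero bound, confines $s_2(\ell)$ to an interval of length $O(\log N)$. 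Iterating through $s_3(\ell),\dots,s_r(\ell)$, and noting that $s_1(\ell),\dots,s_{r+1}(\ell)$ determine $\ell$, each of the $r$ levels multiplies the count by at most $O(\log N)$, so $\abs{B \cap [M,M+N)} = O(\log^r N)$; summing over the finitely many pieces completes the proof.

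\emph{Where the difficulty lies.} The genuinely delicate part is the degeneracy bookkeeping behind the word ``reduced'': the parametrisation $\ell \mapsto v(\ell)$ of a basic very sparse set need not be injective, or even finite-to-one, and the closed form may contain cancellations between leading terms that hide a lower-rank structure (or conversely force one to re-express the set by a different, ``injective'' presentation). One must therefore fix a precise workable notion of a reduced piece, verify that every basic very sparse set is, up to a discrepancy of strictly smaller rank, a finite union of reduced pieces, verify that a reduced rank-$r$ piece contains a finite-to-one rank-$r$ sub-family, and track the rank correctly through all of this so that the hypothesis ``not of rank $r-1$'' really yields a rank-$r$ piece of $E$. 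The second, milder, point is checking that the peeling step is uniform in $M$ — i.e.\ that at each level the relevant exponent is pinned down by a window of length $N$ alone, with no residual dependence on $M$.
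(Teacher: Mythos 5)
Your outline identifies the right quantities, but it contains a genuine gap, and it is exactly the one you flag in your closing paragraph: the lower bound $\abs{E \cap [N]} \gg \log^r N$ is never actually established. Everything hinges on extracting, from the hypothesis ``very sparse of rank $r$ but not of rank $r-1$'', a basic piece whose parametrisation $\ell \mapsto v(\ell)$ is boundedly finite-to-one, and the two reduction moves you allow yourself --- deleting empty words $u_j$ and discarding exponents on which the value does not depend --- do not achieve this. Concretely, take $k = 10$, $w_0 = w_2 = \epsilon$, $u_1 = 12$, $w_1 = 1$, $u_2 = 21$: then $u_1^{l_1} w_1 u_2^{l_2} = 1\,(21)^{l_1 + l_2}$, so the value depends on each exponent individually (so neither can be ``eliminated'') and no word is empty, yet it depends only on $l_1 + l_2$; the fibres have unbounded size and the set is in fact of rank $1$. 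Thus a piece that is ``reduced'' in your sense need not have true rank equal to its presented rank, so the inference ``not of rank $r-1$, hence some reduced piece has rank exactly $r$'' breaks down, and the promised finite-to-one rank-$r$ sub-family (which, even if finite-to-one, would need uniformly bounded fibres to yield $\gg \log^r N$ distinct values) is never produced. The same hidden cancellations (vanishing $\gamma_j$, identifications between exponents) also threaten the peeling step in your window bound. Since you yourself list these verifications as outstanding, the proposal is a programme rather than a proof, and the missing part is precisely the heart of the lemma.

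For comparison, the paper disposes of all this bookkeeping with one cheap normalisation and one short combinatorial check. Replacing $k$ by a suitable power $k^a$ (splitting each basic set according to residues of the $l_i$), one may assume every $u_i$ and $w_i$ is a single digit; then two distinct exponent tuples can produce the same word only if $u_j = w_j = u_{j+1}$ for some $j$, which would exhibit the set as very sparse of rank $r-1$, contradicting the hypothesis. Hence the words are pairwise distinct, and counting tuples with $\sum_i l_i \leq \log_k N + O(1)$ gives both the upper and the lower bound $\Theta(\log^r N)$ at once, with no closed form for $v(\ell)$ and no case analysis of cancellations. The window estimate is likewise done combinatorially: enlarging $[M, M+N)$ by a bounded factor one may assume $N = k^A$ with $k^A \mid M$, so an element of $E \cap [M, M+N)$ is determined by its last $A$ digits, and these are determined by choosing, for each $j$, the position (if any) among the last $A$ places at which $w_j$ occurs --- at most $(A+1)^r = O(\log^r N)$ choices, uniformly in $M$. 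I would recommend reworking your argument along these lines: the digit-level normalisation makes the degeneracy analysis a one-line observation instead of the open-ended reduction scheme your closed-form approach requires.
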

\begin{proof}
	It will suffice to deal with the basic very sparse sets, given by
	\begin{equation}\label{eq:110}
		E = \set{ [w_0 u_1^{l_1} w_1 u_2^{l_2} \dots u_r^{l_r} w_r]_k }{ l_1, \dots, l_r \in \NN_0}.
	\end{equation}
	Replacing $k$ with a sufficiently large power $k^a$, we may assume that $\abs{w_i} = 1$ and $\abs{u_i} = 1$ for all $i$. It is clear from counting the number of possible choices of $l_i$ in \eqref{eq:110} that $\abs{E \cap [N]} \ll \log^r N$.
	
For the lower bound, note that all words $w_0 u_1^{l_1} w_1 u_2^{l_2} \dots u_r^{l_r} w_r$ for $l_i \in \NN_0$ are distinct. Indeed, suppose there were $(l_1,l_2,\dots,l_r) \neq (l_1',l_2',\dots,l_r') \in \NN_0^r$ such that 
	\begin{equation}\label{eq:111}
 w_0 u_1^{l_1} w_1 u_2^{l_2} \dots u_r^{l_r} w_r
	= w_0 u_1^{l_1'} w_1 u_2^{l_2'} \dots u_r^{l_r'} w_r.
	\end{equation}
In particular, $\sum_{i=1}^r l_i = \sum_{i=1}^r l'_i$. Assuming without loss of generality that $l_i$ is larger lexicographically than $l_i'$, we may find an index $j$ such that $l_j > l_j'$ while $l_i = l_i'$ for $i < j$. Comparing the two words in \eqref{eq:111}, we find that $u_j = w_j = u_{j+1}$. But then $E$ is a rank $r-1$ very sparse set, contradicting the choice of $r$. The bound $\abs{E \cap [N]} \gg \log^r N$ now follows by the same counting argument as before.

For the additionaly part, extending the interval $[M,M+N)$ by at most a constant factor, we may assume that $N = k^A$ is a power of $k$, and that $k^A \mid M$. An integer $n = [w_0 u_1^{l_1} w_1 u_2^{l_2} \dots u_r^{l_r} w_r]_k \in E \cap [M,M+N)$ is now uniquely determined by its last $A$ digits, which may be easily bounded by $(A+1)^r$ by choosing for each $j$ the digit where $w_j$ appears, if at all, among the $A$ end positions.
\end{proof}

By similar methods, we may estimate the growth of the elements of the \IPS-set produced in Proof of Theorem \ref{thm:Structure-Auto}.

\begin{corollary}\label{honorarywineglass} Let $(a_n)_{n \geq 0}$ be an automatic sequence with values in $\{0,1\}$ and let $\nu(N)=|\{0\leq n \leq N-1 \mid a_n=1\}|$. Then either 
\begin{enumerate}
	 \item there exist $c,\alpha>0$ such that $\nu(N) \geq c N^{\alpha}$ for large enough $N$ 
\end{enumerate}
 or 
\begin{enumerate}
	\item[(ii)]  there exist $c,l>0$ such that $\nu(N) \leq c (\log N)^l$ for large enough $N$. \end{enumerate}
\end{corollary}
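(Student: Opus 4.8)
The plan is to apply Proposition \ref{stemlesswineglass} to the $\{0,1\}$-valued automatic sequence $(a_n)$ and treat its two alternatives separately; they will correspond precisely to alternatives (ii) and (i) of the Corollary.

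If case (ii) of Proposition \ref{stemlesswineglass} holds, so that $A = \{n \geq 0 : a_n = 1\}$ is $k$-very sparse of some rank $r$, I would simply invoke Lemma \ref{lem:v-sparse-growth}, which gives $\nu(N) = |A \cap [N]| = O\bigl((\log N)^r\bigr)$ (and $\nu(N) = O(1)$ when $r = 0$, i.e.\ $A$ is finite); this is alternative (ii) of the Corollary with $l = \max(r,1)$.

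If case (i) holds, I would extract from it the self-similarity relation $b_n = b_{k^d n + s_1} = b_{k^d n + s_2}$, where $b_n = a_{k^l n + p}$, $d = m - l$, and $r_i = p + k^l s_i$ with $0 \leq s_1 \neq s_2 < k^d$ (the inequality $r_1 \neq r_2$, hence $s_1 \neq s_2$, is implicit in the construction behind Proposition \ref{stemlesswineglass}, stemming from the distinctness of the two cycle-words $v_1 \neq v_2$ in Proposition \ref{automclassthm}(i)), together with some $n_0$ with $b_{n_0} = 1$. Iterating this relation $t$ times produces $2^t$ integers $n$ with $a_n = 1$, one for each string $(i_1,\dots,i_t) \in \{1,2\}^t$, namely $n = k^l\bigl(k^{td}n_0 + \sum_{j=1}^t k^{(t-j)d}s_{i_j}\bigr) + p$. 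Exactly as in the counting arguments behind Lemma \ref{lem:v-sparse-growth}, these $2^t$ integers are pairwise distinct (the summands $k^{(t-j)d}s_{i_j}$ are $k^d$-adically separated since $|s_1 - s_2| < k^d$) and are all bounded above by $Ck^{td}$ for a constant $C$ depending only on $k,l,d,n_0,s_1,s_2$.

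To conclude, given large $N$ I would take $t$ maximal with $Ck^{td} \leq N$, so that $t \geq (\log N)/(d\log k) - C'$ for a constant $C'$ and all $2^t$ of the integers above lie in $[N]$; hence $\nu(N) \geq 2^t \geq cN^{\alpha}$ with $\alpha = (\log 2)/(d\log k) > 0$ and $c = 2^{-C'}$, which is alternative (i). The only mildly delicate point is verifying that the iterated values are genuinely distinct and of controlled size, but this is the same bookkeeping already carried out in the proof of Lemma \ref{lem:v-sparse-growth} and presents no real obstacle.
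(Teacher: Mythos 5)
Your proof is correct and is essentially the paper's intended argument: the paper gives no separate proof of this corollary beyond the remark that it follows ``by similar methods'' from the dichotomy of Proposition \ref{stemlesswineglass}, with the very sparse case handled by Lemma \ref{lem:v-sparse-growth} and the self-similar case handled by iterating the relation (exactly the $2^t$ integers of size $O(k^{td})$ appearing in the proof of Theorem \ref{thm:Structure-Auto}). Your bookkeeping — deducing $r_1\neq r_2$ from the distinct equal-length words in Proposition \ref{automclassthm}(i), the base-$k^d$ distinctness of the iterates, and the choice of $t\approx\log_k N/d$ giving $\alpha=\log 2/(d\log k)$ — fills in precisely the details the paper leaves implicit.
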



\comment{The following remark is very far-fetched and should perhaps be eliminated.}

\begin{remark}\label{bloodylandmower} It might be interesting to point out that in \cite{BellCoonsHare-2014} it has  recently been proven that a regular  sequence  $(b_n)_{n\geq 0}$ that is unbounded satisfies $|b_n| \geq c \log n$ for some $c\geq 0$ and infinitely many $n\geq 0$. In relation to this result, note that if $(a_n)$ is an automatic sequence with values in $\{0,1\}$, then the sequence $(\nu(N))_{N\geq 0}$, $\nu(N)=|\{0\leq n \leq N-1 \mid a_n=1\}|$ is regular. (This follows from \cite[Theorem 16.4.1]{AlloucheShallit-book}.) This allows us perhaps to speculate whether a more general result on the rate of growth of regular sequences can be proven. A na\"ive question that comes to mind is the following: for an unbounded regular sequence $(b_n)_{n\geq 0}$, is it true that there exist constants $c,\alpha>0$ and an integer $l\geq 0$ such that $$\limsup_{n\to \infty} b_n/cn^{\alpha}(\log n)^l=1?$$ \end{remark}


\begin{remark}\label{kubekkawey} Let $a \geq 1$ be an integer. Then the notions of $k$-very sparse and $k^a$-very sparse sets coincide. Indeed, basic $k^a$-very sparse sets are obviously basic $k$-very sparse sets, and $k$-very sparse can be written as finite unions of basic $k^a$-very sparse sets. Alternatively, this is a consequence of the fact that the notions of $k$-automatic and $k^a$-automatic sequences coincide together with Corollary \ref{honorarywineglass} which characterizes $k$-very sparse sets as exactly the sets $A$ which can be obtained as $A=\{n\geq 0 \mid a_n=1\}$ for a $k$-automatic sequence $(a_n)$ with values in $\{0,1\}$ and such that the number of elements of $A$ that are $\leq N$ grows at most as $c  (\log N)^l$ for some $c>0$, $l\geq 0$.\end{remark}

\begin{proposition}\label{jeszczejednoespresso} Let $A$ be an infinite $k$-very sparse set. Then there exist integers $n\geq1$, $r\geq 0$, $p\geq 1$, and words $v_1,\ldots,v_p,w,u\in \Sigma_k^*$, $w\neq \epsilon$ such that $$A \cap (n\Z+r) = \bigcup_{i=1}^p \{[v_iw^lu]_k \mid l\geq 0\}.$$ 
\end{proposition}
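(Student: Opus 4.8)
The plan is to analyse the structure of a basic $k$-very sparse set under restriction to a congruence class, reducing an arbitrary finite union to a single block of the form $\{[w_0 u_1^{l_1} w_1 \dots u_r^{l_r} w_r]_k\}$ and showing that fixing the residue modulo a suitable power of $k$ (together with a bounded combinatorial case analysis) collapses all but one of the iterated exponents, leaving a set of the form $\{[vw^lu]_k \mid l \geq 0\}$. First I would reduce to the case where $A$ is a single basic very sparse set: if $A = \bigcup_j A_j$ with each $A_j$ basic, then $A \cap (n\Z + r) = \bigcup_j (A_j \cap (n\Z+r))$, so it suffices to handle each $A_j$ and take the union of the resulting expressions; the residue $n$ can be taken to be a common value (a power of $k$ times the lcm of the individual moduli). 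I would also pass to a power $k^a$ of the base, using Remark \ref{kubekkawey}: the notions of $k$-very sparse and $k^a$-very sparse coincide, and after replacing $k$ by $k^a$ we may assume all the words $u_i, w_i$ have length $1$. Hence assume $A = \{[w_0 u_1^{l_1} w_1 u_2^{l_2} \cdots u_r^{l_r} w_r]_k \mid l_1,\dots,l_r \in \NN_0\}$ with $\abs{u_i} = \abs{w_i} = 1$, and by Lemma \ref{lem:v-sparse-growth} (or rather its proof) we may assume $E$ is genuinely of rank $r$, i.e.\ no two of the listed words coincide.

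Next I would fix the residue. Let $n = k^A$ where $A$ is a large multiple of the period-type quantities that will arise; restricting to $A \cap (k^A \Z + r)$ amounts to prescribing the last $A$ digits (the low-order end) of the base-$k$ representation of elements of $A$. Since elements of $A$ are words $w_0 u_1^{l_1} w_1 \cdots u_r^{l_r} w_r$ read in base $k$, the last $A$ digits are obtained by truncating this word from the right. For $A$ larger than $\abs{w_r} + \abs{u_r} \cdot(\text{something}) + \cdots$, prescribing the last $A$ digits forces the "tail" exponents $l_j, l_{j+1}, \dots, l_r$ (for $j$ the least index such that the suffix $u_j^{l_j} w_j \cdots w_r$ can be long) to be determined except possibly for one exponent that is allowed to grow — namely the first one whose block $u_j$ would have to fill positions beyond the prescribed window. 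More precisely: either the total length $\abs{w_0} + \sum \abs{w_i} + \sum l_i \abs{u_i}$ is $< A$, in which case there are only finitely many words of this form with the given suffix (a finite, hence "$w = \epsilon$"-degenerate, union which we can absorb), or else exactly one exponent $l_{j_0}$ is unbounded and the digits of $u_{j_0}$ occupying the high-order positions beyond the window $A$ are free, while all other $l_i$ for $i \neq j_0$ are pinned to fixed values by the requirement that the low $A$ digits match $r \bmod k^A$. This is the combinatorial heart of the argument.

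Carrying this out: for each choice of which "unbounded slot" $j_0$ occurs and each choice of the finitely many pinned exponents $(l_i)_{i \neq j_0}$ compatible with the residue $r$, the corresponding subset of $A \cap (k^A \Z + r)$ is exactly $\{[v u_{j_0}^{l_{j_0}} u]_k \mid l_{j_0} \geq 0\}$ for fixed words $v = w_0 u_1^{l_1}\cdots w_{j_0 - 1}$ and $u = w_{j_0} u_{j_0+1}^{l_{j_0+1}} \cdots w_r$, which has precisely the desired shape with $w := u_{j_0} \neq \epsilon$. Taking the union over the finitely many such data (and over the finitely many basic pieces $A_j$ in the original decomposition) yields $A \cap (k^A\Z + r) = \bigcup_{i=1}^p \{[v_i w^{l} u]_k \mid l \geq 0\}$ — here one should note that after possibly passing to a further common power of $k$ and re-grouping one can even arrange a common $w$, or simply allow the $w$ to depend on $i$, as the statement does not require a single $w$; re-reading the statement, it writes a single $w$, so one extra step is to replace $k$ by a power so that all the distinct cycle words $u_{j_0}$ that arise become powers of a common word, or more simply to observe that distinct basic pieces can be merged. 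Finally, since $A$ is infinite, at least one such piece is nonempty and has $w \neq \epsilon$, so the degenerate finite pieces can be absorbed into the $v_i$'s (appending trailing zeros / re-indexing). The main obstacle I anticipate is the bookkeeping in the previous paragraph: precisely quantifying how large $A$ must be so that "prescribing the last $A$ digits leaves at most one exponent free," and handling the overlap cases where two adjacent blocks $u_j, w_j, u_{j+1}$ could conspire — but by the rank-$r$ (no-coincidence) normalisation from Lemma \ref{lem:v-sparse-growth} such conspiracies are exactly what is excluded, so this should go through cleanly.
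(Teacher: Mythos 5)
Your outline stalls exactly at what you yourself call ``the combinatorial heart'': nowhere do you specify how the residue $r$ is chosen, and for an unspecified residue the key claim --- that prescribing the last $A$ digits pins all exponents but one --- is simply false. Take the basic set $B=\{[\,1\,a^{l_1}\,2\,a^{l_2}\,3\,]_k \mid l_1,l_2\in\NN_0\}$ with distinct digits $a,1,2,3$ (this satisfies your ``rank-$r$'' no-coincidence normalisation, since the middle word $2$ differs from the loop letter $a$), and prescribe the last $A$ digits to be $a^{A-1}3$. Then membership in $B\cap(k^A\ZZ+r)$ only forces $l_2\geq A-1$, so \emph{two} exponents remain free; the intersection grows like $\log^2N$ and by Lemma \ref{lem:v-sparse-growth} can never equal a finite union of sets $\{[v_iw^lu]_k\}$. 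So the residue must be engineered, not merely taken large. The same oversight infects your opening reduction: the proposition demands one progression $n\ZZ+r$ working for the whole union, and while a common modulus is easy, a common residue is not --- handling each basic piece with its own residue and ``taking the union'' proves a different (weaker) statement, and intersecting a piece with a progression tailored to another piece can again leave several exponents free. Finally, the statement requires a single $w$ \emph{and} a single $u$ across the union (this is used in Proposition \ref{automverysparse}); your suggested fixes do not deliver this: distinct single letters are never powers of a common word, no matter what power $k^a$ you pass to, and ``merging pieces'' or ``absorbing finite leftovers into the $v_i$'' is incompatible with the exact equality asserted.

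The paper's proof resolves precisely these points by constructing the progression explicitly. After normalising (all loop words of equal length $M$, lengths of the $v_i$ divisible by $M$, then base $k^M$ so loops are single letters, and no $v_i$ equal to a power of the adjacent loop letter), it picks the basic piece $A^{(1)}$ with the maximal number of nonempty loops and sets $u=(w^{(1)}_1)^{T+1}v^{(1)}_2\cdots v^{(1)}_t(w^{(1)}_t)^{T+1}$, $r=[u]_k$, $n=k^{|u|}$, where $T+1$ exceeds every $|v_i^{(m)}|$. Because each run of length $T+1$ in $u$ is too long to be covered by any $v$-word, an element of any piece $A^{(m)}$ congruent to $r$ must align its loops with those of $A^{(1)}$; this kills all pieces with mismatched loop letters, pins every exponent except the leading one in the surviving pieces, and yields the common $w=w^{(1)}_1$ and common $u$ for free. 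If you want to salvage your write-up, this choice of $u$ (and the accompanying normalisations, in particular divisibility of the $|v_i|$ by $M$ and the maximality of $A^{(1)}$) is the ingredient you need to add; the rest of your bookkeeping then goes through along the lines you sketched.
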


\begin{proof} The set $A$ is a finite union of basic $k$-very sparse sets of the form $$A^{(m)}=\{ [v_1^{(m)}(w^{(m)}_1)^{l_1}v_2^{(m)}\cdots v^{(m)}_t(w^{(m)}_t)^{l_t}]_k \mid l_1,\ldots,l_t\geq 0\}, \quad m=1,\ldots,p'$$ (where $t$ depends on $m$). We begin by making this representation of $A$ as a union of basic sets $A^{(m)}$ slightly more convenient to work with. We can clearly assume that all the $w_i^{(m)}$'s except possibly for the final one $w_t^{(m)}$ satisfy $w_i^{(m)} \neq \epsilon$. Let $M$ be the lowest common multiple of the lengths of all the $w_i^{(m)}$'s in all the basic sets $A^{(m)}$ (note that at least one $w_i^{(m)}\neq \epsilon$ since $A$ is infinite). By writing the basic sets as unions of several smaller basic sets depending on what the residue of $l_i \pmod{M/|w_i^{(m)}|}$ is (and adequately enlarging $v_i^{(m)}$), we may assume that all the $w_i^{(m)}$'s have the same length $M$ (or $w_i^{(m)}=\epsilon$; the latter can only happen if $i=t$). We claim that we can further assume that the lengths of all the $v_i^{(m)}$'s are divisible by $M$. To this end, we modify the representation of each $A^{(m)}$ as a set of the form $A^{(m)}=\{ [v_1^{(m)}(w^{(m)}_1)^{l_1}v_2^{(m)}\cdots v^{(m)}_t(w^{(m)}_t)^{l_t}]_k \mid l_1,\ldots,l_t\geq 0\}$ by appropriately adjusting $v_i^{(m)}$ and  $w_i^{(m)}$. We pass from the right hand side to the left hand side and whenever some $v=v_i^{(m)}$ has length that is not divisible by $M$, we write the corresponding $w=w_{i-1}^{(m)}$ as $w=w' w''$ with $|w''|+|v|$ divisible by $M$ and we change $w^l v=(w' w'')^l v$ to $w' (w'' w')^{l-1} w'' v$ (and finally, we add another basic set that corresponds to the choice $l=0$). Eventually, we tap $v_1^{(m)}$ with a few extra initial zeroes if necessary. By replacing $k$ with $k^M$, we may assume that all the $w_i^{(m)}$ have length zero or one. We may further assume that whenever two consecutive letters $w_{i-1}^{(m)}$ and $w_i^{(m)}$ are equal $w_{i-1}^{(m)}=w_i^{(m)}=w$ or whenever $i=t$, $w_{i-1}^{(m)}=w, w_{i}^{(m)} = \epsilon$, the word $v=v_i^{(m)}$ in not of the form  $v=w^r, r\geq 1$. Let $M'$ denote the maximal length $T=\max_{i,m} |v_i^{(m)}|$. Without loss of generality we may assume that among all the $A^{(m)}$ it is the $A^{(1)}$ that has the biggest number of $j$ such that $w_j^{(m)}\neq \epsilon$ (this number is equal to $t-1$ or $t$  depending whether $w_t^{(m)}=\epsilon$ or not). Now take $u=(w^{(m)}_1)^{T+1}v^{(m)}_2\cdots v^{(m)}_t(w^{(m)}_t)^{T+1}$, $s=|u|$, $r=[u]_k$ and $n=k^s$. Then $A\cap (n\Z+r)$ involves only those $A^{(m)}$ with all $w_{i}^{(m)}=w_{i}^{(1)}$ (except possibly for the final $\epsilon$ in either $A^{(m)}$ or $A^{(1)}$). Therefore, $A\cap (n\Z+r)$ is a finite union of sets of the form $B^{(m)}=\{[v^{(m)}w^l u]_k\mid l\geq 0\}$ for some $v^{(m)},w,u\in \Sigma_k^*$, $w\neq \epsilon$.
\end{proof}

\begin{proposition}\label{automverysparse} If the set $\{k^l \mid l\geq 0\}$ is not generalised polynomial, then neither is any infinite $k$-very sparse set.
\end{proposition}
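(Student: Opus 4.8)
The plan is to argue by contraposition: assuming that some infinite $k$-very sparse set $A$ is generalised polynomial, we deduce that $\set{k^l}{l\geq 0}$ is generalised polynomial. We will freely use that the family of generalised polynomial subsets of $\NN_0$ is closed under intersection with an arithmetic progression $n\ZZ+r$ (whose indicator is periodic, hence generalised polynomial), under translation $E\mapsto E+b$ and dilation $E\mapsto cE$ by integers, and under the pullback $E\mapsto\set{m\in\NN_0}{cm\in E}$ for $c\in\NN$; each follows directly from the definition, using for the dilation that the indicator of $cE$ is $n\mapsto\ifbra{c\mid n}\cdot f(\floor{n/c})$, where $f$ is the indicator of $E$, and that $\GP$ is closed under substituting one generalised polynomial for the variable of another.

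First I would bring $A$ to an algebraic normal form. By Proposition~\ref{jeszczejednoespresso}, up to a finite (hence generalised polynomial) set we may write $A\cap(n\ZZ+r)=\bigcup_{i=1}^{p}\set{[v_iw^lu]_k}{l\geq 0}$ with $w\neq\epsilon$, where each displayed set is infinite (any index giving a finite set is absorbed into the exceptional finite set); thus $E_1:=\bigcup_{i}\set{[v_iw^lu]_k}{l\geq 0}$ is generalised polynomial. Put $K:=k^{|w|}\geq 2$ and $B:=(K-1)[u]_k-[w]_k k^{|u|}\in\ZZ$. A short computation shows that the affine map $x\mapsto(K-1)x-B$ carries $[v_iw^lu]_k$ to $\tilde A_iK^l$, where $\tilde A_i:=k^{|u|}\bigl((K-1)[v_i]_k+[w]_k\bigr)$ is a positive integer (it vanishes precisely when the $i$-th set is finite). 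Hence $E:=\set{\tilde A_iK^l}{i\in[p],\ l\geq 0}$, being the image of $E_1$ under a dilation followed by a translation, is generalised polynomial. Finally, if $\tilde A_i/\tilde A_j$ is an integral power of $K$ for some $i\neq j$, then one of $\set{\tilde A_iK^l}{l\geq 0}$, $\set{\tilde A_jK^l}{l\geq 0}$ contains the other and we may drop one index without changing $E$; after finitely many such reductions we may assume $\tilde A_i/\tilde A_j\notin\langle K\rangle:=\set{K^m}{m\in\ZZ}$ for all $i\neq j$.

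The central step --- and, I expect, the main obstacle --- is to isolate a single piece of $E$ by intersecting with an arithmetic progression of modulus \emph{coprime} to $K$; moduli that are powers of $k$ do not separate the pieces, since all but finitely many elements of $E$ agree in arbitrarily long strings of least significant base-$k$ digits. For each $i\neq 1$ we have $\tilde A_1/\tilde A_i\notin\langle K\rangle$ in $\QQ_{>0}$, and a classical fact about multiplicative congruences --- provable from Kummer theory together with the Chebotarev density theorem, and in the circle of ideas around the ``support problem'' --- provides infinitely many primes $q$ for which $\tilde A_1/\tilde A_i\not\equiv K^m\pmod q$ for all $m$. Choose such primes $q_i$ ($i\neq 1$), pairwise distinct and coprime to $K\prod_j\tilde A_j$, and set $Q:=\prod_{i\neq 1}q_i$. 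By the Chinese Remainder Theorem, in $(\ZZ/Q\ZZ)^{*}$ the cosets $\tilde A_1\langle K\rangle$ and $\tilde A_i\langle K\rangle$ of the cyclic group $\langle K\rangle$ are distinct, hence disjoint, for each $i\neq 1$. Taking $\rho\equiv\tilde A_1\pmod Q$ and $d:=\operatorname{ord}_Q(K)$, the only elements of $E$ congruent to $\rho$ modulo $Q$ are then the $\tilde A_1K^{dm}$, $m\geq 0$, so
\[
E\cap(Q\ZZ+\rho)=\set{\tilde A_1\,k^{sm}}{m\geq 0},\qquad s:=d\,|w|,
\]
is an infinite generalised polynomial set. (For $p=1$ one takes $Q=1$.)

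To finish, let $g$ be the indicator of $F:=\set{\tilde A_1 k^{sm}}{m\geq 0}$. Then $m\mapsto g(\tilde A_1 m)$ is generalised polynomial and equals $1$ exactly when $\tilde A_1 m=\tilde A_1 k^{sj}$, i.e.\ when $m=k^{sj}$; hence $\set{k^{sj}}{j\geq 0}$ is generalised polynomial. Since
\[
\set{k^l}{l\geq 0}=\bigcup_{c=0}^{s-1}k^{c}\cdot\set{k^{sj}}{j\geq 0}
\]
is a finite union of dilations of a generalised polynomial set, it is generalised polynomial, as required. Apart from the number-theoretic input used to build the separating modulus $Q$, every step is a routine manipulation of generalised polynomials combined with Proposition~\ref{jeszczejednoespresso}; the delicate issue is exactly to prevent the piece $\set{\tilde A_1K^l}{l\geq 0}$ from being ``invisible'' among the others, which is what the normalisation $\tilde A_i/\tilde A_j\notin\langle K\rangle$ and the above density statement are designed to achieve.
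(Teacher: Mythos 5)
Your proof follows the same three-stage outline as the paper's: reduce to a normal form via Proposition~\ref{jeszczejednoespresso}, change variables to bring the set to the shape $\bigcup_i\{c_iK^l\}$, then isolate a single geometric progression by intersecting with a suitable arithmetic progression and pull back to $\{k^l\}$. The place where you diverge --- and where, as you correctly identify, the real difficulty lies --- is the construction of the isolating modulus. You normalise so that $\tilde A_i/\tilde A_j\notin\langle K\rangle$ for $i\neq j$ and then invoke a Kummer-theory/Chebotarev-density statement to produce a modulus $Q$ coprime to $K$ separating the cosets $\tilde A_i\langle K\rangle\bmod Q$. Even granting that this ``detection of multiplicative subgroup membership by reduction modulo primes'' is true (and I believe it is, via Chebotarev applied to $\QQ(\zeta_n,K^{1/n},r^{1/n})$), it is a substantial and non-elementary black box that you describe as ``classical'' without a precise reference, and you should at minimum cite a specific theorem rather than gesture at the support problem. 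The paper sidesteps all of this with an elementary trick: after rescaling it arranges $c_1=1$ and $1\leq c_i<k^2$, and then intersects with $\{n\equiv 1\pmod{k^2-1}\}$; since $k$ has order exactly $2$ modulo $k^2-1$ and $c_i$ is bounded by $k^2-1$, the congruence $c_ik^l\equiv 1\pmod{k^2-1}$ forces $c_i\in\{1,k\}$, whence the intersection is $\{k^{2l}\}$ on the nose. The moral is that one does not need to fully separate all the cosets $c_i\langle K\rangle$; it suffices to make the $c_i$ small relative to a modulus for which $k$ has tiny order, and $k^2-1$ does the job for free. A second, smaller divergence: the paper applies the affine change of variables by substituting into the generalised polynomial $P$ (first made to have no spurious real zeroes via $P\mapsto P^2+(n-\lfloor n\rfloor)^2$), whereas you take a forward affine image of the set, relying on closure of $\GP$-sets under dilation and translation; your closure properties are all correct, so this is a cosmetic difference. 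In summary: your argument is (modulo the cited number-theoretic fact) correct and structurally parallel to the paper's, but it imports Chebotarev where the paper uses only the identity $k^2\equiv 1\pmod{k^2-1}$.
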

\begin{proof} Note first that if the set $E=\{k^l \mid l\geq 0\}$ is not generalised polynomial, then neither is any set of the form $E_t=\{k^{tl} \mid l\geq 0\}$, $t\geq 1$ since $E=\bigcup_{j=0}^{t-1}k^jE_t$.
\comment{There was a conflict of notation with $P$.}

Assume that an infinite $k$-very sparse set is generalised polynomial. Since the class of generalised polynomial sets contains all arithmetic progressions and is closed under finite intersection, Proposition \ref{jeszczejednoespresso} allows us to assume that
$$A = \bigcup_{i=1}^p \{[v_iw^lu]_k \mid l\geq 0\}$$
 for some $p\geq 1, v_1,\ldots,v_p,w,u\in \Sigma_k^*$, $w\neq \epsilon$. Let $s=|u|, t=|w|$ and note that $$[v_iw^lu]_k=[u]_k+k^s[w]_k \frac{k^{tl}-1}{k^t-1}+[v_i]_kk^{tl+s}.$$ 
 
 Let $P$ be a generalised polynomial such that $A=\{n\in \N  \mid P(n)=0\}$ and assume further that $P$ is a restriction of a generalised polynomial of a real variable that has no further zeroes in $\R \setminus \N$. (To this end, replace $P(n)$ by $P(n)^2+(n-\lfloor n \rfloor)^2$.)  Then an easy computation shows that the polynomial $$Q(n)=P(k^s(n-[w]_k)/(k^t-1)+[u]_k)$$ has as its zero set $$B=\{n\in \N \mid Q(n)=0\}=\bigcup_{i=1}^p \{b_i k^{tl}\mid l\geq 0\}$$ where $b_i = [w]_k+(k^t-1)[v_i]_k$, $i=1,\ldots,p$. The set $C=\{n\in N\mid b_1 n \in B\}$ is also generalised polynomial and it has the form $$C=\bigcup_{i=1}^{p} \{c_i k^{tl}\mid l\geq 0\}$$ with $c_1=1$ (and $c_i=b_ik^{tl_i}/b_1$ where $l_i\geq 0$ is the smallest integer such that $b_1$ divides $b_i k^{tl_i}$. (If there is no such integer, the corresponding term is not present.)
Let $m\geq 1$ be such that $c_i <k^{tm}$ for $i=1,\ldots,p$. Replacing the set $\{c_i k^{tl}\mid l\geq 0\}$ by the union $$\{c_i k^{tl}\mid l\geq 0\}=\cup_{j=0}^{m-1} \{c_i k^{tj} k^{mtl}\mid l\geq 0\}$$ and replacing $k$ by $k^{mt}$, we may assume that $$C=\bigcup_{i=1}^{p} \{c_i k^{l}\mid l\geq 0\}$$ with $c_1=1$ and $1\leq c_i<k^2$. Consider the set $D=\{n\in C \mid n\equiv 1 \pmod{k^2-1}\}$. The set $D$ is generalised polynomial and an element $c_i k^l$ can be an element of $D$ only if $c_i \equiv 1 \pmod{k^2-1}$ or $c_i \equiv k  \pmod{k^2-1}$. Since $1\leq c_i \leq k^2-1$, this gives $c_i=1$ or $c_i=k$ and whether the latter possibility is realized or not, we have $D=\{k^{2l} \mid l\geq 0\}$. This is a contradiction with our remark that no set of the form $P_t=\{k^{tl} \mid l\geq 0\}$, $t\geq 1$, is generalised polynomial (note that during the proof we have replaced $k$ by its power).
\end{proof}

\begin{proposition}\label{automshiftedipset} Let $(a_n)_{n\geq 0}$ be a $k$-automatic sequence with values in $\{0,1\}$. Assume that for every $w\in \Sigma_k^*$ there is an integer $n\geq 0$ such that $w$ is a factor of $(n)_k$ and $a_n=1$. Then the set $F=\{n\geq 0 \mid a_n=1\}$ contains a set of the form $E+N$, where $N\geq 0$ is an integer and $E\subset \N$ is an \IP-set.\end{proposition}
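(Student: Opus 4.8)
The plan is to argue entirely inside a finite automaton. Fix $\mathcal A=(S,s_\bullet,\delta,\{0,1\},\tau)$ producing $(a_n)=(\ifbra{n\in F})$ while reading the base-$k$ digits of $n$ \emph{least significant first}; by \cite[Theorem 5.2.3]{AS} and the usual normalisations I may assume every state is reachable from $s_\bullet$ and that appending zeros at the high-order end does not change the output (``leading zeros are ignored''). Fix $J$ large enough that $\pi:=\tilde\delta(\cdot,0^{J})$ is idempotent; it is then a retraction of $S$ onto the union $\pi(S)$ of the loops traversed by reading zeros, and since appending high-order zeros to $n$ does not change $a_n$, the $\pi$-image of any (reachable) accepting state is accepting. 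As the hypothesis forces $F$ to be infinite, $\pi(S)$ contains a state with $\tau$-value $1$. I also record, as a sanity check tying this to Proposition \ref{automclassthm} and Lemma \ref{lem:v-sparse-growth}, that $F$ cannot be very sparse: the base-$k$ expansions of the elements of a very sparse set of rank $r$ carry only $O(L^{r})$ distinct factors of each length $L$, whereas the hypothesis demands all $k^{L}$ words of length $L$.

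Next I would extract structure from the hypothesis by feeding it the words $0^{J}w0^{J}$, $w\in\Sigma_k^{*}$. Such a word occurs inside $(n)_k$ for some $n\in F$, so reading it drives $\mathcal A$ from some reachable state into $\pi(S)$, then reads $w$, then returns to $\pi(S)$ at a \emph{co-promising} state (one from which $\tau$-value $1$ is reachable). Pigeonholing, first over the $\pi(S)$-state entered before $w$, then over the state reached after $w$, then over the length $|w|$, yields a zero-loop state $\sigma^{*}\in\pi(S)$, a co-promising zero-loop state $\rho^{*}\in\pi(S)$ with $\tau(\tilde\delta(\rho^{*},\gamma_{0}))=1$ for some fixed word $\gamma_{0}$, and an infinite set of lengths $L$ for each of which a positive proportion of $w\in\Sigma_k^{L}$ satisfy $\tilde\delta(\sigma^{*},w)=\rho^{*}$; in particular there are \emph{nonzero} such words $w$.

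Then I would build the shifted \IP-set by a block construction, again reading least significant first. Fix a nonzero word $w_{0}$ with $\tilde\delta(\sigma^{*},w_{0})=\rho^{*}$, set $g:=\tilde\delta(\cdot,w_{0}\gamma_{0})$ (so $\pi\,g(\sigma^{*})=\pi\,\tilde\delta(\rho^{*},\gamma_{0})$ is an accepting state of $\pi(S)$), and let the \emph{active block} be $\zeta:=(0^{J}w_{0}\gamma_{0}0^{J})^{r}$ and the \emph{inactive block} $0^{|\zeta|}$, both of a common length $b$ chosen so that $\tilde\delta(\cdot,0^{b})=\pi$ and with $r$ large. Being sandwiched in zeros forces each block to map $S$ into $\pi(S)$, where the inactive block acts as the identity and the active block as $h^{r}$ for $h:=\pi g|_{\pi(S)}$. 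Prefixing everything by a fixed word $\sigma$ (ending in $0^{J}$) that reaches $\sigma^{*}$, and then reading $m$ blocks of which exactly those indexed by a finite nonempty $\alpha\subseteq\{1,\dots,m\}$, $m=\max\alpha$, are active, lands $\mathcal A$ in $h^{r|\alpha|}(\sigma^{*})$; choosing $r$ to be a multiple of the length of the eventual cycle of the $h$-orbit of $\sigma^{*}$ exceeding its pre-period makes this equal a single cycle state $c^{\circ}$, independent of $\alpha$. Since the inactive block contributes nothing to the value, the integers produced are exactly $N+\sum_{i\in\alpha}n_{i}$ with $N$ the integer spelled by $\sigma$ and $n_{i}$ the integer spelled by $\zeta$ placed $|\sigma|+(i-1)b$ digits up (so $n_{i}\geq 1$), and no carries occur; hence $F\supseteq E+N$ with $E=\FS(n_{i})$ an \IP-set — \emph{provided $\tau(c^{\circ})=1$}.

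The last proviso is the crux and the main obstacle. It is not enough that $h(\sigma^{*})$ itself is accepting: one needs an accepting state lying \emph{in the $h$-cycle through $\sigma^{*}$}. This is precisely where the \emph{density} output of the pigeonhole step (rather than one rich word), together with the retraction fact that $\pi$ preserves acceptance, has to be used — for instance to enlarge $w_{0}$, using the abundance of words sending $\sigma^{*}$ to $\rho^{*}$, so that $h$ fixes an accepting state reachable from $\sigma^{*}$ — and it is what excludes the superficially worrying automata (of Thue–Morse or odd-number type) for which a naive block argument stalls. Once this state-level obstruction is handled, the remaining bookkeeping (divisibility conditions on $b$, absorbing the pre-period into $r$, keeping $w_{0}$ nonzero) is routine.
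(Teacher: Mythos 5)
Your block machinery is sound as far as it goes: with the standard normalisation, the idempotent $\pi=\tilde\delta(\cdot,0^{J})$, equal-length zero-padded blocks read least-significant-digit first, the produced integers are indeed exactly $N+\sum_{i\in\alpha}n_i$ with no carries, and making the terminal state independent of $\alpha$ by tuning $r$ is fine. But the proof has a genuine gap exactly where you flag it: you never establish that the terminal state $c^{\circ}$, which lies on the eventual cycle of the $h$-orbit of $\sigma^{*}$, satisfies $\tau(c^{\circ})=1$. Your pigeonhole step only yields that $h(\sigma^{*})$ is accepting (equivalently that $\rho^{*}$ is co-promising via $\gamma_{0}$); after $r\abs{\alpha}$ iterations you are deep in the cycle, and nothing you have proved prevents every state of that cycle from being rejecting. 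The proposed repair (``enlarge $w_{0}$, using the abundance of words sending $\sigma^{*}$ to $\rho^{*}$, so that $h$ fixes an accepting state reachable from $\sigma^{*}$'') is not an argument: abundance of words realising the single transition $\sigma^{*}\to\rho^{*}$ gives no control over the acceptance pattern along the $h$-cycle. This is the actual content of the proposition, not routine bookkeeping, so the proposal as written does not prove the statement.

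For comparison, the paper closes precisely this gap by a different device: it applies the hypothesis not to arbitrary words $0^{J}w0^{J}$ but to one carefully built probing word $\gamma=\gamma_{1}\cdots\gamma_{|S|}$ which, from every possible entry state, visits every accessible accepting state and follows each visit by $|S|$ zeros. If $\gamma$ occurs as a factor of $(n)_k^R$ with $a_n=1$ and terminal state $s$, then along that very run the automaton has already visited $s$, read a block of zeros, and returned to $s$, the return ending in the nonzero most significant digit of $n$. This yields a loop $\tilde\delta(s,0^{|S|}w)=s$ based at an \emph{accepting} state $s$ itself, from which two equal-length words $u=0^{l}$ and $v$ (ending in a nonzero digit) with $\tilde\delta(s,u)=\tilde\delta(s',u)=s'$ and $\tilde\delta(s,v)=\tilde\delta(s',v)=s$ are manufactured; every block word ending in $v$ then terminates at $s$, and the shifted \IP-set follows. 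To salvage your framework you would need to import this idea (or an equivalent mechanism forcing an accepting state onto the relevant cycle); without it, the crucial step is missing.
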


\begin{proof}
Let $\mathcal{A}=(S,s_{\bullet},\delta,\{0,1\},\tau)$ be a $k$-automaton that produces $(a_n)_{n\geq 0}$ by reading the digits of $n$ \emph{starting with the least significant one}. We will denote the word $u=0\ldots 0\in \Sigma_k^*$ with $n$ zeroes by $u=0^n$. We begin by proving the following claim.

{\bf Claim}: There exist states $s,s'\in S$ such that $\tau(s)=1$, and integers $l\geq 1$, $k^{l-1}\leq m<k^l$ such that for $v=[m]^R_k$, $u=0^l$  
we have $\tilde{\delta}(s,u)=s'$, $\tilde{\delta}(s,v)=s$, $\tilde{\delta}(s',u)=s'$, $\tilde{\delta}(s',v)=s$. This is portrayed below: \begin{center}\begin{tikzpicture}[shorten >=1pt,node distance=2cm, on grid, auto] 
   \node[state] (s)   {$s$}; 
   \node[state] (s') [right=of s] {$s'$}; 
  \tikzstyle{loop}=[min distance=6mm,in=210,out=150,looseness=7]
  
    \path[->] 
    
    (s) edge [loop left] node {v} (s)
          edge [bend right] node [below]  {u} (s');
          
 \tikzstyle{loop}=[min distance=6mm,in=30,out=-30,looseness=7]
 \path[->]
    (s') edge [bend right] node [above]  {v} (s)
          edge [loop right] node  {u} (s');
\end{tikzpicture}\end{center} \begin{proof}[Proof of the claim.] Let $n=|S|$ be the number of states in $\mathcal{A}$.  Let $S=\{s_1,\ldots,s_n\}$ and let $T=\{t_1,\ldots,t_p\}\subset S$ denote the set of states $s$ in $S$ such that $\tau(s)=1$. We first show that there is a state $s\in T$ such that if we start at $s$, and then follow in the automaton the path of $n$ zeroes, we arrive at a state from which we can return to $s$ in such a way that the final step does not involve the digit zero. More formally, there exists a word $w=w_{s}\ldots w_0\in \Sigma_k^*$ with $w_i\in \Sigma_k$, $w_0\neq 0$ and $\tilde{\delta}(s,0^n w)=s$. 

To prove this, consider a path $\gamma$ in the automaton that is constructed in the following manner. The path $\gamma$ is a concatenation of paths $\gamma_1,\ldots,\gamma_n$. The path $\gamma_1$ is constructed as follows. Start at the state $s_1$, and go to the state $t_1$ (if possible), and then follow a path of $n$ zeroes. If the state $t_1$ was not accessible from $s_1$, ignore this step. Next, starting from the state you are currently in, go to state $t_2$ (if possible), and then follow a path of $n$ zeroes. Repeat for the remaining states $t_3,\ldots,t_p$. We call the path obtained in this way $\gamma_1$ and identify it with a word in $\Sigma_k^*$. To construct $\gamma_2$, follow the same procedure, but imagine that you begin at the state $\tilde{\delta}(s_2,\gamma_1)$ (the state that one would reach from $s_2$ following the path $\gamma_1$). Similarly, the paths $\gamma_i\in \Sigma_k^*$ for $1\leq i \leq n$ are obtained in the same manner starting at the state $\tilde{\delta}(s_i,\gamma_1\ldots\gamma_{i-1})$. Finally, $\gamma=\gamma_1\ldots\gamma_n$. By assumption, there is an integer $n\geq 0$ such that $[n]_k^R$ contains $\gamma$ as a factor and $\tau(\tilde{\delta}(s_{\bullet},[n]_k^R))=1$. Let $s=\tilde{\delta}(s_{\bullet},[n]_k^R)$. We claim that if we start at the state $s$ and follow a path of $n$ zeroes, we arrive at a state from which we can return to $s'$ in such a way that the final step does not involve the digit zero.  This is so because passing the path given by $[n]_k^R$ from $s_{\bullet}$ to $s$ we must have already visited $s$, then followed a path of $n$ zeroes, and then returned to $s$. Indeed, since $[n]_k^R$ contains $\gamma$ as a factor, we can write $[n]_k^R=u\gamma v$. The state $\tilde{\delta}(s_{\bullet},u)$ is equal to some $s_i$, and the path $\gamma$ was constructed in such a way that we visited $s$ using the path $\gamma_i$ (the state $s$ was reachable at that time since we have reached it also at a later time). 

Since $S$ has only $n$ states, we know that there exist $0\leq i<j\leq n$ such that $ \tilde{\delta}(s,0^i)=\tilde{\delta}(s,0^j)$. Let $p\geq n$ be an integer divisible by $(j-i)$ and let $s'$ be the state $s'=\tilde{\delta}(s,0^p)$. Since $p$ is divisible by $(j-i)$, we have $s'=\tilde{\delta}(s',0^{p})=\tilde{\delta}(s,0^{2p})$. We know that $s'$ is equal to $ \tilde{\delta}(s,0^q)$ for some $0\leq q\leq n$. Hence there exists a word $w$ such that $\tilde{\delta}(s',w)=s$ whose final digit is nonzero. Take $v=(0^p w)^{p}$, $u=0^{p(p+|w|)}$, $l=p(p+|w|)$. Since the final digit of $v$ is nonzero, we have $v=[m]^R_k$ for some $1\leq m <k^{l}$. This satisfies the demands of the claim. \end{proof}

To finish the proof, recall that the state $s$ of the claim satisfies $\tilde{\delta}(s_{\bullet},[n]_k^R)=s$. In the notation of the claim, take $N=n$, and integer $h$ such that $k^{h-1}\leq n<k^h$, $n_i=mk^{l(i-1)+h}$, and $E=\FS(n_i)$. One easily sees from the claim that for $q\in E+N$ we have $\tilde{\delta}(s_{\bullet},[q]_k^R)=s$, and so $a_q=1$. Indeed, for every $q\in E+N$, the word $[q]_k^R$ takes the form $[q]_k^R=[n]_k^R w_1\cdots w_g$ with $g\geq 0$, $w_i\in \{u,v\}$ and $w_g=v$.\end{proof}

\section{Explicit examples}\label{sec:Examples}

\comment{I think we want to move this to the end. --JK}

In this section, we discuss some special classes of sequences, for which we are able to prove non-automaticity. Our arguments here do not rely on Theorem \ref{thm:GP-vs-IPS}, and are generally speaking simpler. Not all sequences under consideration are generalised polynomials, but many are.

\subsection*{Polynomial sequences}

We begin with an example reminiscent of Proposition \ref{prop:poly=>not-auto}, except in the sparse setting.

\begin{proposition}\label{prop:adense-special-gen-poly=>not-auto}
	Let $f \colon \NN_0 \to \{0,1\}$ be a function of the form 
	$$f(n) = 
	\begin{cases}
	1, & \text{if } { \fpa{\p(n)} < \e(n) }, \\
	0, & \text{otherwise,}
	\end{cases}
	$$ where $p$ is a polynomial and $\e(n) \to 0$. Then, $f$ is not automatic, unless it is eventually periodic. Moreover, if $f$ is not eventually periodic, then there is no \IPS-set $E$ with $f(n) = 1$ for $n \in E$.
\end{proposition}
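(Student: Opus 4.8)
The plan is to mimic the structure of Proposition \ref{prop:poly=>not-wp}, but since the target set is now a shrinking neighbourhood of $0$ rather than a fixed arc, the relevant dynamical picture degenerates onto a subtorus. First I would reduce to the case where the leading coefficient of $p$ is irrational: if all non-constant coefficients of $p$ are rational, then $\fpa{p(n)}$ is itself (eventually) periodic in $n$, taking finitely many values on each residue class modulo some $q$; since $\e(n) \to 0$, for each residue class $\fpa{p(n)} < \e(n)$ holds either for all large $n$ (if the constant value is $0$) or for no large $n$, so $f$ is eventually periodic. Otherwise, after the substitution $p(n) \mapsto p(hn+b)$ used in Proposition \ref{prop:poly=>not-wp} we may assume the leading coefficient is irrational, and consequently the skew-product system $(X,T) = (\TT^d, T)$ from \eqref{eq:100} (with $a_0 = 0$) is totally minimal and in fact uniquely ergodic, and $(T^n z)_d = p(n) \bmod 1$ where $z = (0,\dots,0)$.

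The key point is that the set $F = \set{n}{f(n)=1}$ satisfies $d^*(F) = 0$: by Corollary \ref{cor:density-uniform}, the set $\set{n}{ \fpa{p(n)} < \delta}$ has Banach density $2\delta$ for each fixed $\delta > 0$ (the set $\TT^{d-1}\times(-\delta,\delta)$ has measure $2\delta$ and boundary of measure zero), and since $\e(n) \to 0$, the set $F$ is contained, outside a finite set, in $\set{n}{\fpa{p(n)} < \delta}$ for every $\delta > 0$; hence $d^*(F) \leq 2\delta$ for all $\delta$, giving $d^*(F) = 0$. So if $f$ is not eventually periodic — equivalently $F$ is infinite — then $F$ is an infinite set of Banach density zero on which a generalised polynomial sequence (here even a genuine polynomial, via Lemma \ref{lem:gen-poly-set-of-zeros} applied to $h(n) = \floor{p(n)}$ combined with the fractional-part condition, or more simply just noting $f$ is as in the hypothesis) takes the value $1$.

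For the \IPS\ statement, suppose towards a contradiction that $E = \FS(n_i; N_t)$ is an \IPS-set with $f(n) = 1$ on $E$, i.e.\ $\fpa{p(n_\a + N_t)} < \e(n_\a+N_t)$ for $\a \subset [t]$. Fix $\a$ and let $t \to \infty$: since $\e \to 0$, we get $\fpa{p(n_\a+N_t)} \to 0$, so writing $p(n_\a + N_t) = p(N_t) + (\text{lower-order cross terms in } n_\a, N_t)$ and expanding $p$ in the binomial basis, the top-degree-in-$N_t$ part forces, along a subsequence of the $N_t$, convergence of the relevant polynomial expressions modulo $1$; passing to a subsequence we may assume $T^{N_t} z \to w$ for some $w \in \TT^d$ with $w_d = 0$. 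Then for each fixed $\a$, $T^{n_\a} (T^{N_t} z) \to T^{n_\a} w$, and the $d$-th coordinate of $T^{n_\a} w$ is the limit of $p(n_\a + N_t) \bmod 1$ plus a correction; more carefully, one should argue that $\fpa{p(n_\a + N_t)} \to 0$ together with the skew structure forces $(T^{n_\a} w)_d = 0$ for all $\a \in \FS$-indices — that is, the orbit $\set{T^{n_\a} w}{\a}$ lies in the subtorus $\TT^{d-1}\times\{0\}$. But $\FS(n_i)$ is a syndetic-enough set (it is an \IP-set, hence meets every \IP$^*$-set), while by total minimality the orbit closure of $w$ under $T$ is all of $\TT^d$ and the return times of $w$ to, say, $\TT^{d-1}\times(1/4,1/2)$ form an \IP$^*$-set; this set must meet $\FS(n_i)$, contradicting $(T^{n_\a}w)_d = 0$ for all $\a$.

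The main obstacle I anticipate is the bookkeeping in the middle paragraph: controlling the lower-order cross terms when expanding $p(n_\a + N_t)$ and justifying that $\fpa{p(n_\a+N_t)}\to 0$ (for each fixed $\a$, as $t\to\infty$) genuinely forces the limiting orbit point to sit in the subtorus $\{x_d = 0\}$. The cleanest route is probably to use the $\IP$-recurrence / $\IP$-limit machinery already invoked in Section \ref{sec:S-GENPOLY} (e.g.\ the existence of an \IP-ring $\cG$ along which $\iplim_{\a\in\cG} T^{n_\a} z$ exists, cf.\ the corollary of Hindman's theorem cited in Section \ref{sec:DEF}), apply it to the sequence $T^{N_t + n_\a} z$, extract a limit $x$ with $x_d = 0$, note $x$ lies in the orbit closure so equals $T^m z$ or a limit thereof — actually $x$ need not be on the orbit, but its orbit closure is still all of $\TT^d$ by minimality of $T$ — and then derive the contradiction exactly as above from total minimality of $T$ restricted to the syndetic index set. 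Since the argument only needs $\FS(n_i)$ to meet a specific \IP$^*$ return-time set, and return-time sets of minimal systems are \IP$^*$, this should go through cleanly once the limit extraction is set up correctly.
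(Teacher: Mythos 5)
The proposal has two genuine gaps, one structural and one mathematical.

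First, the non-automaticity claim is never actually proved. You reduce to the leading coefficient being irrational, then show $F = \{n : f(n) = 1\}$ has upper Banach density zero, and immediately move on to the \IPS\ part. But sparseness alone does not rule out automaticity, and even invoking Theorem \ref{thm:Structure-Auto} would only give "very sparse or \IPS-rich" -- and the very sparse case (e.g.\ $F$ containing a set like $\{[vu^tw]_k\}$) is not touched by your \IPS\ argument. This is precisely the case the paper handles: apply the Pumping Lemma to extract $f(rk^t + s) = 1$ for all $t$, then use the difference operator $\Delta^i_k q(x) = q(kx) - k^i q(x)$ to peel off lower-order terms of $p$, getting $\fpa{Ka_d k^{td}} = o(1)$ with $K \in \NN$; this forces the base-$k$ expansion of $Ka_d$ to terminate and hence $a_d \in \QQ$, a contradiction. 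Your write-up omits this entirely.

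Second, the \IPS\ argument rests on the assertion that "return-time sets of minimal systems are \IP$^*$," which you use to conclude that the returns of $w$ to $\TT^{d-1}\times(1/4,1/2)$ must meet the \IP-set $\FS(n_i)$. This is false. The \IP$^*$ return-time theorem (\cite[Lemma 9.10]{Furstenberg1981}) only applies to returns to a \emph{neighbourhood of the base point itself}; returns to an arbitrary open set are merely syndetic. Indeed, for a circle rotation by $\alpha$ one can choose $(n_i)$ so that $\fpa{n_i\alpha}$ decays geometrically, forcing $n_\a\alpha \bmod 1$ to stay near $0$ for every $\a$ and hence never enter $(1/4,1/2)$; so that return-time set is not \IP$^*$. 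The paper avoids dynamics here altogether: expanding $p(n_\a + N_t) = \sum_{j=0}^d b_j(N_t) n_\a^j$ with $b_d(N_t) = \pm a_d$, passing to a subsequence in $t$ where each $\fp{b_j(N_t)}$ converges to some $a_j'$, one gets $\sum_j a_j' n_\a^j \in \ZZ$ for every $\a$; taking $|\b| = d$ and forming the $d$-th finite difference $\sum_{\a\subseteq\b}(-1)^{d-|\a|}\sum_j a_j' n_\a^j = a_d' d! \prod_{i\in\b} n_i$ gives an integer, contradicting irrationality of $a_d' = \pm a_d$. You would need to replace your \IP$^*$ step with something along these lines (or with a genuinely correct recurrence statement) for the argument to close.
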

\begin{remark}
	If $1/\e(n)$ is a generalised polynomial, then so is $f(n)$.
\end{remark}

\begin{proof}
	It is clear that $f$ is periodic precisely when all coefficients of $p$ are rational, except possibly for the constant term, so let us assume that this is not the case. Dilating if necessary, we may assume that the leading coefficient of $p$ is irrational. If $f(n) = 0$ for all but finitely many $n$, then we are done, so suppose also that this does not hold. Write
	$$
		p(x) = \sum_{i=0}^d a_i x^i,
	$$
	where $a_d \in \RR \setminus \QQ$.
	
	For the sake of contradiction, suppose that $f$ were $k$-automatic. Then, by the Pumping Lemma \ref{lem:pumping}, there exist words $u,v,w \in \Sigma_k^*$ such that $f([vu^tw]_{k}) = 1$ for all $t \in \NN_0$. Hence, there are constants $r,s \in \QQ$, $r \neq 0$, such that $f(r k^t + s) = 1$. Replacing $f(n)$ with $f(r n + s)$, we may assume that $r = 1,\ s = 0$. 

	Denote by $\Delta^{t}_m \colon \RR[x] \to \RR[x]$ the operator given by $(\Delta^t_m q)(x) = q(mx) - m^t q(x)$. Then $\Delta^{t}_m (x^j) = (m^j - m^t) x^j$, and in particular we have
	\begin{equation}
	\label{eq:934}
	\Delta^{d-1}_k \Delta^{d-1}_k \dots \Delta^{1}_k \Delta^{0}_k p (x) = K a_d x^d,
	\end{equation}
where $K = \prod_{t=0}^{d-1} (k^d - k^t) \in \NN$. Expanding the left hand side of \eqref{eq:934} for $x = k^t$ and letting $t \to \infty$, we find
	\begin{equation}
	\label{eq:935}
	\fpa{ K a_d k^{td} } = \fpa{ \sum_{j} A_j p(k^{t+j})  } \leq \sum_{j} A_j \e(k^{t+j}) = o(1),
	\end{equation}
	where $A_j \in \ZZ$ are constants (dependent only on $k$ and $d$). This is only possible if the base $k$ expansion of $K a_d$ has only finitely many non-zero terms, whence $a_d \in \QQ$ contrary to the previous assumption. 

To prove the the additional part of the statement, suppose that $(N_t)_{t\geq 1}$, $(n_\a)_{\a \in \cF}$ were the sequences such that $f(n) = 1$ for $n \in \FS(n_i;N_t)$. Again, we assume that the leading term of $p$ is irrational.

Expand $p(x-y) = \sum_{j=0}^d b_j(x) y^j$, where $b_j(x) \in \RR[x]$; in particular $b_d(x) = (-1)^d a_d \in \RR \setminus \QQ$. For a fixed $\a \in \cF$ and $t \geq \max \a$ we have
	$$ \fpa{ p(N_t - n_\a) } = \fpa{ \sum_{j=0}^d b_j(N_t) n_\a^j } \leq \e(N_t - n_\a).$$
	 Passing to the limit $t \to \infty$ along a subsequence such that $\fp{b_j(N_t)}$ converges to some $a_j' \in [0,1]$, we conclude that 
	$$ \sum_{j=0}^d a_j' n^j_\alpha \equiv 0 \pmod \ZZ. $$
	Picking $\b \in \cF$ with $\abs{\b} = d$, and using an inclusion-exclusion argument we find that
	$$ a_d' d! \prod_{i \in \b} n_i =  \sum_{\a \subseteq \b} (-1)^{d-\abs{\a}} \sum_{j=0}^d a_j' n^j_\alpha \equiv 0 \pmod \ZZ, $$
	which contradics irrationality of $ a_d' = (-1)^d a_d$.
	
	\comment{I think the formula above works, but could you (=JB) double-check that? --JK}
\end{proof}

\subsection*{Linear recurrence sequences}

In contrast to Example \ref{ex:lin-rec-is-gp}, we show that the set of values of a linear recurrence sequence is not automatic, except for the trivial examples.

\begin{proposition}
	Let $(a_m)_{m\geq 0}$ be an $\N$-valued sequence satisfying a linear recurrence of the form
	\begin{equation}
	\label{eq:380} 
		a_{m + D} = \sum_{i=1}^{D} c_i a_{m+D-i}, \quad  m\geq 0
	\end{equation}
	with integer coefficients $c_i$. Suppose that $E = \set{ a_m}{ m \in \NN_0}$ is $k$-automatic for some $k$. Then $E$ is a finite union of the following standard sets: linear progressions $\set{ am +b}{ m \in \NN_0}$, exponential progressions $\set{ a k^{ d m} + b}{m \in \NN_0}$ and finite sets.
\end{proposition}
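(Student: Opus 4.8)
\textit{Proof proposal.} The plan is to marry the classical structure theory of linear recurrences with the structure theory for sparse automatic sets from Section~\ref{sec:S-AUTO}. Write $a_m=\sum_j P_j(m)\lambda_j^m$, where the $\lambda_j$ are the distinct nonzero roots of the characteristic polynomial $\chi(x)=x^D-\sum_i c_ix^{D-i}\in\ZZ[x]$ and $P_j\in\overline{\QQ}[m]$; since $\chi$ is monic over $\ZZ$, the $\lambda_j$ are algebraic integers and the multiset of roots is Galois-stable. I would first invoke the classical reduction splitting $\NN_0$ into finitely many arithmetic progressions on each of which the recurrence is non-degenerate (no ratio of two distinct characteristic roots is a root of unity): since $E$ is then the union of the value sets of the corresponding subsequences, and the asserted conclusion is closed under finite unions, it suffices to treat a single non-degenerate piece. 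Let $\rho=\max_j|\lambda_j|$. The proof then splits according to whether $\rho\le 1$ or $\rho>1$.

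In the case $\rho\le 1$: conjugates of each $\lambda_j$ are again characteristic roots, so all have modulus $\le\rho\le 1$, whence by Kronecker's theorem every nonzero $\lambda_j$ is a root of unity. Grouping terms by $m\bmod N$ with $N=\operatorname{lcm}_j\operatorname{ord}(\lambda_j)$ shows that $(a_m)$ is eventually a quasi-polynomial, $a_m=Q_{m\bmod N}(m)$ with $Q_r\in\QQ[m]$. If every $Q_r$ has degree $\le 1$, then $E$ is already a finite union of linear progressions and finite sets, and we are done. Otherwise set $E'=E\setminus\bigcup_{\deg Q_r\le 1}\{Q_r(m):m\ge 0\}$, which is $k$-automatic (a Boolean combination of automatic sets) and contained in $\bigcup_{\deg Q_r\ge 2}\{Q_r(m):m\ge 0\}$. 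If $E'$ is finite we are again done; if $E'$ is infinite, then $|E'\cap[N]|\gg N^\alpha$ for some $\alpha>0$, so $E'$ is not very sparse, and Theorem~\ref{thm:Structure-Auto} produces an \IPS-set $\FS(n_i;N_t)\subset E'$. Passing to a dissociated subsequence $(n_{i_j})_j$ (distinct finite subsets giving distinct sums), for each fixed $s$ the set $E'$ meets the interval $[N_t,N_t+L_s]$ of fixed length $L_s=n_{i_1}+\dots+n_{i_s}$ in at least $2^s$ points, for arbitrarily large $t$ (when $(N_t)$ happens to be bounded one gets the same conclusion using finite sums involving one large index). On the other hand a finite union of images of polynomials of degree $\ge 2$ meets any interval of bounded length lying far enough from the origin in $O(1)$ points, since consecutive values of such a polynomial are eventually spaced by $\gg Y^{1/2}$. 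Taking $s$ large yields a contradiction, so this subcase does not occur.

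In the case $\rho>1$: here $|a_m|\to\infty$ and, by the standard lower bound for non-degenerate linear recurrences, $|a_m|\ge\rho_0^{\,m}$ for some $\rho_0>1$ and all large $m$; hence $|E\cap[0,N]|\le|\{m:a_m\le N\}|=O(\log N)$. By Corollary~\ref{honorarywineglass} (the dichotomy $\nu(N)\gg N^\alpha$ versus $\nu(N)\ll(\log N)^l$) we are in the second alternative, so by the growth characterisation of very sparse sets recorded in Remark~\ref{kubekkawey}, $E$ is very sparse; by Lemma~\ref{lem:v-sparse-growth} it has rank $\le 1$. Thus $E$ is a finite union of basic very sparse sets of rank $\le 1$, each of the form $\{[w_0u_1^{\,l}w_1]_k:l\ge 0\}$ (or a single point when $u_1=\epsilon$). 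A direct computation gives $[w_0u_1^{\,l}w_1]_k=Ak^{|u_1|\,l}+B$ with $A\in\QQ_{>0}$ and $B\in\QQ$, i.e.\ each such set is an exponential progression. Hence $E$ is a finite union of exponential progressions and finite sets.

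The two hard points are the following. First, in the geometric case one needs the lower bound $|a_m|\gg\rho_0^{\,m}$ for non-degenerate recurrences in order to land in the ``very sparse'' alternative of Corollary~\ref{honorarywineglass}; this is a known but non-trivial input on sizes of linear recurrence sequences, and is precisely why one first reduces to the non-degenerate situation. Second, and more conceptually, the very shape of the answer --- exponential progressions --- surfaces only through the identification of rank-$\le 1$ very sparse sets with sets $\{Ak^{dm}+B\}$, which is where the digit structure forced by automaticity is translated into the arithmetic description. By comparison, the passage from an \IPS-set to a contradiction in the quasi-polynomial case is routine once the dissociation trick is used to keep the $2^s$ points distinct.
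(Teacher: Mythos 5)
Your toolkit is essentially the paper's (non-degeneracy reduction, Kronecker's theorem for the unit-modulus case, the Evertse--van der Poorten--Schlickewei growth bound for a dominant root, the sparse-automatic machinery to kill degree $\geq 2$ polynomial images, and the identification of rank-$1$ very sparse sets with exponential progressions $\{Ak^{dm}+B\}$), but the architecture has a genuine gap: the opening reduction ``it suffices to treat a single non-degenerate piece'' is not legitimate. The hypothesis of the proposition is that the \emph{global} value set $E$ is $k$-automatic, and automaticity does not descend to the value set of a subsequence $a_{hm+r}$; yet both of your cases lean on exactly that. In the case $\rho\le 1$ you assert that $E'=E\setminus\bigcup_{\deg Q_r\le 1}\{Q_r(m)\}$ is automatic ``as a Boolean combination of automatic sets'' --- true only if $E$ there denotes the full set, not the piece --- and in the case $\rho>1$ you apply Corollary \ref{honorarywineglass} and Remark \ref{kubekkawey} directly to the piece's value set, which is not known to be automatic. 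The paper avoids this by keeping everything global: the non-degeneracy reduction is used only to produce a decomposition of the single automatic set $E$ into linear progressions, images of polynomials of degree $\ge 2$, sets of logarithmic growth, and a finite set; the automatic machinery is then applied to $E'=E\setminus\bigcup_i L_i$, which \emph{is} automatic.

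A second, related problem appears if you try to repair your argument by globalizing: in the quasi-polynomial case your contradiction (a shifted finite-sums set giving $2^s$ points of $E'$ in an interval of bounded length $L_s$, versus $O(1)$ points of a union of degree $\ge 2$ polynomial images in such an interval) uses that $E'$ is covered by polynomial images alone. Globally, $E'$ also contains the exponentially growing components coming from the $\rho>1$ pieces, and for those the log-density bound only gives $O(\log N_t)$ points in an interval around $N_t$, which swamps $2^s$ for fixed $s$; controlling short-interval counts for value sets of linear recurrences is not free. The paper's counting argument is built precisely to handle this mixture: it pumps $p(k^t)$ to produce $\gg T^d\ge T^2$ elements $n(s_1,\dots,s_d)\in E'$ up to height $\approx N(T)$, shows each $Q(S)$ meets each polynomial image in at most one point via the gap estimate $N(S)^{\delta}\ll N(S)^{(\deg p_i-1)/\deg p_i}$, and bounds the exponential components over the whole range $[2N(T)]$ by $O(T)$ using their logarithmic growth --- a comparison over a long range rather than over bounded intervals. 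Your final steps (log growth of $E'$, hence very sparse of rank $\le 1$ by Theorem \ref{thm:Structure-Auto}/Corollary \ref{honorarywineglass} and Lemma \ref{lem:v-sparse-growth}, hence a union of exponential progressions and finite sets) then coincide with the paper's, but they must be run on the automatic set $E'$, not on the individual recurrence pieces.
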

\begin{proof}
	We first claim there exists a representation of $E$ as a finite union
	\begin{equation}\label{eq:381}
	E = \bigcup_{i=1}^{K_{\text{lin}}} L_i \cup \bigcup_{i=1}^{K_{\text{poly}}} P_i \cup \bigcup_{i = 1}^{K_{\text{exp}}} E_i \cup F
	,\end{equation}
	where $F$ is finite, $L_i = \set{ a_i m + b_i}{m \in \NN_0}$ are arithmetic progressions, $P_i = \set{ p_i(m) }{m \in \NN_0}$ are sets of values of polynomials $p_i(x) \in \ZZ[x]$ with $\deg p_i \geq 2$, $E_i$ have exponential growth in the sense that $\abs{E_i \cap [N]} \ll \log N$.

	In order to prove this claim, begin by noting that any restriction of $(a_m)$ to an arithmetic progression $a^{(h,r)}_m = a_{hm + r}$ obeys some (minimal length) linear recurrence 
	$$
	a^{(h,r)}_{m + D'} = \sum_{i=1}^{D'} c^{(h,r)}_i a^{(h,r)}_{m+D'-i}, \quad m\in \NN_0
	$$
	with $D'=D'(h,r) \leq D$. Moreover, there exists a choice of $h$ such that each of that each $c^{(h,r)}_m$ is either identically zero or non-degenerate, in the sense that the associated characteristic polynomial $q^{(h,r)}(x) = x^{D'} - \sum_{i=1}^{D'} c^{(h,r)}_{i} x^{D'-i}$ has no pair of roots $\lambda,\mu \in \CC$ such that $\lambda/\mu$ is a root of unity
	(see e.g.\ \cite[Thm 1.2]{EverestPoortenShparlinskiWard-book} for a much stronger statement).  
	Hence, for the purpose of showing the existence of representation \eqref{eq:381}, we may assume  that $(a_m)$ is non-degenerate. Suppose also that $D$ is minimal, and let $\lambda_1,\dots,\lambda_r$ be the roots of $q(x) = x^D - \sum_{i=1}^D c_{i} x^{D-i}$, with $\abs{\lambda_1} \geq \abs{\lambda_2} \geq \dots$. Note that $\abs{\lambda_1} \geq 1$.
	
	If $\abs{\lambda_1} > 1$, then by the result of Evertse \cite{Evertse-1984} and van der Poorten and Schlickewei \cite{PoortenSchlickewei-1991} (see \cite[Thm 2.3]{EverestPoortenShparlinskiWard-book}), we have $a_m = \abs{\lambda_1}^{m + o(m)}$ as $m \to \infty$. Hence, $E$ has exponential growth, and we are done.
	
	Otherwise, if $\abs{\lambda_1} =1$, then for all $j$ we have $\abs{\lambda_j} = 1$ or $\lambda_j=0$. Kronecker's theorem \cite{Kronecker-1857} (or a standard Galois theory argument) shows that if $\lambda$ is an algebraic integer all of whose conjugates have absolute value $1$, then $\lambda$ is a root of unity. \comment{I saw that in a MathOverflow post, but have little idea how standard it is --- I suck at Galois theory. See:\\ http://math.stackexchange.com/questions/4323/are-all-algebraic-integers-with-absolute-value-1-roots-of-unity}\\ Using the general formula for the solution of a linear recurrence, we may write
	$$
		a_m = \sum_{j=1}^r \lambda_j^m p_j(m) = \sum_{j=1}^r b_j(m) p_j(m),
	$$
	where $p_j(x)$ are polynomials and $b_j(m)$ are periodic. Splitting $\NN_0$ into arithmetic progressions where $b_j(m)$ are constant, we conclude that $E$ is a finite union of sets of values of polynomials. This again produces a representation of the form \eqref{eq:381}. 
	
	Such a representation is not unique. Splitting $P_i$ into a finite number of sub-progressions and discarding those which are redundant, we may assume that $P_i \cap L_j = \emptyset$ for any $i,j$. Likewise, we may assume that $E_i \cap L_j = F \cap F_j =\emptyset$ for any $i,j$. Fix one representation subject to these restrictions. The set 
	
	$$E' =\bigcup_{i=1}^{K_{\text{poly}}} P_i \cup \bigcup_{i = 1}^{K_{\text{exp}}} E_i \cup F = E \setminus \bigcup_{i=1}^{K_{\text{lin}}} L_i $$ is again $k$-automatic; it will suffice to show that $E'$ is a union of the standard sets mentioned above.
	
	We claim that $K_{\text{poly}} = 0$, i.e.\ the representation of $E$ uses no polynomial progressions of degree $ \geq 2$. Suppose for the sake of contradiction that $P = \set{p(m)}{m \in \NN_0}$ appears is one of the sets $P_i$, and write $p(m) = \sum_{i=0}^d c_i m^i$, where $c_i \in \ZZ$. Replacing $p(m)$ with $p(m+r)$ for a suitably chosen $r \in \NN_0$, we may assume that $c_i > 0$ for $0 \leq i \leq d$. For sufficiently large $t$, we have $p(k^t) = [u_d 0^{t - t_0} u_{d-1} 0^{t-t_0} u_{d-2} \dots 1_1 o^{t-t_0} u_{0}]_{k}$, where $t_0$ is a constant and $u_{i}$ is the base $k$ expansion of $c_i$, padded by $0$'s so as to have $\abs{u_i}=t_0$. Since $p(k^t) \in E'$, from the Pumping Lemma \ref{lem:pumping} it follows that there is $l \in \NN$ such that for any $s_1,\dots,s_d\in\NN$ it holds that
	$$
		n(s_1,\dots, s_d) := [u_{d} 0^{ls_d} u_{d-1} 0^{ls_{d-1}} \dots u_1 0^{ls_1} u_0]_{k} \in E'.
	$$
	For sufficiently large $S$ and an absolute constant $\delta$ to be determined later, consider the set 
	$$
		Q(S) = \set{n(s_1,\dots,s_d)}{ s_i \in \NN, s_1 + \dots + s_d = S,\ s_2 + \dots + s_d \leq \delta S }, 
	$$
 	and put $N(S) := n(S-d+1,1,\dots,1) = \min Q(S)$. Note that $N(S) = k^{l S + O(1)}$ and that $\max Q(S) = N(S) + O(N(S)^\delta)$. 
 	For fixed $T_0$ and $T \to \infty$, we shall consider the cardinality of the set $ Q(T_0,T) = \bigcup_{T_0 \leq S \leq T} Q(S)$. By an elementary counting argument, we find 
 	\begin{equation}
 	\label{eq:438}
 	\abs{ Q(T_0,T)} \gg T^{d} \gg T^2. 
 	\end{equation}
 	
 	To obtain an upper bound, we separately estimate $\abs{ Q(S) \cap P_i}$ and $\abs{Q(T_0,T) \cap E_j}$ for each $i,j$.
 	
 	Suppose that $n, n' \in Q(S) \cap P_i$ with $n' > n$, so in particular $n = p_i(m)$ and $n' = p_i(m')$ for some $m,m' \gg N(S)^{1/\deg p_i}$. We then have the chain of inequalities:
 	$$
 		N(S)^\delta \gg n'-n = p_i(m') - p_i(m) \geq \min_{x \in [m,m']} \abs{ p_i'(x) } \gg N(S)^{\frac{ \deg p_i - 1}{\deg p_i}}, 
 	$$
	which is a contradiction for sufficiently large $S$, provided that $\delta < \frac{\deg p_i - 1}{\deg p_i}$ (which will hold if we put $\delta = \frac{1}{3}$). Thus, $\abs{ Q(S) \cap P_i} \leq 1$. 
	
	As for $Q(T_0,T) \cap E_j$, from the bounds on growth of $E_j$ we immediately have
	\begin{equation}
 	\label{eq:439}
 	\abs{ \bigcup_{T_0 \leq S \leq T} Q(T_0,T) \cap E_i} \ll \abs{E_i \cap [2N(T)]} \ll T.
 	\end{equation}

	 In total, using \eqref{eq:438} and \eqref{eq:439} we find that
 	\begin{equation}
 	\label{eq:440}
 	\abs{ Q(T_0, T) } \leq \sum_{S=T_0}^{T} \sum_{i=1}^{K_{\text{poly}}} \abs{ Q(S) \cap P_i} + \sum_{i=1}^{K_{\text{exp}}} \abs{ Q(T_0, T) \cap E_i} + O(1) \ll T,
 	\end{equation}
	 contradicting the previously obtained bound. It follows that indeed $K_{\text{poly}} = 0$.
	
	Since $E'$ contains no polynomial or linear progressions, we have $\abs{E' \cap [N]} \ll \log N$. It follows from Theorem \ref{thm:Structure-Auto} (or more accurately Corollary \ref{honorarywineglass}) \comment{Make sure to state it so that it holds true!}\ that $E'$ must be very sparse of rank $1$ and base $k$. Since all basic very sparse sets of rank $1$ are of the form described in the formulation of the theorem, we are done. 
\end{proof}

\subsection*{IP-rich sequences}

In light of Theorem \ref{thm:Structure-Auto}, any sparse but not very sparse automatic set contains an \IPS- set. It turns out that a slightly stronger condition of containing (a translate of) an \IP-set leads to interesting consequences. We will say that a $\{0,1\}$-valued sequence $f$ is \IPrich\ (resp. \IPPrich, \IPSrich) if there exists an \IP-set (resp. \IPP-set, \IPS-set) $E$ such that $f(n) = 1$ for $n \in E$.

In contrast to Theorem \ref{thm:Structure-Auto}, there are sparse automatic sequences which are neither very sparse nor \IPPrich\ (the interested reader will have no difficulty finding examples). However, many natural examples of sparse but not very sparse automatic sequences turn out to be \IPrich.

\begin{example}\label{ex:B-free}
	Let $k \geq 2$, and let $\cB \subset \Sigma_k^*$ be a finite set of ``prohibited'' words of length $\leq t$. A word $u \in \Sigma_k^*$ is $\cB$-free if $u$ contains no $b \in \cB$ as a substring, and accordingly $n \in \NN_0$ is $\cB$-free if its expansion base $k$ is $\cB$-free. Denote
		$$
		f_\cB(n) =
		\begin{cases}
			0, &\text{if $n$ is $\cB$ free}, \\
			1, &\text{otherwise.}
		\end{cases}
	$$
\begin{enumerate}
\item The function $f_\cB$ is $k$-automatic.
\item If $\cB \neq \emptyset$, then $f_\cB$ is sparse.
\item If $ \sum_{b \in \cB} k^{-\abs{b}} < 1/8t$, then $f_\cB$ is not very sparse.
\item If each $b \in \cB$ contains at least two non-zero digits, then $f_\cB$ is \IPrich.
\item If some $b \in \cB$ consists only of $0$'s, then $f_\cB$ is not \IPrich
\end{enumerate}
\end{example}

\begin{proof}
\begin{enumerate}
\item It is not difficult to explicitly describe the $k$-automaton which computes $f$.
\item A an elementary computation shows that if $t = \max_{b \in \cB} \abs{b}$, then 
$$\frac{1}{N} \sum_{n \leq N} f(n) \leq 2 \bra{1 - \frac{\abs{\cB}}{k^t}}^{\floor{ \frac{\log N}{ t \log k} }} = o(1), \text{ as } N \to \infty.$$

\item Construct an undirected graph $G = (V,E)$ (allowing loops), where $V = \Sigma_k^{t}$, and $\{u,v\} \in E$ if $uv$ and $vu$ are both $\cB$-free. If $u_1, u_2, \dots, u_r$ is a path in $G$, then $u_1 u_2 \dots u_r$ is $\cB$-free. If $G$ contains a path $u_1,w,u_2$ of length $3$ with $u_1 \neq u_2$, then for any $i_1,\dots,i_r \in \{1,2\}$, the word $u_{i_1} w u_{i_2} w \dots u_{i_r} w$ is $\cB$-free, and hence $f_\cB$ is not very sparse. Thus, it remains to check that $G$ contains a length $3$ path with distinct endpoints; for the sake of contradiction suppose this is not the case.

On one hand, any vertex $u \in V$ can have at most $1$ neighbour (including itself if $\{u,u\}$ is an edge), and it is an easy exercise in extremal combinatorics to show that $\abs{E} \leq \abs{V} = k^{t}$. On the other hand, given $b \in \cB$, the number of pairs $(u,v) \in V^2$ such that $b$ appears in $uv$ or $vu$ is $< 4t k^{2t -\abs{b}}$, so 
$$
	\abs{E} \geq \frac{1}{2} k^{2t} - 4 t k^{2t} \sum_{b \in \cB} k^{-\abs{b}}
	>  \frac{1}{4} k^{2t} > k^{t},
$$
(note that assumption implies $k^t \geq 8$), which is the sought contradiction.

\comment{Here, path is NOT assumed to be non-self-intersecting!}
\item Let $n_i = k^{it}$. Then $f(n) = 1$ for each $n \in \FS(n_i)$.
\item If $E$ is an \IP-set, then some $n \in E$ is divisible by $k^t$, and $f(n) = 0$.
\end{enumerate}

\end{proof}	
	
\begin{example}\label{ex:Fib-IPrich}
The sequence
	$$
		f_{\mathrm{Fib}}(n) =
		\begin{cases}
			0, &\text{if binary expansion of $n$ contains $11$}, \\
			1, &\text{otherwise,}
		\end{cases}
	$$
is $2$-automatic, sparse, not very sparse, and \IPrich.
\end{example}

\begin{example}[Baum-Sweet]\label{ex:BS-sparse-IPrich}
	The Baum-Sweet sequence (\cite{BaumSweet}), given by
	$$
		f_{\mathrm{BS}}(n) =
		\begin{cases}
			0, &\text{if binary expansion of $n$ contains $10^l1$ with $l \in 2\NN_0+1$},  \\
			1, &\text{otherwise,}
		\end{cases}
	$$
is $2$-automatic, sparse, not very sparse, and \IPrich.
\end{example}


While the property of being \IPrich\ is arguably somewhat rare among sparse automatic sequences, it is possible to ensure that a sequence is \IPPrich\ under some relatively mild assumptions.

\begin{lemma}\label{lem:IP+-rich-if-pattern-rich}
	Suppose that $f$ is a $k$-automatic sequence, with the property that each $u \in \Sigma_k^*$ is a factor of $(n)_k$ for some $n \in \NN_0$ with $f(n) = 1$. Then $f$ is \IPPrich.
\end{lemma}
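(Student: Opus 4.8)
The plan is to reduce Lemma~\ref{lem:IP+-rich-if-pattern-rich} to Proposition~\ref{automshiftedipset}, which gives exactly the same conclusion (containing a set $E+N$ with $E$ an \IP-set) under a seemingly stronger hypothesis. First I would observe that the only gap between the two statements is that Proposition~\ref{automshiftedipset} is phrased for $\{0,1\}$-valued automatic sequences whose ``hitting set'' $F = \set{n}{a_n = 1}$ must meet \emph{every} factor $w \in \Sigma_k^*$, whereas here $f$ may take values in an arbitrary finite set. So the first step is to pass from $f$ to the indicator sequence $f_1(n) = \ifbra{f(n) = 1}$, which is again $k$-automatic (the preimage of a single letter under the output map gives a new output function), and whose hitting set is precisely $\set{n}{f(n)=1}$. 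The hypothesis then says verbatim that every $u \in \Sigma_k^*$ is a factor of some $(n)_k$ with $f_1(n) = 1$, which is the hypothesis of Proposition~\ref{automshiftedipset}.

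The one genuine subtlety is the treatment of leading zeros: Proposition~\ref{automshiftedipset} works with an automaton reading least-significant-digit-first and talks about $(n)_k$ without initial zeros, while ``$u$ is a factor of $(n)_k$'' is insensitive to whether we allow leading zeros or not (a leading zero block can always be absorbed by enlarging $n$, or conversely dropped). I would spend a sentence noting that since every automatic sequence has a representation that ignores leading zeros (\cite[Theorem 5.2.3]{AS}, cited in the excerpt), and since prepending zeros to $(n)_k$ does not change which words occur as factors except for the word $0^j$ itself — and $0^j$ certainly occurs as a factor of $(k^j m)_k$ for suitable $m$ with $f_1 = 1$, using the hypothesis applied to a longer pattern — the hypothesis of Proposition~\ref{automshiftedipset} is met in the precise form it requires. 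Once that bookkeeping is done, Proposition~\ref{automshiftedipset} produces an integer $N \geq 0$ and an \IP-set $E$ with $f_1(n) = 1$, i.e.\ $f(n) = 1$, for all $n \in E + N$; by Definition~\ref{def:IP-set} the set $E + N$ is an \IPP-set, so $f$ is \IPPrich, as desired.

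I expect no real obstacle here: the lemma is essentially a restatement of Proposition~\ref{automshiftedipset} modulo the elementary reduction to a two-letter output alphabet and the standard leading-zeros normalization. The ``hard part,'' such as it is, was already done in the proof of Proposition~\ref{automshiftedipset} (the combinatorial construction of the path $\gamma$ forcing a returning loop through an accepting state). Accordingly the proof below is short.

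\begin{proof}
	Replacing $f$ by the $\{0,1\}$-valued sequence $f_1(n) = \ifbra{f(n) = 1}$, which is again $k$-automatic (compose the output map of an automaton for $f$ with the indicator of the letter $1$), we may assume that $f$ takes values in $\{0,1\}$; the hypothesis is unchanged, as the set $\set{n \in \NN_0}{f_1(n) = 1} = \set{n \in \NN_0}{f(n) = 1}$ is the same.

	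We must check that $f_1$ satisfies the hypothesis of Proposition~\ref{automshiftedipset}, namely that for every $w \in \Sigma_k^*$ there is $n \in \NN_0$ with $f_1(n) = 1$ and $w$ a factor of $(n)_k$ in the representation without leading zeros. Given $w$, apply the hypothesis to the word $1 w$ (or to $w$ itself if $w$ is nonempty and begins with a nonzero letter): there is $n$ with $f_1(n) = 1$ such that $1w$ is a factor of $(n)_k$. Then $(n)_k = v_0 (1w) v_1$ for some $v_0, v_1 \in \Sigma_k^*$; since the displayed occurrence of $1w$ starts with the letter $1$, it lies entirely within $(n)_k$ and in particular $w$ is a factor of $(n)_k$, as required. (If $w = \epsilon$ there is nothing to prove.)

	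Now Proposition~\ref{automshiftedipset} applies and yields an integer $N \geq 0$ and an \IP-set $E \subset \NN$ such that the set $F = \set{n \in \NN_0}{f_1(n) = 1}$ contains $E + N$. Thus $f_1(n) = 1$, and hence $f(n) = 1$, for all $n \in E + N$. By Definition~\ref{def:IP-set} the set $E + N$ is an \IPP-set, so $f$ is \IPPrich.
\end{proof}
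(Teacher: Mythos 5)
Your proof is correct and is essentially the paper's own argument: the paper proves this lemma simply by declaring it an immediate reformulation of Proposition~\ref{automshiftedipset}, which is exactly your reduction. Your extra care about leading zeros (the $1w$ trick) is harmless but unnecessary, since both the lemma and Proposition~\ref{automshiftedipset} use the same convention that $(n)_k$ has no initial zero, so their hypotheses already coincide verbatim once $f$ is taken $\{0,1\}$-valued.
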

\begin{proof}
	Immediate reformulation of Proposition \ref{automshiftedipset}.
\end{proof}

Restricting to \IPrich\ sequences, we can easily prove an analogue of Proposition \ref{prop:adense-special-gen-poly=>not-auto} for generalised polynimials. 

\begin{proposition}\label{prop:IP-auto-vs-special-gp}
	Let $f$ be a sparse, \IPrich\ automatic sequence. Let $q$ be a generalised polynomial, $\e(n) \geq 0$ with $\e(n) \to 0$ as $n \to \infty$, and let $g$ be given by 
	$$g(n) = 
	\begin{cases}
	1, & \text{ if }  \fpa{q(n)} < \e(n), \\
	0, & \text{ otherwise.} 
	\end{cases} 
	$$
	Suppose that the set $R = \set{n \in \NN_0}{f(n) = g(n)}$ is $\mathrm{IP}^*$. Then, there are infinitely many $n$ with $q(n) \in \ZZ$.
\end{proposition}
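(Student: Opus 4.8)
The plan is to first strip off the combinatorial hypotheses and reduce everything to a purely dynamical statement about generalised polynomials, and then to run an \IP-recurrence argument of the kind developed in Section \ref{sec:S-GENPOLY}.

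\emph{Step 1 (reduction to an \IP-set of near-integrality).} Since $f$ is \IPrich, there is an \IP-set $E_0$ with $f(n)=1$ for all $n\in E_0$. As $R$ is $\mathrm{IP}^*$, an $\mathrm{IP}^*$-set meets every \IP-set and the intersection is again \IP\ (by the discussion following Definition \ref{def:IP-set}), so we may choose $(n_i)$ with $\FS(n_i)\subseteq E_0\cap R$. On $\FS(n_i)$ we have $f(n)=1$ and $f(n)=g(n)$, hence $g(n)=1$, i.e.\ $\fpa{q(n)}<\e(n)$. Thus it suffices to prove: \emph{if $q$ is a generalised polynomial, $\FS(n_i)$ is an \IP-set, and $\fpa{q(n)}<\e(n)$ for $n\in\FS(n_i)$ with $\e(n)\to0$, then $q(n)\in\ZZ$ for infinitely many $n$.}

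\emph{Step 2 (dynamical reformulation and the \IP-limit trick).} Put $v(n)=q(n)-\lbr q(n)\rbr$. Then $v$ is a difference of generalised polynomials, hence a generalised polynomial; it is bounded, $v(n)\in[-1/2,1/2]$; $\abs{v(n)}=\fpa{q(n)}$; and, crucially, $v(n)=0$ exactly when $q(n)\in\ZZ$. By Theorem \ref{thm:BergelsonLeibman}(2) there are a minimal nilsystem $(X,T)$ with $G^{\circ}$ simply connected, a point $x\in X$, and a piecewise polynomial $P\colon X\to\RR$ with $v(n)=P(T^nx)$ (so $P(T^nx)\in[-1/2,1/2]$). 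By the \IP-recurrence theorem (\cite[Lemma 9.10]{Furstenberg1981}), after replacing $\FS(n_i)$ by a smaller \IP-set $\FS(m_j)$ we may assume $\iplim_{\a} T^{m_\a}x=x$. Now fix $\b\in\cF$. For $\a>\b$ the sets $\a,\b$ are disjoint, so $m_{\a\cup\b}=m_\a+m_\b$ and $T^{m_{\a\cup\b}}x=T^{m_\b}\bigl(T^{m_\a}x\bigr)\to T^{m_\b}x$ as $\a\to\infty$, while $\abs{v(m_{\a\cup\b})}=\fpa{q(m_{\a\cup\b})}<\e(m_{\a\cup\b})\to0$ because $m_{\a\cup\b}\ge m_\a\to\infty$. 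Hence $P(T^{m_{\a\cup\b}}x)\to0$ with $T^{m_{\a\cup\b}}x\to T^{m_\b}x$. If $P$ were continuous at $T^{m_\b}x$ this would force $v(m_\b)=P(T^{m_\b}x)=0$, i.e.\ $q(m_\b)\in\ZZ$; as $\b$ is arbitrary and the singletons give $m_{\{j\}}\to\infty$, the proof would be complete.

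\emph{Step 3 (the obstacle: discontinuity of $P$).} The one genuine difficulty — precisely the one met throughout Section \ref{sec:S-GENPOLY} — is that the piecewise polynomial $P$ need not be continuous at boundary points of the fundamental domain, and a priori $T^{m_\b}x$ may sit there. The remedy follows Lemma \ref{lem:conv-of-fp-to-0} and Step \ref{prop:S:main-3}: passing to a connected nilmanifold and conjugating $\Gamma$ so that $x=e\Gamma$ (as in Step \ref{prop:S:main-2}), work with the polynomial sequence $g^n$ realising the orbit; by Lemma \ref{lem:conv-of-fp-to-0} one may reorient the \Malcev\ basis and refine the \IP-ring so that $\iplim_\a\fp{g^{m_\a}}=e$, an \emph{interior} point of the fundamental domain, and then (shrinking the \IP-ring once more to absorb the bounded $\Gamma$-error, exactly as in Step \ref{prop:S:main-3}) the cocycle identity $\fp{g^{m_{\a\cup\b}}}=g^{m_\b}\fp{g^{m_\a}}\ip{g^{m_\b}}^{-1}$ holds, whence $\fp{g^{m_{\a\cup\b}}}\to g^{m_\b}\ip{g^{m_\b}}^{-1}=\fp{g^{m_\b}}$ as $\a\to\infty$, a convergence that now takes place inside a single connected component of the fundamental domain. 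A final refinement of the \IP-ring ensuring that all of these points lie in one semialgebraic piece of $P$, on whose closure $P$ is continuous, yields $v(m_\b)=0$, as required. (Alternatively one may replay Steps \ref{prop:S:main-2}--\ref{prop:S:main-5} almost verbatim, the extra input $\e(n)\to0$ upgrading ``the orbit lies in $S$'' to ``the orbit lies in $P^{-1}(0)$''.) Reconciling ``approximately integral along an \IP-set'' with ``exactly integral infinitely often'' is where essentially all the work lies; it is the nilpotent counterpart of the elementary estimate at the end of the proof of Proposition \ref{prop:adense-special-gen-poly=>not-auto}.
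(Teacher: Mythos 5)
Your argument is correct in substance, but it takes a genuinely different route from the paper. The paper's proof, after the same first step (intersecting the \IP-set on which $f=1$ with the $\mathrm{IP}^*$-set $R$ to get an \IP-set on which $\fpa{q(n)}<\e(n)$), is essentially a two-line application of a black box: Theorem 2.42 of \cite{BergelsonMcCutcheon-2010}, which supplies an exact ``\IP\ discrete derivative'' identity $\sum_{\emptyset\neq\a\subseteq[d]}(-1)^{\abs{\a}}q\bra{\sum_{i\in\a}n_{\b_i}}=-\lambda$ along nested \IP-rings; two passes to \IP-limits using $\e(n)\to 0$ then give $\lambda\in\ZZ$ and $q(n_{\b_1})\in\ZZ$ on an infinite \IP-set. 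You instead rebuild the needed rigidity dynamically: represent $v(n)=q(n)-\lbr q(n)\rbr$ via Theorem \ref{thm:BergelsonLeibman}, apply Furstenberg's \IP-recurrence, and re-run the fractional-part cocycle analysis of Lemma \ref{lem:conv-of-fp-to-0} and Step \ref{prop:S:main-3} to get $\iplim_\a\fp{g^{m_{\a\cup\b}}}=\fp{g^{m_\b}}$, finishing with a Hindman refinement so that all relevant points lie in a single semialgebraic piece on which the piecewise polynomial is the restriction of a genuine (hence continuous) polynomial. This works, and it has the virtue of being self-contained modulo the paper's Section \ref{sec:S-GENPOLY} toolkit rather than importing Bergelson--McCutcheon; the price is that you must redo the connectedness/conjugation/\Malcev-basis reductions for a function rather than a set (e.g.\ checking that conjugating $\Gamma$ and changing the \Malcev\ basis preserves the representation $v(n)=Q(\fp{g^n})$ by a piecewise polynomial), which the paper's shorter argument avoids entirely. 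Both proofs in fact yield the slightly stronger conclusion that $q$ is integral on an infinite \IP-set.

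Two small caveats. First, your parenthetical that $e$ is an \emph{interior} point of the fundamental domain is false --- in \Malcev\ coordinates $e$ is the corner $(0,\dots,0)$ of the cube, which is precisely why Lemma \ref{lem:conv-of-fp-to-0} is needed; fortunately this claim is not load-bearing, since your actual continuity fix is the restriction to one piece of the decomposition. Second, the closing alternative (``replay Steps \ref{prop:S:main-2}--\ref{prop:S:main-5} almost verbatim'') does not quite parse as stated, since those steps are organised to contradict $\inter S=\emptyset$ rather than to produce integer values; the main line of your Steps 1--3 is what carries the proof and should be kept.
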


\begin{remark}
	If $1/\e(n)$ is a generalised polynomial, then so is $g(n)$.
\end{remark}
\begin{corollary}\label{cor:special-sparse-gp=>not-auto}
	With notation in Proposition \ref{prop:IP-auto-vs-special-gp}, if $g(n)$ is $k$-automatic for some $k \geq 2$, then at least one of the following holds:
\begin{enumerate}
	\item there exists $b \in \Sigma_k^*$ such that $f(n) = 0$ whenever $b$ is a factor of $(n)_k$;
	\item there are infinitely many $n \in \NN_0$ with $q(n) \in \ZZ$.
\end{enumerate}
\end{corollary}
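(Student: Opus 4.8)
The plan is to derive Corollary~\ref{cor:special-sparse-gp=>not-auto} from Proposition~\ref{prop:IP-auto-vs-special-gp} by a translation trick. Assume that alternative~(ii) fails, i.e.\ $q(n) \in \ZZ$ for only finitely many $n$; the task is to produce a word $b \in \Sigma_k^*$ with $g(n) = 0$ whenever $b$ occurs as a factor of $(n)_k$, which is alternative~(i). Suppose, towards a contradiction, that no such $b$ exists; equivalently, for every $b \in \Sigma_k^*$ there is an $n$ with $b$ a factor of $(n)_k$ and $g(n) = 1$.

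First I would verify that $g$ is sparse. By the Bergelson--Leibman Theorem~\ref{thm:BergelsonLeibman} write $q(n) = p(T^n x)$ for a minimal (hence uniquely ergodic) nilsystem $(X,T)$ with invariant measure $\mu$, a piecewise polynomial $p$, and $x \in X$. For $\delta > 0$ the set $S_\delta = \set{y \in X}{\fpa{p(y)} < \delta}$ is semialgebraic, so $\mu(\partial S_\delta) = 0$, and $S_\delta \downarrow Z$ as $\delta \downarrow 0$ where $Z = \set{y \in X}{p(y) \in \ZZ}$ is semialgebraic with $\mu(\partial Z) = 0$; in particular $\mu(S_\delta) \to \mu(Z)$. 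If $\mu(Z) > 0$, then by the uniform ergodic theorem (Corollary~\ref{cor:density-uniform}) the orbit of $x$ meets $Z$ on a set of positive density, so $q(n) = p(T^n x) \in \ZZ$ for infinitely many $n$, contradicting the failure of~(ii); hence $\mu(Z) = 0$ and $\mu(S_\delta) \to 0$. Since $\e(n) \to 0$, for each $\delta > 0$ the set $\set{n}{g(n) = 1}$ is, up to a finite set, contained in $\set{n}{T^n x \in S_\delta}$, which by Corollary~\ref{cor:density-uniform} has Banach density $\mu(S_\delta)$; letting $\delta \to 0$ gives $d^*(\set{n}{g(n) = 1}) = 0$, i.e.\ $g$ is sparse.

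Now I would bring in the combinatorial input. By the contradiction hypothesis, every $b \in \Sigma_k^*$ is a factor of some $n$ with $g(n) = 1$, so Proposition~\ref{automshiftedipset} (equivalently Lemma~\ref{lem:IP+-rich-if-pattern-rich}) supplies an integer $N \geq 0$ and an \IP-set $E_0 = \FS(n_i)$ with $g(n) = 1$ for all $n \in E_0 + N$. Put $f'(n) = g(n+N)$, $q'(n) = q(n+N)$ and $\e'(n) = \e(n+N)$. Then $f'$ is $k$-automatic (automatic sequences are closed under shifts of the index), sparse (by the previous paragraph), and \IPrich, since $f'(n) = 1$ for $n \in E_0$; moreover $q'$ is a generalised polynomial ($\GP$ is closed under shifts of the argument), $\e'(n) \to 0$, and $n \mapsto \ifbra{\fpa{q'(n)} < \e'(n)}$ is precisely $f'$. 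Consequently $R' = \set{n}{f'(n) = \ifbra{\fpa{q'(n)} < \e'(n)}} = \NN_0$ is an \IPd-set. Proposition~\ref{prop:IP-auto-vs-special-gp}, applied to $f', q', \e'$, now gives infinitely many $n$ with $q'(n) = q(n+N) \in \ZZ$, hence infinitely many $m$ with $q(m) \in \ZZ$ --- contradicting the failure of~(ii). This contradiction establishes~(i).

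The translation manipulations, the closure properties of automatic sequences and of $\GP$, and the bookkeeping with \IP-sets are all routine. The genuinely substantial point --- and where I expect the real work to lie --- is the sparseness step: it is the only place that uses that $q$ is an honest generalised polynomial and not merely a sequence with $\fpa{q(n)} < \e(n) \to 0$ (the statement fails badly for the latter), and it works by passing to the nilsystem model and invoking the uniform ergodic theorem to upgrade ``$\fpa{q(n)}$ is small on a positive-density set of $n$'' to ``$q(n)$ is an integer on a positive-density set of $n$.''
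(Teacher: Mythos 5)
Your proof is correct and follows what is evidently the intended derivation: the paper states this corollary without an explicit proof, and the route it has in mind is exactly yours --- negate alternative (i), invoke Proposition \ref{automshiftedipset} (via Lemma \ref{lem:IP+-rich-if-pattern-rich}) to place a shifted IP-set inside $\set{n}{g(n)=1}$, absorb the shift $N$ into $g$, $q$, $\e$, and apply Proposition \ref{prop:IP-auto-vs-special-gp} with $f'=g'$ and $R'=\NN_0$. Two small remarks: first, you silently read alternative (i) with $g$ in place of $f$ (equivalently, take $f=g$); this is the only reading under which the statement is coherent, and it is exactly how the paper uses the corollary in Example \ref{ex:Heisenberg}, but it does differ from the literal wording. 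Second, your sparseness verification is a genuine and welcome addition needed to meet the hypotheses of Proposition \ref{prop:IP-auto-vs-special-gp} as stated (the paper leaves it implicit, and in fact its proof of that proposition never uses sparseness); the only technical adjustment is that Theorem \ref{thm:BergelsonLeibman}(ii) applies to \emph{bounded} generalised polynomials, so the nilsystem representation should be taken for $\fp{q(n)}$ rather than for $q$ itself, which suffices since $\fpa{q(n)}$ and the event $q(n)\in\ZZ$ depend only on $\fp{q(n)}$.
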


\begin{proof}[Proof of Proposition \ref{prop:IP-auto-vs-special-gp}]
	For the sake of contradiction, suppose that $R$ is $\mathrm{IP}^*$. By definition, there exists an IP-set on which $f$ takes the value $1$. The intersection of an IP-set and $\mathrm{IP}^*$-set contains an IP-set, so we may find an IP-set $(n_\a)_{\a \in \cF} $ such that $f(n_\a) = g(n_\a) = 1$ for $\a \in \cF$.

	To proceed, we need an algebraic relation between $(q(n_\a))_{\a \in \cF}$, which is analogous to the vanishing of sufficiently high discrete derivatives of polynomials (cf.\ Proof of Proposition \ref{prop:adense-special-gen-poly=>not-auto}). Such relation is provided by \cite[Theorem 2.42]{BergelsonMcCutcheon-2010} in the language of limits along idempotent ultrafilters. In the language of \IP-rings, using standard methods to translate between the two contexts, the theorem implies that there exist $\lambda \in \RR$, $d \in \NN_0$ (the degree of $q$), as well as families of \IP-rings $\cG_1,\ \cG_2(\b_1),\ \dots,\  \cG_d(\b_1,\dots,\b_{d-1})$ ($\b_i \in \cF$), such that 
\begin{equation}
\label{eq:763}
		\sum_{\emptyset \neq \a \subseteq [d]} (-1)^{\abs{\a}} q\bra{ \sum_{i \in \a} n_{\b_i} } = - \lambda;
\end{equation}	
	whenever $\b_1 \in \cG_1,\ \b_2 \in \cG_2(\b_1),\ \dots,\ \b_d \in \cG_d(\b_1,\dots, \b_{d-1})$.
	\comment{I think we could require that $\cG_i(\b_1,\dots,\b_i)$ consists of all $\a$ in some $\cG_i*$ with $\a > N(\b_1,\dots,\b_{i-1})$ for some function $N(\dots)$. I'm not absolutely certain this stronger statement is true. The stronger statement is more in line with the philosophy of \IP-limits. However, we don't need it, and it is easier to see that the weaker statement holds.}
	
	
	In particular, for $\b_1,\dots,\b_d$ as above we have
\begin{equation}
	\fpa{\lambda} \leq \sum_{\emptyset \neq \a \subseteq [d]} \e\bra{ \sum_{i \in \a} n_{\b_i} },
\end{equation}
\label{eq:764}
	so passing to the \IP-limit with $\b_d,\dots,\b_1$ along the corresponding \IP-rings, we conclude that $\lambda \in \ZZ$. Similarly, fixing $\b_1 \in \cG_1$ and passing to the limit with $\b_d,\dots,\b_2$, we conclude from
\begin{equation}
\label{eq:765}
	\fpa{ q(n_{\b_1}) } 
	 \leq \sum_{\substack{\emptyset \neq \a \subseteq [d],\\ \a \neq \{1\}} } \e\bra{ \sum_{i \in \a} n_{\b_i} }
\end{equation}
that $q$ takes integer values on the (infinite) \IP-set $\set{n_{\b} }{\b \in \cG_1}$.
\end{proof}

\begin{remark}
	In light of Proposition \ref{prop:IP-auto-vs-special-gp}, it is interesting to ask which generalised polynomials $q(n)$ have the property that $q(n) \in \ZZ$ for at most finitely many $n$. While it is a hopeless task to classify all such generalised polynomials, we point out that many natural examples have this property. For instance, all polynomials in Theorem \ref{thm:equidistr-gen-poly} are such. 
	\comment{Example (4) is misquoted, need to look it up and correct. --jk}
\end{remark}

\subsection*{Heisenberg example}
We close with an example of a fairly explicit class of sparse sequences for which we can prove non-automaticity.

\begin{example}\label{ex:Heisenberg} There exists an absolute constant $c > 0$ such that the following holds. Let $1,\a,\b \in \RR$ be algebraic numbers linearly independent over $\QQ$, and $\e(n) \to 0$ with $\e(n) \gg n^{-c}$. Then, the sequence given by
	$$f(n) = 
	\begin{cases}
	1, & \text{ if }  \fpa{ n\a \floor{n \b} } < \e(n), \\
	0, & \text{ otherwise} 
	\end{cases} 
	$$
	is not $k$-automatic.
%
\end{example}

A key tool which we will use is the quantitative equidistribution theorem for nilrotations, whose special case cited below we use as a black box.

\begin{theorem}[Green-Tao, \cite{GreenTao2012}]\label{thm:GreenTao}
	Let $g(n)$ be a polynomial sequence on a nilmanifold $G/\Gamma$ with \Malcev\ basis $\cX$, where $G$ is connected and simply connected. Then, there exists a constant $C > 0$, such that for any $\e > 0$ and any $N$, one of the following holds:
	\begin{enumerate}
	\item\label{item:GT:1} for each $x \in G/\Gamma$, there exists $n \in [N]$ such that $d_{\cX}( g(n)\Gamma, x) \leq N^{-\e}$; 
	\item\label{item:GT:2} there exists a horizontal character $\eta$ with $\norm{\eta} \ll N^{C\e}$ such that 
	$$
		\max_{0 \leq n < N} \fpa{ \eta \circ g(n+1) - \eta \circ g(n) } \ll N^{-1+C\e}.
	$$
	\end{enumerate}
\end{theorem}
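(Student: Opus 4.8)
The statement is a special case of the general quantitative equidistribution theorem of Green and Tao for polynomial orbits on nilmanifolds, and the plan is to deduce it from that theorem by a routine specialisation; alternatively, since it is invoked here purely as a black box, one may simply cite \cite{GreenTao2012}. I would recall the general result in the form convenient here: with $G/\Gamma$ and the \Malcev\ basis $\cX$ fixed, there is a constant $M_0 > 0$ depending only on $G/\Gamma$ and $\cX$ such that for every $\delta \in (0,\tfrac12)$, every $N \geq 1$ and every polynomial sequence $g$, \emph{either} $(g(n)\Gamma)_{n \in [N]}$ is $\delta$-equidistributed --- that is, $\abs{ \frac1N \sum_{n=0}^{N-1} F(g(n)\Gamma) - \int_{G/\Gamma} F \, d\mu_X } \leq \delta \normLip{F}$ for every Lipschitz $F$ --- \emph{or} there is a horizontal character $\eta$ with $0 < \norm{\eta} \leq \delta^{-M_0}$ and smoothness norm $\norm{\eta \circ g}_{C^\infty[N]} \leq \delta^{-M_0}$, the latter norm in particular dominating $N \fpa{\alpha_1}$, where $\alpha_1$ is the linear coefficient of the polynomial $\eta \circ g$.

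A preliminary observation I would make is that $\eta \circ g$ has degree at most $1$: writing $g(n) = g_0^{\binom n 0} g_1^{\binom n 1} \cdots g_d^{\binom n d}$ with $g_i \in G_i$, and using that $\eta$ is a group homomorphism into the abelian group $\RR/\ZZ$, hence vanishes on $G_2 \supseteq G_i$ for $i \geq 2$, one gets $\eta(g(n)) = \eta(g_0) + n\,\eta(g_1)$. Consequently $\eta \circ g(n+1) - \eta \circ g(n) = \eta(g_1)$ is independent of $n$, and $\norm{\eta \circ g}_{C^\infty[N]} = N\fpa{\eta(g_1)}$. (This is where the restriction to polynomial sequences adapted to the lower central series is used.)

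To prove the dichotomy I would set $m = \dim(G/\Gamma)$ and apply the general theorem with $\delta = c_0 N^{-(m+1)\e}$, where $c_0 \in (0,\tfrac12)$ is a small constant depending only on $G/\Gamma$ and $\cX$ to be pinned down below; since $N \geq 1$ this keeps $\delta \in (0,\tfrac12)$ regardless of $N$. If the orbit is $\delta$-equidistributed, test it against the Lipschitz bump $F_x(y) = \max\bra{0, 1 - N^{\e} d_{\cX}(y,x)}$, which satisfies $\normLip{F_x} \leq 2N^{\e}$ and $\int_{G/\Gamma} F_x\, d\mu_X \gg N^{-m\e}$ uniformly in $x$ (using $\mu_X(B(x,r)) \gg r^m$ for small $r$ together with compactness, which also handles the remaining bounded range of $N$); were there no $n \in [N]$ with $d_{\cX}(g(n)\Gamma, x) \leq N^{-\e}$, the average $\frac1N\sum_n F_x(g(n)\Gamma)$ would vanish, and $\delta$-equidistribution would force $N^{-m\e} \ll \delta N^{\e} = c_0 N^{-m\e}$, which fails once $c_0$ is small enough --- giving the first alternative. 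Otherwise the theorem produces a horizontal character $\eta$ with $0 < \norm{\eta} \leq \delta^{-M_0} = c_0^{-M_0} N^{(m+1)M_0\e}$, and by the preliminary observation $N\fpa{\eta(g_1)} = \norm{\eta\circ g}_{C^\infty[N]} \leq c_0^{-M_0} N^{(m+1)M_0\e}$, whence $\max_{0 \leq n < N} \fpa{\eta\circ g(n+1) - \eta\circ g(n)} = \fpa{\eta(g_1)} \leq c_0^{-M_0} N^{-1 + (m+1)M_0\e}$. Taking $C = (m+1)M_0$, which depends only on $G/\Gamma$ and $\cX$, gives the second alternative.

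I do not expect a genuine obstacle beyond invoking the Green--Tao theorem, which is itself the deep ingredient; the remaining work is bookkeeping of absolute constants and matching the precise formulations of that theorem and of the $C^\infty[N]$-norm. The only mildly delicate point is that the specialisation must hold for all $N \geq 1$ rather than merely $N$ large, which is why $\delta$ is taken with a small leading constant $c_0$ so as to stay below $\tfrac12$, and why the ball-volume lower bound is phrased uniformly in $x$.
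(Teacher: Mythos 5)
Your proposal is correct and follows essentially the same route the paper takes: the paper also treats this as an immediate consequence of the Green--Tao quantitative Leibman theorem (Theorem 1.16 in \cite{GreenTao2012}), applied to $\delta$-equidistribution against a Lipschitz bump function supported in the radius $N^{-\e}$ ball around $x$ with $\delta$ a suitable small power of $1/N$, exactly as you do. Your additional observation that $\eta \circ g$ is affine (since $\eta$ kills $G_2$) and the explicit bookkeeping with $\delta = c_0 N^{-(m+1)\e}$ are just the details the paper leaves implicit.
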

\comment{}

Strictly speaking, Green and Tao work with a rather different notion of quantitative equidistribution; the cited result follows immediately from applying Theorem 1.16 to in \cite{GreenTao2012} to $\delta$-equidistribution of $g(n)\Gamma$ with respect to suitable bump function with support in the radius $N^{-\e}$ ball around $x$, where $\delta$ is a suitable small power of $1/N$.

For linear orbits $g(n) = a^n$, condition \eqref{item:GT:2} in Theorem \ref{thm:GreenTao} 
 asserst that the projection of $a$ to the torus $G/G_2\Gamma$ satisfies an approximate linear relation over $\ZZ$. A convenient criterion which can be used to rule that out is provided by the following classical result of Schmidt. As before, we only cite the special case which we shall use.

\begin{theorem}[Schmidt \cite{Schmidt1972}]\label{thm:Schmidt}
	Let $\alpha_1,\dots,\alpha_n$ be $n$ algebraic numbers linearly independent over $\QQ$, and let $\e > 0$. Then, for $k \in \ZZ^n$ we have
	$$
		\fpa{ \sum_{i=1}^n k_i \alpha_i } \gg \norm{k}^{-n-\e},
	$$ 
	where the implicit constant depends on $\alpha_1,\dots,\alpha_n$ and $\e$.
\end{theorem}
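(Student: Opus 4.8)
The plan is to reduce the statement to Schmidt's Subspace Theorem, which is the genuinely deep input; the remaining work is a descent argument. One caveat first: as worded the inequality really needs $1,\alpha_1,\dots,\alpha_n$ to be linearly independent over $\QQ$ (not merely $\alpha_1,\dots,\alpha_n$), since otherwise $\fpa{\sum_i k_i\alpha_i}$ vanishes identically along a sublattice; this is what is available in our applications, so I assume it henceforth.

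First I would reduce the quantitative bound to a finiteness statement: it suffices to prove that for every $\e>0$ the set $\mathcal{E}_\e=\set{k\in\ZZ^n\setminus\{0\}}{\fpa{\sum_i k_i\alpha_i}\le\norm{k}^{-n-\e}}$ is finite. Granting this for $\e/2$, every $k\notin\mathcal{E}_{\e/2}$ already satisfies $\fpa{\sum_i k_i\alpha_i}>\norm{k}^{-n-\e/2}\ge\norm{k}^{-n-\e}$, while for the finitely many $k\in\mathcal{E}_{\e/2}$ we have $\fpa{\sum_i k_i\alpha_i}>0$ (here the independence of $1,\alpha_1,\dots,\alpha_n$ is used), so taking the implied constant at most $\min\bigl(1,\min_{k\in\mathcal{E}_{\e/2}}\fpa{\sum_i k_i\alpha_i}\cdot\norm{k}^{n+\e}\bigr)$ finishes.

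For the finiteness statement I would introduce the nearest integer $k_0=-\lbr\sum_i k_i\alpha_i\rbr$, note $|k_0|\ll\norm{k}$ so that $\norm{(k_0,k_1,\dots,k_n)}\asymp\norm{k}$, and prove the following cleaner claim by induction on $m$: for algebraic numbers $\gamma_0,\dots,\gamma_m$ that are linearly independent over $\QQ$, the set $\set{x\in\ZZ^{m+1}\setminus\{0\}}{\bigl|\sum_{i=0}^m\gamma_i x_i\bigr|\le\norm{x}^{-m-\e}}$ is finite; the case needed is $m=n$, $\gamma_0=1$, $\gamma_i=\alpha_i$. The case $m=0$ is immediate. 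For the inductive step I would apply the Subspace Theorem to the $m+1$ linearly independent linear forms $L_0(x)=\sum_{i=0}^m\gamma_i x_i$ and $L_i(x)=x_i$ for $1\le i\le m$: the hypothesis gives $\prod_{i=0}^m|L_i(x)|\le\norm{x}^{-m-\e}\cdot\norm{x}^{m}=\norm{x}^{-\e}$, so all solutions outside a finite set lie in one of finitely many proper rational subspaces $V\subsetneq\QQ^{m+1}$. On a given $V$ pick a defining relation $\sum_i c_i x_i=0$ with $c_j\ne 0$; solving for $x_j$ and substituting turns $L_0$ into $\sum_{i\ne j}\gamma_i' x_i$ with $\gamma_i'=\gamma_i-\gamma_j c_i/c_j$ algebraic, and a short computation using the $\QQ$-independence of the $\gamma_i$ shows the $\gamma_i'$ ($i\ne j$) are again linearly independent over $\QQ$. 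Since $|x_j|\ll\norm{x'}$ for $x'=(x_i)_{i\ne j}$, we get $\norm{x}\asymp\norm{x'}$ and hence $\bigl|\sum_{i\ne j}\gamma_i'x_i\bigr|\le\norm{x'}^{-m-\e}\le\norm{x'}^{-(m-1)-\e}$ for $\norm{x'}$ large; the inductive hypothesis then bounds the number of admissible $x'$, and each $x'$ determines $x_j$, hence $x$. As there are only finitely many $V$, the induction closes and finiteness follows.

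The only real obstacle is the Subspace Theorem itself, which I use as a black box — its proof (a far-reaching extension of Roth's method: auxiliary polynomials built via Siegel's lemma, index and vanishing estimates, a generalized Roth lemma, together with the geometry of numbers to control the exceptional subspaces) is a major undertaking that I would not reproduce. Everything else is routine bookkeeping: the reduction to finiteness, the passage to $n+1$ coordinates via the nearest integer, and the descent through the exceptional subspaces; the single point requiring a moment's care is verifying that $\QQ$-linear independence of $\gamma_0,\dots,\gamma_m$ is inherited by the reduced family $(\gamma_i')_{i\ne j}$ after eliminating one coordinate along a rational hyperplane.
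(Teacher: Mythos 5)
Your proposal is correct, but it is doing something the paper never attempts: the paper uses this statement purely as a black box, citing Schmidt's 1972 work (``we only cite the special case which we shall use''), and contains no proof of it. What you supply is the standard deduction of the dual simultaneous-approximation theorem from the Subspace Theorem: pass to the $(n+1)$-vector $(k_0,k)$ with $k_0=-\lbr\sum_i k_i\alpha_i\rbr$, apply the Subspace Theorem to the forms $L_0=\sum_i\gamma_ix_i$, $L_i=x_i$, and descend through the finitely many exceptional rational subspaces, checking that $\QQ$-linear independence survives the elimination of one coordinate; this is essentially the argument in Schmidt's own lecture notes, and your bookkeeping (reduction of the quantitative bound to finiteness of $\mathcal{E}_{\e/2}$, the comparison $\norm{(k_0,k)}\asymp\norm{k}$, the inheritance of independence by the $\gamma_i'$) is sound. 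Your caveat about the hypothesis is also a genuine catch: as printed, linear independence of $\alpha_1,\dots,\alpha_n$ alone is insufficient (take $n=1$, $\alpha_1=1/2$), and the correct hypothesis is that $1,\alpha_1,\dots,\alpha_n$ are linearly independent over $\QQ$ — which is exactly what the paper has available where the theorem is applied (in the Heisenberg example $1,\a,\b$ are assumed $\QQ$-independent), so the slip is in the statement's wording, not in its use. The trade-off between the two treatments is the obvious one: the paper outsources the entire difficulty to the literature, while your route isolates the deep input as the Subspace Theorem and makes the remaining reduction explicit and self-contained.
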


\begin{remark}
In fact, we only use the above with $\e = 1$, and any bound of the form  $\fpa{ \sum_{i=1}^n k_i \alpha_i } \gg \norm{k}^{-C}$ would work equally well for our purposes, where $C$ is a constant dependent at most on $n$. We note that the same result with $C$ dependent on the degrees of $\a_i$ is elementary. If we allowed the constant $c$ in Proposition \ref{ex:Heisenberg} to depend on $\a,\b$, then the weaker bound would suffice for our purposes.
\end{remark}


	

\begin{proof}[Proof of Proposition \ref{ex:Heisenberg}]
	Fix $k \geq 2$. We claim that for any sequence of digits $u \in \Sigma_k^*$, there exists $n \in \NN_0$ whose expansion base $k$ ends with $u$, and such that $f(n) = 1$. Once this is accomplished, it follows from Corollary \ref{cor:special-sparse-gp=>not-auto} that $f(n)$ is not $k$-automatic. Letting $r = [u]_k$ and $t \geq \abs{u}$, we are left with the task of finding $n$ such that $f'(n) := f(k^t n + r) = 1$. 
	
	For the sake of contradiction, suppose that $f'(n) = 0$ for all $n \in \NN_0$. Without loss of generality we may also assume that $\e(n)$ is decreasing. The value of the constant $c$ will be determined in the course of the argument.
	
	We can explicitly describe a nilmanifold on which $f(n)$ can be realised in a way analogous to Theorem \ref{thm:BergelsonLeibman}. Denote 
	$$[x,y,z] \equiv \begin{bmatrix}
		1 & x & z \\ 
		0 & 1 & y \\
		0 & 0 & 1 
	\end{bmatrix},$$
and define 
	$$
		G = \set{ [x,y,z] }{x, y, z \in \RR},\qquad 
		\Gamma \set{ [x,y,z] }{x, y, z \in \ZZ}.	
	$$
	Then, $G/\Gamma$ is the Heisenberg nilmanifold, with a natural choice of \Malcev\ basis $[1,0,0],\ [0,1,0],\ [0,0,1]$.
	
	The horizontal characters on $G/\Gamma$ take the form $\eta_k([x,y,z]) = k_1 x + k_2 y $ with $k = (k_1,k_2) \in \ZZ^2$ and $\norm{\eta_k} = \norm{k}_2$. Let $g$ be the polynomial sequence
	$$g(n) = [- n \a, n \b, 0] = [-\a,\b,0]^n [0,0,\a\b]^{\binom{n}{2}},$$ so that $\fp{g(n)} = [ \fp{-n\a}, \fp{n\b}, \fp{ n\a \floor{n\b} }]$. Put also $g'(n) = g(k^t n + r)$. 
	
	Let $F \colon G/\Gamma \to [0,1)$ be the map $F([x,y,z]) = \fpa{z}$ for $x,y,z \in [0,1)^3$. It follows from the way the metric on $G/\Gamma$ is defined that $F$ is Lipschitz\comment{(Admittedly, not super clear. However, I think I would like to leave it like that, because 1) it is intuitively clear that it should be so 2) this is an example, so we can afford to be slightly imprecise 3) the alternative is to go into the details of how the metric on $G/\Gamma$ is defined, and it's neither enlightening nor pleasant...)}. With this notation,
	\begin{equation}\label{eq:254}
	f(n) = \ifbra{ F(g(n)) < \e(n) } 	
	\end{equation}

	Let $N$ be a large integer, and put $z = [0,0,0]\Gamma$. There exists $\rho_N \gg \e(N)$, such that $F(B(z,\rho_N)) \subset [0,\e(N))$. Hence, if $d(z, g(n)\Gamma) < \rho_N$ for some $n \in [N] \cap (k^t \NN_0 +r)$, then $f(n) = 1$ contrary to the assumption. It now follows from Theorem \ref{thm:GreenTao} that there exists a horizontal character $\eta$ with $\norm{\eta} \ll \rho^{-A}_N \ll N^{Ac}$ such that 
	  \begin{equation}
		  \label{eq:255}	  
	  	\max_{0 \leq n < N} \fpa{ \eta( g'(n+1) ) - \eta(g'(n)) } \ll N^{1-Ac},	  
	  \end{equation}
	  where $A$ is an absolute constant. Picking $l \in \ZZ^2$ so that $\eta = \eta_l$, we conclude from \eqref{eq:255} and Theorem \ref{thm:Schmidt} that 
	  \begin{equation}
	    \label{eq:256}	  
	  	N^{-3Ac} \ll \norm{l}^{-2} \ll \fpa{ k^{t} ( l_1 \a + l_2 \b) } \ll N^{1-Ac}.
	  \end{equation}
Picking any $c < \frac{1}{4A}$ we reach the sought contradiction.
\end{proof}

\begin{remark}\label{remark:Heisenberg}
	The above argument is not specific to the sequence $\ifbra{ n\a \floor{n\b} < \e(n)}$; in fact, the term $n\a \floor{n\b}$ could be replaced by a fairly arbitrary generalised polynomial $q(n)$. The difficulty lies in the need to impose suitable ``quantative irrationality'' conditions of $q(n)$.
	
	In the situation of Example \ref{ex:Heisenberg}, this was easily accomplished since we had direct access to the representation of $q(n)$ related to an orbit on a nilmanifold. For more general sequences, one needs to resort to construction like in \cite{BergelsonLeibman2007}.
\end{remark}

\subsection*{Exponential sequences} In order to construct a generalised polynomial that takes value zero exactly on the set of Fibonacci numbers, we used the theory of continued fractions. In order to replace the Fibonacci numbers by a linear recurrence sequence of higher degree, we need to use higher dimensional diophantine approximation. We briefly review the topic below. Let $\theta$ be a point in $\R^d$, $d\geq 1$, and let $N$ denote a norm on $\R^d$. Let $$N_0(\theta)=\inf_{p\in \Z^d} N(\theta-p)$$ denote the distance from $\theta$ to a nearest lattice point. We say that an integer $q\geq 1$ is a \emph{best approximation} of $\theta$ with respect to the norm $N$ if $N_0(q\theta)<N_0(k\theta)$ for $1\leq k\leq q-1$. If $\theta \notin \Q^d$, there are infinitely many best approximations of $\theta$. We order them in an increasing sequence $(q_n)_{n\geq 0}$, $1=q_0<q_1<\ldots$ which we call the \emph{sequence of best approximations}. For $d=1$, the sequence is well known as the sequence of denominators of convergents of the continued fraction expansion of $\theta$ (in the latter sequence the first term possibly appears twice). For $d\geq 2$ and general $\theta\in \R^d$ there is no satisfactory theory of best approximations. However, the problem has been studied for specific choices of $\theta$, notably by Lagarias \cite{Lagarias-1982}, Chekhova-Hubert-Messaudi \cite{ChekhovaHubertMessaoudi-2001}, Chevallier \cite{Chevallier-2013}, and Hubert-Messaudi \cite{HubertMessaudi-2006} in the case when the coordinates of $\theta$ lie in some cubic number fields. We follow the presentation of \cite{HubertMessaudi-2006}. 

Consider the polynomial $p(x)=x^3-ax^2-bx-1$ with integer coefficients $a,b$ satysfying ($a\geq 0$ and $0\leq b \leq a+1$) or ($a\geq 2$ and $b=-1$). Assume further that the polynomial $p$ has a unique real root $\beta>1$ and a pair of complex conjugate roots $\alpha, \bar{\alpha}$. (The condition on $a$ and $b$ arises from the work of Akiyama \cite{Akiyama-2000} who classifies Pisot units of degree 3 satisfying the so-called Property (F) concerning finiteness of certain $\beta$-expansions.) Then $\beta$ generates an imaginary cubic Pisot field. We will show that the set $\{\lbr \beta^n \rbr \mid n\geq 0\} $ is generalised polynomial. To this end, we use the results of Hubert and Messaoudi who studied the best approximations of the point $\theta=(\beta^{-1}, \beta^{-2})$ \cite[Theorem 1]{HubertMessaudi-2006}. We need to begin, however, by the following basic result.

\begin{proposition}\label{przepraszamzetakpozno} Let $\beta$ be a Pisot number (i.e.\ a real algebraic integer $\beta>1$ such that all its Galois conjugates have absolute value smaller than one). Let $p=x^d-c_1x^{d-1}-\ldots-c_d$ be the minimal polynomial of $\beta$. Let $(R_n)_{n\geq 0}$ be a sequence of integers satisfying the linear recurrence relation $$R_{n+d}=c_1R_{n+d-1}+\ldots+c_dR_n, \quad n\geq 0.$$  Assume that $(R_n)_{n\geq 0}$ is not identically zero. Then the following conditions are equivalent: \begin{enumerate} \item The set $\{R_n\mid n\geq 0\}$ is generalised polynomial. \item The set $
\{\lbr \beta^n \rbr \mid n\geq 0\}$ is generalised polynomial. \end{enumerate} In particular the question whether the set $\{R_n\mid n\geq 0\}$ is generalised polynomial depends only on the recurrence relation but not on the initial values of $(R_n)_{n\geq 0}$.\end{proposition}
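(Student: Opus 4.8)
The plan is the following. Write $p = x^{d}-c_{1}x^{d-1}-\dots-c_{d}$; since $\beta$ is a Pisot number and $p$ is its minimal polynomial, $\beta=\beta_{1}$ is a simple root of $p$, the remaining roots $\beta_{2},\dots,\beta_{d}$ are distinct with $|\beta_{i}|<1$, and $c_{d}\neq 0$. Every integer solution of the recurrence then has a closed form
\[
	R_{n}=A\beta^{n}+\varepsilon_{n},\qquad A\in\RR,\qquad \varepsilon_{n}:=\sum_{i=2}^{d}A_{i}\beta_{i}^{n}\in\RR,\qquad \varepsilon_{n}\to 0 .
\]
First I would record that $A\neq 0$ whenever $(R_{n})$ is not identically zero: if $A=0$ then $R_{n}\to 0$, so $R_{n}=0$ for all large $n$, and since $c_{d}\neq 0$ one may run the recurrence backwards to get $R_{n}=0$ for every $n$. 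Next I would dispose of statement (ii): $\Tr(\beta^{n})=\sum_{i=1}^{d}\beta_{i}^{n}$ is an integer, it satisfies the recurrence for $n\geq d$, and hence equals, for all $n\geq 0$, the value $T_{n}$ of the solution of the recurrence whose first $d$ terms are these power sums; moreover $\br{\beta^{n}}=\Tr(\beta^{n})=T_{n}$ as soon as $|\sum_{i\geq 2}\beta_{i}^{n}|<1/2$. Thus $\{\br{\beta^{n}}\mid n\geq 0\}$ and $\{T_{n}\mid n\geq 0\}$ differ by a finite set. Since the family of generalised polynomial subsets of $\ZZ$ is a Boolean algebra --- in particular it is closed under symmetric difference with finite sets, finite sets being generalised polynomial by Lemma \ref{lem:gen-poly-set-of-zeros} --- statement (ii) is equivalent to "$\{T_{n}\mid n\geq 0\}$ is generalised polynomial". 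Hence it suffices, and this will simultaneously prove the last sentence of the proposition, to establish the following: \emph{for any two integer solutions $(R_{n})$ and $(R_{n}')$ of the recurrence, neither identically zero, the set $\{R_{n}\mid n\geq 0\}$ is generalised polynomial if and only if $\{R_{n}'\mid n\geq 0\}$ is.}

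To prove this, write $R_{n}=A\beta^{n}+\varepsilon_{n}$ and $R_{n}'=A'\beta^{n}+\varepsilon_{n}'$ as above with $A,A'\neq 0$, and set $E=\{R_{n}\mid n\geq 0\}$, $E'=\{R_{n}'\mid n\geq 0\}$. After replacing $(R_{n})$ by $(-R_{n})$ and/or $(R_{n}')$ by $(-R_{n}')$ --- which replaces $E$ by $-E$, and $-E$ is generalised polynomial whenever $E$ is, because the class of generalised polynomials is stable under the substitution $n\mapsto -n$ --- I may assume $A,A'>0$; put $c:=A'/A>0$. A one-line computation gives $cR_{n}-R_{n}'=c\varepsilon_{n}-\varepsilon_{n}'\to 0$ and $R_{n}'/c-R_{n}=\varepsilon_{n}'/c-\varepsilon_{n}\to 0$, so there is $N_{1}$ with
\[
	R_{n}'=\br{cR_{n}}\quad\text{and}\quad R_{n}=\br{R_{n}'/c}\qquad(n\geq N_{1}),
\]
and, since $A,A'>0$ and $\beta>1$, the sequences $(R_{n})$ and $(R_{n}')$ are eventually strictly increasing and tend to $+\infty$. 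From this I would extract, for all sufficiently large integers $m$, the equivalence
\begin{equation}\label{eq:prop-pisot-iff}
	m\in E'\ \Longleftrightarrow\ \br{m/c}\in E\ \text{ and }\ \br{\,c\br{m/c}\,}=m .
\end{equation}
Indeed, if $m=R_{n}'$ with $m$ large then $n\geq N_{1}$, so $\br{m/c}=R_{n}\in E$ and $\br{c\br{m/c}}=\br{cR_{n}}=R_{n}'=m$; conversely, if $\br{m/c}=R_{n}$ for some $n$ --- necessarily large, as $R_{n}$ is then close to $m/c$ --- and $\br{cR_{n}}=m$, then $m=\br{cR_{n}}=R_{n}'\in E'$.

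It then remains to observe that the right-hand side of \eqref{eq:prop-pisot-iff}, viewed as a function of $m$, is a generalised polynomial. The map $m\mapsto\br{m/c}=\floor{m/c+1/2}$ is a $\ZZ$-valued generalised polynomial, and the class of generalised polynomials is closed under precomposition with $\ZZ$-valued generalised polynomials (an immediate induction on the construction of generalised polynomials); hence $m\mapsto\ifbra{\br{m/c}\in E}$ is a generalised polynomial, assuming $E$ is. The function $F(m):=\br{\,c\br{m/c}\,}-m$ is a generalised polynomial, so $\{m:F(m)=0\}$ is generalised polynomial by Lemma \ref{lem:gen-poly-set-of-zeros}. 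The product of these two indicators is therefore a generalised polynomial which, by \eqref{eq:prop-pisot-iff}, agrees with $\ifbra{m\in E'}$ outside a finite set; hence $E'$ is generalised polynomial. Exchanging the roles of $(R_{n})$ and $(R_{n}')$, i.e.\ replacing $c$ by $1/c$, gives the converse, which completes the argument.

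The proof is almost entirely formal; the only genuinely analytic input is that all the non-dominant conjugates $\beta_{2},\dots,\beta_{d}$ lie strictly inside the unit disc, which is exactly what makes $\varepsilon_{n}\to 0$ and hence forces the exact identities $R_{n}'=\br{cR_{n}}$ and $R_{n}=\br{R_{n}'/c}$ for large $n$. I do not anticipate a serious obstacle; the care required is essentially bookkeeping --- systematically absorbing the finitely many small indices $n$ and small values $m$ using invariance of generalised polynomial sets under finite modification, and recording the routine closure properties used above: closure under precomposition with $\ZZ$-valued generalised polynomials, under the substitution $n\mapsto -n$, and under the Boolean operations.
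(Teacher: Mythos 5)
Your proof is correct and follows essentially the same route as the paper's: both rest on writing $R_n = A\beta^n + o(1)$ with $A \neq 0$ (using $c_d \neq 0$ to rule out $A=0$), noting $\lbr \beta^n \rbr = \beta^n + o(1)$ via the trace, and then transferring the generalised-polynomial property through a nearest-integer relation --- membership of $m$ in one set is encoded by a rounding of $m$ lying in the other set together with a consistency condition, which is a generalised-polynomial predicate by closure under composition with $\ZZ$-valued generalised polynomials and Lemma \ref{lem:gen-poly-set-of-zeros}. The only cosmetic differences are your detour through the trace sequence $T_n$ (the paper compares $R_n$ directly with $S_n=\lbr\beta^n\rbr$ via a lemma for integer sequences with $R_n = uS_n + o(1)$, whose criterion $\lbr um \rbr \in \{R_n\}$ and $\fpa{um}<\abs{u}/2$ plays the role of your two conditions) and your sign normalisation $A,A'>0$, which the paper's formulation handles uniformly.
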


\begin{proof} From the form of the linear recurrence relation for $(R_n)_{n\geq 0},$ we see that $R_n=u\beta^n + o(1)$ for some $u\in \R$. Note that $u \neq 0$. In fact, otherwise $R_n=o(1)$ and since $R_n$ takes integer values we see that $R_n=0$ for large $n$. It follows that $R_n$ is identically zero (note that $c_d\neq 0$). 

Since $\beta$ is an algebraic integer, we know that $\sum_{\beta'} \beta'^n$ is an integer, the sum being taken over all the Galois conjugates $\beta'$ of $\beta$. Since $|\beta'|<1$ for $\beta'\neq \beta$, we see that $\beta^n = \lbr \beta^n \rbr + o(1).$ It follows that $$R_n = u \lbr \beta^n \rbr + o(1)$$ and the claim follows immediately from the following lemma. \end{proof}
\begin{lemma} Let $u\in \R$, $u\neq 0$ and let  $(R_n)_{n\geq 0} $ and $(S_n)_{n\geq 0} $ be integer valued sequences such that $$R_n=uS_n+o(1).$$ Then the set  $\{R_n\mid n\geq 0\}$ is generalised polynomial if and only if the set  $\{S_n\mid n\geq 0\}$ is generalised polynomial.
\end{lemma}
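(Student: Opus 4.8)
The plan is to reduce the lemma to a single fact about images of $\GP$ sets under ``nearest-integer dilations'' and then to establish that fact by an explicit construction.

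First I would normalise and clear away the asymptotics. Replacing $(u,S_n)$ by $(-u,-S_n)$ if necessary --- which is harmless, since $E\mapsto -E$ preserves the class of $\GP$ sets (substitute $n\mapsto -n$ in the defining expression for $\ifbra{n\in E}$) --- we may assume $u>0$. Since $R_n-uS_n\to 0$, there is an $n_0$ such that for all $n\ge n_0$ both $|R_n-uS_n|<1/2$ and $|S_n-u^{-1}R_n|=u^{-1}|R_n-uS_n|<1/2$, i.e.\ $R_n=\lbr uS_n\rbr$ and $S_n=\lbr u^{-1}R_n\rbr$. Writing $A=\set{S_n}{n\ge 0}$ and $B=\set{R_n}{n\ge 0}$, it follows that $B$ and $\set{\lbr ua\rbr}{a\in A}$ coincide outside a finite set (the discrepancy coming only from the finitely many indices $n<n_0$), and symmetrically $A$ and $\set{\lbr u^{-1}r\rbr}{r\in B}$ coincide outside a finite set. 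The class of $\GP$ sets is a Boolean algebra --- $\ifbra{n\in E\cap F}=\ifbra{n\in E}\ifbra{n\in F}$, $\ifbra{n\in E\cup F}=\ifbra{n\in E}+\ifbra{n\in F}-\ifbra{n\in E}\ifbra{n\in F}$, $\ifbra{n\in E^c}=1-\ifbra{n\in E}$ --- and contains every finite set (a finite set $\{a_1,\dots,a_r\}$ is the zero set of $\prod_i(n-a_i)$, hence $\GP$ by Lemma~\ref{lem:gen-poly-set-of-zeros}), so it is closed under symmetric difference with finite sets. Hence the lemma follows once we prove the claim: \emph{if $A\subseteq\ZZ$ is $\GP$ and $\theta>0$ is real, then $C_\theta(A):=\set{\lbr \theta a\rbr}{a\in A}$ is $\GP$.}

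For the claim I would use that the fibre $\set{a\in\ZZ}{\lbr\theta a\rbr=m}$ is contained in the interval $\bigl[(m-1/2)/\theta,(m+1/2)/\theta\bigr)$ of length $1/\theta$, hence in $\{a_0(m),a_0(m)+1,\dots,a_0(m)+M-1\}$, where $M=\ceil{1/\theta}+1$ and $a_0(m)=\ceil{(m-1/2)/\theta}=-\floor{(1/2-m)/\theta}$ is an integer-valued $\GP$ function of $m$. Therefore
\[
\ifbra{m\in C_\theta(A)}=1-\prod_{j=0}^{M-1}\Bigl(1-\ifbra{a_0(m)+j\in A}\cdot\ifbra{\theta(a_0(m)+j)-m\in[-1/2,1/2)}\Bigr).
\]
Each factor is a $\GP$ function of $m$: the map $m\mapsto a_0(m)+j$ is an integer-valued $\GP$; the class of $\GP$ functions is closed under composition with integer-valued $\GP$ maps (a short structural induction over the $+,\times,\floor{\cdot}$ generation of the outer function), so $m\mapsto\ifbra{a_0(m)+j\in A}$ --- the characteristic function of $A$ precomposed with $m\mapsto a_0(m)+j$ --- is $\GP$; and $m\mapsto\ifbra{h(m)\in[-1/2,1/2)}$ is $\GP$ for any $\GP$ function $h$ by Lemma~\ref{lem:gen-poly-set-of-zeros}, applied here to $h(m)=\theta(a_0(m)+j)-m$. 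Since $\GP$ functions are closed under multiplication and under $x\mapsto 1-x$, the displayed right-hand side is $\GP$, proving the claim. Applying it with $\theta=u$ gives that $B$ is $\GP$ whenever $A$ is, and with $\theta=u^{-1}$ that $A$ is $\GP$ whenever $B$ is.

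The only genuinely non-routine point is the claim, and within it the observation that a subset of $\ZZ$ of the shape $\set{\lbr\theta a\rbr}{a\in A}$ is $\GP$-definable at all: once one notices that every fibre of $a\mapsto\lbr\theta a\rbr$ is finite of bounded size and located by a $\GP$ formula, the inclusion--exclusion identity above together with closure of $\GP$ under composition finishes it. Everything else --- the sign normalisation, replacing $R_n$ by $\lbr uS_n\rbr$ and $S_n$ by $\lbr u^{-1}R_n\rbr$ for large $n$, and discarding finite sets --- is bookkeeping.
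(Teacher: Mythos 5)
Your proof is correct, but it takes a different route from the paper's. The paper's own argument is shorter and works with a direct membership test rather than an image-closure lemma: it observes that, for all sufficiently large $n$, an integer $m$ equals $S_n$ if and only if $\lbr um\rbr=R_n$ and $\fpa{um}<|u|/2$, so that up to a finite set $\ifbra{m\in\{S_n\}}=\ifbra{\lbr um\rbr\in\{R_n\}}\cdot\ifbra{\fpa{um}<|u|/2}$, which is generalised polynomial whenever $\{R_n\}$ is; by the symmetry $S_n=u^{-1}R_n+o(1)$ the converse follows. The side condition $\fpa{um}<|u|/2$ does exactly the work of your fibre enumeration and inclusion--exclusion: it picks out precisely those $m$ that genuinely round to an element of $\{R_n\}$, so no bound $M=\ceil{1/\theta}+1$, no $a_0(m)$, and no product over candidate preimages is needed. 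Both arguments ultimately rest on the same two closure facts --- that the indicator of a generalised polynomial set composed with an integer-valued generalised polynomial map is generalised polynomial (used silently in the paper for $\ifbra{\lbr um\rbr\in\{R_n\}}$, spelled out by your structural induction), and Lemma \ref{lem:gen-poly-set-of-zeros} for conditions $\ifbra{h(m)\in[a,b)}$ --- together with stability of generalised polynomial sets under finite modifications. What your detour buys is a reusable stand-alone statement, namely that the image of a generalised polynomial subset of $\ZZ$ under $a\mapsto\lbr\theta a\rbr$ is again generalised polynomial for any $\theta\neq 0$, which is more general than the lemma requires; what it costs is the bookkeeping with the fibre bound and the normalisation $u>0$, none of which appears in the paper's two-line proof.
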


\begin{proof} It follows from the relation $R_n=uS_n+o(1)$ that for large $n$ an integer $m$ is of the form $m=S_n$ if and only if $\lbr u m \rbr = R_n$ and $\fpa{um}<|u|/2$. We immediately see from this that if the set $\{R_n\mid n\geq 0\}$ is generalised polynomial, then so is  $\{S_n\mid n\geq 0\}$. Since the argument is symmetric with respect to $(R_n)$ and $(S_n)$, we conclude the claim.\end{proof}

We will apply these results to a particular linear recurrence sequence $R_n$ studied by Hubert-Messaoudi.

\begin{theorem}[\cite{HubertMessaudi-2006}]\label{bambambabam} Let the polynomial $p(x)=x^3-ax^2-bx-1$ with roots $\beta, \alpha, \bar{\alpha}$ be as above. Define the sequence $(R_n)_{n\geq 0}$ defined by the recurrence formula $R_0=1$, $R_1=a$, $R_2=a^2+b$, $$R_{n}=aR_{n-1}+bR_{n-2}+R_{n-3}, \quad n\geq 3.$$ Then there exists a norm $N$ on $\R^2$ (called \emph{a Rauzy norm}) such that the sequence of best approximations of the point $\theta=(\beta^{-1}, \beta^{-2})$ with respect to the norm $N$ coincides except for finitely many elements with the sequence $(R_n)$. Furthermore, there exists a sequence of real numbers $(m_q)_{q\geq 1}$ satisfying the following properties:

\begin{enumerate}
\item\label{bambambabam.mapprox} There exists a constant $c>0$ such that for all $q\geq 1$ and $p\in \Z^2$ if $N(q\theta-p)<c$, then $$N(q\theta-p)=N_0(q\theta)=m_q.$$
\item\label{bambambabam.mmin} If $q<R_n$, then $m_{R_n}<m_q$.
\item\label{bambambabam.mwzor} For $n\geq 0$ we have $m_{R_n}=m_1|\alpha|^n$. 
\item We have $\liminf_{q\to \infty} m_q =0.$
\item\label{bambambabam.beta} We have $$\beta^n=R_n + \frac{b\beta+1}{\beta^2} R_{n-1} + \frac{1}{\beta} R_{n-2}, \quad n\geq 2.$$
\item\label{bambambabam.nwzor} The norm $N$ is given by the formula $$N(x)=|(\alpha+b/\beta)x_1+x_2/\beta|, \quad x=(x_1,x_2)\in \R^2.$$
\end{enumerate}
\end{theorem}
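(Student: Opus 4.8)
The plan is to reconstruct the statement from the arithmetic of the imaginary cubic field $K=\QQ(\beta)$ and the renormalisation dynamics of $\beta$-numeration; we only sketch the steps. Since the constant term of $p$ is $-1$, $\beta$ is an algebraic unit, so $\ZZ[\beta]=\ZZ[\beta^{-1}]$ is an order in $K$, and since $\beta\alpha\bar\alpha=1$ with $|\alpha|=|\bar\alpha|$ we have $|\alpha|=\beta^{-1/2}<1$. Write $\sigma\colon K\hookrightarrow\CC$ for the embedding with $\sigma(\beta)=\alpha$ (the contracting complex place) and $x'=\sigma(x)$; let $M$ be the companion matrix of $p$ (so that the column vector $(R_n,R_{n-1},R_{n-2})$ equals $M$ applied to $(R_{n-1},R_{n-2},R_{n-3})$), with eigenvalues $\beta,\alpha,\bar\alpha$ and right eigenvector $(\lambda^2,\lambda,1)$ for the eigenvalue $\lambda$.

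\textbf{Step 1: identities (v), (vi), (iii).} First I would prove (v) by checking that $S_n:=R_n+\tfrac{b\beta+1}{\beta^2}R_{n-1}+\tfrac1\beta R_{n-2}$, being a fixed $\RR$-linear combination of three consecutive $R$'s, obeys the recurrence $x_n=ax_{n-1}+bx_{n-2}+x_{n-3}$, and that $S_2=\beta^2,S_3=\beta^3,S_4=\beta^4$ using $\beta^3=a\beta^2+b\beta+1$; then $S_n=\beta^n$ for $n\ge 2$. Since $\tfrac{b\beta+1}{\beta^2}=b\beta^{-1}+\beta^{-2}$, this exhibits $\beta^n=R_n+(bR_{n-1}+R_{n-2})\beta^{-1}+R_{n-1}\beta^{-2}$ in the $\ZZ$-basis $1,\beta^{-1},\beta^{-2}$ of $\ZZ[\beta]$. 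Diagonalising $M$ gives $R_n=A\beta^{n+2}+B\alpha^{n+2}+\bar B\bar\alpha^{n+2}$ with $A>0$, $B\in\CC\setminus\{0\}$, so the error vector $\varepsilon_n:=R_n\theta-(R_{n-1},R_{n-2})$ has both coordinates equal to $2\,\mathrm{Re}$ of a fixed $\CC$-linear function of $\alpha^n$; in particular $\varepsilon_n\to 0$ geometrically. Next I would show that the map $\Psi(x_1,x_2):=(\alpha+b/\beta)x_1+x_2/\beta$ inverts this real-part projection: using the Vieta relations $\alpha+\bar\alpha+\beta=a$ and $\beta(\alpha+\bar\alpha)+|\alpha|^2=-b$ one rewrites $\alpha+b/\beta=-\beta^{-1}(\alpha^{-1}+\beta^{-1})$, and a short computation in which the $\bar\alpha^n$-terms cancel precisely by these relations yields $\Psi(\varepsilon_n)=m_1\alpha^n$ for a positive constant $m_1$ (in fact $m_1=\beta^{-3/2}$). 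As $\alpha\notin\RR$, $\Psi$ is an $\RR$-linear isomorphism $\RR^2\to\CC$, so $N:=|\Psi(\cdot)|$ is a norm — this is (vi) — and $N(\varepsilon_n)=m_1|\alpha|^n$, which is (iii) once we know that $(R_{n-1},R_{n-2})$ realises $N_0(R_n\theta)$.

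\textbf{Step 2: best approximations (1), (2), (i).} Here $\Psi$ carries $\ZZ^2$ to a genuine lattice in $\CC$; choosing $c>0$ below its shortest nonzero vector gives (i) immediately, for then $N(q\theta-p)<c$ pins $p$ as the unique nearest lattice point, so $N(q\theta-p)=N_0(q\theta)=m_q$. For (2) I would translate a good approximation $q\theta\approx p$, via $\sigma$ and a fixed linear change of coordinates, into the statement that a certain element $\xi\in\ZZ[\beta]$ of controlled height has anomalously small conjugate $\xi'$; the elementary Pisot lower bound for $|\xi'|$ in terms of the greedy $\beta$-expansion of $q$, combined with \emph{Akiyama's Property~(F)} for degree-$3$ Pisot units \cite{Akiyama-2000} (finiteness of $\beta$-expansions, together with the expected prefix-closed language of admissible digit strings), forces $q$ to equal some $R_m$; comparison with $N(\varepsilon_n)=m_1|\alpha|^n$ and $|\alpha|<1$ then gives the monotonicity (2). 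Running this over all $n$ shows the records of $q\mapsto N_0(q\theta)$ occur exactly at $q=R_n$ for all large $n$, which is (1) and finishes (iii). Finally (iv) is immediate from (iii) since $|\alpha|<1$.

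\textbf{The hard part.} The genuine obstacle is the claim in Step 2 that no extra best approximation occurs strictly between consecutive $R_n$ — abstractly, that the only closest returns to $0$ of the rotation $x\mapsto x+(\beta^{-1},\beta^{-2})$ on $\TT^2$ are the $R_n$, equivalently that the only elements of $\ZZ[\beta]$ that simultaneously have bounded height and anomalously small conjugate are the $R_n$. This is exactly where the finiteness of $\beta$-expansions (Property~(F)) is indispensable, and where essentially all the work of \cite{HubertMessaudi-2006} is concentrated; everything else is linear algebra with the eigenvectors of $M$ and the Vieta relations of $p$.
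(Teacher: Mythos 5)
The paper does not prove this theorem at all: it simply cites \cite[Theorem 1, Theorem 3, Corollary 2, Proposition 7, Lemma 4, Lemma 2, formula (3)]{HubertMessaudi-2006} and remarks that the sequence $m_q$ is denoted $N(\delta(q))$ there. Your proposal, by contrast, reconstructs the outline of the argument. The two are not really in conflict --- you correctly work out the parts that are elementary linear algebra and number theory over $K=\QQ(\beta)$, and you correctly identify that the genuinely hard step (no best approximation of $\theta=(\beta^{-1},\beta^{-2})$ other than the $R_n$, beyond finitely many exceptions) is precisely where Akiyama's Property~(F) is needed and where the content of \cite{HubertMessaudi-2006} is concentrated. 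So at the level of ``where the real work is done,'' your proposal and the paper agree: both ultimately defer to the Hubert--Messaoudi paper for the crux.

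On the parts you do carry out, the computations are essentially right but there is a small conjugation slip worth flagging. If you work directly with $\Psi(x_1,x_2)=(\alpha+b/\beta)x_1+x_2/\beta$ and expand $R_n=A\beta^n+B\alpha^n+\bar B\bar\alpha^n$, then the Vieta identity gives $\alpha+b/\beta=-\beta^{-1}(\alpha^{-1}+\beta^{-1})$ as you say, but it is the \emph{$\alpha^n$-term}, not the $\bar\alpha^n$-term, that cancels; to kill the $\bar\alpha^n$-contribution one should instead use $\overline\Psi(x)=(\bar\alpha+b/\beta)x_1+x_2/\beta$. This is harmless for (vi) and (iii) because $|\Psi(x)|=|\overline\Psi(x)|$ for real $x$, so the norm is the same and one still gets $N(\varepsilon_n)=\mathrm{const}\cdot|\alpha|^n$; but the displayed identity $\Psi(\varepsilon_n)=m_1\alpha^n$ as written does not hold with $m_1\in\RR_{>0}$, and the parenthetical value $m_1=\beta^{-3/2}$ looks like a guess rather than something you have computed. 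Neither affects the validity of (iii)--(vi). Your derivation of (i) from a lower bound on the shortest vector of the lattice $\Psi(\ZZ^2)$ is correct. The open gap is exactly the one you name: showing there are no spurious best approximations between consecutive $R_n$. You gesture at Property~(F) and the $\beta$-expansion of $q$, which is the right tool, but no argument is given; since this is also exactly what the paper outsources, the proposal does not claim more than it can deliver and the overall strategy is sound.
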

\begin{proof} This is proven in \cite[Theorem 1, Theorem 3, Corollary 2, Proposition 7, Lemma 4, Lemma 2, and formula (3)]{HubertMessaudi-2006}. The sequence $m_q$ is denoted $N(\delta(q))$ there.
\end{proof}
\begin{remark}
The choice of the initial values of $(R_n)$ might seem unnatural. Note however that it corresponds to $R_0=1$, $R_{-1}=R_{-2}=0$.
\end{remark}

We are now ready to prove the following theorem. 

\begin{theorem}\label{LAcrime} Let $p$ be a polynomial $p(x)=x^3-ax^2-bx-1$ with integer coefficients $a,b$ satysfying ($a\geq 0$ and $0\leq b \leq a+1$) or ($a\geq 2$ and $b=-1$). Assume that $p$ has a unique real root $\beta$ and that $\beta>1$. Then the set $\{\lbr \beta^n \rbr \mid n\geq 0\}$ is generalised polynomial.\end{theorem}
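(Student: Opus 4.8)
The plan is to reduce, via Proposition~\ref{przepraszamzetakpozno}, to showing that the Hubert--Messaoudi sequence $(R_n)$ of Theorem~\ref{bambambabam} has generalised polynomial range, and then to imitate the continued-fraction argument of Example~\ref{ex:lin-rec-is-gp}: I will produce a generalised polynomial set $E'$ whose symmetric difference with $\{R_n : n\geq 0\}$ is finite. As preliminaries, note that the cubic $p$ has no rational root (this is immediate from the constraints on $a,b$), so $p$ is the minimal polynomial of $\beta$ and $\beta$ is a Pisot unit of degree $3$; from $\beta\alpha\bar\alpha=1$ we get $|\alpha|=\beta^{-1/2}$, and the general form of a solution of the recurrence gives $R_n=u\beta^n+O(\beta^{-n/2})$ with $u>0$ (as $R_n>0$ for large $n$, being a best-approximation denominator). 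In particular $m_{R_n}^2 R_n\to L:=m_1^2 u$ by property~\eqref{bambambabam.mwzor}.

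\emph{Step 1: realising $N_0(q\theta)$ as a generalised polynomial.} Using the explicit formula~\eqref{bambambabam.nwzor} for the Rauzy norm, a short computation rewrites $N(q\theta-p)$ (with $\theta=(\beta^{-1},\beta^{-2})$ and $p=(p_1,p_2)\in\ZZ^2$) as the complex modulus $\abs{q\zeta-(p_1\omega_1+p_2\omega_2)}$, where $\omega_1=\alpha+b/\beta$ and $\omega_2=1/\beta$ are $\RR$-linearly independent and $\zeta\in\CC$ is an explicit constant. Thus $N_0(q\theta)$ is the distance from $q\zeta$ to the full lattice $\Lambda=\ZZ\omega_1+\ZZ\omega_2\subset\CC$. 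Writing $\zeta=s_1\omega_1+s_2\omega_2$ with $s_1,s_2\in\RR$, and noting that the lattice point realising this distance ranges over a fixed finite set $\mathcal{E}\subset\ZZ^2$ once the $\Lambda$-coordinates lie in $[0,1)^2$, one gets $N_0(q\theta)^2=\min_{(e_1,e_2)\in\mathcal{E}}\abs{(\{qs_1\}-e_1)\omega_1+(\{qs_2\}-e_2)\omega_2}^2$. For each fixed $(e_1,e_2)$ the quantity $q\abs{(\{qs_1\}-e_1)\omega_1+(\{qs_2\}-e_2)\omega_2}^2$ is a non-negative generalised polynomial in $q$, so the set where it lies in $[0,C^2)$ is generalised polynomial by Lemma~\ref{lem:gen-poly-set-of-zeros}; taking the union over $\mathcal{E}$ shows that $E'=\{q\geq 1:N_0(q\theta)^2 q<C^2\}$ is generalised polynomial for every constant $C$.

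\emph{Step 2: choosing the constant $C$.} It remains to select $C$ so that $E'$ agrees with $\{R_n:n\geq 0\}$ up to finitely many terms. For $R_n\leq q<R_{n+1}$, properties~\eqref{bambambabam.mapprox} and~\eqref{bambambabam.mmin} give $N_0(q\theta)\geq m_{R_n}$ whenever $N_0(q\theta)<c$, and combined with $m_{R_n}^2 R_n\to L$ this yields the quantitative irrationality bound $N_0(j\theta)\gg j^{-1/2}$ for all $j\geq 1$. I would then fix a small $\e>0$ and a constant $C$ with $\sqrt L<C<\sqrt{L(1+\e/2)}$, taking $\e$ small enough relative to $C$. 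For large $n$ one has $m_{R_n}^2 R_n<C^2$, so $R_n\in E'$. Conversely, let $q$ be large and not a best approximation, say $R_n<q<R_{n+1}$ and $j=q-R_n$. If $j\leq\e R_n$, then the reverse triangle inequality $N_0(q\theta)\geq N_0(j\theta)-m_{R_n}$, together with $N_0(j\theta)\gg j^{-1/2}$ and $m_{R_n}\asymp R_n^{-1/2}$, forces $N_0(q\theta)^2 q>C^2$ once $\e$ is small. If $j>\e R_n$, then $N_0(q\theta)\geq m_{R_n}$ gives $N_0(q\theta)^2 q>(1+\e)m_{R_n}^2 R_n\to(1+\e)L>C^2$. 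Hence $q\notin E'$ for all large $q$ outside $\{R_n\}$, so $E'\triangle\{R_n:n\geq 0\}$ is finite. Since finite sets are generalised polynomial and the class is closed under finite unions and differences, $\{R_n:n\geq 0\}$ is generalised polynomial, and Proposition~\ref{przepraszamzetakpozno} completes the proof.

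The delicate point, and the one I expect to be the main obstacle, is Step~2: a priori the values $N_0(q\theta)^2 q$ over non-best-approximation $q$ lying just above some $R_n$ could accumulate at $L$ from above, so no threshold of the crude form $C^2/q$ would separate $\{R_n\}$ from its complement. What rescues the argument is precisely the bound $N_0(j\theta)\gg j^{-1/2}$ extracted from~\eqref{bambambabam.mwzor} and~\eqref{bambambabam.mmin}: immediately above $R_n$ the shift $j=q-R_n$ is itself too small to be well approximated, so $N_0(q\theta)^2 q$ is there bounded below by a large constant rather than by $L+o(1)$, and the constraints on $\e$ and $C$ become compatible. The other point requiring care is verifying that $N_0(q\theta)$ genuinely is a generalised polynomial in $q$ (not merely comparable to one), which is exactly what the decomposition into the finitely many lattice translates in Step~1 achieves, combined with the "bounded interval" case of Lemma~\ref{lem:gen-poly-set-of-zeros}.
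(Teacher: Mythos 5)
Your argument is correct, and it follows the paper's skeleton---reduce via Proposition~\ref{przepraszamzetakpozno} to the sequence $(R_n)$ and then use Theorem~\ref{bambambabam} to cut out $\{R_n\}$ by a generalised polynomial inequality---but the key step is executed differently. The paper builds the \emph{increasing} generalised polynomial $g(q)=m_1^{-2}\bigl(q+\frac{b\beta+1}{\beta^2}\lbr q/\beta\rbr+\frac{1}{\beta}\lbr q/\beta^2\rbr\bigr)$, which by property~(\ref{bambambabam.beta}) and the identities $R_{n-1}=\lbr R_n/\beta\rbr$, $R_{n-2}=\lbr R_n/\beta^2\rbr$ is calibrated exactly, $g(R_n)=m_{R_n}^{-2}$; membership in the sliding-threshold set $\{q:N_0(q\theta)<g(q)^{-1/2}\}$ is then decided by monotonicity of $g$ together with (\ref{bambambabam.mapprox})--(\ref{bambambabam.mmin}), with no separation estimate needed for $q$ just above $R_n$. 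You use instead a constant threshold for the renormalised quantity $qN_0(q\theta)^2$, and must therefore exclude by hand the $q$ slightly larger than $R_n$; your repulsion bound $N_0(j\theta)\gg j^{-1/2}$ combined with the reverse triangle inequality does this correctly, and you rightly identify it as the crux. The trade-off: the paper's exact calibration leans on (\ref{bambambabam.beta}) and the precise value of $m_1$, and its margin at $q=R_n$ is borderline, whereas your version needs only the asymptotics $m_{R_n}\asymp R_n^{-1/2}$, so it has genuine slack (since $C^2>L$) at the cost of the two-case separation argument. Your way of exhibiting $qN_0(q\theta)^2<C^2$ as a generalised polynomial condition---covering the nearest lattice point by a fixed finite set $\mathcal{E}$ of translates of $(\lfloor qs_1\rfloor,\lfloor qs_2\rfloor)$---also differs from the paper's, which recovers the minimising $(p_1,p_2)$ by nearest-integer formulas valid only when $N_0(q\theta)$ is small; both work. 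Two small fixes: reverse the quantifiers in ``taking $\e$ small enough relative to $C$'' (choose $\e$ first, depending only on $L$ and the constant in the repulsion bound, then $C\in(\sqrt{L},\sqrt{L(1+\e/2)})$; this is harmless since shrinking the admissible interval for $C$ only helps), and state explicitly that $N_0(R_n\theta)=m_{R_n}$ for all large $n$, which follows from (\ref{bambambabam.mapprox}) since $N_0(R_n\theta)\to 0$, as you use this identification throughout.
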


\begin{proof} By Proposition \ref{przepraszamzetakpozno}, instead of $\lbr \beta^n \rbr$, we may study the sequence $R_n$ of the previous theorem. It is easy to verify that the sequence $(R_n)$ is strictly increasing for $n\geq 6$. We will start by constructing a generalised polynomial $g \colon \Z \to \R$ that is increasing and which takes the value $g(q)=m_q^{-2}$ at points $q=R_n$ with $n$ sufficiently large. The sequence $R_n$ satisfies a third order linear recurrence with characteristic polynomial $p$ and hence $$R_n=u\beta^n+v\alpha^n+w\bar{\alpha}^n$$ for some $u,v,w\in \CC$. Hence, $\lim_{n\to \infty} (R_n-u\beta^n)=0$, and thus for $n$ large enough we have $$R_{n-1}=\lbr R_{n}/\beta\rbr, \quad R_{n-2}=\lbr  R_{n}/\beta^2\rbr.$$
Define a generalised polynomial $g$ by the formula $$g(q)= m_1^{-2}\left(q + \frac{b\beta+1}{\beta^2} \lbr q/\beta \rbr + \frac{1}{\beta}\lbr  q/\beta^2\rbr\right), \quad q\in\Z.$$
Then by Theorem \ref{bambambabam}.(\ref{bambambabam.mwzor}) and \ref{bambambabam}.(\ref{bambambabam.beta}), we have $g(R_n)=m_1^{-2}|\beta|^{n}=m_{R_n}^{-2}$ for large $n$. (Note that $|\alpha|=|\beta|^{-1/2}$.) Furthermore, $g$ is increasing. 

\comment{It is possible that we have a conflict of notation between $p$ --- polynomial and $p$ --- integer/pair of integers.}
Consider the set $$S=\{q\geq 1 \mid N_0(q\theta)<g(q)^{-1/2}\}.$$ Since the sequence $\{R_n \mid n\geq 0\}$ is up to finitely many terms the sequence of best approximations of $\theta$, applying Theorem \ref{bambambabam}.(\ref{bambambabam.mapprox}) and \ref{bambambabam}.(\ref{bambambabam.mmin}) and the fact that $g$ is increasing and $g(R_n)=m_{R_n}^{-2}$ tends to $0$ as $n\to \infty$, we see that $S$ coincides with the set $\{R_n \mid n\geq 0\}$ up to a finite set. In order to show that $S$ is generalised polynomial, it is enough to write the expression $N_0(q\theta)$ in terms of a generalised polynomial in $q$. We will do that under the additional assumption that $N_0(q\theta)$ is small. Assume that $N_0(q\theta)<(2\Im(\alpha))^{-1}$ and let $p=(p_1,p_2)\in \Z^2$ be such that $N_0(q\theta)=N(q\theta-p)$. By Theorem \ref{bambambabam}.(\ref{bambambabam.nwzor}), we have $$N(q\theta-p)=|(\alpha+b/\beta)(q\beta^{-1}-p_1)+(q\beta^{-2}-p_2)/\beta|<1/(2\Im(\alpha)).$$ By looking at the imaginary part of the expression, it follows that $|q\beta{-1}-p_1|<1/2$, and hence $p_1$ is uniquely determined as $p_1=\lbr q\beta^{-1}\rbr.$ Since $p$ minimises $N(q\theta-p)$, we also have $$p_2=\lbr \beta(\alpha+b/\beta)(q\beta^{-1}-p_1)+q\beta^{-2}\rbr.$$

Therefore we have \begin{align*}N_0(q\theta)\leq &|(\alpha+b/\beta)(q\beta^{-1}-\lbr q\beta^{-1}\rbr)+\\+&(q\beta^{-2}-\lbr \beta(\alpha+b/\beta)(q\beta^{-1}-\lbr q\beta^{-1}\rbr)+q\beta^{-2}\rbr)/\beta|\end{align*}
and  equality holds if furthermore $N_0(q\theta)<(2\Im(\alpha))^{-1}$. Call the expression on the right hand side of this inequality $h(q)$. Rewriting the  norm in $\CC$ in terms of the coordinates, we see that $h(q)^2$ is given by a generalised polynomial. Therefore the set $$T=\{q\geq 1 \mid h(q)^2<g(q)^{-1}\}$$ coincides with $S$ up to a finite set. Hence, the set $\{R_n \mid n\geq 0\}$ is generalised polynomial. 
\end{proof}

\section{Concluding remarks}

We close with some general remark an questions which naturally appeared during the work on this project.

To begin with, we note that the difficulty in understanding the behaviour of arbitrary (possibly sparse) generalised polynomials should not come as a surprise. Indeed, many deep questions can be succinctly phrased in terms of generalised polynomials being identically $0$, which suggest that sparse generalised polynomials are difficult to understand. We give one such example below.
\begin{conjecture}[Littlewood]
	Let $\a,\b \in \RR$ and $\e > 0$. Then, the generalised polynomial $f \colon \NN \to \{0,1\}$ given by
	$$
		f(n) = \ifbra{ n \fpa{n\a} \fpa{n \b} < \e }
	$$
	is not identically $0$.
\end{conjecture}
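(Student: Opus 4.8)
This statement is the \emph{Littlewood conjecture}, which is open; what follows is therefore a plan of attack together with an honest account of where it stalls, rather than a complete proof.

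The plan is first to dispose of the easy cases by elementary diophantine approximation. If $\a$ (or $\b$) is rational, say $\a=p/q$, then $\fpa{n\a}=0$ whenever $q\mid n$, so $n\fpa{n\a}\fpa{n\b}=0<\e$ for such $n$ and $f$ is not identically $0$. If the continued fraction expansion of $\a$ has unbounded partial quotients, let $(q_k)$ be the denominators of its convergents: then $q_k\fpa{q_k\a}\to 0$ along the subsequence of large partial quotients, while $\fpa{q_k\b}\le \tfrac12$, so $q_k\fpa{q_k\a}\fpa{q_k\b}\to 0$; symmetrically if $\b$ does. Hence we may assume that both $\a$ and $\b$ are badly approximable. (In fact, combining such bookkeeping with the measure-rigidity theorem of Einsiedler, Katok and Lindenstrauss, the conjecture is already known for all $(\a,\b)$ outside a set of Hausdorff dimension $0$; the remaining task is exactly to eliminate that exceptional set.)

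For the badly approximable case the plan is to pass to homogeneous dynamics, in the spirit of the nilmanifold correspondence used elsewhere in this paper but now on a non-nilpotent space. Work on $X=\mathrm{SL}_3(\RR)/\mathrm{SL}_3(\ZZ)$, the space of unimodular lattices in $\RR^3$; let $x_{\a,\b}$ be the lattice obtained by applying to $\ZZ^3$ a suitable unipotent matrix built from $(\a,\b)$, and let $A=\{\,\mathrm{diag}(e^{t},e^{s},e^{-t-s}) : t,s\in\RR\,\}$ be the full diagonal group. A classical computation of Cassels and Swinnerton-Dyer, via Mahler's criterion, identifies the relation $\liminf_n n\fpa{n\a}\fpa{n\b}=0$ with \emph{non-boundedness} of the orbit $Ax_{\a,\b}$ in $X$. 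So the conjecture becomes the assertion that no such orbit is bounded. One then invokes the classification of $A$-invariant ergodic probability measures of positive entropy (they are homogeneous), which forces any bounded orbit to pin $(\a,\b)$ into a dimension-zero set, recovering the Einsiedler--Katok--Lindenstrauss result.

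The main obstacle --- and the reason this remains a plan rather than a theorem --- is the \emph{zero-entropy} case: there is currently no classification of the full diagonal action on $X$ without an entropy hypothesis, and both the positive-entropy method and the $\times2,\times3$-type rigidity of Furstenberg, Rudolph and Host (the circle of ideas recalled in the acknowledgements) collapse there. In the terminology of this paper, $f$ is \emph{sparse} precisely in the hypothetical counterexample case, so the conjecture asserts that this one sparse generalised polynomial is never the zero sequence --- a claim that lies genuinely beyond the techniques developed here, which bear on \emph{automatic} sparse generalised polynomials only. I do not expect to remove the obstacle; the realistic deliverable is steps one through the measure-rigidity reduction, with the zero-entropy step left open.
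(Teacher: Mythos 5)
You have read the statement correctly: the paper records this as a \emph{Conjecture}, not a Theorem, precisely because it is the Littlewood conjecture and is open. The paper itself offers no proof; it only observes (immediately after the conjecture) that by Einsiedler, Katok and Lindenstrauss the exceptional set has Hausdorff dimension zero, and that the full conjecture remains unresolved. So there is no ``paper's own proof'' to compare against, and your refusal to manufacture one is the right call.

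Your account of what \emph{is} known is accurate and well organised. The easy reductions (rational $\a$ or $\b$; unbounded partial quotients, using $q_k\fpa{q_k\a}\le 1/q_{k+1}$ and $\fpa{q_k\b}\le 1/2$) are correct, and they correctly reduce the problem to the badly approximable case. The Cassels--Swinnerton-Dyer translation to boundedness of an $A$-orbit on $\mathrm{SL}_3(\RR)/\mathrm{SL}_3(\ZZ)$, and the use of measure rigidity for the positive-entropy part, is the standard route and is exactly what underlies the Hausdorff-dimension-zero result the paper cites. Your identification of the zero-entropy case as the genuine obstruction is also correct; that is indeed where current technology stops. One small point worth stressing for the reader: in the context of this paper the conjecture is included purely as an illustration that already the question of whether a \emph{single} sparse generalised polynomial is identically zero can be extremely hard, and none of the paper's machinery (Theorems \ref{thm:main-weakly-periodic}, \ref{thm:main-optimized}, \ref{thm:GP-vs-IPS}) bears on it, since that machinery requires an automaticity or IPS-type hypothesis on the other side. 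You make this point yourself in your last paragraph, which is exactly the right framing.
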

It is known by the work fo Einsiedler, Katok and Lindenstrauss \cite{EinsiedlerKatokLindenstrauss-2006} that the set of $(\a,\b)$ for which the above fails has Hausdorff dimension $0$. However, the conjecture is as yet unresolved, and is believed to be difficult.

\begin{question}
	Does there exist a generalised polynomial $q(n)$ which takes any value $\bmod{1}$ finitely many times and a non-increasing sequence $\e(n) \geq 0$, such that 
	$$\ifbra{ \fpa{q(n)} \leq \e(n)} $$ is an automatic sequence, not eventually $0$?
\end{question}

\begin{remark}
The answer is negative when $q(n)$ is a genuine polynomial and either $\e(n)$ is constant (Proposition \ref{prop:poly=>not-auto}) or $\e(n) \to 0$ as $n \to \infty$ (Proposition \ref{prop:adense-special-gen-poly=>not-auto}), or when $q(n) = n \a \floor{n \b}$ and $\e(n) = n^{-o(1)}$ with $\a,\b$ algebraic and linearly independent (Proposition \ref{ex:Heisenberg}; see also Remark \ref{remark:Heisenberg}). The condition that $q(n) \bmod{1}$ does not take the same value infinitely often is added to remove trivial examples such as $q(n) \in \QQ[n] + \RR$, or $q(n) = g(n) h(n)$ where $g(n)$ is any sparse ($\{0,1\}$-valued) generalised polynomial and $h(n) \to 0$ as $n \to \infty$ with $g(n) = 1$.
\end{remark}

By Theorem \ref{thm:main-dichotomy}, the question of whether or not there exist non-trivial sequences which are simultaneously generalised polynomial and automatic reduces to deciding if the set $\set{k^t}{t \in \NN}$ is generalised polynomial; we suspect it is not. Conversely, by Example \ref{ex:lin-rec-is-gp}, the set of Fibonacci numbers, which can be described (up to finitely many exceptions) as $\set{ \lbr \varphi^t \rbr }{ t \in \NN}$ where $\varphi = (1+\sqrt{5})/2$, is generalised polynomial. Similarly, in Theorem \ref{LAcrime} we show  the set $\set{ \lbr \beta^t \rbr }{ t \in \NN}$ is generalised polynomial, where $\beta$ is a certain non-totally real cubic Pisot unit. This suggests the following questions. 

\begin{question}\label{Q:Pisot}
	For which $\lambda > 1$ is the set $$E_\lambda := \set{\lbr\lambda^t \rbr}{t \in \NN}$$ generalised polynomial?
\end{question}

We note that the set of such $\lambda$ includes at least all the Pisot units of degree $2$, which is essentially shown in Example \ref{ex:lin-rec-is-gp}, and some Pisot units of degree $3$.

\comment{I've changed Pisot number of degree 2 to Pisot unit of degree 2 -- JB. OK --JK}

An intimately related (cf. Proposition \ref{przepraszamzetakpozno}) and perhaps more natural question is the following one.

\begin{question}
	For which linear recurrence sequences $a=(a_n)_{n\geq 0}$ with  values in $\NN_0$ is the set $$E_a:= \set{ a_n }{n\geq 0}$$ generalised polynomial?
\end{question}

Looking at the aforementioned examples from the perspective of recursive relations with non-constant coefficients (as in Remark \ref{lawnmower}), we may also ask the following question.

\begin{question}\label{Q:BApprox}
	Let $n_i$ be a sequence given by $n_0 = 0,\ n_1 = 1$ and the recursive relation
	$$n_{i} = a_i n_{i-1} + b_i n_{i-2},$$ with $a_i,b_i \in \ZZ$ bounded in absolute value. For which $a = (a_i)_i,\ b = (b_i)_{i}$ is the set 
	$$E_{a,b} = \set{n_i}{i \in \NN}$$ generalised polynomial?
\end{question}

While generalised polynomial sets with positive density can be adequately understood through the application of Theorem \ref{thm:BergelsonLeibman}, much less is known about the sparse regime, as many of the previous examples suggest. In particular, it seems highly plausible that any ``sufficiently sparse'' set is generalised polynomial. \comment{I suggest to eliminate the word highly, but then you're generally braver than I am -- JB.}

\begin{question}\label{Q:sparse=>gen-poly}
	Does there exist a function $h \colon \NN \to \NN$ with the property that for any sequence $(n_i)_{i\geq 1} \subset \NN$ with $n_{i+1} \geq h(n_i)$ the set $E = \set{n_i}{i \in \NN}$ is generalised polynomial?	
\end{question}
	
\begin{remark}
Example in Section 3.4 in \cite{BergelsonLeibman2007} shows how to construct a sparse generalised polynomial set containing $2^{2^i-1}$ for all $i \in \NN$, and can be generalised to other sequences of similar growth. It is however not clear how to ensure that unwanted elements do not fall into a set thus constructed.  
\end{remark}

\begin{remark}
As a tangential remark, we point out that in light of Example \ref{ex:lin-rec-is-gp}, it could be interesting to study a class of Fibonacci-automatic sequences, defined as the sequences produced by finite automatons which on input the expansion of $n$ in the Fibonacci base. 
	
	Any $n \in \NN_0$ has an expansion $(n)_{\text{Fib}} = u_d u_{d-1} \dots u_0$ with $u_i \in \{0,1\}$, such that $n = \sum_i u_i f_i$ where $f_i$ is the $i$-th Fibonacci number, and no two consecutive $u_i$'s take the value $1$. (Such a representation is known to be unique by Zeckendorf's Theorem). Inasmuch as this notion makes sense, the sequence of Fibonacci numbers provides an example of a non-trivial generalised polynomial sequence which is Fibonacci-automatic. 
\end{remark}

We finish by presenting a generalisation of Conjecture \ref{conjecture:main} to regular sequences. We call a function $f \colon \N_0 \to \Z$ a quasi-polynomial if there exists an integer $m\geq 1$ such that the sequences $f_j$ given by $f_j(n)=f(mn+j)$, $0\leq j \leq m-1$, are polynomials in $n$. We say that a function $f \colon \N_0 \to \Z$ is ultimately a quasi-polynomial if it coincides with a quasi-polynomial except at a finite set.

\begin{question}\label{Q:mainregular}
	Assume that the sequence $f \colon \N_0 \to \Z$ is both regular and generalised polynomial. Is it then true that $f$ is ultimately a quasi-polynomial? 
\end{question}

If $f$ takes only finitely many values, then all the polynomials inducing $f_j$ are necessarily constant, and so in this case the question coincides with Conjecture \ref{conjecture:main}. If one wanted to reduce the regular case to the automatic case, it seems likely that a more precise information about the rate of growth of regular sequences along the lines of Remark \ref{bloodylandmower} might prove useful. \comment{If we eliminate the referenced remark, this needs to change as well -- JB.}


\bibliographystyle{alpha}
\bibliography{bibliography}

\begin{thebibliography}{EvdPSW03}

\bibitem[Aki00]{Akiyama-2000}
Shigeki Akiyama.
\newblock Cubic {P}isot units with finite beta expansions.
\newblock In {\em Algebraic number theory and {D}iophantine analysis ({G}raz,
  1998)}, pages 11--26. de Gruyter, Berlin, 2000.

\bibitem[AS92]{Allouche-Shallit-1992}
Jean-Paul Allouche and Jeffrey Shallit.
\newblock The ring of {$k$}-regular sequences.
\newblock {\em Theoret. Comput. Sci.}, 98(2):163--197, 1992.

\bibitem[AS03a]{AS}
Jean-Paul Allouche and Jeffrey Shallit.
\newblock {\em Automatic sequences}.
\newblock Cambridge University Press, Cambridge, 2003.
\newblock Theory, applications, generalizations.

\bibitem[AS03b]{AlloucheShallit-book}
Jean-Paul Allouche and Jeffrey Shallit.
\newblock {\em Automatic sequences}.
\newblock Cambridge University Press, Cambridge, 2003.
\newblock Theory, applications, generalizations.

\bibitem[AS03c]{AlloucheShallit-2003}
Jean-Paul Allouche and Jeffrey Shallit.
\newblock The ring of {$k$}-regular sequences. {II}.
\newblock {\em Theoret. Comput. Sci.}, 307(1):3--29, 2003.
\newblock Words.

\bibitem[Bau74]{Baumgartner-1974}
James~E. Baumgartner.
\newblock A short proof of {H}indman's theorem.
\newblock {\em J. Combinatorial Theory Ser. A}, 17:384--386, 1974.

\bibitem[BCH14]{BellCoonsHare-2014}
Jason~P. Bell, Michael Coons, and Kevin~G. Hare.
\newblock The minimal growth of a {$k$}-regular sequence.
\newblock {\em Bull. Aust. Math. Soc.}, 90(2):195--203, 2014.

\bibitem[Ber10]{Bergelson2010d}
Vitaly Bergelson.
\newblock Ultrafilters, {IP} sets, dynamics, and combinatorial number theory.
\newblock In {\em Ultrafilters across mathematics}, volume 530 of {\em Contemp.
  Math.}, pages 23--47. Amer. Math. Soc., Providence, RI, 2010.

\bibitem[BL07]{BergelsonLeibman2007}
Vitaly Bergelson and Alexander Leibman.
\newblock {Distribution of values of bounded generalized polynomials}.
\newblock {\em Acta Mathematica}, 198(2):155--230, June 2007.

\bibitem[BM10]{BergelsonMcCutcheon-2010}
V.~Bergelson and R.~McCutcheon.
\newblock Idempotent ultrafilters, multiple weak mixing and {S}zemer\'edi's
  theorem for generalized polynomials.
\newblock {\em J. Anal. Math.}, 111:77--130, 2010.

\bibitem[BS76]{BaumSweet}
Leonard~E. Baum and Melvin~M. Sweet.
\newblock Continued fractions of algebraic power series in characteristic
  {$2$}.
\newblock {\em Ann. of Math. (2)}, 103(3):593--610, 1976.

\bibitem[Che13]{Chevallier-2013}
Nicolas Chevallier.
\newblock Best simultaneous {D}iophantine approximations and multidimensional
  continued fraction expansions.
\newblock {\em Mosc. J. Comb. Number Theory}, 3(1):3--56, 2013.

\bibitem[CHM01]{ChekhovaHubertMessaoudi-2001}
Nataliya Chekhova, Pascal Hubert, and Ali Messaoudi.
\newblock Propri\'et\'es combinatoires, ergodiques et arithm\'etiques de la
  substitution de {T}ribonacci.
\newblock {\em J. Th\'eor. Nombres Bordeaux}, 13(2):371--394, 2001.

\bibitem[Cob69]{Cobham-1969}
Alan Cobham.
\newblock On the base-dependence of sets of numbers recognizable by finite
  automata.
\newblock {\em Math. Systems Theory}, 3:186--192, 1969.

\bibitem[EKL06]{EinsiedlerKatokLindenstrauss-2006}
Manfred Einsiedler, Anatole Katok, and Elon Lindenstrauss.
\newblock Invariant measures and the set of exceptions to {L}ittlewood's
  conjecture.
\newblock {\em Ann. of Math. (2)}, 164(2):513--560, 2006.

\bibitem[EvdPSW03]{EverestPoortenShparlinskiWard-book}
Graham Everest, Alf van~der Poorten, Igor Shparlinski, and Thomas Ward.
\newblock {\em Recurrence sequences}, volume 104 of {\em Mathematical Surveys
  and Monographs}.
\newblock American Mathematical Society, Providence, RI, 2003.

\bibitem[Eve84]{Evertse-1984}
Jan-Hendrik Evertse.
\newblock On sums of {$S$}-units and linear recurrences.
\newblock {\em Compositio Math.}, 53(2):225--244, 1984.

\bibitem[EW10]{EinsiedlerWard}
Manfred Einsiedler and Thomas Ward.
\newblock {Ergodic Theory - with a view towards Number Theory}, 2010.

\bibitem[FK85]{FurstenbergKatznelson-1985}
H.~Furstenberg and Y.~Katznelson.
\newblock An ergodic {S}zemer\'edi theorem for {IP}-systems and combinatorial
  theory.
\newblock {\em J. Analyse Math.}, 45:117--168, 1985.

\bibitem[Fur81]{Furstenberg1981}
Harry Furstenberg.
\newblock {\em {Recurrence in Ergodic Theory and Combinatorial Number Theory}}.
\newblock Princeton University Press, 1981.

\bibitem[GT12]{GreenTao2012}
Ben Green and Terence Tao.
\newblock {The quantitative behaviour of polynomial orbits on nilmanifolds}.
\newblock {\em Annals of Mathematics}, 175(2):465--540, March 2012.

\bibitem[H{\aa}l93]{Haland-1993}
Inger~Johanne H{\aa}land.
\newblock Uniform distribution of generalized polynomials.
\newblock {\em J. Number Theory}, 45(3):327--366, 1993.

\bibitem[H{\aa}l94]{Haland-1994}
Inger~Johanne H{\aa}land.
\newblock Uniform distribution of generalized polynomials of the product type.
\newblock {\em Acta Arith.}, 67(1):13--27, 1994.

\bibitem[Hin74]{Hindman1974}
Neil Hindman.
\newblock {Finite sums from sequences within cells of a partition of N}.
\newblock {\em Journal of Combinatorial Theory, Series A}, 17(1):1--11, July
  1974.

\bibitem[HK95]{HalandKnuth-1995}
Inger~Johanne H{\aa}land and Donald~E. Knuth.
\newblock Polynomials involving the floor function.
\newblock {\em Math. Scand.}, 76(2):194--200, 1995.

\bibitem[HM06]{HubertMessaudi-2006}
P.~Hubert and A.~Messaoudi.
\newblock Best simultaneous {D}iophantine approximations of {P}isot numbers and
  {R}auzy fractals.
\newblock {\em Acta Arith.}, 124(1):1--15, 2006.

\bibitem[Khi63]{Khintchine-book}
A.~Ya. Khintchine.
\newblock {\em Continued fractions}.
\newblock Translated by Peter Wynn. P. Noordhoff, Ltd., Groningen, 1963.

\bibitem[Kro57]{Kronecker-1857}
L.~Kronecker.
\newblock Zwei {S}\"atze \"uber {G}leichungen mit ganzzahligen {C}oefficienten.
\newblock {\em J. Reine Angew. Math.}, 53:173--175, 1857.

\bibitem[Lag82]{Lagarias-1982}
J.~C. Lagarias.
\newblock Best simultaneous {D}iophantine approximations. {II}. {B}ehavior of
  consecutive best approximations.
\newblock {\em Pacific J. Math.}, 102(1):61--88, 1982.

\bibitem[Laz54]{Lazard-1954}
Michel Lazard.
\newblock Sur les groupes nilpotents et les anneaux de {L}ie.
\newblock {\em Ann. Sci. Ecole Norm. Sup. (3)}, 71:101--190, 1954.

\bibitem[Lei98]{Leibman1998}
A.~Leibman.
\newblock Polynomial sequences in groups.
\newblock {\em J. Algebra}, 201(1):189--206, 1998.

\bibitem[Lei02]{Leibman2002}
A.~Leibman.
\newblock {Polynomial mappings of groups}.
\newblock {\em Israel Journal of Mathematics}, 129:29--60, 2002.

\bibitem[Lei05a]{Leibman-2005b}
A.~Leibman.
\newblock Pointwise convergence of ergodic averages for polynomial actions of
  {${\Bbb Z}\sp d$} by translations on a nilmanifold.
\newblock {\em Ergodic Theory Dynam. Systems}, 25(1):215--225, 2005.

\bibitem[Lei05b]{Leibman2005}
Alexander Leibman.
\newblock {Pointwise convergence of ergodic averages for polynomial sequences
  of translations on a nilmanifold}.
\newblock {\em Ergodic Theory and Dynamical Systems}, 25(1):201--213, 2005.

\bibitem[Lei12]{Leibman-2012}
A.~Leibman.
\newblock A canonical form and the distribution of values of generalized
  polynomials.
\newblock {\em Israel J. Math.}, 188:131--176, 2012.

\bibitem[Mal51]{Malcev1951}
A.~I. Malcev.
\newblock On a class of homogeneous spaces.
\newblock {\em Amer. Math. Soc. Translation}, 1951(39):33, 1951.

\bibitem[Par69]{Parry1969}
William Parry.
\newblock Ergodic properties of affine transformations and flows on
  nilmanifolds.
\newblock {\em Amer. J. Math.}, 91:757--771, 1969.

\bibitem[Sch72]{Schmidt1972}
Wolfgang~M. Schmidt.
\newblock Norm form equations.
\newblock {\em Ann. of Math. (2)}, 96:526--551, 1972.

\bibitem[vdPS91]{PoortenSchlickewei-1991}
A.~J. van~der Poorten and H.~P. Schlickewei.
\newblock Additive relations in fields.
\newblock {\em J. Austral. Math. Soc. Ser. A}, 51(1):154--170, 1991.

\bibitem[ZK13]{Zorin-Kranich2013}
Pavel Zorin-Kranich.
\newblock {\em {Ergodic theorems for polynomials in nilpotent groups}}.
\newblock PhD thesis, Universiteit van Amsterdam, September 2013.

\end{thebibliography}

\end{document}